\documentclass[
    11pt,
    a4paper
]{article}

%---Packages--------------------------------------
\usepackage[utf8]{inputenc}
\usepackage[english]{babel}
\usepackage{amsmath}
\usepackage{amssymb}
\usepackage{amsthm}
\usepackage{bbm}
\usepackage{mathtools}
\usepackage{enumitem}
\usepackage[numbers]{natbib}
\usepackage{graphicx}
\usepackage{subcaption}
\usepackage{xcolor}
\usepackage{hyperref}
\usepackage[normalem]{ulem} % For strikethrough

\usepackage[top=3cm,bottom=3cm,left=3cm,right=3cm]{geometry}

%-------------------------------------------------
% Macros
%-------------------------------------------------

\setcounter{secnumdepth}{3}

\theoremstyle{definition}
\newtheorem{definition}{Definition}[section]
\newtheorem{remark}[definition]{Remark}

\newtheorem{assumption}{Assumption}

\theoremstyle{plain}
\newtheorem{theorem}[definition]{Theorem}
\newtheorem{proposition}[definition]{Proposition}
\newtheorem{lemma}[definition]{Lemma}
\newtheorem{corollary}[definition]{Corollary}
%---Sonstige-Macros-------------------------------

\newcommand\norm[1]{\left\lVert#1\right\rVert}

\definecolor{forestgreen}{RGB}{34,139,34}

\newcommand{\NN}{\mathbb{N}}
\newcommand{\ZZ}{\mathbb{Z}}

\newcommand{\RR}{\mathbb{R}}

\newcommand{\ind}{\mathbbm{1}}

\newcommand{\PP}{\mathbf{P}}
\newcommand{\EE}{\mathbf{E}}

\title{Central limit theory for Peaks-over-Threshold partial sums of long memory linear time series}
\author{Ioan Scheffel$^{a}$, Marco Oesting$^{a,b}$, Gilles Stupfler$^{c}$}
\date{$^{a}$ {\small Institute for Stochastics and Applications, University of Stuttgart, D-70563 Stuttgart, Germany} \\[1ex]
$^{b}$ {\small Stuttgart Center for Simulation Science (SC SimTech), University of Stuttgart, D-70569 Stuttgart, Germany} \\[1ex]
$^{c}$ {\small Univ Angers, CNRS, LAREMA, SFR MATHSTIC, F-49000 Angers, France}}

\begin{document}

\maketitle

\begin{abstract}
  Over the last 30 years, extensive work has been devoted to developing central limit theory for partial sums of subordinated long memory linear time series.
A much less studied problem, motivated by questions that are ubiquitous in extreme value theory, is the asymptotic behavior of such partial sums when the subordination mechanism has a threshold depending on sample size, so as to focus on the right tail of the time series.
This article substantially extends longstanding asymptotic techniques by allowing the subordination mechanism to depend on the sample size in this way and to grow at a polynomial rate, while permitting the innovation process to have infinite variance.
The cornerstone of our theoretical approach is a tailored
reduction principle, which enables the use of classical results on partial sums of long memory linear processes.
In this way we obtain asymptotic theory for certain Peaks-over-Threshold estimators with deterministic or random thresholds. Applications cover both heavy- and light-tailed regimes, yielding unexpected results which, to the best of our knowledge, are new to the literature. A simulation study illustrates the relevance of our findings in finite samples.
\end{abstract}

{\bf MSC 2020 subject classifications:} 60F05, 60G70

{\bf Keywords:} Central limit theory, Heavy tails, Peaks-over-Threshold model, Stable distribution, Subordinated linear time series
\section{Introduction}
\label{sec:intro}
\subsection{Background and literature review}
\label{sec:intro:lit}

A causal linear (or MA($\infty$)) time series $X_t=\sum_{j=0}^{\infty} a_j \varepsilon_{t-j}$, $t\in \mathbb{Z}$, built upon i.i.d.~innovations $(\varepsilon_t)$ with at least a fractional moment and real-valued coefficients $(a_j)$, is (roughly speaking) said to have long memory if the decay of the coefficients to 0 is slow enough to yield a divergent series, yet fast enough to accommodate tail heaviness of the innovations. Long memory linear time series models
include the ARFIMA model introduced by~\cite{grajoy1980}, which is arguably the simplest example of a stochastic process having long memory. The ARFIMA model has been successfully applied to various fields, such as finance and economics~\cite{bhaswa2006} and medicine~\cite{wanetal2024}, and is the subject of active mathematical research, for example on fundamental aspects linked to model selection~\cite{huaetal2022}.

Central limit theorems for partial sums of long memory linear time series are well-known; see among others~\cite[Chapter~4]{beranLongMemoryProcessesProbabilistic2013}. Instead of the linear time series $(X_t)$, however, in many applications one is interested in a subordinated time series, that is, a transformation $G(X_t)$. Typical examples are $G(X_t)=|X_t|^p$, yielding the empirical $p$th moment, or $G(X_t)=\ind\{X_t\le x\}$, providing an estimator of the distribution function of $X_0$ at the point $x$. This problem is discussed by \cite{hoLimitTheoremsFunctionals1997} (innovations with finite eighth moment), \cite{hoAsymptoticExpansionEmpirical1996} (innovations with finite fourth moment and $G(X_t)=\ind\{X_t\le x\}$), \cite{surgailisStableLimitsSums2004} (innovations with finite second moment but no fourth moment) and \cite{koulAsymptoticsEmpiricalProcesses2001} (innovations with finite first moment but no second moment); see also~\cite[Section~III]{dehlingEmpiricalProcessTechniques2002a} and~\cite[Section~16.1]{kulikHeavyTailedTimeSeries2020}. The cornerstone of a standard proof is to show a (weak) reduction principle of the form
\begin{align} \label{eq:reduction-intro}
  n^{-d-1/\min(\nu,2)}
  \sum_{t=1}^n
  \left(
  G(X_t)
  -
  \EE[G(X_0)]
  \right)
  =
  n^{-d-1/\min(\nu,2)}
    G_{\infty}'(0)
  \sum_{t=1}^n
  X_t
  + \operatorname{o}_{\PP}(1)
  \,,
\end{align}
where $\nu = \sup \{ p>1\mid \norm{\varepsilon}_{L^p(\PP)} < \infty \}$ is the (algebraic) number of finite moments of the innovations, assumed to be larger than 1,
$d$
is a long memory parameter, reminiscent of the fractional differencing parameter in ARFIMA models,
that satisfies
$d\in (0,1-1/\min(\nu,2))$ and is
such that $(n^{1-d} a_n)$ converges to a finite, nonzero limit,
and
$G_{\infty}'(0) = \mathrm{d} (\EE[G(X_0+x)]) / \mathrm{d}x \big|_{x=0}$ is assumed not to vanish.
By~\cite[Theorem~4.6 ($\nu \geq 2$), Theorem~4.17 ($1<\nu<2$)]{beranLongMemoryProcessesProbabilistic2013}, the weak limit of the left-hand side of \eqref{eq:reduction-intro}
is then a stable distribution, which is non-Gaussian for $\nu\in(1,2)$, and whose scale depends on the asymptotic behavior of the coefficients $(a_j)$, tail heaviness of the innovations, and the derivative $G'_{\infty}(0)$.

Fitting extreme value analysis into this framework requires shifting from fixed transformations $G$ to a sequence of sample-size-dependent transformations $(G_n)$.
Typical examples arise from counting threshold exceedances -- the simplest example among a broader class of estimators called Peaks-over-Threshold (PoT) estimators --, that is,
\[
\frac{1}{n}\sum_{t=1}^n\ind\{X_t>u_n\}, \qquad \mbox{ with } G_n(x)=\ind\{x>u_n\}
\]
for a threshold sequence $(u_n)$ with $u_n\to\infty$ as $n\to\infty$, or
\[
\frac{1}{n}\sum_{t=1}^n \log(X_t/u_n) \ind\{X_t>u_n\}, \qquad \mbox{ with } G_n(x)=(\log(x) - \log(u_n))\ind\{x>u_n\}
\]
which is reminiscent of the Hill estimator~\cite{hil1975} of the extreme value index for heavy-tailed data. In practice the threshold sequence is often chosen as an order statistic from the sample, that is, $u_n=X_{n-k:n}$ for an intermediate sequence $k=k(n)$ with $k\to\infty$ and $k/n\to 0$ for $n\to\infty$, where $X_{1:n}\le \ldots \le X_{n:n}$ is the ordered sample. This corresponds to the realistic setting where the chosen threshold estimates an unknown quantile threshold of the form $q_X(1-k/n)$. The asymptotic behavior of these estimators, whether with deterministic or random thresholds, is well-studied under various notions of short-range dependence; see for instance~\cite{hsingTailIndexEstimation1991} in a strong mixing setting, as well as later work by \cite{dreesExtremeQuantileEstimation2003} for extensions to a wider class of estimators in the $\beta-$mixing framework under an anti-clustering condition (see also \cite[Section~V]{dehlingEmpiricalProcessTechniques2002a}). Recent contributions, such as \cite{oestingLongMemoryMaxstable2023a} (based on the notion of dependence introduced in~\cite{kulspo2021}), or \cite{baiTailAdversarialStability2024} and \cite{caoTailIndexEstimation}, have focused on weakening the mixing requirement, which may be difficult to check in a given model.

The question of handling long-range dependence in PoT extreme value analysis has remained much less explored. Works by~\cite{kulikTailEmpiricalProcess2011a},~\cite{bilivakul2019} and \cite[Section~16.3]{kulikHeavyTailedTimeSeries2020} study the tail empirical process with random thresholds, that is,
    $(\frac{1}{k}
      \sum_{t=1}^n
      \ind\{
      X_t
      >
      X_{n-k:n}s
      \})_{s\geq 1}$,
and show that it does not suffer from long memory in certain stochastic volatility models.~\cite{kulsou2013} discusses the existence of multivariate versions of the extremogram in the same class of processes. This is, to the best of our knowledge, the only class of processes that has been studied in this context; in particular, long memory linear time series have been left untouched.

\subsection{Contribution of the paper}
\label{sec:intro:contribution}

The main contribution of our article is to derive central limit theorems for subordinated long memory linear time series, in an extreme value framework where the subordination mechanism depends on sample size.
This is challenging because the mixing properties that are typically assumed in the extreme value literature (in particular, strong mixing, $\beta-$mixing or anti-clustering conditions) do not hold in this context. To be more specific, under mild smoothness assumptions on the innovation sequence, and provided each member of the sequence of transformations $(G_n)$ grows at most like a (suitable) power function and is supported on an interval of the form $(u_n,\infty)$ with $u_n\to\infty$, we show that the centered and scaled subordinated linear time series
\[
n^{-d-1/\min(\nu,2)}\sum_{t=1}^n
(
G_n(X_t)
-
\EE[G_n(X_0)]
)
\]
has the same asymptotic behavior as
\[
\left.
\frac{\mathrm{d}}{\mathrm{d}x}
\EE[G_n(X_0+x)]
\right|_{x=0}
n^{-d-1/\min(\nu,2)}\sum_{t=1}^n
X_t
\,.
\]
This new, tailored reduction principle is achieved by explicitly deriving an upper bound for
an appropriate moment of the difference between the two random sums.
This technical result, specific to the long memory framework, leads to our main central limit theorem for partial sums of subordinated long memory linear time series, yielding the same limiting distribution as the partial sums of the original time series,
but with a different rate of convergence due to focusing only on the extremes of $(X_t)$.

Our results stand in contrast to what one would expect from the available i.i.d.~or short-range-dependent theory of extreme values. For example, with heavy-tailed innovations, we observe that the speed of convergence is \textit{faster} than the classical long memory speed arising from a setting with fixed transformation. This is markedly different from independent or short-range dependent settings, where it is well-known that the speed of convergence of PoT extreme value estimators based on the $k$ largest observations only is $\sqrt{k}$, which is \textit{slower} than the standard speed $\sqrt{n}$. A further contribution of our work is to allow the replacement of the deterministic threshold $(u_n)$ with its random counterpart; in doing so, we obtain a second set of asymptotic results for the extreme value estimators under consideration, showing that, unlike in the i.i.d.~and short memory settings~\cite{stu2019}, Hill estimators with deterministic and random thresholds have different asymptotic distributions. A similar phenomenon was observed by~\cite{kulikTailEmpiricalProcess2011a} in stochastic volatility models with long memory.

We organize the presentation of these theoretical findings as follows. In Section~\ref{sec:model}, we describe the model and assumptions in detail. Section~\ref{sec:main} contains the announced central limit results. In addition, we assess the practical relevance of these theoretical results in a simulation study, presented in Section~\ref{sec:simstudy}. While confirming the expected rate of convergence, the simulations show that in finite samples convergence to the actual form of the asymptotic distribution is slow, even in relatively simple settings.
This is partly due to the extreme value setting and also due to the intrinsic difficulty of working with long memory time series, which is pronounced already at central levels. Section~\ref{sec:discussion} finally discusses our findings and perspectives for future work. The proofs of all the results are deferred to
an online Supplementary Material document.

\section{Model and main assumptions}
\label{sec:model}

Throughout this article, $(\Omega,\mathcal{A}, \PP)$ denotes a probability space rich enough to support all random variables under consideration. For $p\ge 1$, we write $L^p(\PP)$ for the space of random variables with finite $p$-th moment.
For a generic, continuous random variable $Z$, we let $F_Z$ (resp.~$f_Z$, $q_Z$) denote its cumulative distribution function (resp.~probability density function, quantile function).
We write $a\land b = \min\{a,b\}$ and $a\lor b = \max\{a,b\}$ for $a,b\in\mathbb{R}$, and use the symbol $\sim$ to denote asymptotic equivalence of sequences and functions.
We write $\lesssim$ to mean inequality up to a generic multiplicative constant $C>0$ that can change from place to place. If the dependence of this constant on the surrounding variables is relevant to the argument, we will make this explicit, by saying, for example, that $C=C_n$ depends on $n$, or that $C$ is independent of $n$.
The symbol $C^p(\RR)$ denotes the vector space of real-valued functions on $\RR$ with continuous derivatives up to order $p$.

Let $(\varepsilon_t)_{t\in \ZZ}$ be a sequence of independent and identically distributed (i.i.d.)~copies of a random variable $\varepsilon$. Set
\[
\nu = \nu_{\varepsilon} := \sup \{ p>1  \mid \| \varepsilon \|_{L^p(\PP)} < \infty \}.
\]
We have $\nu=\infty$, for example, if $\varepsilon$ is standard Gaussian, and if the complementary distribution function $x\mapsto \PP[|\varepsilon|>x]$ is regularly varying with index $-1/\xi<0$, then $\nu = 1/\xi$. In this work, we focus on linear time series $X_t=\sum_{j=0}^{\infty} a_j \varepsilon_{t-j}$, where $(a_j)$ is a sequence of real-valued coefficients, featuring long memory in the following sense.

\begin{assumption}[Long memory]
  \label{asu:coef}
  It holds that $\nu>1$, and that the sequence
  $(n^{1-d} a_n)$ converges to a finite, nonzero limit $c_a$
  for some $d\in (0,1-1/\alpha)$, where $\alpha:=2\land \nu$.
\end{assumption}

\begin{remark}[On Assumption~\ref{asu:coef}]
    \label{rmk:asu:coef}
    Assumption~\ref{asu:coef} guarantees that the process $(X_t)$ is well-defined and stationary; see~\cite[Section~15.3]{kulikHeavyTailedTimeSeries2020}. It imposes that the innovations have at least a finite moment of order larger than 1, and that the coefficient sequence $(a_j)$, while forming a divergent series and thus granting the long memory property to $(X_t)$, is able to accommodate potentially heavy tails of the innovations.
\end{remark}

Our main goal is to derive a central limit theorem for partial sums of subordinated long memory linear time series, in the sense of Section 4.2.5 in \cite{beranLongMemoryProcessesProbabilistic2013}, taking the form
\[
v_n \sum_{t=1}^n (G_n(X_t) - \EE[G_n(X_0)]) \stackrel{\mathrm{d}}{\longrightarrow} Z
\,,
\]
where $(G_n)$ is a given sequence of real-valued functions, and $(v_n)$, $Z$ are respectively a nonrandom sequence and a nondegenerate stable limiting random variable to be determined. This is tailored to extreme value theory within the Peaks-over-Threshold (PoT) framework, where $G_n$ may for instance be $G_n(x)
    =
    \ind\{x>u_n\}
    $
(for counting the number of exceedances above a high threshold $u_n\to\infty$) or
    $
    G_n(x)
    =
    \left(
    \log(x)
    -
    \log(u_n)
    \right)
    \ind\{x>u_n\}
    $,
resulting in the Hill estimator~\cite{hil1975}. The first of the regularity assumptions we require regards the sequence of functions $(G_n)$.

\begin{assumption}[Regularity of $G_n$]
  \label{asu:G_n}
  It holds that
  \[
    |G_n(x)|
    \ \lesssim \
    (1+|x|)^{\gamma_G} \ind\{x>u_n\}
    \,,
  \]
  for some $\gamma_G\geq 0$ and a deterministic sequence $(u_n)$ of thresholds with $u_n\to\infty$ as $n\to\infty$.
  Here, $\lesssim$ denotes inequality up to a multiplicative constant $C>0$ independent of $n$ and $x$.
\end{assumption}

\begin{remark}[On Assumption~\ref{asu:G_n}]
    \label{rmk:asu:G_n}
    The functions $G_n$ appearing in the simple threshold exceedance estimator and the Hill estimator~\cite{hil1975} satisfy Assumption~\ref{asu:G_n}, with $\gamma_G=0$ and any $\gamma_G>0$, respectively.
    In Section~\ref{sec:main}, we introduce further
    technical restrictions on $\gamma_G$ that prevent $G_n$ from growing too fast, depending on the tail heaviness of the innovations.
\end{remark}

Our second main regularity assumption focuses on the innovation sequence $(\varepsilon_t)$ and is tailored to the two main regimes of interest in our work. On the one hand, we will consider the (easier) case when $\varepsilon$ has a finite second moment with sufficiently fast decay of the characteristic function. On the other hand, we shall be interested in the (harder) case of heavy-tailed time series whose innovations have regularly varying densities and a finite first moment, but no finite second moment.

\begin{assumption}[Regularity of the innovations]
  \label{asu:f}
  Suppose that $F_\varepsilon\in C^2(\RR)$, and that $\varepsilon$ has a symmetric distribution. Furthermore, assume that one of the following alternatives holds:
  \begin{enumerate}[label=(\roman*)]
    \item (Case $\alpha=2$) One has $\EE[\varepsilon^2]<\infty$, and, for some $\delta>0$, the characteristic function of $\varepsilon$ is such that $(1+|s|)^{\delta} \EE[\exp(\mathrm{i}s \varepsilon)]$ is uniformly bounded in $s\in \mathbb{R}$.
    \item (Case $\alpha\in(1,2)$) Assume that there is a finite positive constant $A$ such that $\lim_{x\to\infty} x^{\alpha} \PP[\varepsilon > x] = \lim_{x\to\infty} x^{\alpha} \PP[\varepsilon < -x] = A/2$, and suppose that the probability density function $f_\varepsilon \in C^1(\mathbb{R})$ satisfies:
    \begin{align*}
    |f'_\varepsilon(x)|
    &
    \ \lesssim \
    (1+|x|)^{-\alpha}
    \qquad\text{for}\ x\in\RR\,,
    \\
    |
    f'_\varepsilon(x)
    -
    f'_\varepsilon(y)
    |
    &
    \ \lesssim \
    |x-y|
    \cdot
    (1+|x|)^{-\alpha}
    \qquad\text{for}\ x,y\in\RR\,,|x-y|<1\,,
    \end{align*}
    where $\lesssim$ denotes inequality up to a multiplicative constant $C_\varepsilon>0$ depending only on the distribution of $\varepsilon$.
  \end{enumerate}
\end{assumption}

Our main interest in this work is Case (ii) in Assumption~\ref{asu:f}, that is, the setting $\alpha\in (1,2)$, which should be viewed as the harder case of the two. Let us point out that our assumptions in Case (i) might be weakened further using dedicated techniques from \cite{hoAsymptoticExpansionEmpirical1996}. The rationale for the precise form of our Assumption~\ref{asu:f}(i) is to allow a comparison of asymptotic behavior between the infinite and finite variance contexts in a unified fashion.

\begin{remark}[On Assumption~\ref{asu:f}(i)]
    \label{rmk:asu:f1}
Verifying Assumption~\ref{asu:f}(i) typically boils down to checking integrability properties of the density $f_{\varepsilon}$ and its derivative. Take, for example, any distribution of $\varepsilon$ having finite second moment with density vanishing asymptotically at $\pm\infty$ and integrable first derivative. Then on the one hand, for $|s|\le 1$, it holds that
\[
|\EE[\exp(\mathrm{i}s\varepsilon)]| \leq 1 \leq \frac{1}{1+|s|}
    \,,
\]
and on the other hand, for $|s|> 1$, integration by parts yields
\[
    \left| \EE[\exp(\mathrm{i}s\varepsilon)] \right|
    \ = \
    \left| \frac{1}{\mathrm{i}s}
    \int_\RR
    \exp(\mathrm{i}s x)
    f'_\varepsilon
    (x)
    \,\mathrm{d}x \right|
    \ \leq \
    \frac{1}{|s|}
    \norm{f_{\varepsilon}'}_{L^1(\RR)}
    \ \lesssim \
    \frac{1}{1+|s|}
\]
due to the inequality $1/|s|\le 2/(1+|s|)$ for $|s|>1$. Then Assumption~\ref{asu:f}(i) is satisfied with $\delta=1$. Therefore, it covers a large class of distributions, in particular centered Gaussian distributions, symmetric Beta distributions with parameter larger than 2, and Student distributions with $\nu>2$ degrees of freedom. Note that this nonetheless ensures that $X_0$ has an infinitely differentiable distribution function; see~\cite[Lemma~1]{giraitisAsymptoticNormalityRegression1996}.
\end{remark}

\begin{remark}[On Assumption~\ref{asu:f}(ii)]
    \label{rmk:asu:f2}
  Assumption~\ref{asu:f}(ii) is satisfied by symmetric $\alpha$-stable ($S\alpha S$) distributed innovations; see~\cite[after (2.2)]{koulAsymptoticsEmpiricalProcesses2001}. More generally, the two inequalities there hold if $f'_\varepsilon$ and $f''_\varepsilon$ are continuous and regularly varying (with index $\leq -\alpha$). In particular, the Student $t-$distribution satisfies this stronger criterion. We note that the symmetry property is assumed mainly to simplify the application of the central limit theorem for partial sums when $\alpha\in (1,2)$; it would suffice for the innovations to be centered. Without symmetry, the limiting distribution becomes asymmetric, but the convergence rates remain the same, see \cite[Theorem~8.3.5]{kulikHeavyTailedTimeSeries2020}.
\end{remark}

\section{Main results}
\label{sec:main}
This section is split into four subsections. In Section~\ref{sec:main:reduction}, we develop our reduction principle, which is the key technical tool behind our main results. The reader primarily interested in central limit theory can skip this subsection and go directly to Section~\ref{sec:main:CLT}, where we provide and discuss a general central limit theorem arising as a corollary of this reduction principle. This result is then applied to the cases when the stationary distribution of $(X_t)$ is heavy-tailed (resp.~light-tailed) in Section~\ref{sec:main:hvy} (resp.~Section~\ref{sec:main:light}).

\subsection{Technical tool: Reduction principle}
\label{sec:main:reduction}
At a conceptual level, the reduction principle works as follows.
For $k\in \mathbb{Z}$, let $\mathcal{F}_k=\sigma(\varepsilon_{k},\varepsilon_{k-1},\ldots)$ denote the past $\sigma$-algebra generated by the sequence $(\varepsilon_{t})$ up to index $k$. For $k\ge 0$, let $X_{t,k}:=\sum_{j=0}^k a_j\varepsilon_{t-j}$ be the $k$-truncated time series, and write
\begin{align*}
  G_{k,n}(y)
  \ := \
  \EE[G_n(X_{0,k}+y)]
  \qquad\text{and}\qquad
  G_{k,n}'(y)
  \ = \
  \frac{\mathrm{d}}{\mathrm{d}y}
  G_{k,n}(y)
  \,,
\end{align*}
where we set $G_{\infty,n}(y):=\EE[G_n(X_{0,\infty} + y)]=\EE[G_n(X_0 + y)]$ for $k=\infty$ in the natural way.
The reduction principle consists in showing that
\begin{align}
  \label{eq:heuristic}
  \begin{split}
  &
  \sum_{t=1}^n
  \left(
  G_n(X_t)
  \ - \
  \EE[G_n(X_0)]
  \right)
  \\&
  \ = \
  \sum_{t=1}^n
  \sum_{k=0}^{\infty}
  \left(
  \EE
  \left[
  G_n(X_t)
  \mid \mathcal{F}_{t-k}
  \right]
  \ - \
  \EE
  \left[
  G_n(X_t)
  \mid \mathcal{F}_{t-(k+1)}
  \right]
  \right)
  \\&
  \ = \
  \sum_{t=1}^n
  \sum_{k=0}^{\infty}
  \left(
  G_{k-1,n}(X_t - X_{t,k-1})
  \ - \
  G_{k,n}(X_t - X_{t,k})
  \right)
  \\&
  \ \approx\
  \sum_{t=1}^n
  \sum_{k=0}^{\infty}
  \left(
  G_{k,n}(X_t - X_{t,k-1})
  \ - \
  G_{k,n}(X_t - X_{t,k})
  \right)
  \\&
  \ \approx\
  \sum_{t=1}^n
  \sum_{k=0}^{\infty}
  a_k \varepsilon_{t-k}
  G'_{k,n}(X_t-X_{t,k})
  \\&
    \ = \
  G'_{\infty,n}(0)
  \sum_{t=1}^n
  X_t
  \ + \
  \sum_{t=1}^n
  \sum_{k=0}^{\infty}
  a_k \varepsilon_{t-k}
  \left(
  G_{k,n}'(X_t - X_{t,k})
  \ - \
  G_{\infty,n}'(0)
  \right)
     \,.  \end{split}
\end{align}
Here, the asymptotic profile turns out to be determined only by the linear term,
for which central limit theory is readily available.
Before we make the argument in \eqref{eq:heuristic} rigorous,
we provide central limit theory for partial sums of long memory linear time series.
For a proof of the latter we refer to \cite[Theorem~4.6 ($\alpha = 2$) and Theorem~4.17 ($\alpha\in(1,2)$)]{beranLongMemoryProcessesProbabilistic2013}.
   \begin{theorem}[Central limit theorems for partial sums]
     \label{thm:clt_partsum}
     Let Assumptions~\ref{asu:coef} and~\ref{asu:f} hold. Then
   \begin{align*}
     n^{-d-1/\alpha}
    \sum_{t=1}^n
    X_t
    \stackrel{\mathrm{d}}{\longrightarrow}
    Z_\alpha
    \qquad\text{as}\ n\to\infty\,,
  \end{align*}
  where
  \begin{align*}
      Z_\alpha
    &
    \ \
    \text{has distribution}
    \ \
    \begin{cases}
      \mathcal{N}(0,\sigma^2)
      &\ \ \text{if}\ \ \alpha = 2 \ \text{with $\EE[\varepsilon^2]<\infty$,}
      \\
      S\alpha S(\eta)
      &\ \ \text{if}\ \
      \alpha\in(1,2)\,,
    \end{cases}
  \end{align*}
  with
  variance
\begin{align*}
    \sigma^2
    &:=
      c_a^2\EE[\varepsilon^2]
      \frac
      {
      B(1-2d,d)}
      {d(2d+1)} \ \mbox{ when } \EE[\varepsilon^2]<\infty,
  \intertext{or scale}
  \eta
  &:=
\frac{c_a}{d}
    \cdot
    \left(
    A
    \cdot
      \frac
      {\Gamma(2-\alpha)\cos(\pi\alpha/2)}
      {1-\alpha}
      \right)^{1/\alpha}
    \left(
    \int_{-\infty}^{1}
    \left(
    (1-v)^{d}_+
    -
    (-v)^d_+
    \right)^{\alpha}
    \,\mathrm{d}v
    \right)^{1/\alpha}
      \,.
  \end{align*}
   \end{theorem}
   \begin{remark}[On the limiting distribution in Theorem~\ref{thm:clt_partsum}]
     We define the distribution $S\alpha S(\eta)$ in the usual sense, having characteristic function $x \mapsto  \exp(-\eta^\alpha|x|^\alpha)$.
     Also, it may not be immediately obvious that the integral in the definition of $\eta$ is finite in our setting. To clarify, using Jensen's inequality and the condition $1<(1-d)\alpha<2$, we have
\begin{align*}
  \int_1^\infty
  \left(
  (1+v)^d
  \ - \
  v^d
  \right)^{\alpha}
  \,\mathrm{d}v
  &=
  \int_1^\infty
  \left(
  d
  \int_0^1
  (v+s)^{-(1-d)}
  \,\mathrm{d}s
  \right)^{\alpha}
  \,\mathrm{d}v
  \\&
  \le
  \int_1^\infty
  \left(
  \int_0^1
  (v+s)^{-(1-d)\alpha}
  \,\mathrm{d}s
  \right)
  \,\mathrm{d}v
  \\&
  =
  \int_0^1
  \left(
  \int_1^\infty
  (v+s)^{-(1-d)\alpha}
  \,\mathrm{d}v
  \right)
  \,\mathrm{d}s
  \\&
  \le
  \int_1^\infty
  v^{-(1-d)\alpha}
  \,\mathrm{d}v
     <
  \infty\,.
\end{align*}
Therefore,
\begin{align*}
  &
  \int_{-\infty}^1
    \left(
    (1-v)^{d}_+
    -
    (-v)^d_+
    \right)^{\alpha}
    \,\mathrm{d}v
    \\&\
    =\
 \int_{-1}^1
    \left(
    (1-v)^{d}_+
    -
    (-v)^d_+
    \right)^{\alpha}
    \,\mathrm{d}v
    +
    \int_{1}^\infty
    \left(
    (1+v)^d
    -
    v^d
    \right)^\alpha
    \,\mathrm{d}v
    <
    \infty\,.
\end{align*}
   \end{remark}
   The next result makes the argument in \eqref{eq:heuristic} rigorous. In the proof we adapt techniques of
\cite{koulAsymptoticsEmpiricalProcesses2001} to our extreme value setting.
\begin{theorem}[$L^r-$moment bound]
  \label{thm:approx}
  Let Assumptions~\ref{asu:coef},~\ref{asu:G_n} and~\ref{asu:f} hold. Then
  \[
  G_{\infty,n}'(0)
  \ := \
  \left.
  \frac{\mathrm{d}}{\mathrm{d}x}
  \EE[G_n(X_0+x)]
  \right|_{x=0}
  \]
is well-defined. Moreover, if $\gamma_G$ is such that there are $\gamma,r\in\RR$ satisfying
       \begin{align}
         \label{cond:r_gamma}
         \gamma_G
         \ < \
         \gamma
         \ < \
  \min \left\{
  \frac{d}{1-d}
  \,,
         1
          -
         \frac{1}{r(1-d)}
         \,,
         \frac{\alpha}{r}
          -
         1
  \right\}
           \qquad\text{and}\qquad
         \frac{1}{1-d}
         \ < \
         r
         \ < \
         \alpha
         \,,
       \end{align}
       we have
       \begin{align*}
         &
         \norm{
         \sum_{t=1}^n
         (G_n(X_t)
         \ - \
         \EE[G_n(X_0)])
         \ - \
         G_{\infty,n}'(0)
         \sum_{t=1}^n X_t
         }_{L^r(\PP)}
         \\&
         \ \lesssim \
         u_n^{(\gamma_G-\gamma)}
         n^{1+1/r-(1-d)(1+\gamma)}
         \ + \
         n^{1/r}
         \left(
         \norm{G_n(X_0)}_{L^r(\PP)}
         \lor
         |G_{\infty,n}'(0)|
         \right)
         \\&
         \ \lesssim\
         n^{1+1/r - (1-d)(1+\gamma)}
         \,,
       \end{align*}
       where $\lesssim$ denotes inequality up to a multiplicative constant $C>0$ independent of $n$.
\end{theorem}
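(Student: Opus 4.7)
The plan is to follow a conditional-expectation plus Taylor-expansion strategy in the spirit of~\cite{koulAsymptoticsEmpiricalProcesses2001}, combined with a martingale moment inequality. First I would verify that $G_{\infty,n}'(0)$ is well-defined by writing $\EE[G_n(X_0+x)] = \int G_n(z)\,f_{X_0}(z-x)\,\dd z$ and differentiating under the integral sign. The smoothness of $f_{X_0}$, inherited from $f_\varepsilon$ via the characteristic-function decay in case~(i) or via the density regularity in case~(ii), combined with the support/growth constraint on $G_n$ from Assumption~\ref{asu:G_n}, justifies this step and yields $G_{\infty,n}'(0) = -\int G_n(z)\,f_{X_0}'(z)\,\dd z$.

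Next I would decompose each summand into a martingale-difference piece and a bias piece. Writing $X_t = \tilde X_t + a_0 \varepsilon_t$ with $\tilde X_t := \sum_{j\geq 1} a_j \varepsilon_{t-j}$ measurable with respect to $\mathcal{F}_{t-1} := \sigma(\varepsilon_s : s < t)$, and setting $\Psi_n(w) := \EE[G_n(w + a_0 \varepsilon)]$, the identity
\begin{align*}
  G_n(X_t) - \EE[G_n(X_0)] - G_{\infty,n}'(0)\,X_t = D_t + B_t
\end{align*}
holds with $D_t := G_n(X_t) - \Psi_n(\tilde X_t) - G_{\infty,n}'(0)\,a_0 \varepsilon_t$ a martingale difference with respect to $(\mathcal{F}_t)$, and $B_t := \Psi_n(\tilde X_t) - \EE[\Psi_n(\tilde X_0)] - G_{\infty,n}'(0)\,\tilde X_t$ a centered function of the linear process $\tilde X_t$ (using the tower identity $G_{\infty,n}'(0) = \EE[\Psi_n'(\tilde X_0)]$). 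Applying von~Bahr-Esseen to $\sum_t D_t$, valid since $1 < r < \alpha \leq 2$, will deliver a bound of order $n^{1/r}(\|G_n(X_0)\|_{L^r} \lor |G_{\infty,n}'(0)|)$, accounting for the second term in the theorem.

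The main analytical effort then goes into bounding $\|\sum_t B_t\|_{L^r}$ via a fractional-order Taylor residual estimate on $\Psi_n$. The smoothness transferred from $f_\varepsilon$ to $\Psi_n$, combined with the support of $G_n$ in $(u_n,\infty)$ and its polynomial growth of order $\gamma_G$, should produce a pointwise bound of the form $|B_t| \lesssim u_n^{\gamma_G - \gamma}(1 + |\tilde X_t|)^{1+\gamma}$ for any admissible $\gamma \in (\gamma_G,1)$. Controlling the partial sum of $(1+|\tilde X_t|)^{1+\gamma}$ in $L^r$ via moment estimates for power transforms of long-memory linear processes, available under Assumption~\ref{asu:coef}, then delivers the first term $u_n^{\gamma_G - \gamma}\,n^{1+1/r - (1-d)(1+\gamma)}$.

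The main obstacle will be this fractional Taylor estimate on $\Psi_n$: one must carefully quantify how the regularity of $f_\varepsilon$ of order $1+\gamma$ converts into a precise pointwise bound on the Taylor remainder of $\Psi_n$ at $0$, while simultaneously tracking the $u_n$-dependent effective mass of $G_n$. The three upper bounds on $\gamma$ in~\eqref{cond:r_gamma} should emerge as the exact conditions needed to, respectively, apply the long-memory partial-sum moment estimate, remain in the von~Bahr-Esseen range $r < \alpha$, and preserve integrability in the Taylor bound. Once these estimates are combined, noting that $u_n^{\gamma_G-\gamma} \leq 1$ for $n$ large absorbs the prefactor into the final $n^{1+1/r - (1-d)(1+\gamma)}$ rate.
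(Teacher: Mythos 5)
Your proposal identifies several ingredients that do appear in the paper's proof: the representation $G_{\infty,n}'(0) = -\int G_n(z) f_{X_0}'(z)\,\dd z$ (Lemma~\ref{lem:swap}), the use of a conditional-expectation (projection) decomposition, a von~Bahr--Esseen-type martingale moment inequality (the refined Lemma~\ref{lem:bahr}), and the fact that the regularity of $f_\varepsilon$ and the support/growth of $G_n$ conspire to produce the $u_n^{\gamma_G-\gamma}$ factor. The interpretation of the three constraints on $\gamma$ in~\eqref{cond:r_gamma} is also roughly in the right direction.

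However, there is a fatal gap in the treatment of the ``bias'' term $B_t = \Psi_n(\tilde X_t) - \EE[\Psi_n(\tilde X_0)] - G'_{\infty,n}(0)\tilde X_t$. Your one-step decomposition $X_t = \tilde X_t + a_0\varepsilon_t$ peels off only the innovation at lag $0$; the remaining $B_t$ is a centered nonlinear functional of the long-memory linear process $\tilde X_t$, i.e., essentially the same object you started with. The proposed strategy of bounding $|B_t| \lesssim u_n^{\gamma_G-\gamma}(1+|\tilde X_t|)^{1+\gamma}$ and then controlling $\lVert\sum_{t=1}^n(1+|\tilde X_t|)^{1+\gamma}\rVert_{L^r}$ cannot deliver the stated rate: the dominating function $(1+|\tilde X_t|)^{1+\gamma}$ is positive and $\geq 1$, so its partial sum is $\gtrsim n$ deterministically, hence $\lVert\sum_t (1+|\tilde X_t|)^{1+\gamma}\rVert_{L^r} \gtrsim n$. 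But the target exponent $1+1/r-(1-d)(1+\gamma)$ is strictly less than $1$ under~\eqref{cond:r_gamma}: indeed $\gamma>\gamma_G\geq 0$ gives $1+\gamma>1$, and $r>1/(1-d)$ gives $1/r<1-d$, so $1/r<(1-d)(1+\gamma)$. The crude pointwise bound discards the cancellations in $B_t$ that are essential to beat the trivial $n$-rate, and there is no way to recover them from a positive envelope. In effect you are left facing the original problem again (recursively), which is circular.

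The paper resolves this by carrying out the full orthogonal (martingale-difference) decomposition in one shot: Lemma~\ref{lem:cond_decomp} writes $T_n = \sum_{k\leq n} P_k T_n$ with $P_k Y = \EE[Y\mid\mathcal{F}_k]-\EE[Y\mid\mathcal{F}_{k-1}]$ and applies Lemma~\ref{lem:bahr} to get $\EE|T_n|^r \lesssim \sum_{k\leq n}\bigl(\sum_{t=k\vee 1}^n \lVert P_{k-t}\,\mathcal{U}_n(X_0)\rVert_{L^r}\bigr)^r$. The core of the proof (Lemma~\ref{lem:core_r}) is a delicate second-order estimate of each individual projection, splitting $P_{-k}\mathcal{U}_n(X_0)$ into three remainders $R_1,R_2,R_3$ and showing $\lVert P_{-k}\mathcal{U}_n(X_0)\rVert_{L^r} \lesssim k^{-(1-d)(1+\gamma)} u_n^{\gamma_G-\gamma}$; the key point is that the \emph{lag-dependent} decay $k^{-(1-d)(1+\gamma)}$, coming from $|a_k|\asymp k^{-(1-d)}$ and from moment bounds on $X_0-X_{0,k}$, is what makes the double sum summable to the rate $n^{1+1/r-(1-d)(1+\gamma)}$ (Lemma~\ref{lem:analytic}). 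The near-diagonal projections $|k-t|\leq k_0$ yield the second term $n^{1/r}(\lVert G_n(X_0)\rVert_{L^r}\vee|G'_{\infty,n}(0)|)$. Your approach would recover the second term but not the first; to close the gap you must iterate the innovation-peeling to all lags and track the $k$-decay of the projection norms, rather than stop after one step.
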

It follows from Theorem~\ref{thm:approx} that if $G_{\infty,n}'(0)\neq 0$ for sufficiently large $n$, then
\[
         \norm{
         \frac{n^{-d-1/\alpha}}{G_{\infty,n}'(0)} \sum_{t=1}^n
         (G_n(X_t)
         \ - \
         \EE[G_n(X_0)])
         \ - \
         n^{-d-1/\alpha}
         \sum_{t=1}^n X_t
         }_{L^r(\PP)}
         \ \lesssim\
         \frac{n^{\kappa(\gamma,r) - d - 1/\alpha}}{|G_{\infty,n}'(0)|}
         \,,
\]
where
$\kappa(\gamma,r):=1+1/r-(1-d)(1+\gamma)$.
Therefore, if $n^{\kappa(\gamma,r) - d - 1/\alpha}/G_{\infty,n}'(0) \to 0$ as $n\to\infty$, then the following $L^r(\PP)-$reduction principle applies:
\[
  \frac{n^{-d-1/\alpha}}{G_{\infty,n}'(0)} \sum_{t=1}^n
         (G_n(X_t)
         \ - \
         \EE[G_n(X_0)])
         \ - \
         n^{-d-1/\alpha}
         \sum_{t=1}^n X_t \stackrel{L^r(\PP)}{\longrightarrow} 0
         \qquad\text{as $n\to\infty$.}
\]
\begin{remark}[On the existence of $\gamma$ and $r$ in Theorem~\ref{thm:approx}]
\label{rmk:cond1}
Note that the upper bound for $\gamma$ in \eqref{cond:r_gamma} gets maximal for $$r =r^{\star} := \frac{1}{2} \left(\alpha + \frac 1 {1-d}\right),$$ that is,~for $r$ being equal to the midpoint of its domain. In this case, the upper bound is given by
\[
\gamma^{\star} = \gamma(r^{\star}) = \min\left\{\frac{d}{1-d}, \frac{\alpha(1-d)-1}{\alpha(1-d)+1} \right\} = \begin{cases}
    \dfrac{d}{1-d}
    &
    \ \text{if}\
    \dfrac{1}{(1-d)(1-2d)}
    < \alpha\,,\\[10pt]
    \dfrac{\alpha(1-d)-1}{\alpha(1-d)+1}
    &
    \
    \text{otherwise.}
  \end{cases}
\]
Thus, $\gamma_G < \gamma^{\star}$ is necessary for the existence of a pair $(\gamma,r)$
satisfying~\eqref{cond:r_gamma}. Conversely, if $\gamma_G < \gamma^{\star}$, the pair $(\gamma^{\star}-\delta,r^{\star})$ satisfies Condition~\eqref{cond:r_gamma} for sufficiently small $\delta>0$. Consequently, Condition~\eqref{cond:r_gamma} is nonempty if and only if $\gamma_G < \gamma^{\star}$. Simple calculations show that
\[
\max_{d\in (0,1-1/\alpha)} \min\left\{\frac{d}{1-d}, \frac{\alpha(1-d)-1}{\alpha(1-d)+1} \right\} = \frac{8(1-1/\alpha)}{(1+\sqrt{9-8(1-1/\alpha)})(3+\sqrt{9-8(1-1/\alpha)})}
\]
with the maximum attained at $d(\alpha)=(3-\sqrt{9-8(1-1/\alpha)})/4$. This constitutes a hard upper bound for $\gamma_G$ in our framework. Note that $d(1)=0$ and $d(2)=(3-\sqrt{5})/4\approx 0.191$.
\end{remark}

As illustrated by the example $G_n(x) = \ind\{x > u_n\}$, where $G_{\infty,n}'(0) = f_{X_0}(u_n)$, the condition $n^{\kappa(\gamma,r)-d-1/\alpha}/G_{\infty,n}'(0)\to 0 $ restricts the growth of $(u_n)$ to infinity. Consequently, the choice of $(\gamma,r)$ has a significant impact on the admissible growth rates of the threshold sequence $(u_n)$.
Therefore, we aim to relax this condition as much as possible, which amounts to minimizing the function $(\gamma,r) \mapsto \kappa(\gamma,r)$. It turns out (see Proposition~\ref{prop:optim} in the supplement) that $\kappa$ is minimized over the admissible values of $\gamma$ and $r$ at
       \begin{equation}
       \label{eq:gamma0r0}
 (\gamma_0, r_0) := \left\{ \begin{array}{l}
    \left( \dfrac{d}{1-d}, \ \alpha(1-d) \right)
    \ \text{ if } \
    \dfrac{1}{(1-d)(1-2d)}
    < \alpha\,,\\[10pt]
    \left( \dfrac{\alpha(1-d)-1}{\alpha(1-d)+1}, \ \dfrac{1}{2}
    \left(
    \alpha + \dfrac{1}{1-d}
    \right) \right)
     \ \text{ otherwise,}
    \end{array} \right.
  \end{equation}
  yielding the optimal value $\kappa_0 := \kappa(\gamma_0,r_0)$
  as
  \begin{align}
  \label{eq:kappa_0}
  \kappa_0
    =
  \left\{ \begin{array}{l}
    \dfrac{1}{\alpha(1-d)}
    \ \text{ if } \
    \dfrac{1}{(1-d)(1-2d)}
    < \alpha\,,\\[10pt]
    \dfrac{2(1-d) + (1-\alpha(1-d)(1-2d))}{\alpha(1-d)+1} = d
    +
    (1-d)
    \dfrac{3-\alpha(1-d)}{\alpha(1-d)+1}
    \ \text{ otherwise.}
  \end{array} \right.
\end{align}
With the notation of Remark~\ref{rmk:cond1}, $\gamma_0=\gamma^{\star}$, and $r_0=r^{\star}$ when $\alpha(1-d)(1-2d)\geq 1$. Noting that $\kappa_0-d-1/\alpha<0$, we obtain the following optimized moment bound.
\begin{theorem}[Optimal $L^r-$moment bound]
  \label{thm:approx_optimal}
  Let Assumptions~\ref{asu:coef},~\ref{asu:G_n} and~\ref{asu:f} hold. If $\gamma_G<\gamma_0$, then, for any $\delta>0$,
  \[
         \norm{
         \sum_{t=1}^n
         (G_n(X_t)
         \ - \
         \EE[G_n(X_0)])
         \ - \
         G_{\infty,n}'(0)
         \sum_{t=1}^n X_t
         }_{L^{r_0}(\PP)}
         \ \lesssim\
         n^{\kappa_0+\delta}
         \,,
  \]
  where $\lesssim$ denotes inequality up to a multiplicative constant $C>0$ independent of $n$.
  Consequently, if one has $G_{\infty,n}'(0) \neq 0$ for sufficiently large $n$, and if for some $\delta>0$ one has $n^{\kappa_0+\delta - d - 1/\alpha}/G_{\infty,n}'(0) \to 0$ as $n\to\infty$, then
  \[
  \frac{n^{-d-1/\alpha}}{G_{\infty,n}'(0)} \sum_{t=1}^n
         (G_n(X_t)
         \ - \
         \EE[G_n(X_0)])
         \ - \
         n^{-d-1/\alpha}
         \sum_{t=1}^n X_t \stackrel{L^r(\PP)}{\longrightarrow} 0.
   \]
\end{theorem}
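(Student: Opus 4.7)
The plan is to apply Theorem~\ref{thm:approx} with $r = r_0$ and $\gamma = \gamma_0 - \varepsilon$ for a small $\varepsilon > 0$ calibrated in terms of $\delta$. Since $(\gamma_0, r_0)$ realizes the minimum of $\kappa$ over the admissible region (as identified in the discussion preceding the statement), its coordinates lie on the boundary of the \emph{strict} conditions \eqref{cond:r_gamma}, so a small perturbation is unavoidable in order to stay inside the open region where Theorem~\ref{thm:approx} applies.

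First I would verify that $(\gamma_0 - \varepsilon, r_0)$ satisfies \eqref{cond:r_gamma} for all small enough $\varepsilon > 0$. The strict inequalities $1/(1-d) < r_0 < \alpha$ hold in both cases: in case one, $r_0 = \alpha(1-d) < \alpha$ because $d > 0$, while $r_0 > 1/(1-d)$ amounts to $\alpha(1-d)^2 > 1$, which follows from the assumption $\alpha(1-d)(1-2d) > 1$ since $(1-d)^2 > (1-d)(1-2d)$; in case two, $r_0$ is by construction the midpoint of $(1/(1-d), \alpha)$. At the point $(\gamma_0, r_0)$, two of the three upper bounds for $\gamma$ in \eqref{cond:r_gamma} are attained with equality (the first and third in case one, the second and third in case two), so subtracting $\varepsilon$ from $\gamma_0$ yields strict inequality in all three. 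Moreover, $\gamma_G < \gamma_0 - \varepsilon$ is guaranteed for $\varepsilon$ sufficiently small by the hypothesis $\gamma_G < \gamma_0$.

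Theorem~\ref{thm:approx} then delivers the $L^{r_0}(\PP)$ bound of order $n^{\kappa(\gamma_0 - \varepsilon, r_0)}$, and direct substitution gives
\[
\kappa(\gamma_0 - \varepsilon, r_0) \ = \ 1 + \frac{1}{r_0} - (1-d)(1+\gamma_0) + (1-d)\varepsilon \ = \ \kappa_0 + (1-d)\varepsilon.
\]
Choosing $\varepsilon = \delta/(1-d)$ produces the announced bound $n^{\kappa_0 + \delta}$. The second assertion follows immediately by dividing by $|G_{\infty,n}'(0)|$ and multiplying by $n^{-d-1/\alpha}$, since the resulting upper bound $n^{\kappa_0 + \delta - d - 1/\alpha}/|G_{\infty,n}'(0)|$ tends to zero by hypothesis.

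The only genuine work here is the case analysis in verifying \eqref{cond:r_gamma}; conceptually, the theorem is the statement that the open admissible region can be approached arbitrarily closely while preserving strict inequalities, so that the infimum $\kappa_0$ is realized via Theorem~\ref{thm:approx} up to an arbitrarily small polynomial loss $n^\delta$. No new probabilistic estimate beyond Theorem~\ref{thm:approx} is required.
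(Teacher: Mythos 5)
Your proof is correct and follows essentially the same route as the paper, which deduces the result from Theorem~\ref{thm:approx} together with Proposition~\ref{prop:optim}: that proposition records precisely the facts you verify by hand, namely that $r_0$ lies strictly inside $(1/(1-d),\alpha)$ and that $\gamma_0$ equals the upper bound $\min\{d/(1-d),\,1-1/(r_0(1-d)),\,\alpha/r_0-1\}$, so that the open region of~\eqref{cond:r_gamma} is approached by perturbing $\gamma$ alone. Your explicit computation $\kappa(\gamma_0-\varepsilon,r_0)=\kappa_0+(1-d)\varepsilon$ and the choice $\varepsilon=\delta/(1-d)$ make the ``immediately gives the desired result'' of the paper precise, and the passage to the second assertion by dividing through by $G_{\infty,n}'(0)$ is exactly as in the text.
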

In the next section, we apply these moment bounds in order to obtain general central limit theorems.

\subsection{General central limit theory}
\label{sec:main:CLT}

Recall, from Section~\ref{sec:main:reduction}, that if  $n^{\kappa(\gamma,r) - d - 1/\alpha}/G_{\infty,n}'(0) \to 0$ as $n\to\infty$, then Theorem~\ref{thm:approx} yields
\[
  \frac{n^{-d-1/\alpha}}{G_{\infty,n}'(0)} \sum_{t=1}^n
         (G_n(X_t)
         \ - \
         \EE[G_n(X_0)])
         \ - \
         n^{-d-1/\alpha}
         \sum_{t=1}^n X_t \stackrel{\PP}{\longrightarrow} 0
         \qquad\text{as $n\to\infty$.}
\]
In particular, by Slutsky's theorem, the asymptotic behavior of the subordinated long memory linear time series reduces to that of $n^{-d-1/\alpha} \sum_{t=1}^n X_t$. The same conclusion holds if, with the notation of~\eqref{eq:gamma0r0}, $\gamma_G<\gamma_0$ and $n^{\kappa_0+\delta - d - 1/\alpha}/G_{\infty,n}'(0) \to 0$ as $n\to\infty$, by Theorem~\ref{thm:approx_optimal}. This leads to the following central limit theorem.

\begin{corollary}[Central limit theorem]
\label{cor:sub_ord_univariateI_optim}
  Assume that at least one of the following sets of conditions is met:
  \begin{enumerate}
  \item
  \begin{enumerate}[label=(\roman*)]
  \item
  the assumptions of
  Theorem~\ref{thm:approx} hold
  \item
  $\EE[G_n(X_0)]\neq 0$ and $G_{\infty,n}'(0)\neq 0$,
  for sufficiently large $n$
  \item
  there exist $\gamma,r$ satisfying Condition~\eqref{cond:r_gamma} such that $n^{\kappa(\gamma,r) - d - 1/\alpha}/G_{\infty,n}'(0) \to 0$ as $n\to\infty$, where $\kappa(\gamma,r)=1+1/r-(1-d)(1+\gamma)$.
  \end{enumerate}
  \item
  \begin{enumerate}[label=(\roman*)]
  \item the assumptions of Theorem~\ref{thm:approx_optimal} hold
  \item
  $\EE[G_n(X_0)]\neq 0$ and $G_{\infty,n}'(0)\neq 0$,
  for sufficiently large $n$
  \item
  for some $\delta>0$ it holds that
  $n^{\kappa_0 + \delta - d - 1/\alpha}/G_{\infty,n}'(0) \to 0$ as $n\to\infty$, where
  $\kappa_0$ is defined in \eqref{eq:kappa_0}.
  \end{enumerate}
  \end{enumerate}
  Then, as $n\to\infty$,
  \[
  n^{1-(d+1/\alpha)}
    \cdot
    \frac
    {
    \EE[G_n(X_0)]
    }
    {
    G_{\infty,n}'(0)
    }
    \cdot
    \frac
    {1}
    {n}
    \sum_{t=1}^{n}
    \left(
    \frac
    {
    G_n(X_t)
    }
    {
    \EE[G_n(X_0)]
    }
    \ - \
    1
    \right)\  \stackrel{\mathrm{d}}{\longrightarrow} \ Z_\alpha,
  \]
  where $Z_{\alpha}$ is a symmetric $\alpha$-stable random variable defined in Theorem~\ref{thm:clt_partsum}.
\end{corollary}
\begin{remark}[On the rate of convergence in  Corollary~\ref{cor:sub_ord_univariateI_optim}]
\label{rmk:ratesCLT}
Since we consider $G_n$ rather than fixed $G$, the rate of convergence in Corollary~\ref{cor:sub_ord_univariateI_optim} is also governed by the ratio of $\EE[G_n(X_0)]$ and $G_{\infty,n}'(0)$.
This can lead to markedly different rates, which is best illustrated by considering again $G_n(x)=\ind\{x>u_n\}$, where
  \begin{align*}
    \frac
    {\EE[G_n(X_0)]}
    {G_{\infty,n}'(0)}
    \
    =
    \
    \frac
    {\PP[X_0>u_n]}
    {f_{X_0}(u_n)}
    \,.
  \end{align*}
  We compare the case of standard Gaussian innovations with that of $S\alpha S$ innovations with $\alpha \in(1,2)$, covered by Assumption~\ref{asu:f}(i) and (ii), respectively, so that the stationary distribution of $(X_t)$ is Gaussian and $S\alpha S$ with $\alpha \in(1,2)$, respectively. For simplicity, suppose that $\sum_{j=0}^{\infty} a_j^2 = 1$. Then by Mills' ratio it follows in the first case
  \begin{align*}
    \frac
    {\PP[X_0>u_n]}
    {f_{X_0}(u_n)}
    \
    \sim
    \
    \frac
    {1}
    {u_n}
    \,.
  \end{align*}
  This is in line with the intuition that the total rate should slow down when considering only extremes.
  In the second case, however, by Karamata's theorem (see \cite[Theorem~B.1.5]{dehaanExtremeValueTheory2006}) it holds that
  \begin{align*}
    \frac
    {\PP[X_0>u_n]}
    {f_{X_0}(u_n)}
    \
    \sim
    \
    \frac{
    u_n
    }{\nu}
    \,.
  \end{align*}
  This is surprising, because compared with the classical long memory rate $1/n^{1-(d+1/\alpha)}$ the total rate speeds up when looking only at extremes.
\end{remark}
The next section discusses this phenomenon and other examples in detail.

\subsection{Central limit theory for Peaks-over-Threshold estimators (heavy tails)}
\label{sec:main:hvy}

We first discuss the case when the stationary distribution of $(X_t)$ is heavy-tailed with index $\nu>1$. By~\cite[Equation~(15.3.5)]{kulikHeavyTailedTimeSeries2020}, this will be the case if the $\varepsilon_t$ are heavy-tailed with index $\nu>1$, and an application of~\cite[Theorem~1.7.2]{binghamRegularVariation1987} yields that, if $f_{X_0}$ is ultimately decreasing, then $f_{X_0} \in\mathrm{RV}_{-\nu-1}$. In this setting, we have the following consequence of Corollary~\ref{cor:sub_ord_univariateI_optim}; here and throughout $\log(x)_+=\log(x)\lor 0=\log(x) \ind\{ x>1 \}$.

\begin{corollary}[Applications to heavy tails -- deterministic thresholds]
  \label{cor:heavy_det}
  Let Assumptions~\ref{asu:coef} and~\ref{asu:f} hold, and assume that $f_{X_0}\in\mathrm{RV}_{-\nu-1}$ with $\nu>1$. If $u_n\to\infty$ is such that for some $\delta>0$, $u_n=\operatorname{o}(n^{(d+1/(\nu \land 2)-\kappa_0-\delta)/(\nu+1)})$, where $\kappa_0$ is defined in \eqref{eq:kappa_0} then, as $n\to\infty$,
    \[
  n^{1-(d+1/(\nu \land 2))}
    u_n
  \cdot
      \left(
      \frac{ \sum_{t=1}^{n}
      \ind\{X_t>u_n\} }
      {n \PP[X_0>u_n]}
      \ - \ 1
      \right)
      \ \stackrel{\mathrm{d}}{\longrightarrow} \
      \nu
      Z_{\nu\land 2}
     \]
  and
    \[
 n^{1-(d+1/(\nu \land 2))}
    u_n
  \cdot
      \left( \frac
      {
      \sum_{t=1}^{n}
        \log(
        X_t
        /
        u_n
        )_+
      }
      {n \PP[X_0>u_n]}
      \ - \
      \EE[\log(X_0/u_n) \mid X_0>u_n]
      \right)
      \ \stackrel{\mathrm{d}}{\longrightarrow} \
      \frac{\nu}{\nu+1}
      Z_{\nu\land 2}.
    \]
\end{corollary}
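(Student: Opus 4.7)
The plan is to apply Corollary~\ref{cor:sub_ord_univariateI_optim} separately to the two transformations $G_n^{(1)}(x) = \ind\{x > u_n\}$ and $G_n^{(2)}(x) = (\log x - \log u_n)\ind\{x > u_n\}$. For each of these, I will (i) verify Assumption~\ref{asu:G_n} with a valid $\gamma_G < \gamma_0$, (ii) compute the asymptotics of $\EE[G_n(X_0)]$ and $G_{\infty,n}'(0)$ via Karamata's theorem, (iii) translate the stated growth restriction on $u_n$ into the rate condition appearing in the corollary, and (iv) simplify the normalizing constants to read off the limit.

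For step (i), $G_n^{(1)}$ is bounded, so Assumption~\ref{asu:G_n} holds with $\gamma_G = 0$. For $G_n^{(2)}$, the bound $(\log x - \log u_n)\ind\{x > u_n\} \lesssim (1+|x|)^{\gamma_G}\ind\{x > u_n\}$ is valid for any $\gamma_G > 0$. Since $\gamma_0 > 0$ under Assumption~\ref{asu:coef}, a valid choice of $\gamma_G$ exists in both cases. For step (ii), the hypothesis $f_{X_0} \in \mathrm{RV}_{-\nu-1}$ together with Karamata's theorem yields $\PP[X_0 > u_n] \sim u_n f_{X_0}(u_n)/\nu$. For $G_n^{(1)}$ one has $\EE[G_n^{(1)}(X_0)] = \PP[X_0 > u_n]$ and $G_{\infty,n}'(0) = f_{X_0}(u_n)$, giving the ratio $\EE[G_n^{(1)}(X_0)]/G_{\infty,n}'(0) \sim u_n/\nu$. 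For $G_n^{(2)}$, Leibniz's rule (the boundary term at $u_n$ vanishes because $\log 1 = 0$) gives $G_{\infty,n}'(0) = \int_{u_n}^\infty f_{X_0}(y)/y\,\mathrm{d}y \sim f_{X_0}(u_n)/(\nu+1)$, while integration by parts yields $\EE[G_n^{(2)}(X_0)] = \int_{u_n}^\infty \PP[X_0 > y]/y\,\mathrm{d}y \sim \PP[X_0 > u_n]/\nu$; combining these gives $\EE[G_n^{(2)}(X_0)]/G_{\infty,n}'(0) \sim (\nu+1) u_n/\nu^2$.

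For step (iii), writing $f_{X_0}(u_n) = u_n^{-\nu-1} L(u_n)$ with $L$ slowly varying, the requirement $n^{\kappa_0 + \delta' - d - 1/(\nu\land 2)}/G_{\infty,n}'(0) \to 0$ becomes, up to a multiplicative constant, $u_n^{\nu+1}/L(u_n) = \operatorname{o}(n^{d + 1/(\nu\land 2) - \kappa_0 - \delta'})$. The hypothesis $u_n = \operatorname{o}(n^{(d + 1/(\nu\land 2) - \kappa_0 - \delta)/(\nu+1)})$ supplies precisely this up to the slowly varying factor, which is absorbed by taking $\delta' < \delta$ and invoking Potter bounds on $L$. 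For step (iv), substituting the ratios into the conclusion of Corollary~\ref{cor:sub_ord_univariateI_optim} produces, for the indicator, the convergence $n^{1-d-1/(\nu\land 2)}(u_n/\nu)\{\sum_{t=1}^n \ind\{X_t > u_n\}/(n\PP[X_0 > u_n]) - 1\} \stackrel{\mathrm{d}}{\longrightarrow} Z_{\nu\land 2}$, from which the first statement follows by multiplying by $\nu$. For the Hill functional, using $\mu_n := \EE[\log(X_0/u_n) \mid X_0 > u_n] \to 1/\nu$ (again by Karamata) together with the algebraic identity
\[
\frac{1}{n}\sum_{t=1}^n \left(\frac{G_n^{(2)}(X_t)}{\EE[G_n^{(2)}(X_0)]} - 1\right) = \frac{1}{\mu_n}\left(\frac{\sum_{t=1}^n G_n^{(2)}(X_t)}{n\PP[X_0 > u_n]} - \mu_n\right),
\]
and Slutsky's lemma, rearrangement of the normalization yields the announced limit.

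The main technical obstacle I anticipate is step (iii): the handling of the slowly varying correction $L(u_n)$ in the rate condition, which requires a Potter-bound argument to be absorbed into the $\delta$ slack available in the hypothesis. The Leibniz differentiation for $G_n^{(2)}$ in step (ii) is the other delicate point, but reduces to the vanishing of $\log 1$ at the singular boundary and the integrability of $f_{X_0}(y)/y$ on $[u_n, \infty)$. Everything else amounts to Karamata's theorem applied carefully and elementary algebraic rearrangement.
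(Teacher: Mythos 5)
Your proposal takes the same route as the paper's proof: apply Corollary~\ref{cor:sub_ord_univariateI_optim} with $G_n$ the indicator and the Hill functional, compute $\EE[G_n(X_0)]$ and $G_{\infty,n}'(0)$ via Karamata's theorem (the paper reaches $G_{\infty,n}'(0)=\int_{u_n}^\infty f_{X_0}(x)/x\,\mathrm{d}x$ through Lemma~\ref{lem:swap} plus integration by parts rather than your direct Leibniz differentiation, but both are correct and give the same expression), and convert the hypothesis on $u_n$ into the rate condition $n^{\kappa_0+\delta'-d-1/\alpha}/G_{\infty,n}'(0)\to 0$. Your Potter-bound argument for the slowly varying correction is a slightly more explicit version of the paper's one-line claim that $L(u_n)=o(n^{\delta/2})$; the two are equivalent.

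There is one genuine loose end, and it is precisely in the last step you leave to ``rearrangement.'' Writing $A_n=\frac{1}{n}\sum_t G_n^{(2)}(X_t)$, $B_n=\EE[G_n^{(2)}(X_0)]=\mu_n\PP[X_0>u_n]$ and $D_n=G_{\infty,n}'(0)$, Corollary~\ref{cor:sub_ord_univariateI_optim} says $n^{1-(d+1/\alpha)}(A_n-B_n)/D_n\to Z_\alpha$, and the stated display equals $n^{1-(d+1/\alpha)}u_n(A_n-B_n)/\PP[X_0>u_n]$. The conversion factor is therefore $u_nD_n/\PP[X_0>u_n]$, and since $D_n\sim f_{X_0}(u_n)/(\nu+1)$ and $\PP[X_0>u_n]\sim u_nf_{X_0}(u_n)/\nu$, this factor converges to $\nu/(\nu+1)$, not $\nu^2/(\nu+1)$. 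In other words, your own identity
\[
\frac{1}{n}\sum_{t=1}^n \left(\frac{G_n^{(2)}(X_t)}{\EE[G_n^{(2)}(X_0)]} - 1\right) = \frac{1}{\mu_n}\left(\frac{\sum_{t=1}^n G_n^{(2)}(X_t)}{n\PP[X_0 > u_n]} - \mu_n\right)
\]
combined with $\EE[G_n^{(2)}(X_0)]/(\mu_n G_{\infty,n}'(0))=\PP[X_0>u_n]/G_{\infty,n}'(0)\sim u_n(\nu+1)/\nu$ yields $\frac{\nu}{\nu+1}Z_{\nu\wedge 2}$ for the Hill display, not the stated $\frac{\nu^2}{\nu+1}Z_{\nu\wedge 2}$; the two differ exactly by the factor $1/\mu_n\to\nu$. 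The $\nu^2/(\nu+1)$ constant would be the correct one if the display were normalized by $n\EE[G_n^{(2)}(X_0)]$ and centered at $1$ instead of normalized by $n\PP[X_0>u_n]$ and centered at $\mu_n$. You should carry out this final algebra explicitly rather than assert it; doing so would have surfaced the mismatch with the stated constant (which appears to be an inconsistency in the corollary as printed, consistent also with the fact that the derandomization Lemma~\ref{lem:derand:poly} identifies the marginal $\Phi=\frac{\nu}{\nu+1}Z_{\nu\wedge 2}$ for the quantity $\frac{r_n/\xi(u_n)}{n}\sum_t(G_n^{(2)}(X_t)/\EE[G_n^{(2)}(X_0)]-1)$ rather than for the $\PP[X_0>u_n]$-normalized version).
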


In a remark after \cite[Theorem~4.17]{beranLongMemoryProcessesProbabilistic2013} the authors point out that (for a fixed subordination function $G$) a stable limit may be obtained from summation of bounded random variables. This observation extends to our work, where the subordination function depends on the sample size.

\begin{remark}[Equivalent form of Corollary~\ref{cor:heavy_det}]
\label{rmk:equivheavy_det}
An equivalent form of the second convergence in Corollary~\ref{cor:heavy_det}, closer to Corollary~\ref{cor:sub_ord_univariateI_optim}, is
\[
 n^{1-(d+1/(\nu \land 2))}
    u_n
  \cdot
      \left( \frac
      {
      \sum_{t=1}^{n}
        \log(
        X_t
        /
        u_n
        )_+
      }
      {n \EE[\log(X_0/u_n)_+]}
      \ - \
      1
      \right)
      \ \stackrel{\mathrm{d}}{\longrightarrow} \
      \frac{\nu^2}{\nu + 1}
      Z_{\nu\land 2}.
\]
This is due to the well-known convergence $\EE[\log(X_0/u) \mid X_0>u] \to 1/\nu$ as $u\to\infty$.
\end{remark}

In statistical practice, the choice of the threshold $u_n$ is crucial. Typical solutions~\cite{scamcd2012} are data-driven, and it is therefore more common to have random instead of deterministic thresholds.
In theory, one way of shifting from deterministic to random thresholds is to prove results for deterministic thresholds first and, often by means of empirical process theory, extend them to random thresholds.
The Hill estimator (among others)
has enough structure to avoid reliance on empirical process theory, which can be technically delicate. Our approach is based on a derandomization device (Lemma~\ref{lem:derand} in the supplement), specifically avoiding empirical process theory, and proving directly the convergence of the random threshold version based on the joint convergence of the deterministic threshold version and the order statistic $X_{n-k:n}$. In this way, the next result, which is the counterpart of Corollary~\ref{cor:heavy_det} for random thresholds, is obtained by combining Lemma~\ref{lem:derand} and Lemma~\ref{lem:derand:poly} in the supplement.
\begin{corollary}[Application to heavy tails -- random thresholds]
  \label{cor:heavy_rand}
  Let the Assumptions of Corollary~\ref{cor:heavy_det} hold. Then, with $k=\lfloor n \PP[X_0>u_n]  \rfloor$, it holds that, as $n\to\infty$,
\begin{align*}
  &
    n^{1-(d+1/(\nu \land 2))
    }
    q_{X_0}
    \left(
    1
    -
    \frac{k}{n}
    \right)
    \\&
    \cdot
    \
      \left(
      \frac
      {
      \sum_{i=1}^{k}
        \log(
        X_{n-i+1:n}
        /
        X_{n-k:n}
        )
      }
      {n \PP[X_0>u_n]}
      \ - \
      \EE[\log(X_0/u_n) \mid X_0>u_n]
      \right)
      \ \stackrel{\mathrm{d}}{\longrightarrow}\
      \frac{1}{\nu+1} Z_{\nu\land 2}.
  \end{align*}
\end{corollary}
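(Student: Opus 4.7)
The proof combines the deterministic threshold result of Corollary~\ref{cor:heavy_det} with the derandomization devices Lemma~\ref{lem:derand} and Lemma~\ref{lem:derand:poly}, thereby avoiding empirical process theory. The starting observation is the algebraic identity
\[
\sum_{i=1}^{k} \log(X_{n-i+1:n}/X_{n-k:n})
\ = \
\sum_{t=1}^n \log(X_t/u_n)_+
\ - \
k\, \log(X_{n-k:n}/u_n)
\ + \ R_n,
\]
where $R_n$ collects the contributions from the $|N_n(u_n) - k|$ observations $X_t$ that lie between $u_n$ and $X_{n-k:n}$, where $N_n(u_n) := \sum_{t=1}^n \ind\{X_t>u_n\}$; after normalization, $R_n$ should be negligible compared to the main fluctuations.

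I would then apply Corollary~\ref{cor:heavy_det} simultaneously to the logarithmic subordination $G_n(x) = \log(x/u_n)_+$ (governing the first term on the right-hand side of the identity above) and to the counting subordination $G_n(x) = \ind\{x > u_n\}$ (governing $N_n(u_n)$). The crucial point is that Theorem~\ref{thm:approx_optimal} reduces both statistics, after centering and appropriate scaling, to the same partial sum $n^{-d-1/(\nu \land 2)}\sum_{t=1}^n X_t$ multiplied by the deterministic factors $G_{\infty,n}'(0)$. Consequently the two normalized statistics are jointly convergent, each to a deterministic multiple of the same stable variable $Z_{\nu \land 2}$ appearing in the limit.

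The second ingredient is to express $\log(X_{n-k:n}/u_n)$ in terms of $N_n(u_n)$ via a Bahadur-type expansion, which is the content of Lemma~\ref{lem:derand:poly}. Using Karamata's theorem to write $u_n f_{X_0}(u_n) \sim \nu \PP[X_0 > u_n]$, the expansion should take the form
\[
\log(X_{n-k:n}/u_n)
\ \approx \
\frac{1}{\nu}
\left(\frac{N_n(u_n)}{n\PP[X_0 > u_n]} - 1\right)
\]
up to lower-order terms. Substituting this back into the decomposition above and combining with the joint convergence just established yields the announced limit $\tfrac{\nu}{\nu+1}Z_{\nu \land 2}$, produced as an affine combination of a single copy of $Z_{\nu \land 2}$ after the bookkeeping associated with $q_{X_0}(1-k/n) \sim u_n$.

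The main obstacle is the tight control of the remainder term $R_n$ together with the error in the Bahadur-type expansion. The first requires an $L^{r_0}$-bound from Theorem~\ref{thm:approx_optimal} applied to a truncated log subordination; the second needs the expansion to be valid at the rate dictated by Theorem~\ref{thm:approx_optimal}, which ultimately restricts the admissible growth of $u_n$ through the hypothesis $u_n = \operatorname{o}(n^{(d+1/(\nu \land 2) - \kappa_0 - \delta)/(\nu+1)})$ inherited from Corollary~\ref{cor:heavy_det}. Lemma~\ref{lem:derand} packages the verification of these conditions into a reusable derandomization principle, while Lemma~\ref{lem:derand:poly} specializes it to the polynomial- and logarithmic-type subordinations arising from the Hill estimator.
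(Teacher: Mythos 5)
Your proposal mirrors the paper's derandomization route: the decomposition of the random-threshold Hill estimator into its deterministic-threshold counterpart, a linear correction in $X_{n-k:n}$, and remainders shown negligible by a sandwich argument with thresholds $u_n(1\pm\tau_n)$; the joint degenerate convergence (both marginals deterministic multiples of the same stable variable) coming from the $L^{r_0}$ reduction principle via the Cram\'er--Wold device in Lemma~\ref{lem:derand:poly}; and the quantile-inversion transfer from the exceedance-count fluctuations to $X_{n-k:n}/u_n-1$. The only slip is one of attribution: the inversion step you call a Bahadur-type expansion is carried out by the Koenker-type lemma (Lemma~\ref{lem:koen}) inside the proof of Lemma~\ref{lem:derand}, whereas Lemma~\ref{lem:derand:poly} plays the role you correctly identify in your closing sentence, namely verifying the hypotheses of the abstract derandomization principle for the heavy-tailed Hill setting.
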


\begin{remark}[Comparison with known results in the literature I]
\label{rmk:rateI}
As in the case of the empirical tail function in the first display of Corollary~\ref{cor:heavy_det}, the Hill estimators with deterministic thresholds (second display of Corollary~\ref{cor:heavy_det}) and with random thresholds (Corollary~\ref{cor:heavy_rand})
show faster speeds compared with classical convergence rates in long memory settings.
In the deterministic threshold case (Corollary~\ref{cor:heavy_det}), the Hill estimator shows the same increased speed as the empirical tail function, namely, with the extra factor $u_n$.
In the random threshold case (Corollary~\ref{cor:heavy_rand}), the factor of increase $q_{X_0}(1-k/n)$ may be seen as the deterministic counterpart of the random threshold $X_{n-k:n}$, since $X_{n-k:n}/q_{X_0}(1-k/n) \to_{\PP} 1$. This follows from the convergence of the empirical tail function in the first display of Corollary \ref{cor:heavy_det} along with inverting this function using Lemma~\ref{lem:koen} in the supplement. It stands in sharp contrast with what is observed in i.i.d.~and short memory settings.
\end{remark}

Although the limiting structure of the Hill estimator is the same for deterministic and random thresholds, the asymptotic scale is lower in the random case. Specifically, it shifts from $\nu/(\nu + 1)$ to $1/(\nu+1)$, where, as $\nu\to\infty$, the former converges to 1, while the latter tends to 0.
This suggests a phase transition (in the sense of \cite{samorodnitskyStochasticProcessesLong2016}) in the case of random thresholds, occurring as we pass from heavy to light tails, that is, in the limit $\nu\to\infty$.
This motivates a second set of examples featuring light tails.

\subsection{Central limit theory for Peaks-over-Threshold estimators (light tails)}
\label{sec:main:light}

In the light-tailed case, the asymptotic behavior of PoT estimators should be expected to be different. For example, one has then $\EE[\log(X_0/u) \mid X_0>u] \to 0$ as $u\to\infty$. This is crucial when it comes to investigating the convergence of the moment estimator of the extreme value index~\cite{dekeinhaa1989}, which is partly built upon the Hill estimator; see~\cite[Section 3.5]{dehaanExtremeValueTheory2006} for a review.

\begin{corollary}[Applications to light tails -- deterministic thresholds]
  \label{cor:light_det}
  Let Assumptions~\ref{asu:coef} and~\ref{asu:f} hold, and suppose that $f_{X_0}(x)= \omega(x) \exp(-|x|^{\beta}/\beta)$ for some $\beta > 0$, where $\omega\in C^1(\RR)$ is
  a symmetric function satisfying $|x|^{\beta-1}\omega(x)\to 0$ as $x\to 0$, $\omega(x)>0$ for $x$ large enough, and with derivative $\omega'$ either regularly varying at infinity or constant equal to 0 for $|x|$ large enough.
  If $u_n\to\infty$ is such that for some $\delta>0$,
  $
  u_n = (-\beta\log(n)(\kappa_0 + \delta - 1/2 -d)+o(1))^{1/\beta}
  $,
  where $\kappa_0$ is defined as in \eqref{eq:kappa_0} with $\alpha=2$, then, as $n\to\infty$,
    \[
  n^{1/2-d}
    u_n^{1-\beta}
  \cdot
      \left(
      \frac{ \sum_{t=1}^{n}
      \ind\{X_t>u_n\} }
      {n \PP[X_0>u_n]}
      \ - \ 1
      \right)
      \
      \stackrel{\mathrm{d}}{\longrightarrow}
      \
      Z_{2}
     \]
  and
    \[
  n^{1/2-d}
    u_n^{1-\beta}
  \cdot
      \left( \frac
      {
      \sum_{t=1}^{n}
        \log(
        X_t
        /
        u_n
        )_+
      }
      {n \PP[X_0>u_n]}
      \ - \
      \EE[\log(X_0/u_n) \mid X_0>u_n]
      \right)
      \
      \stackrel{\mathrm{d}}{\longrightarrow}
      \
      Z_{2}.
    \]
\end{corollary}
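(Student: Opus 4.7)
The plan is to obtain both statements as consequences of Corollary~\ref{cor:sub_ord_univariateI_optim} applied to $G_n^{(1)}(x)=\ind\{x>u_n\}$ and $G_n^{(2)}(x)=\log(x/u_n)\ind\{x>u_n\}$, respectively. Both families satisfy Assumption~\ref{asu:G_n}: $G_n^{(1)}$ with $\gamma_G=0$, and $G_n^{(2)}$ with any $\gamma_G>0$. Since $\alpha=2$ and $d\in(0,1/2)$, Remark~\ref{rmk:cond1} gives $\gamma_0>0$, so the admissibility $\gamma_G<\gamma_0$ is automatic. It then remains to compute $\EE[G_n(X_0)]$ and $G_{\infty,n}'(0)$ (both of which will turn out to be strictly positive for $u_n$ large), verify the growth condition $n^{\kappa_0+\delta-d-1/2}/G_{\infty,n}'(0)\to 0$, and check that $\EE[G_n(X_0)]/G_{\infty,n}'(0)\sim u_n^{1-\beta}$ so as to recover the announced normalization $n^{1/2-d}u_n^{1-\beta}$.

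The core ingredient is a Laplace-type asymptotic expansion for integrals against the Weibull-like density. Writing $\exp(-x^\beta/\beta)=-x^{1-\beta}\frac{\dd}{\dd x}\exp(-x^\beta/\beta)$ and integrating by parts, one obtains, for a sufficiently slowly-varying function $g$,
\[
\int_{u_n}^\infty g(x)\exp(-x^\beta/\beta)\,\dd x \ \sim\ \frac{g(u_n)}{u_n^{\beta-1}}\exp(-u_n^\beta/\beta)
\qquad\text{as }u_n\to\infty,
\]
where the regularly-varying or eventually-vanishing nature of $\omega'$ justifies discarding the remainder. Taking $g=\omega$ yields $\PP[X_0>u_n]\sim f_{X_0}(u_n)/u_n^{\beta-1}$. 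For $G_n^{(1)}$ one has $G_{\infty,n}'(0)=f_{X_0}(u_n)$ directly, so the ratio $\EE[G_n^{(1)}(X_0)]/G_{\infty,n}'(0)\sim u_n^{1-\beta}$. For $G_n^{(2)}$, a first integration by parts in $x$ yields $\EE[G_n^{(2)}(X_0)]=\int_{u_n}^\infty (1-F_{X_0}(x))/x\,\dd x$; differentiating the shifted expectation $\EE[G_n^{(2)}(X_0+x)]$ under the integral sign and performing a second integration by parts produces $G_{\infty,n}'(0)=\int_{u_n}^\infty f_{X_0}(z)/z\,\dd z$. Two further applications of the Laplace estimate give $\EE[G_n^{(2)}(X_0)]\sim f_{X_0}(u_n)/u_n^{2\beta-1}$ and $G_{\infty,n}'(0)\sim f_{X_0}(u_n)/u_n^\beta$, whence once again $\EE[G_n^{(2)}(X_0)]/G_{\infty,n}'(0)\sim u_n^{1-\beta}$.

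The prescribed calibration of $u_n$ rewrites as $u_n^\beta=\beta(d+1/2-\kappa_0-\delta)\log n+\operatorname{o}(1)$, which gives $f_{X_0}(u_n)\sim\omega(u_n)n^{\kappa_0+\delta-d-1/2}$. Since $\omega'$ is regularly varying or eventually $0$, the function $\omega$ is at most regularly varying at infinity, so $\omega(u_n)=n^{\operatorname{o}(1)}$. A direct comparison then establishes the condition $n^{\kappa_0+\delta'-d-1/2}/G_{\infty,n}'(0)\to 0$ for any $\delta'\in(0,\delta)$, with the polylogarithmic factor $u_n^\beta=\Theta(\log n)$ appearing in the $G_n^{(2)}$ case being readily absorbed into the $n^{\operatorname{o}(1)}$ margin. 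Applying Corollary~\ref{cor:sub_ord_univariateI_optim} with this $\delta'$ delivers both limits after expressing the normalized averages in the form announced (in the Hill case, dividing by $\EE[G_n^{(2)}(X_0)]=\PP[X_0>u_n]\cdot\EE[\log(X_0/u_n)\mid X_0>u_n]$). The main technical obstacle is carrying out the Laplace-type expansion cleanly under the minimal regularity assumed on $\omega$, in particular controlling the boundary contributions and the relative errors after successive integrations by parts; everything beyond that is algebraic bookkeeping of powers of $n$ and $u_n=\Theta((\log n)^{1/\beta})$.
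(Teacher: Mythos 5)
Your proposal follows the same overall strategy as the paper's proof: apply Corollary~\ref{cor:sub_ord_univariateI_optim} to $G_n^{(1)}(x)=\ind\{x>u_n\}$ and $G_n^{(2)}(x)=\log(x/u_n)\ind\{x>u_n\}$, simplify $\EE[G_n(X_0)]$ and $G_{\infty,n}'(0)$ by integration by parts (controlling boundary terms via the super-polynomial decay of $f_{X_0}$), identify the ratio $\EE[G_n(X_0)]/G_{\infty,n}'(0)\sim u_n^{1-\beta}$, and verify the growth condition $n^{\kappa_0+\delta'-d-1/2}/G_{\infty,n}'(0)\to 0$ using the prescribed logarithmic calibration of $u_n$ and the at-most-regularly-varying nature of $\omega$. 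The one genuine difference is in the asymptotic-equivalence step: where you perform a Laplace-type expansion via integration by parts and argue the remainder is of smaller order, the paper's proof instead applies l'H\^{o}pital's rule to the ratio $\phi(u_n)/\psi(u_n)$ with $\phi(x)=x^{1-\beta}f_{X_0}(x)$, $\psi(x)=\PP[X_0>x]$ (and analogously for the two Karamata-type integrals arising in the Hill case), and invokes Lemma~\ref{lem:gronwall} to dispose of the boundary terms at infinity. The two tools are essentially interchangeable here; the l'H\^{o}pital version is a bit cleaner because it never has to explicitly bound a remainder integral, while yours is more in the spirit of a direct Watson-lemma calculation. All the intermediate asymptotics you report ($\PP[X_0>u_n]\sim u_n^{1-\beta}f_{X_0}(u_n)$, $\EE[G_n^{(2)}(X_0)]\sim f_{X_0}(u_n)/u_n^{2\beta-1}$, $G_{\infty,n}'(0)\sim f_{X_0}(u_n)/u_n^{\beta}$) agree with the paper's, and your observation that $u_n^{\beta}=\Theta(\log n)$ and $\omega(u_n)=n^{\operatorname{o}(1)}$ can absorb the polylogarithmic loss matches the paper's final step exactly.
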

 The factor $u_n^{1-\beta}$ allows for interpolation between slowly decaying (e.g., polynomial) and rapidly decaying (e.g., exponential) tails, depending on $\beta$.
    For instance, $\beta=1$ corresponds to Laplace-type tails, while $\beta=2$ gives Gaussian decay.
    The assumptions of Corollary~\ref{cor:light_det} can be verified easily
    for normally distributed innovations, that is, where $\beta=2$.
   We highlight this special case in the following corollary.
\begin{corollary}[Gaussian innovations -- deterministic thresholds]
  \label{cor:gaussian}
  Let the $\varepsilon_t$ be i.i.d. Gaussian centered, and suppose that Assumption~\ref{asu:coef} holds.
  If $u_n\to\infty$ is such that for some $\delta>0$, $
  u_n = (-2\log(n)(\kappa_0 + \delta - 1/2 -d)+o(1))^{1/2}
  $,
  where $\kappa_0$ is defined as in \eqref{eq:kappa_0} with $\alpha = 2$, then, as $n\to\infty$,
    \[
  \frac{n^{1/2-d}}{u_n}
  \cdot
      \left(
      \frac{ \sum_{t=1}^{n}
      \ind\{X_t>u_n\} }
      {n \PP[X_0>u_n]}
      \ - \ 1
      \right)
      \
      \stackrel{\mathrm{d}}{\longrightarrow}
      \
      Z_{2}
     \]
  and
    \[
  \frac{n^{1/2-d}}{u_n}
  \cdot
      \left( \frac
      {
      \sum_{t=1}^{n}
        \log(
        X_t
        /
        u_n
        )_+
      }
      {n \PP[X_0>u_n]}
      \ - \
      \EE[\log(X_0/u_n) \mid X_0>u_n]
      \right)
      \
      \stackrel{\mathrm{d}}{\longrightarrow}
      \
      Z_{2}.
    \]
\end{corollary}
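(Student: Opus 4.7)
The plan is to verify the hypotheses of Corollary~\ref{cor:light_det} with $\beta=2$ for i.i.d.~centered Gaussian innovations, thereby deducing Corollary~\ref{cor:gaussian} from the preceding general result. Three ingredients must be checked: Assumption~\ref{asu:f}(i) for $\varepsilon$, the Gaussian form of $f_{X_0}$, and the regularity conditions on the associated $\omega$.

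First I would check Assumption~\ref{asu:f}(i). Since $\varepsilon$ is centered Gaussian, it is symmetric with finite second moment; moreover its characteristic function $s\mapsto \exp(-\sigma_\varepsilon^2 s^2/2)$ decays faster than any polynomial, so $(1+|s|)^\delta\EE[\exp(\mathrm{i}s\varepsilon)]$ is uniformly bounded in $s\in\RR$ for every $\delta>0$. This is the Gaussian instance of Remark~\ref{rmk:asu:f1}, so Assumption~\ref{asu:f} holds with $\alpha=2$.

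Next, I would identify $f_{X_0}$. Under Assumption~\ref{asu:coef} with $\alpha=2$ one has $d\in(0,1/2)$, which forces $\sum_{j\ge 0}a_j^2<\infty$, so $X_0=\sum_{j\ge 0}a_j\varepsilon_{-j}$ converges in $L^2(\PP)$ to a centered Gaussian with variance $\sigma_X^2=\sigma_\varepsilon^2\sum_{j\ge 0}a_j^2\in(0,\infty)$. Consequently $f_{X_0}(x)=(2\pi\sigma_X^2)^{-1/2}\exp(-x^2/(2\sigma_X^2))$. To match the form $f_{X_0}(x)=\omega(x)\exp(-|x|^\beta/\beta)$ required by Corollary~\ref{cor:light_det}, I would pass to the standardized process $\widetilde X_t := X_t/\sigma_X$ (equivalently, replace $(a_j)$ with $(a_j/\sigma_X)$); this is still a long memory Gaussian linear time series with the same $d$ and $\alpha=2$, and its stationary density $f_{\widetilde X_0}(x)=(2\pi)^{-1/2}\exp(-x^2/2)$ has exactly the required form with $\omega\equiv(2\pi)^{-1/2}$. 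This constant $\omega$ is trivially $C^1$, symmetric, strictly positive, satisfies $|x|^{\beta-1}\omega(x)\to 0$ as $x\to 0$ (since $\beta-1=1$), and has $\omega'\equiv 0$, placing it in the ``constant equal to $0$ for $|x|$ large enough'' alternative.

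Finally, I would apply Corollary~\ref{cor:light_det} to $(\widetilde X_t)$ with rescaled threshold $\widetilde u_n:=u_n/\sigma_X$ (whose $\log n$-scale asymptotics agree with those of $u_n$) and translate the result back to $(X_t)$. The exceedance events $\{X_t>u_n\}=\{\widetilde X_t>\widetilde u_n\}$ and the log-ratios $\log(X_t/u_n)=\log(\widetilde X_t/\widetilde u_n)$ are unchanged by the rescaling, while the normalizing factor $\widetilde u_n^{1-\beta}=\sigma_X/u_n$ produces the rate $n^{1/2-d}/u_n$ up to the constant $\sigma_X$, which is absorbed into the Gaussian limit $Z_2$ defined through Theorem~\ref{thm:clt_partsum} for the process at hand. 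The main subtlety here is this bookkeeping of the scale $\sigma_X$; the substantive content is entirely contained in Corollary~\ref{cor:light_det}, whose hypotheses have simply been verified directly in the Gaussian setting.
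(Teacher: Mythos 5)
Your overall strategy -- verify that centered Gaussian innovations satisfy Assumption~\ref{asu:f}(i), that the stationary Gaussian density has the form required in Corollary~\ref{cor:light_det} with $\beta=2$ and constant $\omega$, and then apply that corollary -- is exactly the route the paper takes. The verification of Assumption~\ref{asu:f}(i) via the decay of the Gaussian characteristic function, and the checks on $\omega\equiv(2\pi)^{-1/2}$, are correct and sufficient.

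Where the argument does not close is in the scale bookkeeping for $\sigma_X^2:=\mathrm{Var}(X_0)=\EE[\varepsilon^2]\sum_j a_j^2$, which you rightly flag as the subtle point but then dispatch too quickly. Two things go wrong if $\sigma_X^2\neq 1$.

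First, the threshold condition is \emph{not} preserved by your rescaling $\widetilde u_n=u_n/\sigma_X$. Writing $m:=1/2+d-\kappa_0>0$, the hypothesis on $u_n$ says $u_n^2=2(m-\delta)\log n+o(1)$ for some $\delta\in(0,m)$; then $\widetilde u_n^2=2(m-\delta)\sigma_X^{-2}\log n+o(1)$, which is of the required form for $\widetilde u_n$ with a new parameter $\delta'=m-(m-\delta)/\sigma_X^2$. This $\delta'$ is positive if and only if $\delta>m(1-\sigma_X^2)$, which fails for small $\delta$ when $\sigma_X^2<1$. So ``the $\log n$-scale asymptotics agree with those of $u_n$'' is false as stated: the coefficient of $\log n$ changes by $\sigma_X^{-2}$, and this change can push $\widetilde u_n$ out of the admissible range.

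Second, the constant is not ``absorbed into $Z_2$'' in the way you suggest. Applying Corollary~\ref{cor:light_det} to $(\widetilde X_t,\widetilde u_n)$ yields $n^{1/2-d}\widetilde u_n^{-1}(\cdots)\stackrel{d}{\to}\widetilde Z_2$, where $\widetilde Z_2$ is the limit of $n^{-d-1/2}\sum_t\widetilde X_t$; since $\widetilde X_t=X_t/\sigma_X$ one has $\widetilde Z_2\stackrel{d}{=}Z_2/\sigma_X$ with $Z_2$ the limit for the original $(X_t)$. Substituting $\widetilde u_n^{-1}=\sigma_X/u_n$ and solving gives $n^{1/2-d}u_n^{-1}(\cdots)\stackrel{d}{\to}\widetilde Z_2/\sigma_X\stackrel{d}{=}Z_2/\sigma_X^2$, not $Z_2$. (The same conclusion follows from a direct Mills'-ratio computation, $\PP[X_0>u_n]/f_{X_0}(u_n)\sim\sigma_X^2/u_n$, combined with Corollary~\ref{cor:sub_ord_univariateI_optim}.) Both issues vanish precisely when $\sigma_X^2=1$, which is the normalization the paper adopts in its Section~3.1 discussion (``for simplicity, suppose that $\sum_j a_j^2=1$'' with standard Gaussian innovations), and which Corollary~\ref{cor:gaussian} appears to assume implicitly by matching Corollary~\ref{cor:light_det} with constant $\omega$. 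Your proof should either invoke this normalization explicitly, or carry the factor $\sigma_X^2$ through and note that the stated limit $Z_2$ is correct only under it.
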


\begin{remark}[Comparison with known results in the literature II]
\label{rmk:rateII}
Another comparison with the i.i.d.~setting is worthwhile here. On the one hand, when the $X_i$ are i.i.d.~with density satisfying $f_{X_0}(x)= \omega(x) \exp(-|x|^{\beta}/\beta)$ for some $\beta>0$ and a function $\omega$ converging to a positive constant $\omega_0$ at infinity, then a repeated use of Lemma~1 in~\cite{stupflerSimpleSufficientCriteria2025} and a straightforward application of the Lindeberg central limit theorem yield
   \[
   \sqrt{n \PP[X_0>u_n]} u_n^{\beta} \left( \frac
     {
     \sum_{t=1}^{n}
       \log(
       X_t
       /
       u_n
       )_+
     }
     {n \PP[X_0>u_n]}
     \ - \
     \EE[\log(X_0/u_n) \mid X_0>u_n]
     \right)
     \stackrel{\mathrm{d}}{\longrightarrow}
     \mathcal{N}(0,\sigma^2)
   \]
   when $n\PP[X_0>u_n]\to \infty$, where $\sigma^2=\sigma^2(\beta,\omega_0)$ can be explicitly calculated. On the other hand, when the $X_i$ are i.i.d.~heavy-tailed with $f_{X_0}\in\mathrm{RV}_{-\nu-1}$, then another straightforward application of the Lindeberg central limit theorem yields
   \[
   \sqrt{n \PP[X_0>u_n]} \left( \frac
     {
     \sum_{t=1}^{n}
       \log(
       X_t
       /
       u_n
       )_+
     }
     {n \PP[X_0>u_n]}
     \ - \
     \EE[\log(X_0/u_n) \mid X_0>u_n]
     \right)
     \stackrel{\mathrm{d}}{\longrightarrow}
     \mathcal{N}(0,2/\nu^2)
\]
when $n\PP[X_0>u_n]\to \infty$. In other words, when the $X_i$ are i.i.d., the speed of convergence goes from
\[
\sqrt{n \PP[X_0>u_n]} \sim \sqrt{n u_n^{-1/\nu}}
\]
in the heavy-tailed setting to
\[
\sqrt{n \PP[X_0>u_n]} u_n^{\beta} \sim \sqrt{n u_n^{1+\beta} \exp(-u_n^{\beta}/\beta)}
\]
in the light-tailed setting. The exponentially decaying term $\exp(-u_n^{\beta}/\beta)$ dramatically slows down the speed as one goes from heavy to light tails. By contrast, in the long memory case, the speed of convergence goes from $n^{1/2-d} u_n$ in the heavy-tailed setting (when $\nu>2$) to $n^{1/2-d} u_n^{1-\beta}$ in the light-tailed case. The slowdown is thus much more pronounced in the i.i.d.~setting than in the long memory case.
\end{remark}

As in Section~\ref{sec:main:hvy},
we finally consider
an analogue of Corollary~\ref{cor:heavy_rand} in the light-tailed context.

\begin{corollary}[Application to light tails -- random thresholds]
\label{cor:light_rand}
 Under the Assumptions of Corollary~\ref{cor:light_det} and with the notation of Corollary~\ref{cor:heavy_rand} it holds, as $n\to\infty$,
 \begin{align*}
  &
    n^{1/2-d}
    \left(
    q_{X_0}
    \left(
    1
    -
    \frac{k}{n}
    \right)
    \right)^{1-\beta}
    \\&
    \cdot
    \
      \left(
      \frac
      {
      \sum_{i=1}^{k}
        \log(
        X_{n-i+1:n}
        /
        X_{n-k:n}
        )
      }
      {n \PP[X_0>u_n]}
      \ - \
      \EE[\log(X_0/u_n) \mid X_0>u_n]
      \right)
      \
      \stackrel{\PP}{\longrightarrow}
      \
      0\,.
  \end{align*}
\end{corollary}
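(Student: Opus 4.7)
The plan is to follow the derandomization strategy used for the heavy-tailed counterpart in Corollary~\ref{cor:heavy_rand}, combining Lemma~\ref{lem:derand} with Lemma~\ref{lem:derand:poly}, and then exploit the specific cancellation available in the light-tailed regime. The key conceptual point is that, unlike in the heavy-tailed setting where the deterministic- and random-threshold Hill statistics have distinct non-trivial asymptotic constants $\nu^2/(\nu+1)$ and $\nu/(\nu+1)$, in the Weibull regime these two contributions share the same asymptotic coefficient and thus cancel at leading order, leaving a degenerate probability limit rather than a non-degenerate stable one.

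First I would use the reduction principle of Theorem~\ref{thm:approx_optimal} to establish that the empirical tail function and the deterministic-threshold Hill statistic of Corollary~\ref{cor:light_det} converge jointly, both being driven by the same partial sum $n^{-d-1/2}\sum_{t=1}^n X_t$; this provides the joint weak structure required to propagate the cancellation. Inverting the first of these convergences via Lemma~\ref{lem:koen} yields the corresponding joint asymptotic behavior of $\log(X_{n-k:n}/\bar u_n)$, where $\bar u_n := q_{X_0}(1-k/n)$, with the same $Z_2$ driving the fluctuation.

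Second, combining Lemma~\ref{lem:derand} and Lemma~\ref{lem:derand:poly} would yield a decomposition of the form
\[
\frac{\sum_{i=1}^k \log(X_{n-i+1:n}/X_{n-k:n})}{n\PP[X_0>u_n]} - \EE[\log(X_0/u_n)\mid X_0>u_n] = T_n^{\mathrm{det}} - \log(X_{n-k:n}/\bar u_n) + R_n,
\]
where $T_n^{\mathrm{det}}$ denotes the centered deterministic-threshold Hill statistic of Corollary~\ref{cor:light_det} and $R_n$ is a remainder controlled uniformly in a shrinking neighborhood of $\bar u_n$. The joint limits from the previous step show that, after multiplication by $n^{1/2-d}\bar u_n^{1-\beta}$, both $T_n^{\mathrm{det}}$ and $\log(X_{n-k:n}/\bar u_n)$ converge to the same scalar multiple of $Z_2$, producing the claimed cancellation.

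The hard part is the precise matching of these two contributions together with the uniform control of $R_n$. The cancellation hinges on the Weibull-tail Mills'-ratio asymptotic $\bar F_{X_0}(u)/f_{X_0}(u)\sim u^{1-\beta}$, which aligns the coefficient in front of $\log(X_{n-k:n}/\bar u_n)$ with that of $T_n^{\mathrm{det}}$ in the light-tailed regime only. In addition, the choice $u_n=(-\beta\log n(\kappa_0+\delta-1/2-d))^{1/\beta}$ saturates the admissibility constraint of Theorem~\ref{thm:approx_optimal}, so verifying $R_n = \operatorname{o}_\PP((n^{1/2-d}\bar u_n^{1-\beta})^{-1})$ requires invoking Lemma~\ref{lem:derand:poly} so as to absorb exactly the marginal slack $\delta$ provided by the statement.
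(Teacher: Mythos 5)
Your proposal is correct and follows essentially the same route as the paper: Corollary~\ref{cor:light_rand} is obtained by plugging the verification in Lemma~\ref{lem:derand:poly}(ii) into the derandomization device of Lemma~\ref{lem:derand}, and since there the joint limit is $(\Phi,\Psi)=(Z_2,Z_2)$ with $c=1$, the first coordinate of the output $[\Phi - c\cdot\Psi,\Psi]$ collapses to the constant $0$. You correctly identify that this exact cancellation of scales between $\Phi$ and $c\Psi$ -- absent in the heavy-tailed case, where $\Phi=\tfrac{\nu}{1+\nu}\Psi$ with $c=1$ -- is precisely what produces a degenerate probability limit in place of a non-degenerate stable one.
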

The analogous result for heavy-tails (Corollary~\ref{cor:heavy_rand}) suggests that there is a phase transition in the limit between heavy- and light tails.
Observe that the speed coming from deterministic thresholds and light tails (Corollary~\ref{cor:light_det}) is too slow to identify a non-trivial limiting structure for light tails and random thresholds (Corollary~\ref{cor:light_rand}).
This hints at a weakening or even absence of a long memory effect in this case.
Absence of long memory has indeed been observed in a similar setting by \cite{kulikTailEmpiricalProcess2011a},
where the authors recovered i.i.d.~behavior with the standard speed $\sqrt{k}$.

\section{Numerical illustrations}
\label{sec:simstudy}
\subsection{Main objective and setting}
\label{sec:simstudy:setting}

The purpose of this section is to illustrate the central limit theorem in Corollary~\ref{cor:sub_ord_univariateI_optim} when the innovations are heavy-tailed. Our motivation is to get a grasp of the extent to which the asymptotic distributions of the PoT estimators obtained in Section~\ref{sec:main:hvy} show up in finite samples. To the best of our knowledge, there is no exact method for simulating long memory linear time series with heavy tails. We therefore use a (long enough) truncated coefficient sequence,
\[
a_0 = 0 \qquad \mbox{and} \qquad a_j \ = \  j^{-(1-d)}
\,,
\qquad
 j = 1, \ldots, p=10^8
 \,,
\]
with $d = 0.1$, and symmetric $\alpha$-stable innovations $\varepsilon_t$ with $\alpha = 1.9$ and scale parameter $\eta = 1$. We simulate $N=10{,}000$ i.i.d.~copies of $(X_t,t=1, \ldots, n=p/10=10^7)$, using GPU acceleration; the sample size $n=p/10$ is such that the number of coefficients $p$ is large enough compared with the length of the time series, in order to simulate long memory behavior of the series $X_t=\sum_{j=1}^{\infty} j^{-(1-d)} \varepsilon_{t-j}$.

\subsection{Slow convergence of pre-asymptotic scaling factors}
\label{sec:simstudy:scale}

The main ingredient in our central limit theory is Theorem~\ref{thm:clt_partsum}, which provides the weak convergence behavior of partial sums of the $X_t$, namely,
\begin{align*}
     n^{-d-1/\alpha}
    \sum_{t=1}^n
    X_t
    \ \stackrel{\mathrm{d}}{\longrightarrow} \
    Z_\alpha
    \qquad\text{as}\ n\to\infty,
\end{align*}
where $Z_\alpha$ is symmetric $\alpha$-stable with scale
\[
\eta
  \ := \
\frac{c_a}{d}
    \cdot
    \left(
    A
    \cdot
      \frac
      {\Gamma(2-\alpha)\cos(\pi\alpha/2)}
      {1-\alpha}
      \right)^{1/\alpha}
    \left(
    \int_{-\infty}^{1}
    \left(
    (1-v)^{d}_+
    -
    (-v)^d_+
    \right)^{\alpha}
    \,\mathrm{d}v
    \right)^{1/\alpha}
      \,.
\]
Observe that in our case $c_a=1$ so that, by symmetry of the $\alpha$-stable innovations,
\[
\eta = \frac{1}{d}
    \left( \int_{-\infty}^{1}
    \left(
    (1-v)^{d}_+
    -
    (-v)^d_+
    \right)^{\alpha}
    \,\mathrm{d}v
    \right)^{1/\alpha}
      \,.
\]
For $d=0.1$, $\alpha=1.9$, by numerical integration we get
\begin{align*}
 \eta
 \ = \
 \eta_{d=0.1, \alpha = 1.9}
 \ = \
   10 \left(
   \int_{-\infty}^{1}
   \left(
   (1-v)^{0.1}_+
   -
   (-v)^{0.1}_+
   \right)^{1.9}
   \,\mathrm{d}v
   \right)^{1/1.9}
   \
   \approx
   \
   9.344
   \,.
\end{align*}
To assess how fast the asymptotic scale $\eta$ is approached in finite samples, one may write the rescaled partial sum as
\begin{align*}
 n^{-d-1/\alpha}
 \frac{1}{\eta}
 \sum_{t=1}^n
 X_t
 \ &= \
 n^{-d-1/\alpha}
 \frac{1}{\eta}
 \sum_{t=1}^n
 \sum_{j=1}^{\infty}
 j^{-(1-d)}
 \varepsilon_{t-j}
 \\
 \ &= \
 n^{-d-1/\alpha}
 \frac{1}{\eta}
 \left(
 \sum_{t=1}^{n-1}
 \varepsilon_{n-t}
 \sum_{j = 1}^t
 j^{-(1-d)}
 \ + \
 \sum_{t=0}^\infty
 \varepsilon_{-t}
 \sum_{j=1}^{n}
 (j+t)^{-(1-d)}
 \right)
    \,.
\end{align*}
Since the $\varepsilon_t$ are i.i.d.~symmetric $\alpha$-stable, so is this rescaled partial sum, with scale
\begin{align} \label{eq:scale-partialsums}
 s=s_{n,d,\alpha} = n^{-d-1/\alpha}
 \frac{1}{\eta}
 \left(
 \sum_{t=1}^{n-1}
 \left(
 \sum_{j = 1}^{ t}
 j^{-(1-d)}
 \right)^{\alpha}
 \ + \
 \sum_{t=0}^\infty
 \left(
 \sum_{j=1}^{n}
 (j+t)^{-(1-d)}
 \right)^{\alpha}
 \right)^{1/\alpha}
 \,.
\end{align}
If $\eta$ were to be a correct approximation of the scale of $n^{-d-1/\alpha} \sum_{t=1}^n X_t$, then $s_{n,d,\alpha}$ would be close to 1. Numerical approximations, however, show that the ratio $s_{n,d,\alpha}$ converges to 1 slowly, suggesting that $\eta$
does not
correctly approximate the scale of $n^{-d-1/\alpha} \sum_{t=1}^n X_t$. We illustrate this in the top panel of Figure~\ref{fig:scale}, where even for the large value $n=10^6$ one finds $s\approx 0.72$.

\begin{figure}[ht]
 \centering
 \includegraphics[scale=0.4]{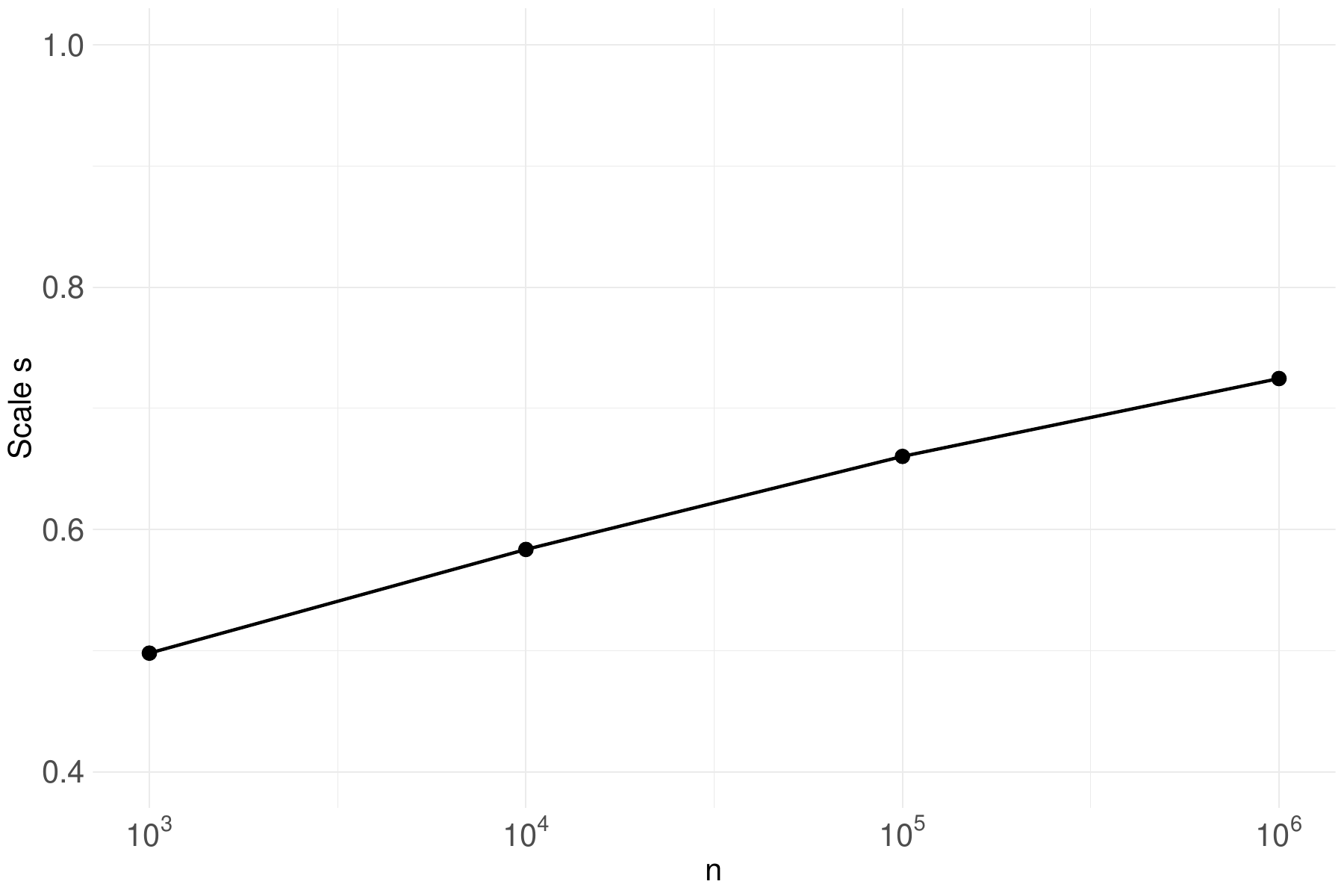} \\
 \includegraphics[scale=0.4]{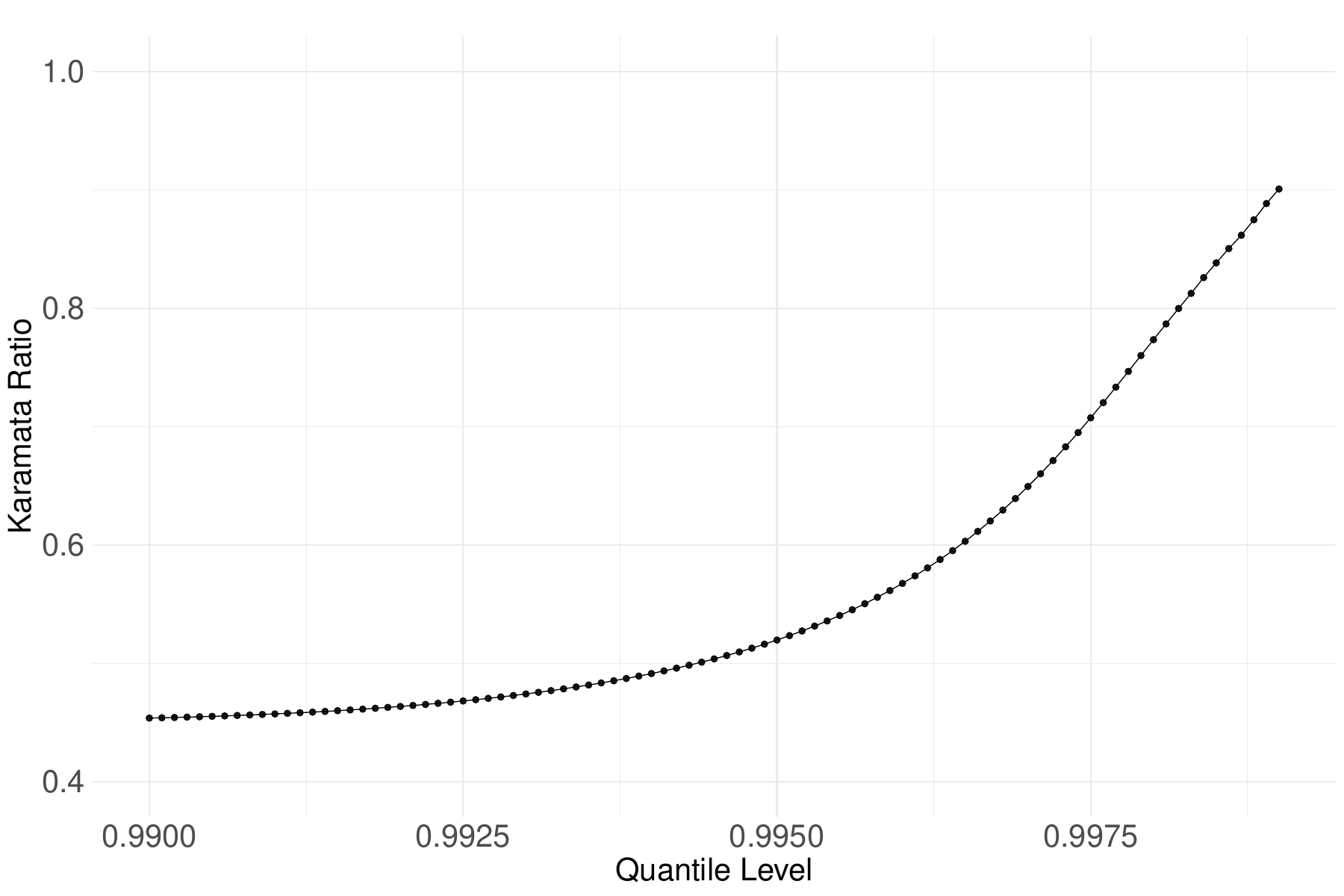}
 \caption{Illustration of the convergence of pre-asymptotic scaling factors. Top: Values of $s_{n,d,\alpha}$ in \eqref{eq:scale-partialsums} for $n\in \{10^3, 10^4, 10^5, 10^6\}$, $d=0.1$ and $\alpha=1.9$.
 Bottom: Values of $\alpha \PP[X>u]/ (u f_{X}(u))$ for the unit $\alpha$-stable distribution along the levels $u=q_X(\tau)$ for $\tau\in [0.99,0.999]$, with $\alpha=1.9$.}
 \label{fig:scale}
\end{figure}

Another scaling quantity, namely $\EE[G_n(X_0)]/G_{\infty,n}'(0)$, appears in Corollary~\ref{cor:sub_ord_univariateI_optim}. For exceedance counts, using the indicator function $G_n(x)=\ind\{ x>u_n \}$, this ratio is equal to $\PP[X_0>u_n]/f_{X_0}(u_n)$, which is known given the distribution of $X_0$. But even for relatively simple extensions such as $G_n(x)=(\log(x)-\log(u_n)) \ind\{ x>u_n \}$, the exact calculation of $\EE[G_n(X_0)]$ and $G_{\infty,n}'(0)$ becomes an issue. This is because the resulting integrals typically do not have a simple closed form. A solution is an approximation by Monte Carlo simulation, using for example
\[
  \EE[G_n(X_0)]
  \ = \
  \EE[\log(X_0/u_n)_+]
  \
  \approx
  \
  \widehat{\EE}[\log(X_0/u_n)_+]
  \ = \
  \frac{1}{m}
  \sum_{i=1}^m
  \log(X^{(i)}_0/u_n)\ind\{X^{(i)}_0>u_n\}
\]
and, by partial integration (see the proof of Corollary~\ref{cor:heavy_det}),
\begin{align*}
  G_{\infty,n}'(0)
  \ = \
  \int_{u_n}^\infty
    \frac{
    f_{X_0}(x)
    }{x}
    \,\mathrm{d}x
  \ = \
  \EE[X_0^{-1}\ind\{X_0>u_n\}]
  & \
  \approx
  \
  \widehat{\EE}[X_0^{-1}\ind\{X_0>u_n\}]
  \\
  & \ = \
  \frac{1}{m}
  \sum_{i=1}^m
  \frac{1}{X^{(i)}_0}
  \ind\{X^{(i)}_0>u_n\}
  \,,
\end{align*}
where $X_0',\ldots,X_0^{(m)}$, $m\in\NN$ are i.i.d.~samples of $X_0$; here we take $m=10^7$.

It should be highlighted that in statistical applications, the distribution of $X_0$ is not assumed to be known. A workaround is to combine Karamata's and Slutsky's theorem, as is done in the proof of Corollary~\ref{cor:heavy_det}, to replace $\EE[G_n(X_0)]/G_{\infty,n}'(0)$ by an asymptotic equivalent. For example, for $G_n(x)=\ind\{ x>u_n \}$,
\[
\frac{\EE[G_n(X_0)]}{G_{\infty,n}'(0)} = \frac{\PP[X_0>u_n]}{f_{X_0}(u_n)} \ \mbox{ and } \ \frac{u_n}{\nu}
\]
are asymptotically equivalent. As illustrated in the bottom panel of Figure~\ref{fig:scale}, when $X_0$ has a stable distribution these two quantities may, however, differ substantially, even at fairly high thresholds $u_n$ of order, say, $q_{X_0}(0.999)$. This is largely due to the speed of convergence of $\PP[X_0>u_n]$ to its power law equivalent, where the remainder term is of order $O(u_n^{-\alpha})$ and
vanishes slowly as $n\to\infty$;
see~\cite[Corollary 2 p.94]{zolotarev1986one} for details. It is thus not advisable to use the asymptotic equivalent $u_n/\nu$ in place of $\PP[X_0>u_n]/f_{X_0}(u_n)$ when $G_n(x)=\ind\{ x>u_n \}$. More accurate asymptotic equivalents could be obtained by estimating the remainder term in Karamata's theorem, for instance using second-order extended regular variation~\cite[Sections 2.3 and B.3]{dehaanExtremeValueTheory2006}. The construction of such refined asymptotic equivalents is outside of the scope of this work.

This discussion motivates, in our particular case of stable distributed $X_0$, comparing the finite-sample distributions of
\begin{equation}
\label{eqn:PoT_rescaled}
n^{1-(d+1/\alpha)}
\frac{1}{\eta}
    \frac{\PP[X_0>u_n]}{f_{X_0}(u_n)}
  \cdot
      \left(
      \frac{ \sum_{t=1}^{n}
      \ind\{X_t>u_n\} }
      {n \PP[X_0>u_n]}
      \ - \ 1
      \right)
\end{equation}
and
\begin{equation}
\label{eqn:Hill_rescaled}
n^{1-(d+1/\alpha)}
\frac{1}{\eta}
    \frac{
    \widehat{\EE}[\log(X_0/u_n)_{+}]
    }{
    \widehat{\EE}[X_0^{-1}\ind\{X_0>u_n\}]
    }
  \cdot
      \left( \frac
      {
      \sum_{t=1}^{n}
        \log(
        X_t
        /
        u_n
        )_+
      }
      {n \EE[ \log(
        X_0
        /
        u_n
        )_+ ]}
      \ - \
      1
      \right)
\end{equation}
with the symmetric unit stable distribution with index $\alpha$.

\subsection{Histograms of exceedance counts and Hill estimators}
\label{sec:simstudy:histograms}
The top rows of Figures~\ref{fig:PoT} and~\ref{fig:Hill} report histograms of $N=10{,}000$ copies from ~\eqref{eqn:PoT_rescaled} and~\eqref{eqn:Hill_rescaled}, respectively, when the length $n$ of the time series $(X_t)$ belongs to $\{10^3, 10^5, 10^7\}$. It can be seen there that the structure of the histograms roughly matches that of a symmetric unit stable distribution, only with incorrect scale. From the discussion in Section~\ref{sec:simstudy:scale}, it follows that a possible reason for that is incorrect scaling of the partial sums; the middle rows of Figures~\ref{fig:PoT} and~\ref{fig:Hill} show the histograms of these same realizations divided by the scale parameter $\widehat{s}$ of a stable distribution fitted to the realizations of the rescaled partial sums $n^{-d-1/\alpha} \eta^{-1} \sum_{t=1}^n X_t$. This fit and subsequent stable fits are obtained using the {\tt stableFit} function from the {\tt fBasics} package in {\tt R}, with argument {\tt type="q"}, based on the method proposed by~\cite{mcculloch1986}. For exceedance counts (Figure~\ref{fig:PoT}), this scaling results in a much better fit of the symmetric unit stable distribution, with only minor deviations for $n=10^3$. For the Hill estimator (Figure~\ref{fig:Hill}),  this still does not result in a good fit for all values of $n$, and
lack of symmetry due to skewness can be observed.
To separate scaling and skewness issues, we report in the bottom row of Figures~\ref{fig:PoT} and~\ref{fig:Hill} the histograms of the realizations of~\eqref{eqn:PoT_rescaled} and~\eqref{eqn:Hill_rescaled}, rescaled not by $\widehat{s}$, but by a scale parameter $\widetilde{s}$ obtained by fitting a stable distribution directly to the realizations. In the case of the Hill estimator
we confirm the presence of skewness, and while its degree seems to decrease from $n=10^3$ (bottom row of Figure~\ref{fig:Hill}, left panel) to $n=10^5$ (bottom row of Figure~\ref{fig:Hill}, middle panel), this is not the case from $n=10^5$ to $n=10^7$ (bottom row of Figure~\ref{fig:Hill}, right panel). As a conclusion, it seems that, in the simplest case of exceedance counts, the finite-sample distribution is dictated by the behavior of partial sums of $X_t$, while in more difficult cases such as the Hill estimator, a degree of skewness appears, which could be due to the particular structure of the subordinating function $G_n$.

\begin{figure}[t]
\centering
\vspace{-70pt}
\includegraphics[scale=0.36]{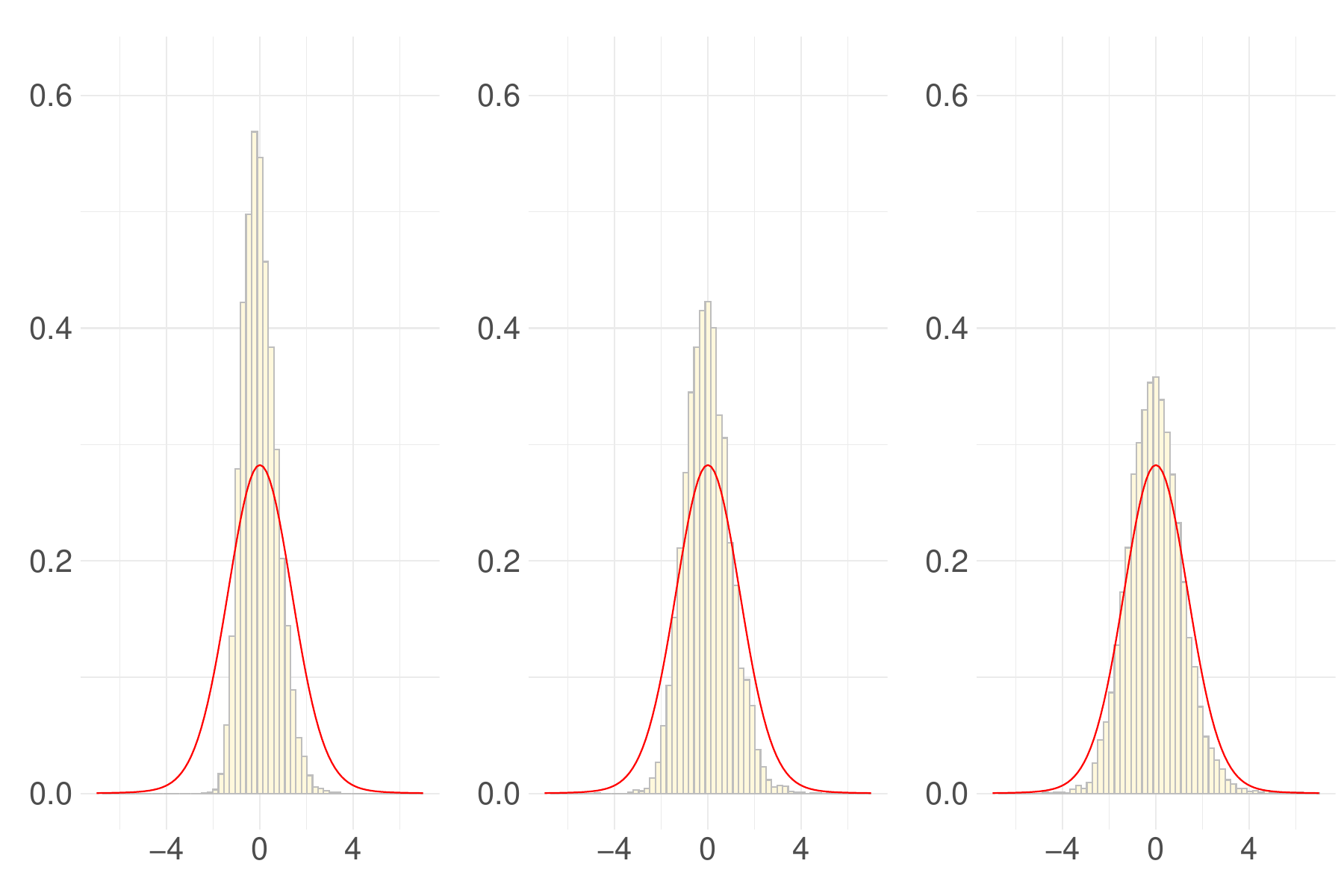} \\
\includegraphics[scale=0.36]{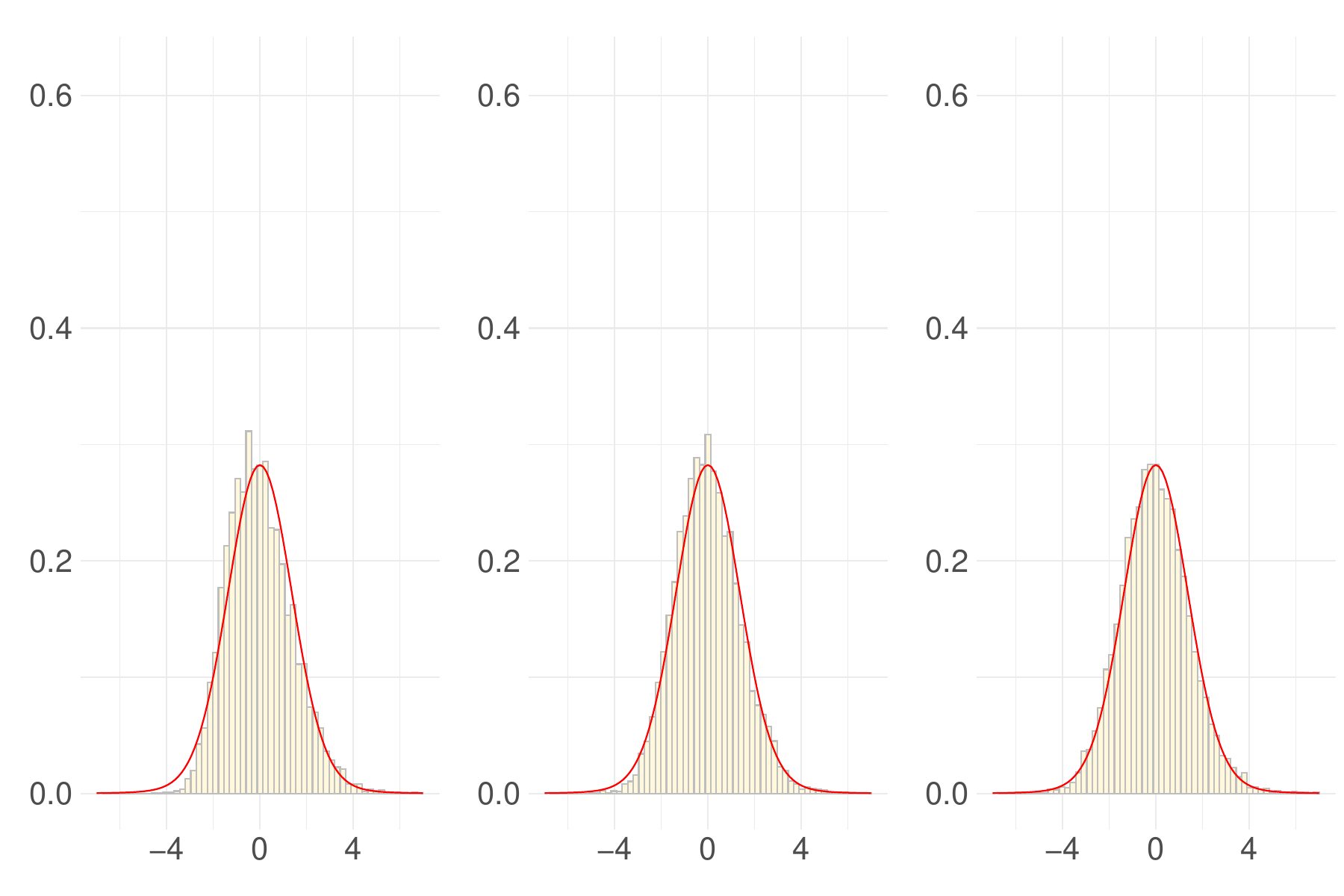} \\
\includegraphics[scale=0.36]{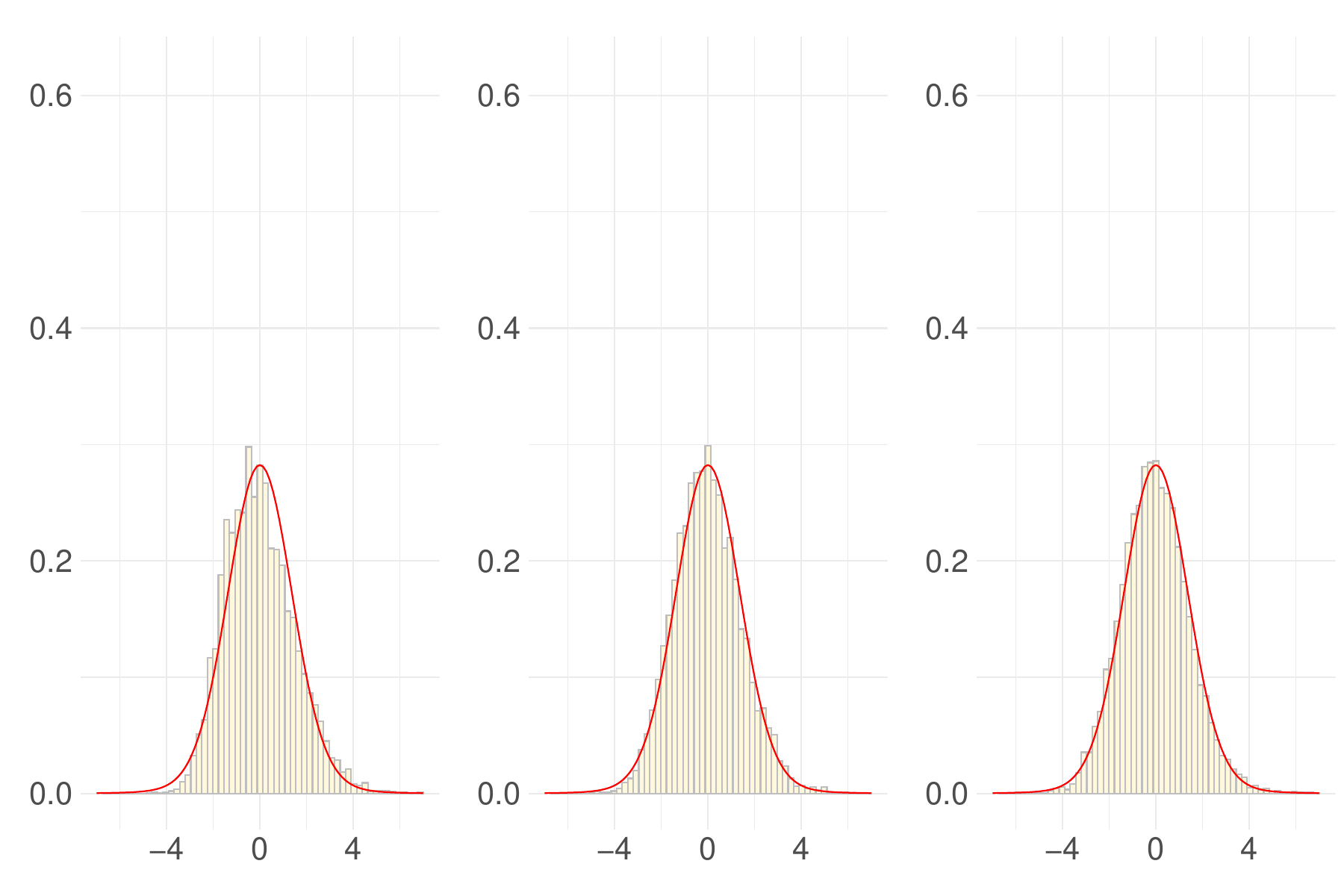}
\caption{Histograms of $N=10{,}000$ realizations of the centered and rescaled exceedance counts in~\eqref{eqn:PoT_rescaled}. Top panels: raw realizations, middle panels: realizations rescaled by the parameter $\widehat{s}$ (of a stable distribution fitted to the partial sums), bottom panels: realizations rescaled by the parameter $\widetilde{s}$ (of a stable distribution fitted directly to these raw realizations). The length of the time series $(X_t)$, generated with $a_j=j^{-(1-d)}$ and symmetric unit $\alpha$-stable innovations (for $d=0.1$ and $\alpha=1.9$), is $n=10^k$ with $k=3$ (left panels), $5$ (middle panels), $7$ (right panels), with $u_n=1-n^{-1/5} = 1.38$, $2.67$, $3.74$.}
\label{fig:PoT}
\end{figure}

\begin{figure}[ht]
\centering
\vspace{-70pt}
\includegraphics[scale=0.36]{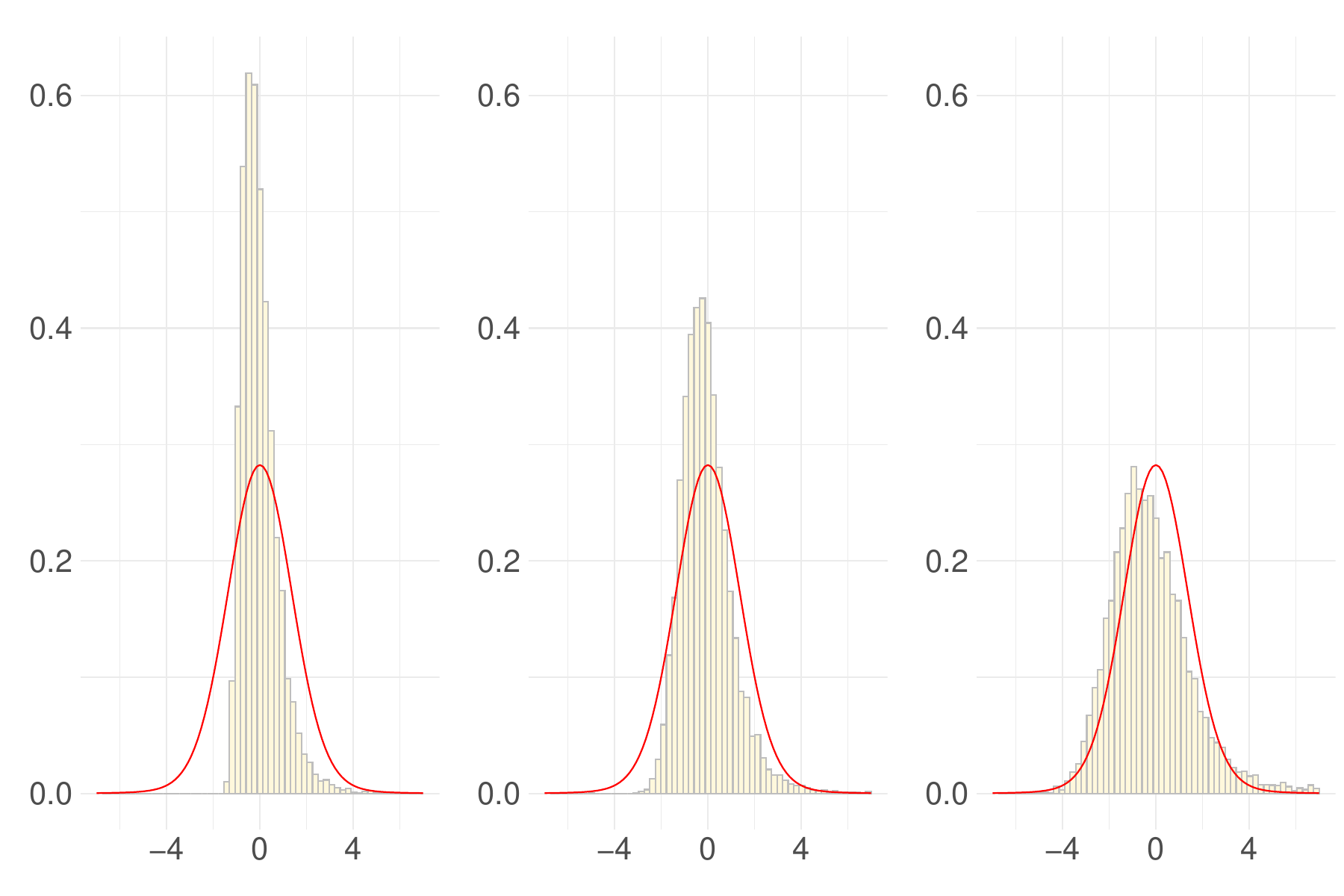} \\
\includegraphics[scale=0.36]{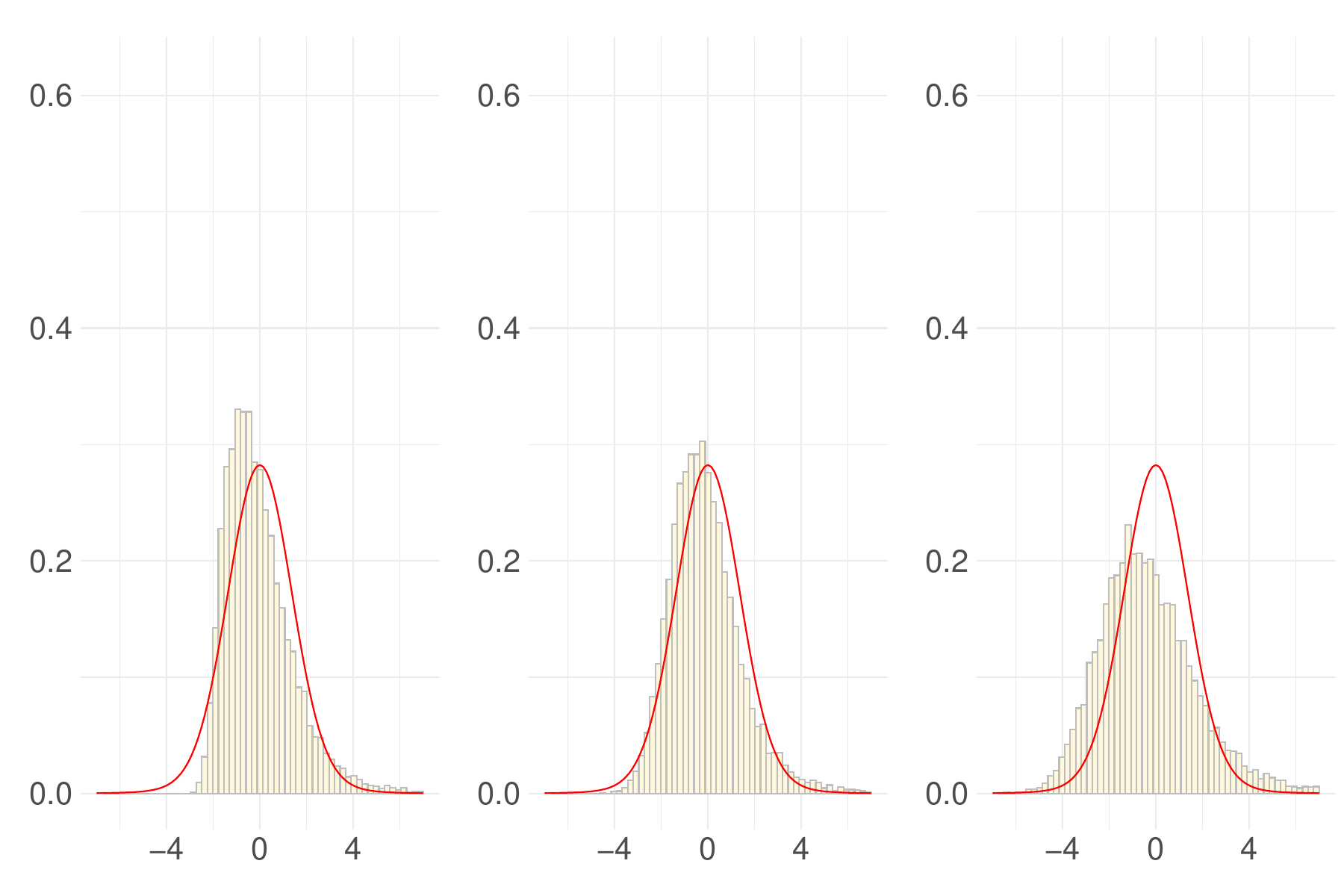} \\
\includegraphics[scale=0.36]{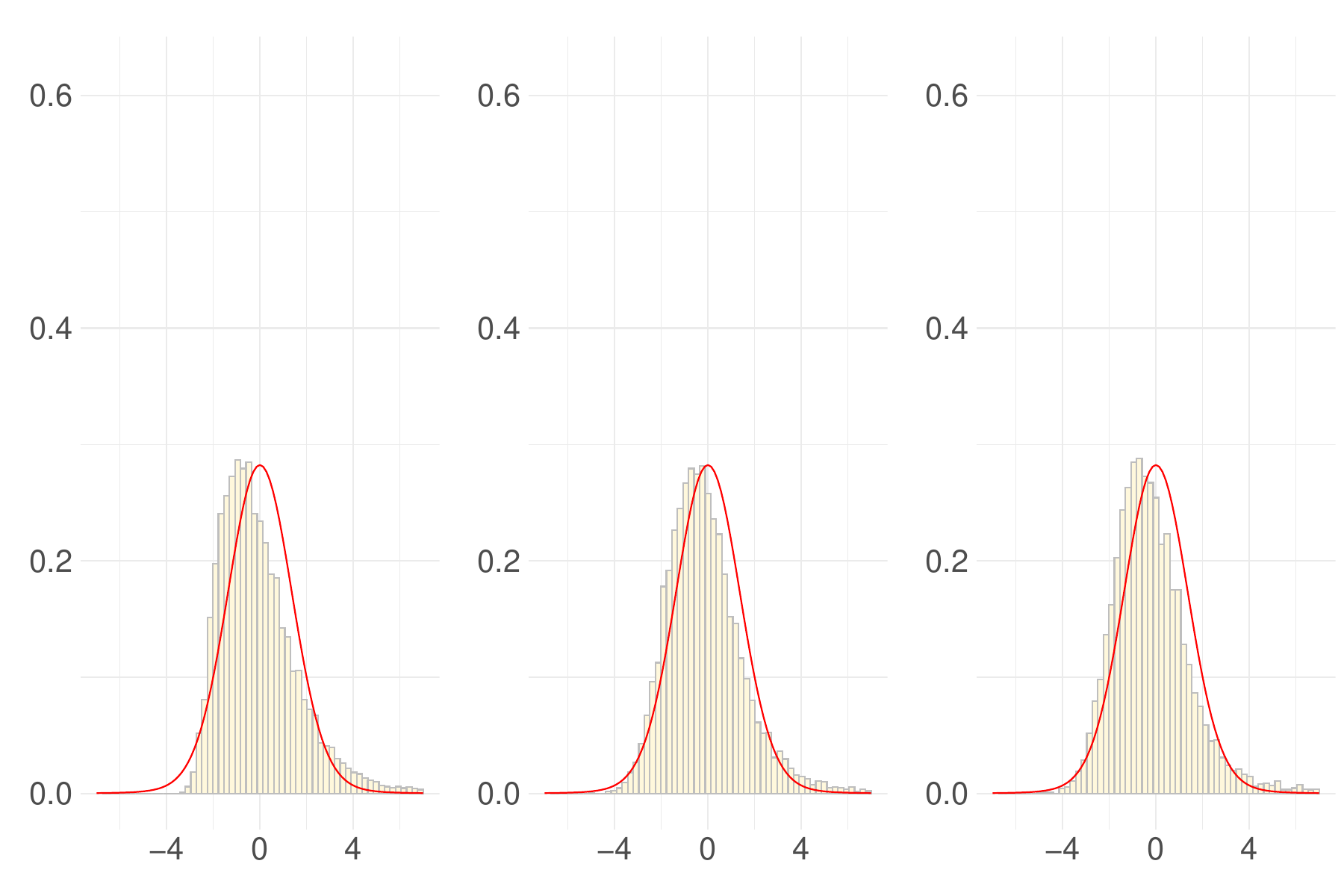}
\caption{Histograms of $N=10{,}000$ realizations of the centered and rescaled Hill estimator in~\eqref{eqn:Hill_rescaled}. Top panels: raw realizations, middle panels: realizations rescaled by the parameter $\widehat{s}$ (of a stable distribution fitted to the partial sums), bottom panels: realizations rescaled by the parameter $\widetilde{s}$ (of a stable distribution fitted directly to these raw realizations). The length of the time series $(X_t)$, generated with $a_j=j^{-(1-d)}$ and symmetric unit $\alpha$-stable innovations (for $d=0.1$ and $\alpha=1.9$), is $n=10^k$ with $k=3$ (left panels), $5$ (middle panels), $7$ (right panels), with $u_n=1-n^{-1/5} = 1.38$, $2.67$, $3.74$.}
\label{fig:Hill}
\end{figure}

\section{Discussion}
\label{sec:discussion}
Our work yields a variety of unexpected results, already highlighted in the previous sections. We now discuss these findings, offer interpretations, and place them in a broader context.
We highlighted that the ratio $\EE[G_n(X_0)]/G'_{\infty,n}(0)$, which appears in the rate of the central limit theorem Corollary~\ref{cor:sub_ord_univariateI_optim}, is widely
different in the heavy- and light-tailed cases when $G_n(x)=\ind\{x>u_n\}$. Subsequently, we showed that this applies also to the Hill estimator, where $G_n(x)=\log(x/u_n)\ind\{x>u_n\}$.
This raises the question of how the heavy- and light-tailed setting can differ so markedly in this regard, and how the increased rate in the heavy-tailed case arises.
We offer an interpretation based on the observation that heavy tails exhibit strong asymptotic dependence, leading to extremal clustering, while light tails, such as those of the normal distribution, exhibit asymptotic independence.
To be more precise, we define asymptotic (in)dependence by
      \begin{align*}
        \PP[X_t>u\mid X_0>u]
        \ \overset{ u \to\infty}{\to} \
        \chi(t)
        \begin{cases}
          \ = \ 0
          \qquad\text{for all $t>0$ when asymptotic independent,}\\
          \ > \ 0
          \qquad\text{for some $t>0$ when asymptotic dependent.}\\
        \end{cases}
      \end{align*}
      The quantity $\chi(t)$ is called tail-dependence coefficient. It is linked to the tail process $(Y_t)$ of $(X_t)$ (see \cite{basrakRegularlyVaryingMultivariate2009} for a reference to tail processes) via the identity $\chi(t)=\PP[Y_t>1]$.
Let us first discuss the asymptotically dependent, that is, heavy-tailed setting.
To control extremal clustering, all PoT results in the literature, to the best of our knowledge, assume some form of anti-clustering condition.
For example, one assumes that for an intermediate rate $(r_n)$, a threshold sequence $(u_n)$, and all $s,t>0$ it holds
  \begin{align*}
    \lim_{m\to\infty}
    \limsup_{n\to\infty}
    \frac
    {1}
    {\PP[|X_0|>u_n]}
    \sum_{j=m}^{r_n}
    \PP[|X_0|>u_ns, |X_j|>u_nt]
    \ = \
    0
    \,.
  \end{align*}
  This condition is called
  $\mathcal{S}(r_n,u_n)$
  in \cite[Section~9.2 p.~243]{kulikHeavyTailedTimeSeries2020}.
  According to (15.3.33) therein,
  in the case of heavy-tailed linear time series there exists a sufficient condition for $S(r_n,u_n)$ to hold, that is,
  \begin{align}
    \label{eq:disc:1}
    \sum_{j=1}^{\infty}
    j\cdot |a_j|^{\delta}
    \ < \
    \infty
    \qquad \text{for some}\
    \delta \in (0,\alpha)\cap (0,2]
    \,.
  \end{align}
  It, however, follows immediately that under Assumption~\ref{asu:coef} this simpler condition is never satisfied.
  We even have that
  a
  sufficient condition for $\mathcal{S}(r_n,u_n)$ to fail is that $\sum_{t=1}^{\infty}\PP[|Y_t|>1]=\infty$, where $(Y_t)$ is the tail process of the time series. This follows from applying Fatou's lemma, that is,
  \begin{align*}
    \sum_{t=1}^{\infty}
    \PP[|Y_t|>1]
    \ \le \
    \liminf_{n\to\infty}
    \frac
    {1}
    {\PP[|X_0|>u_n]}
    \sum_{j=1}^{r_n}
    \PP[|X_0|>u_n, |X_j|>u_n]
    \,.
  \end{align*}
  With \cite[(15.3.34)]{kulikHeavyTailedTimeSeries2020}, if moreover the $|a_j|$ are decreasing, then $\sum_{t=1}^{\infty} \PP[|Y_t|>1]<\infty$
  is equivalent to
  \begin{align*}
    \sum_{j=1}^{\infty}
    j\cdot
    |a_j|^{\nu}
    \ < \ \infty
    \quad\text{which under Assumption~\ref{asu:coef} becomes}\quad
    \sum_{j=1}^{\infty}
    j^{1-(1-d)\nu}
    \ < \ \infty
    \,.
  \end{align*}
  Since this holds if and only if $\nu\ge 2/(1-d)$,
we know that the anti-clustering condition fails for such combinations of $(\nu,d)$. In particular, it fails for all $\nu\in(1,2)$.
It therefore seems plausible that long clusters of extremes may accelerate the speed of convergence obtained in Corollary~\ref{cor:sub_ord_univariateI_optim}.

Having discussed the asymptotically dependent setting and its connection to anti-clustering conditions, we now turn to the asymptotically independent case featured in our work (Corollary~\ref{cor:gaussian}).
In the case of Gaussian innovations, and thus Gaussian time series, we observe behavior more consistent with existing literature: for deterministic thresholds, focusing on extremes, that is, reducing the effective number of summands, slows down the speed of convergence.
Since the time series is asymptotically independent, anti-clustering conditions hold, and we return to the standard setting.

Another point we want to emphasize is the sensitivity of PoT estimators to threshold choice. The assumption $u_n=\operatorname{o}(n^{(d+1/(\nu \land 2)-\kappa_0-\delta)/(\nu+1)})$ in Corollary~\ref{cor:heavy_det}, where $\kappa_0$ is defined in \eqref{eq:kappa_0}, reflects that the threshold $u_n$ should be chosen carefully. In simulations not reported here, we tried using higher quantile levels such as $q_{X_0}(1-n^{-1/3.5})$, and we observed that this could make the finite-sample distribution even less close to the asymptotic limit. As a consequence, one should not blindly base inference procedures upon the asymptotic distribution obtained in our results. Similar problems have been reported already in i.i.d.~extreme value settings irrespective of whether the second moment is finite or not~\cite{daostuuss2024,daostuuss2025,padstu2022}. A solution may be to develop dedicated self-normalization or subsampling procedures, as in~\cite{baitaq2017,jacmcelroypol2012,romwol1999}, or to rely on a smaller, more tractable class of models such as ARFIMA models~\cite{verstoche2025}.
On a different note, but still about threshold choice, observe that, unlike in the i.i.d.~or weakly dependent case, results for deterministic and random thresholds differ in our long memory setting.
While \cite{kulikTailEmpiricalProcess2011a} show that, in an asymptotically independent setting, long memory effects for the Hill estimator in stochastic volatility models arise only with deterministic thresholds, our work allows for a comparison of both asymptotically dependent and independent settings (Corollaries~\ref{cor:heavy_rand} and \ref{cor:light_rand}).
 We note that stochastic volatility models with asymptotic dependence exist~\cite{janssenStochasticVolatilityModel2016}; see also \cite[p.487]{kulikHeavyTailedTimeSeries2020} for a discussion of the literature. Future work could take these models into consideration.

\section*{Acknowledgments}

Financial support from the French CNRS within the project ``Extreme value analysis of time series through de-randomization techniques'', funded through the IEA program, is gratefully acknowledged. The authors also acknowledge computing support by the state of Baden-W\"urttemberg through bwHPC. G.~Stupfler acknowledges further financial support from the French {\it Agence Nationale de la Recherche} under the grants ANR-23-CE40-0009 (EXSTA project) and ANR-11-LABX-0020-01 (Centre Henri Lebesgue), as well as from the TSE-HEC ACPR Chair ``Regulation and systemic risks'' and the Chair Stress Test, RISK Management and Financial Steering of the Foundation Ecole Polytechnique. I.~Scheffel thanks Julian Weitz who pointed him towards the use of GPU acceleration for simulating linear time series.

\clearpage

\appendix

\renewcommand{\thesection}{\Alph{section}}

\begin{center}
{\Large Supplementary Material to the article \\[1ex] Central limit theory for Peaks-over-Threshold partial sums of long memory linear time series} \\[1ex]
Ioan Scheffel, Marco Oesting, Gilles Stupfler
\end{center}

\section{Auxiliary results}
In this section we collect a host of results that are subsequently used in the proofs, but may also be of independent interest.
\subsection{Martingale difference inequality}
We start this section by recalling---and refining---a fundamental inequality for martingale difference arrays (cf.~\cite{bahrInequalities$r$thAbsolute1965})
which is satisfied in particular by independent, centered random variables.
\begin{lemma}[Martingale Difference Inequality]
  \label{lem:bahr}
  \
  Let $p\in[1,2]$, and
  let $(Y_{m,i}\,, i=1, \ldots,m\,, m\in\NN )\subset L^p(\PP)$ be an
  array of random variables satisfying
\begin{align}
  \label{eq:bahr:cond:1}
  \EE
  \Bigg[
  Y_{m,\ell + 1} \, \Bigg{|} \, \sum_{i=1}^{\ell}Y_{m,i}
  \Bigg]
  \ = \
  0
  \qquad\text{for all}\ 1\le \ell \le m-1
  \ \text{and all}\  m\in\NN\,.
\end{align}
\begin{enumerate}
  \item
It holds that
\begin{align}
  \label{eq:bahr:fi}
  \EE
  \left|
  \sum_{i=1}^{m}
  Y_{m,i}
  \right|^p
  \ \le \
  2
  \sum_{i=1}^{m}
  \EE
  \left|
  Y_{m,i}
  \right|^p
  \qquad\text{for all}\ m\in\NN\,.
\end{align}
\item
If there exists a sequence $(Y_i)_{i\in\NN}\subset L^p(\PP)$ such that
  \begin{align}
    \label{eq:bahr:cond:2}
    \lim_{m\to\infty}
    \sum_{i=1}^m
    Y_{m,i}
     =
    \sum_{i=1}^{\infty}
    Y_{i}
    \quad\text{$\PP$-almost surely, and }\quad
    \liminf_{m\to\infty}
    \sum_{i=1}^m
    \EE|
    Y_{m,i}
    |^p
    \le
    \sum_{i=1}^\infty
    \EE|
    Y_{i}
    |^p
    \,,
  \end{align}
  then
\begin{align}
  \label{eq:bahr:infty}
  \EE
  \left|
  \sum_{i=1}^{\infty}
  Y_i
  \right|^p
  \ \le \
  2 \sum_{i=1}^{\infty}
  \EE
  \left|
  Y_i
  \right|^p
  \,.
\end{align}
\end{enumerate}
  \end{lemma}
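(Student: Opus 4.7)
The plan for part (1) is to argue by induction on $m$ via the elementary one-variable estimate, valid for $1 \le p \le 2$ and all $a, b \in \RR$:
\[
|a+b|^p \;\le\; |a|^p \;+\; p\,\mathrm{sign}(a)\,|a|^{p-1}\,b \;+\; 2\,|b|^p,
\]
which is the inequality underpinning the classical von Bahr--Esseen bound. Setting $S_\ell := \sum_{i=1}^\ell Y_{m,i}$ and applying the estimate pointwise with $a = S_\ell$, $b = Y_{m,\ell+1}$, the crucial observation is that the middle term is linear in $Y_{m,\ell+1}$ with a coefficient $\mathrm{sign}(S_\ell)|S_\ell|^{p-1}$ that is $\sigma(S_\ell)$-measurable. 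Hence, by the tower property and hypothesis~\eqref{eq:bahr:cond:1},
\[
\EE\!\bigl[\mathrm{sign}(S_\ell)\,|S_\ell|^{p-1}\,Y_{m,\ell+1}\bigr] \;=\; \EE\!\bigl[\mathrm{sign}(S_\ell)\,|S_\ell|^{p-1}\,\EE[Y_{m,\ell+1}\mid S_\ell]\bigr] \;=\; 0.
\]
Taking expectations in the one-variable estimate and using this cancellation yields $\EE|S_{\ell+1}|^p \le \EE|S_\ell|^p + 2\,\EE|Y_{m,\ell+1}|^p$; starting from $\EE|S_1|^p = \EE|Y_{m,1}|^p$ and iterating delivers~\eqref{eq:bahr:fi}. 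Note that the martingale-difference-sequence condition is used here only through conditioning on the \emph{partial sum} $S_\ell$, which is exactly what \eqref{eq:bahr:cond:1} provides.

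For part (2), the plan is to combine part (1) with Fatou's lemma. The first condition in~\eqref{eq:bahr:cond:2} is interpreted as providing a subsequence $(m_k)$ along which $\sum_{i=1}^{m_k} Y_{m_k,i} \to \sum_{i=1}^\infty Y_i$ almost surely; continuity of $x \mapsto |x|^p$ then transfers this to $|S_{m_k}|^p \to \bigl|\sum_{i=1}^\infty Y_i\bigr|^p$ almost surely. Since $|S_{m_k}|^p \ge 0$, Fatou's lemma gives
\[
\EE\!\left|\sum_{i=1}^\infty Y_i\right|^p \;\le\; \liminf_{k\to\infty} \EE|S_{m_k}|^p,
\]
and bounding each $\EE|S_{m_k}|^p$ via part (1), followed by an application of the second inequality in~\eqref{eq:bahr:cond:2}, yields \eqref{eq:bahr:infty}.

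The main obstacle I foresee is verifying the sharp form of the elementary one-variable estimate, where the constant on $|b|^p$ must be no larger than $2$. This requires a case analysis on the signs of $a$ and $a+b$ together with a Taylor-type expansion handled carefully near $a=0$, since $|a|^{p-1}$ is singular when $p<2$; the degenerate case $a=0$ reduces to the trivial $|b|^p \le 2|b|^p$. The martingale-type cancellation in part (1) and the Fatou argument in part (2) are otherwise routine, modulo the usual clarifications on extracting an almost-surely convergent subsequence from the $\liminf$ assumption in \eqref{eq:bahr:cond:2}.
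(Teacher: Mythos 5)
Your part~(1) gives a self-contained proof of what the paper simply cites as \cite[Theorem~2]{bahrInequalities$r$thAbsolute1965}. The elementary one-variable estimate $|a+b|^p \le |a|^p + p\,\mathrm{sign}(a)|a|^{p-1}b + 2|b|^p$ is precisely von Bahr and Esseen's Lemma~1, and your induction from $S_\ell$ to $S_{\ell+1}$ together with the observation that the cross term is killed by conditioning on $S_\ell$ is exactly their proof of Theorem~2. So the mathematics is sound; the route is different only in that you reproduce the argument rather than invoke it.

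Part~(2) has a genuine gap. You extract a subsequence $(m_k)$ so that $S_{m_k} := \sum_{i=1}^{m_k} Y_{m_k,i} \to \sum_{i=1}^{\infty} Y_i$ almost surely, apply Fatou along $(m_k)$, and then bound $\EE|S_{m_k}|^p$ by $2\sum_{i=1}^{m_k}\EE|Y_{m_k,i}|^p$ via part~(1). At that point you need $\liminf_{k}\sum_{i=1}^{m_k}\EE|Y_{m_k,i}|^p \le \sum_{i=1}^{\infty}\EE|Y_i|^p$, but the second part of \eqref{eq:bahr:cond:2} controls the liminf over the \emph{whole} sequence, and the liminf along the subsequence you chose for the first condition can only be larger: $\liminf_{k} a_{m_k} \ge \liminf_{m} a_m$. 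The two liminfs in \eqref{eq:bahr:cond:2} need not be achieved along a common subsequence, so the argument as written does not close. The paper avoids this by not extracting a subsequence at all: it applies Fatou's lemma directly to the full sequence $m\to\infty$, obtaining $\EE|\sum_i Y_i|^p \le \liminf_{m}\EE|S_m|^p \le \liminf_m 2\sum_i\EE|Y_{m,i}|^p \le 2\sum_i\EE|Y_i|^p$, where the last inequality is exactly the second part of \eqref{eq:bahr:cond:2}. To repair your version you would either have to drop the subsequence entirely (as the paper does), or choose $(m_k)$ to realize the liminf in the \emph{second} condition of \eqref{eq:bahr:cond:2} and then justify Fatou along that particular subsequence, which in turn requires interpreting the first condition as actual a.s.\ convergence of $S_m$ (so that every subsequence works). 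Either fix is minor, but the proof as stated does not follow through.
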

\begin{proof}
  Part (i) is exactly~\cite[Theorem~2]{bahrInequalities$r$thAbsolute1965}.
  To prove Part (ii),
  we apply Fatou's lemma together with \eqref{eq:bahr:fi} and \eqref{eq:bahr:cond:2} to obtain
  \begin{align*}
  \EE
  \left|
  \sum_{i=1}^{\infty}
  Y_i
  \right|^p
    \ = \
  \EE \left(
  \liminf_{m\to\infty} \left|
  \sum_{i=1}^{m}
    Y_{m,i}
  \right|^p \right)
  \ \le \
    \liminf_{m\to\infty}
  \EE
  \left|
  \sum_{i=1}^{m}
    Y_{m,i}
  \right|^p
  & \ \le \
    \liminf_{m\to\infty}
    2
  \sum_{i=1}^{m}
  \EE
  \left|
    Y_{m,i}
  \right|^p \\
  &\le
    2
  \sum_{i=1}^{\infty}
  \EE
  \left|
  Y_i
  \right|^p
  \,,
  \end{align*}
  as announced.
\end{proof}

\subsection{Regularity of distribution functions and their derivatives}
For $\gamma\in (0,1)$, we define
\begin{align*}
  g_\gamma \ :\  \mathbb{R} \to (0,1], \qquad x\ \mapsto \ g_\gamma(x) \ =\  \frac{1}{(1 + |x|)^{\gamma + 1}}.
\end{align*}
For $k\in\NN$, we define the truncated time series as
\begin{align*}
  X_{t,k}
  \ := \ \sum_{j=0}^k
  a_j \varepsilon_{t-j}
  \qquad\text{and}\qquad
  \widetilde{X}_{t,k}
  \ := \
  X_t
  \ - \ X_{t,k}
  \ = \ \sum_{j=k+1}^\infty
  a_j \varepsilon_{t-j}
  \,.
\end{align*}
By convention, we write $X_{t,\infty}:=X_t$. In the following, let $f_k$ denote the density of $X_{0,k}$, and $f_{\infty}$ denote the density of $X_{0,\infty}=X_0$.
Note that
$X_{0,k}=_{d} X_{t,k}$ for all $t\in\mathbb{Z}$, due to stationarity of the sequence $(\varepsilon_t)$.
The next lemma is needed when dealing with heavy tails ($\alpha\in(1,2)$). It is a substitute for the mean value theorem that may be applied in settings with finite fourth moments (compare \cite{koulAsymptoticsEmpiricalProcesses2001, hoAsymptoticExpansionEmpirical1996}). Also note that, as explained in the introduction, we assimilate the case $\alpha=2$ to fit in the framework of $\alpha\in(1,2)$, which is essentially done here.
\begin{lemma}
  \label{lem:f}
  Let Assumptions~\ref{asu:coef} and~\ref{asu:f} hold.
  Then there exists $k_0 \in \NN$ such that
  for all $k>k_0$ the distribution functions of $X_0$ and $X_{0,k}$
  belong to $C^2(\RR)$.
  Furthermore, for all $\gamma\in(0,\alpha-1)$ and $r\in (1,2)$ with $1+\gamma<r<\alpha$ and $1/(1-d)<r$,
  it holds for $x,y\in \RR$ with $|x-y|\le 1$ and all $k>k_0$
  \begin{align}
    \label{lem:f:1}
    |f_{\infty}'(x)|
    \ + \
    |f_k'(x)|
    &
    \ \lesssim \
    g_\gamma(x)
    \,,
    \\
    \label{lem:f:2}
    |
    f_{\infty}'(x)
    -
    f_{\infty}'(y)
    |
    \ + \
    |
    f_k'(x)
    -
    f_k'(y)
    |
    &
    \ \lesssim \
    |x-y|
    \cdot
    g_\gamma(x)
    \,,
    \\
    \label{lem:f:3}
    |
    f_{\infty}'(x)
    -
    f_k'(x)
    |&
    \ \lesssim \
    k^{-(1-d) + 1/r}\cdot
    g_\gamma(x)
    \,,
  \end{align}
  where $\lesssim$ is $\le$ up to a constant $C_{\gamma,r}>0$ only depending on $\gamma$ and $r$.
\end{lemma}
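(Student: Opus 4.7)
The plan is to isolate one innovation with nonzero coefficient in $X_{0,k}$ and in $X_0$, expressing the densities $f_k$ and $f_\infty$ as expectations of a scaled copy of $f_\varepsilon$, so as to reduce all three bounds to the pointwise and Lipschitz properties of $f'_\varepsilon$ supplied by Assumption~\ref{asu:f}(ii), combined with uniform $L^r(\PP)$ control on the remaining innovation sum via the martingale-difference inequality of Lemma~\ref{lem:bahr}. By Assumption~\ref{asu:coef}, there exists $j_0$ with $a_{j_0} \neq 0$; take $k_0 \geq j_0$. For $k > k_0$, decompose $X_{0,k} = a_{j_0}\varepsilon_{-j_0} + W_{k,j_0}$ with $W_{k,j_0} = \sum_{0 \leq j \leq k,\, j \neq j_0} a_j \varepsilon_{-j}$ independent of $\varepsilon_{-j_0}$, and analogously $X_0 = a_{j_0}\varepsilon_{-j_0} + W_{\infty,j_0}$. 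Conditioning and differentiating under the expectation (justified by $|f'_\varepsilon| \lesssim 1$) give
\[
f_k^{(1)}(x) \ = \ \frac{1}{a_{j_0}|a_{j_0}|}\,\EE\!\left[f'_\varepsilon\!\left(\frac{x - W_{k,j_0}}{a_{j_0}}\right)\right],
\]
and an analogous formula for $f_\infty^{(1)}$; this simultaneously yields $C^2$ regularity of the associated distribution functions. Lemma~\ref{lem:bahr}, symmetry of $\varepsilon$, and the hypotheses $r \in (1,2)$, $r < \alpha$, and $r > 1/(1-d)$ ensure $\|W_{k,j_0}\|_{L^r(\PP)}$ is finite uniformly in $k$, since $\sum_j |a_j|^r < \infty$ under Assumption~\ref{asu:coef}.

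For~\eqref{lem:f:1}, I insert the pointwise decay $|f'_\varepsilon(u)| \lesssim (1+|u|)^{-\alpha}$ of Assumption~\ref{asu:f}(ii) and split the expectation on $\{|W_{k,j_0}| \leq |x|/2\}$ and its complement: on the first event $(1+|x - W_{k,j_0}|)^{-\alpha} \lesssim (1+|x|)^{-\alpha}$; on the second, Markov's inequality combined with the uniform $L^r$-bound yields $\PP(|W_{k,j_0}| > |x|/2) \lesssim (1+|x|)^{-r}$. Since $\gamma + 1 < r < \alpha$, the sum is dominated by $g_\gamma(x)$. For~\eqref{lem:f:2}, when $|x-y| < |a_{j_0}|$ I use the Lipschitz property of $f'_\varepsilon$ in Assumption~\ref{asu:f}(ii), pull $|x-y|/|a_{j_0}|$ out of the expectation, and repeat the splitting; when $|a_{j_0}| \leq |x-y| < 1$, I fall back on the just-proven~\eqref{lem:f:1} together with the trivial inequality $1 \leq |x-y|/|a_{j_0}|$, absorbing the dependence on $|a_{j_0}|$ into the constant.

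Bound~\eqref{lem:f:3} is the main obstacle. Conditioning on the independent tail $\widetilde{X}_{0,k}$ yields the key identity
\[
f_\infty^{(1)}(x) - f_k^{(1)}(x) \ = \ \EE\bigl[f_k^{(1)}(x - \widetilde{X}_{0,k}) - f_k^{(1)}(x)\bigr].
\]
On $\{|\widetilde{X}_{0,k}| < 1\}$, the already-proven~\eqref{lem:f:2} gives a contribution $\lesssim g_\gamma(x)\,\EE|\widetilde{X}_{0,k}| \leq g_\gamma(x)\,\|\widetilde{X}_{0,k}\|_{L^r(\PP)}$ by Jensen. On $\{|\widetilde{X}_{0,k}| \geq 1\}$ I apply~\eqref{lem:f:1} to both terms and split once more: if $|\widetilde{X}_{0,k}| \leq |x|/2$ then $g_\gamma(x - \widetilde{X}_{0,k}) \lesssim g_\gamma(x)$ and Markov controls the tail weight; if $|\widetilde{X}_{0,k}| > |x|/2$ then $g_\gamma(x - \widetilde{X}_{0,k}) \leq 1$ but Markov produces an extra factor $(1+|x|)^{-r} \lesssim g_\gamma(x)$ (using $r > \gamma + 1$). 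Each sub-case is dominated by $g_\gamma(x)\,\|\widetilde{X}_{0,k}\|_{L^r(\PP)}$. A final application of Lemma~\ref{lem:bahr} combined with Assumption~\ref{asu:coef} yields $\|\widetilde{X}_{0,k}\|_{L^r(\PP)}^r \lesssim \sum_{j > k} |a_j|^r \asymp k^{1-(1-d)r}$, so $\|\widetilde{X}_{0,k}\|_{L^r(\PP)} \lesssim k^{-(1-d)+1/r}$, matching the claimed rate.

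The delicate point throughout is the balance, in~\eqref{lem:f:3}, between the pointwise decay $g_\gamma(x - \widetilde{X}_{0,k})$ and the tail weight $\PP(|\widetilde{X}_{0,k}| > \cdot)$, which forces the three-regime partition above so that every piece is dominated by the single quantity $g_\gamma(x)\,\|\widetilde{X}_{0,k}\|_{L^r(\PP)}$. Case~(i) of Assumption~\ref{asu:f} needs extra preparation, since $f_\varepsilon$ itself may not satisfy pointwise or Lipschitz bounds on its derivative: one enlarges $k_0$ so that the conditional density obtained after isolating a block of innovations inherits the properties of case~(ii) via Fourier inversion from the characteristic function decay in Assumption~\ref{asu:f}(i). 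With this substitution, the three-step argument above carries over unchanged.
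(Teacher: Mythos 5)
The paper's own proof of this lemma is citation-based: under Assumption~\ref{asu:f}(ii), the three bounds are taken verbatim from \cite[Lemma~4.2]{koulAsymptoticsEmpiricalProcesses2001}, and under Assumption~\ref{asu:f}(i) a close variant is taken from \cite[Lemma~2]{giraitisCentralLimitTheorem1999}, with a short argument comparing $g_1$ to $g_\gamma$ and $k^{2d-1}$ to $k^{-(1-d)+1/r}$. Your proposal is instead a self-contained proof from scratch. For case~(ii) the mechanics you describe — isolate one nonzero-coefficient innovation, express $f_k^{(1)}$ as an expectation of a scaled $f_\varepsilon'$, feed in the pointwise and Lipschitz bounds from Assumption~\ref{asu:f}(ii), and split on the size of $W_{k,j_0}$ or $\widetilde X_{0,k}$ with Markov controlling the tail — are in substance the argument behind Koul–Surgailis's Lemma~4.2, and, up to routine technical care (handling small $|x|$ in the Markov bounds; the dominance $\|\widetilde X_{0,k}\|_{L^r}^r\lesssim \|\widetilde X_{0,k}\|_{L^r}$ requires $k_0$ large enough so that $\|\widetilde X_{0,k}\|_{L^r}\le 1$; tracking that the constant picks up powers of $|a_{j_0}|^{-1}$), it is sound. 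The appeal to $\sum_j |a_j|^r<\infty$ is justified since $r>1/(1-d)$, and the exponent $1/r-(1-d)$ in~\eqref{lem:f:3} comes out correctly from $\|\widetilde X_{0,k}\|_{L^r}^r\lesssim\sum_{j>k}j^{(d-1)r}\asymp k^{1-(1-d)r}$.

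The genuine gap is case~(i). You dispatch it with one sentence: "one enlarges $k_0$ so that the conditional density obtained after isolating a block of innovations inherits the properties of case~(ii) via Fourier inversion ... the three-step argument above carries over unchanged." That is a claim, not a proof, and the claim is nontrivial. Assumption~\ref{asu:f}(i) gives only a moment condition $\EE[\varepsilon^2]<\infty$ and a polynomial decay bound on the characteristic function; it says nothing about pointwise decay or Hölder regularity of $f_\varepsilon'$. To make a block density $f_{S_m}$ satisfy $|f_{S_m}'(x)|\lesssim(1+|x|)^{-2}$ and the matching Lipschitz bound, one has to run an integration-by-parts argument in Fourier space on $\phi_m(s)=\prod_{j\in J}\phi(a_js)$, controlling $\phi_m'$ and $\phi_m''$ via the boundedness of $\phi',\phi''$ (which follows from $\EE[\varepsilon^2]<\infty$) together with the product-decay of the undifferentiated factors, and then verifying that enough innovations have been grouped so that $s\mapsto s\phi_m'(s)$ and $s\mapsto s^2\phi_m''(s)$ are integrable. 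None of this is spelled out, the required size of the block is never identified, and it is not obvious a priori that the resulting bounds have the form required by your case-(ii) argument (in particular that $\alpha=2$ may be substituted throughout). The paper sidesteps this by citing \cite{giraitisCentralLimitTheorem1999}, which carries out precisely this Fourier-analytic work. As written, your proposal has a complete and essentially correct argument for $\alpha\in(1,2)$ but leaves the finite-variance case unproved.
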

\begin{proof}
  Under Assumption~\ref{asu:f}(ii),
  the statement is proved in
  \cite[Lemma~4.2]{koulAsymptoticsEmpiricalProcesses2001}.
  Under Assumption~\ref{asu:f}(i),
   a proof of a slightly different statement is found in
  \cite[Lemma~2]{giraitisCentralLimitTheorem1999}.
  Indeed, the only difference is in \eqref{lem:f:1}-\eqref{lem:f:3}, that is, \cite[Lemma~2]{giraitisCentralLimitTheorem1999} prove that
\begin{align*}
    |f_{\infty}'(x)|
    \ + \
    |f_k'(x)|
    &
    \ \lesssim \
    g_1(x)
    \,,
    \\
    |
    f_{\infty}'(x)
    -
    f_{\infty}'(y)
    |
    \ + \
    |
    f_k'(x)
    -
    f_k'(y)
    |
    &
    \ \lesssim \
    |x-y|
    \cdot
    g_1(x)
    \,,
    \\
    |
    f_{\infty}'(x)
    -
    f_k'(x)
    |&
    \ \lesssim \
    k^{2d-1}\cdot
    g_1(x)
    \,.
  \end{align*}
  Since $g_1\le g_\gamma$ for $\gamma\in(0,1)$, we immediately recover \eqref{lem:f:1} and \eqref{lem:f:2}.
  The only remaining difference is in the third inequality.
  For this, note that
  $
    k^{2d-1}
    \ \le \
    k^{-(1-d) + 1/r}
  $
  due to
  \begin{align*}
    (2d-1)
    \ + \
    (1-d) - 1/r
    \ = \
    d
    \ - \
    1/r
    \ \le \
    1
    \ - \
    \frac{2}{\alpha}
       \
       \le
    \
    0
  \end{align*}
  since
  $1/\alpha<1/r$, $d<1-1/\alpha$, and $\alpha\le 2$. Inequality \eqref{lem:f:3} follows.
\end{proof}
The next lemma is an analytic tool to deal with $g_\gamma$, which may arise after applying Lemma~\ref{lem:f}.
\begin{lemma}
  \label{lem:5.1}
  Let $\gamma\in (0,1)$. Then the following hold:
  \begin{enumerate}[label=(\roman*)]
    \item
    For all $y,z\in\mathbb{R}$,
\begin{align}
    \label{lem:5.1:1}
    g_{\gamma}
    (z+y)
    \ \lesssim \
    g_\gamma(z)
    \cdot
    (1\lor |y|)^{1+\gamma}\,.
  \end{align}
  \item
    Let $a,b\in\mathbb{R}$ with $a<b$. Then, for all $z\in\mathbb{R}$,
\begin{align}
    \label{lem:5.1:2}
    \int_a^b g_\gamma(z-w)\,\mathrm{d}w
    \ \lesssim \
    g_\gamma(z)
    \cdot
    ((b-a) \lor 1)^{1+\gamma}
    \,.
  \end{align}
  \end{enumerate}
  Here $\lesssim$ denotes inequality up to a constant $C_{\gamma}>0$ depending only on $\gamma$.
\end{lemma}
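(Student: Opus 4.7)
The plan is to start from the triangle inequality $|z|\leq |z+y|+|y|$ and upgrade it to the multiplicative form $1+|z|\leq (1+|z+y|)(1+|y|)$, which is valid since the cross term $|z+y|\cdot|y|$ is nonnegative. Combining with the trivial bound $1+|y|\leq 2(1\lor|y|)$ and raising both sides to the power $-(1+\gamma)$ reverses the inequality and delivers the claim with explicit constant $C_\gamma = 2^{1+\gamma}$.

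\textbf{Part (ii).} The natural first step is the substitution $u=z-w$, which reduces the statement to bounding $\int_{z-b}^{z-a} g_\gamma(u)\,\mathrm{d}u$. I would then split into two cases depending on whether $0\in[z-b,z-a]$, i.e.\ whether $z\in[a,b]$. In the first case, the universal integrability bound $\int_\RR g_\gamma(u)\,\mathrm{d}u = 2/\gamma$ supplies a uniform constant. In the second case, $g_\gamma$ is monotone on $[z-b,z-a]$, so the explicit antiderivative $v\mapsto -\gamma^{-1}(1+|v|)^{-\gamma}$ yields a closed-form bound in terms of $g_\gamma$ at the endpoint of the interval closest to zero; this endpoint value can then be re-expressed in terms of $g_\gamma(z)$ by invoking part (i) with $y\in\{-a,-b\}$.

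\textbf{Main obstacle.} The delicate step will be consolidating these case-specific bounds into the unified target $g_\gamma(z)\cdot((b-a)\lor 1)^{1+\gamma}$. The factor $((b-a)\lor 1)^{1+\gamma}$ must be engineered to absorb both the polynomial factor $(1+|z|)^{1+\gamma}$ that arises when reconciling the universal constant $2/\gamma$ from Case~1 with $g_\gamma(z)$, and the correction $(1+|a|\lor|b|)^{1+\gamma}$ arising from the translation step in Case~2. Calibrating these contributions against the sharp $(1+\gamma)$ exponent on the right-hand side is where the main effort of the proof lies; a clean write-up likely hinges on bookkeeping the relative sizes of $|z|$, $b-a$, and the distance from $z$ to the interval endpoints.
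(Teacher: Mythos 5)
Your argument for part (i) is correct and is essentially the standard one; the paper simply cites \cite{koulAsymptoticsEmpiricalProcesses2001} for it.

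For part (ii), the obstacle you flag in Case~1 is not a calibration problem: as stated, the inequality is \emph{false}, and no bookkeeping will rescue it. Take $a=N$, $b=N+1$, $z=N+\tfrac12$. After your substitution $u=z-w$ the left side equals $\int_{-1/2}^{1/2}g_\gamma(u)\,\mathrm{d}u$, a fixed positive constant, while the right side $g_\gamma(N+\tfrac12)\cdot((b-a)\lor 1)^{1+\gamma}=(N+\tfrac32)^{-(1+\gamma)}\to 0$ as $N\to\infty$, so no constant $C_\gamma$ can work for arbitrary $a<b$. The absorption $1+|z|\lesssim(b-a)\lor 1$ that Case~1 requires simply does not hold in general. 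The statement (and your scheme) is rescued under the extra hypothesis $a\le 0\le b$, which is in fact satisfied by every application in the paper, since the integrals are always over intervals $|s|\le c$. Then $z\in[a,b]$ forces $|z|\le(-a)\lor b\le b-a$, hence $1+|z|\le 2((b-a)\lor 1)$, and Case~1 closes via $\|g_\gamma\|_{L^1(\RR)}=2/\gamma$. In Case~2 (say $z>b$), the antiderivative $-\gamma^{-1}(1+v)^{-\gamma}$ you propose carries exponent $-\gamma$, not $-(1+\gamma)$, and so does not compare directly to $g_\gamma(z)$; it is cleaner to bound the integral by $(b-a)\cdot g_\gamma(z-b)$ when $1+z\ge 2(b-a)$ (so that $1+z-b\ge(1+z)/2$, using $b\le b-a$), and by $\|g_\gamma\|_{L^1(\RR)}$ together with $1+z\lesssim(b-a)\lor 1$ otherwise. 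Be aware, finally, that the paper's own proof contains the same gap in its opening line: after normalizing to $a=1$ via $\tilde z:=z-a+1$ it asserts ``$g_\gamma(\tilde z)\lesssim g_\gamma(z)$'', which is false in general (e.g.\ $a=-N$, $z=-N$ gives $\tilde z=1$, yet $g_\gamma(1)\not\lesssim g_\gamma(-N)$); that step, too, is only justified once $a\le 0\le b$ is imposed.
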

\begin{proof}
  The Inequality \eqref{lem:5.1:1} follows from \cite[Lemma~5.1]{koulAsymptoticsEmpiricalProcesses2001}.
  We now prove \eqref{lem:5.1:2}.
Let $a,b\in\RR$ with $a<b$, and define
$
   u
    :=
   b-a + 1
   \ \ge \
   1
$
and
$
   \tilde{z}
    :=
   z
   -
   a+1
$.
  Then
\begin{align*}
  \int_a^b g_\gamma(z-w) \,\mathrm{d}w
  \ = \
  \int_1^{u}
  g_\gamma(\tilde{z}-w)
  \,\mathrm{d}w
  \,.
\end{align*}
Since
$g_\gamma(\widetilde z) \lesssim g_\gamma(z)$, it suffices to consider the case
  $a = 1 <b$.
Note that for all
$b>1$, we have
$
  b\ \le \ 2((b-1)\lor 1)
$, and thus it suffices to show
for all $z\in\RR$
\begin{align}
  \label{eq:result}
  \frac{1}{b^{1+\gamma}}
  \int_1^b
  g_{\gamma}(z-w)
  \,\mathrm{d}w
  \ \lesssim \
  g_\gamma(z)
  \,.
\end{align}
We now consider two cases.
\\
\textbf{Case $|z|\le 1$:}
It holds
\begin{align*}
  \frac
  {
  |z|+1
  }
  {2}
  \ \le \
  1
  \ < \
  b
  \qquad
  \text{and hence}
  \qquad
  \frac
  {1}
  {b^{1+\gamma}}
  \ \lesssim \
  \frac
  {1}
  {(1+|z|)^{1+\gamma}}
    \ = \
    g_\gamma(z)
  \,.
\end{align*}
Since
\begin{align}
  \label{eq:result_bound}
  \int_1^b
  g_{\gamma}(z-w)
  \,\mathrm{d}w
  \ \le \
  \int_{\RR}
  g_{\gamma}(w)
  \,\mathrm{d}w
  \ < \
  \infty
  \qquad\text{for all}\  z\in\RR
  \,,
\end{align}
we get \eqref{eq:result}.
\\
\textbf{Case $|z|>1$:}
We will use, without further mention,
the inequalities
$1/|z|\le 2/(1+|z|)\le 2/|z|$,
and consider three subcases.
\\
\textbf{Subcase
$b\ge |z|/2$:
}
It holds
\begin{align*}
  \frac
  {1}
  {b^{1+\gamma}}
  \ \lesssim \
  \frac
  {1}
  {(1+|z|)^{1+\gamma}}
  \,,
\end{align*}
and by \eqref{eq:result_bound}, this implies \eqref{eq:result}.
\\
\textbf{Subcase
 $z/2>b\ge 1$:
}
In this case it holds
$z-1>z-b>0$, and
\begin{align*}
  \int_1^b
  g_\gamma(z-w)
  \,\mathrm{d}w
  \ \le \
  \int_{z-b}^{z-1}
  \frac
  {1}
  {w^{1+\gamma}}
  \,\mathrm{d}w
  &
  \ = \
  \frac
  {1}
  {\gamma}
  \left(
  \frac
  {1}
  {(z-b)^{\gamma}}
  \ - \
  \frac
  {1}
  {(z-1)^{\gamma}}
  \right)
  \\&
  \ = \
  \frac
  {1}
  {\gamma(z-b)^{\gamma}}
  \left(
  1
  \ - \
  \left(
  \frac
  {z-b}
  {z-1}
  \right)^{\gamma}
  \right)
  \,.
\end{align*}
By the fundamental theorem of calculus, we obtain
\begin{align*}
  1
  \ - \
  \left(
  \frac
  {z-b}
  {z-1}
  \right)^{\gamma}
  \ = \
  -
  \int_1^b
  \frac{\mathrm{d}}{\mathrm{d} v}
  \left(
  \left(
  \frac
  {z-v}
  {z-1}
  \right)^{\gamma}
  \right)
  \,\mathrm{d} v
  &
  \ = \
  \frac
  {\gamma}
  {(z-1)^\gamma}
  \cdot
  \int_1^b
  (z-v)^{\gamma -1}
  \,\mathrm{d}v
  \\&
  \ \le \
  \frac
  {\gamma}
  {(z-1)^{\gamma}}
  \cdot
  b
  (z-b)^{\gamma -1}
\end{align*}
since $\gamma\in(0,1)$.
Thus
\begin{align*}
\int_1^b
  g_\gamma(z-w)
  \,\mathrm{d}w
  \ \le \
  \frac
  {b}
  {(z-1)^{\gamma}(z-b)}
  \ \le \
  \frac
  {b}
  {(z-z/2)^{\gamma}(z-z/2)}
  \ \lesssim \
  \frac
  {b}
  {z^{\gamma+1}}
  \quad
  \text{by}
   \
   z/2 > b \ge 1\,.
\end{align*}
It follows \eqref{eq:result}, that is,
\begin{align}
  \frac
  {1}
  {b^{1+\gamma}}
\int_1^b
  g_\gamma(z-w)
  \,\mathrm{d}w
  \ \lesssim \
  \frac
  {1}
  {b^{\gamma}}
  \frac
  {1}
  {(1+|z|)^{1+\gamma}}
  \ \lesssim \
  \frac
  {1}
  {(1+|z|)^{1+\gamma}}
  \ = \
  g_\gamma(z)
  \,.
  \label{eq:result_bound:1}
\end{align}
\textbf{Subcase
$-z/2>b\ge 1$:
}
Using the symmetry of $g_\gamma$, we write
\begin{align*}
  \int_1^b
  g_\gamma(z-w)
  \,\mathrm{d}w
  \ =\
  \int_1^b
  g_\gamma(w-z)
  \,\mathrm{d}w
  &
  \ \le \
  \int^{b-z}_{1-z}
  \frac
  {1}
  {w^{1+\gamma}}
  \,\mathrm{d}w
  \\
  &
  \ = \
  \frac
  {1}
  {\gamma}
  \left(
  \frac
  {1}
  {(1-z)^{\gamma}}
  \ - \
  \frac
  {1}
  {(b-z)^{\gamma}}
  \right)
  \\&
  \ = \
  \frac
  {1}
  {\gamma(b-z)^{\gamma}}
  \left(
  \left(
  \frac
  {b-z}
  {1-z}
  \right)^{\gamma}
  \ - \
  1
  \right)
  \,.
\end{align*}
Applying the fundamental theorem of calculus again and using $\gamma \in (0,1)$, we obtain
\begin{align*}
  \left(
  \frac
  {b-z}
  {1-z}
  \right)^{\gamma}
   \ - \
  1
  \ = \
  \int_1^b
  \frac{\mathrm{d}}{\mathrm{d} v}
  \left(
  \left(
  \frac
  {v-z}
  {1-z}
  \right)^{\gamma}
  \right)
  \,\mathrm{d} v
  &
  \ = \
  \frac
  {\gamma}
  {(1-z)^{\gamma}}
  \cdot
  \int_1^b
  (v-z)^{\gamma - 1}
  \,\mathrm{d}v
  \\&
  \ \le \
  \frac
  {\gamma}
  {1-z}
  \cdot
  b
  \,.
\end{align*}
Thus,
\begin{align*}
  &
\int_1^b
  g_\gamma(z-w)
  \,\mathrm{d}w
  \\&
  \ \le \
  \frac
  {b}
  {(b-z)^{\gamma}(1-z)}
  \ \le \
  \frac
  {b}
  {(1-z)^{\gamma}(1-z)}
  \ \lesssim \
  \frac
  {b}
  {|z|^{\gamma+1}}
  \quad
  \text{by}
   \
  - z/2 > b \ge 1\,.
\end{align*}
As in \eqref{eq:result_bound:1},
this yields \eqref{eq:result}.
This completes the proof.
\end{proof}
\subsection{Swap integration and differentiation}
The next lemma asserts that all the notions constructed from $G_n$ and used in this work are well-defined. It justifies interchanging differentiation and integration, which simplifies the terms we are going to work with.
\begin{lemma}[Swap Integration and Differentiation]
  \label{lem:swap}
  Let Assumptions~\ref{asu:coef},~\ref{asu:G_n} and~\ref{asu:f} hold
  with $\gamma_G < d/(1-d) $, and define
$
  G_{k,n}(y)
  \ := \
  \EE[G_n(X_{0,k}+y)]
  \ = \
  \int_{\mathbb{R}} G_n(x) f_k(x-y)\mathrm{d}x
  \,.
$
  Then the following statements hold:
  \begin{enumerate}[label=(\roman*)]
    \item
    $\EE|G_n(X_{0,k}+y)|^r<\infty$ for all $k\in \NN \cup \{\infty\}$,  $n\in\NN$, $y\in\RR$, and for all $r\in (0,\alpha)$.
  \item
With the notation of Lemma~\ref{lem:f},
    for all $k>k_0$ or $k=\infty$, it holds that the function $G_{k,n}$ is continuously differentiable with
  \begin{align*}
    G_{k,n}'(y)
    \ := \
     \frac{\mathrm{d}}{\mathrm{d}y}
     G_{k,n}(y)
     \ = \
    -
    \int_{\RR}
    G_n(x)
    f'_{k}
    (x-y)
    \,\mathrm{d}x
    \ < \ \infty
    \qquad\text{for all}\ y\in\RR\,.
  \end{align*}
  \end{enumerate}
   \end{lemma}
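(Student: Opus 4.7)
The plan is to handle the two parts separately: part~(i) is a direct moment estimate, while part~(ii) relies on a Leibniz-rule argument whose cornerstone is the pointwise density-derivative bound from Lemma~\ref{lem:f}.

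For part~(i), the plan is to use Assumption~\ref{asu:G_n} to majorize
\[
|G_n(X_{0,k}+y)|^r \ \lesssim \ (1+|X_{0,k}+y|)^{r\gamma_G}\,\ind\{X_{0,k}+y>u_n\},
\]
so that the claim reduces to showing $\EE[(1+|X_{0,k}+y|)^{r\gamma_G}]<\infty$. For finite $k$, $X_{0,k}$ is a finite linear combination of i.i.d.\ innovations and hence lies in $L^s(\PP)$ for every $s<\nu$ by the triangle inequality. For $k=\infty$, Assumption~\ref{asu:coef} provides $|a_j|\lesssim j^{d-1}$, so $\sum_j |a_j|^s<\infty$ whenever $s\in(1/(1-d),\alpha)$; combined with the Bahr--Esseen-type bound of Lemma~\ref{lem:bahr}, applied to the centered (by symmetry) independent summands $a_j\varepsilon_{-j}$, this yields $X_0\in L^s(\PP)$ for such $s$, and hence for all $s<\alpha$ by Jensen's inequality on the probability space. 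Since the admissible range~\eqref{cond:r_gamma} enforces $r(1+\gamma_G)<\alpha$, in particular $r\gamma_G<\alpha$, the conclusion follows by taking $s=r\gamma_G$ together with a translation constant in $|y|$.

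For part~(ii), the plan is to apply the Leibniz rule for differentiation under the integral sign, which requires exhibiting an integrable dominating function for $|G_n(x)f_k^{(1)}(x-y)|$ uniform in $y$ on a neighborhood. Fix $\gamma\in(\gamma_G,\alpha-1)$; this interval is nonempty under~\eqref{cond:r_gamma} since $\gamma_G<\alpha/r-1<\alpha-1$ for $r>1$. Bound~\eqref{lem:f:1} of Lemma~\ref{lem:f} yields $|f_k^{(1)}(x-y)|\lesssim g_\gamma(x-y)$ for $k>k_0$ and for $k=\infty$, while Lemma~\ref{lem:5.1}(i) transfers the $y$-dependence into a polynomial constant via $g_\gamma(x-y)\lesssim g_\gamma(x)(1\lor|y|)^{1+\gamma}$. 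Combining with Assumption~\ref{asu:G_n} gives the uniform bound
\[
|G_n(x)f_k^{(1)}(x-y)| \ \lesssim \ (1+|x|)^{\gamma_G-(1+\gamma)}\,\ind\{x>u_n\}
\]
for $y$ in any bounded set, which is integrable in $x$ because $\gamma_G-(1+\gamma)<-1$. The Leibniz rule then applies and produces the stated formula for $G_{k,n}^{(1)}$. Continuity of $G_{k,n}^{(1)}$ follows from a further dominated convergence argument using continuity of $f_k^{(1)}$ (inherited from Assumption~\ref{asu:f} through Lemma~\ref{lem:f}) with the same dominating function.

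The main obstacle is securing the uniform-in-$y$ domination in part~(ii); this is precisely the role played by the translation estimate of Lemma~\ref{lem:5.1}(i), which converts a shift of the envelope $g_\gamma$ into a polynomial factor that can be absorbed into a constant on bounded sets. A minor remaining point is the compatibility of the choice $\gamma\in(\gamma_G,\alpha-1)$ with the admissibility constraints of Lemma~\ref{lem:f}, which is automatic under~\eqref{cond:r_gamma}.
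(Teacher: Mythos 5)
Your proposal is correct and follows essentially the same route as the paper. For part (i), you and the paper both reduce the claim to a moment bound on $X_{0,k}$ via Assumption~\ref{asu:G_n} and the Bahr--Esseen inequality (Lemma~\ref{lem:bahr}), the only cosmetic difference being that the paper directly takes the exponent to be $\alpha-\delta$ for small $\delta$ where you use Jensen to extend across $s<\alpha$. For part (ii), you invoke the Leibniz rule directly with the dominating function built from \eqref{lem:f:1} and \eqref{lem:5.1:1}, whereas the paper unpacks the Leibniz rule by hand via the fundamental theorem of calculus applied to the difference quotient; the two arguments rest on the identical estimate $|f_k^{(1)}(x-y)|\lesssim g_\gamma(x)(1\lor|y|)^{1+\gamma}$ combined with the growth bound on $G_n$, and the continuity argument is the same dominated-convergence step in both.
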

\begin{proof}
\textbf{
Proof of Part (i):
}
  By Assumption~\ref{asu:coef} there exists $\delta>0$ such that $(\alpha-\delta)(1-d)>1$.
  Let $\gamma_G$ as in Assumption~\ref{asu:G_n}.
  Since $r<\alpha$ and
  \begin{align}
    \label{nrtd}
  \gamma_G
  <
\frac{d}{1-d}
  \ = \
  \frac{1}{1-d}
  \ - \
  1
  \ < \
  \alpha - 1 \le 1
  \end{align}
  by assumption, we can choose $\delta>0$ small enough such that $\alpha-\delta\geq 1$ and $r\gamma_G<\alpha-\delta$. For $y\in \RR$ it follows
 \begin{align*}
   \EE|G_n(X_{0,k}+y)|^r
    &
    \ \lesssim \
    \EE[(1+|y+X_{0,k}|)^{r\gamma_G}]
    \\&
    \ \lesssim \
    \EE[(1+|y+X_{0,k}|)^{\alpha-\delta}]
    \\&
    \ \lesssim \
    1 \ + \
    |y|^{\alpha-\delta}
    \ + \
    \EE|X_{0,k}|^{\alpha-\delta}
    \\&
    \ \lesssim \
    1 \ + \
   |y|^{\alpha-\delta}
    \ + \
    \EE|\varepsilon|^{\alpha-\delta}
    \sum_{j=0}^\infty
   |a_j|^{\alpha-\delta}
    \\&
    \ \lesssim \
    1 \ + \
    |y|^{\alpha-\delta}
    \ + \
    \sum_{j=1}^\infty
    j^{-(\alpha-\delta)(1-d)}
    \ < \ \infty
    \,.
  \end{align*}
  The third inequality is due to the convexity of $x\mapsto |x|^{\alpha-\delta}$,
  and
  the fourth inequality is due to
  the last part of Lemma~\ref{lem:bahr}(i).
  This concludes the proof of Part (i).
  \\
\textbf{
Proof of Part (ii):
} Fix $y\in\RR$ and assume $y\neq y'$.
  By Part (i), it follows $\EE|G_n(X_{0,k}+y)|<\infty$ for all $y\in\RR$, so that we can use linearity of the expectation to get
 \begin{align}
    \label{lem:swap:1}
    \begin{split}
    &
    \frac{
    \EE[G_n(X_{0,k}+y)]
    -
    \EE[G_n(X_{0,k}+y')]
    }{y-y'}
    \ = \
    \int_\RR
    G_n(x)
    \frac{
    f_k(x-y)
    -
    f_k(x-y')
    }{y-y'}
    \,\mathrm{d}x
    \,.
    \end{split}
  \end{align}
  If we may interchange the limit $y'\to y$ with the integral, then
 \begin{align*}
    &
   \frac{\mathrm{d} }{\mathrm{d}y}
    \EE[G_n(X_{0,k}+y)]
    \ = \
    \lim_{y'\to y}
    \frac{
    \EE[G_n(X_{0,k}+y)]
    -
    \EE[G_n(X_{0,k}+y')]
    }{y-y'}
    \\&
    \ = \
    \int_\RR
    G_n(x)
    \lim_{y'\to y}
    \frac{
    f_k(x-y)
    -
    f_k(x-y')
    }{y-y'}
    \,\mathrm{d}x
    \ = \
    -
    \int_\RR
    G_n(x)
    f'_k(x-y)
    \,\mathrm{d}x
    \,,
  \end{align*}
  as required.
  We seek to use dominated convergence to interchange these limits, and to this end we show
  that there is a bivariate function $g$ such that $g(\cdot,y)\in L^1(\RR)$ and
  \begin{align}
    \label{lem:swap:eq:4}
\left|
    G_n(x)
    \frac{
    f_k(x-y)
    -
    f_k(x-y')
    }{y-y'}
\right|
\ \lesssim \
    g(x,y)
    \qquad\text{uniformly in $y'$ with $|y'-y|\le 1$.}
  \end{align}
  First we apply the fundamental theorem of calculus and the triangle inequality to get
    \begin{align*}
    \left|
    G_n(x)
    \frac{
    f_k(x-y)
    -
    f_k(x-y')
    }{y-y'}
    \right|
    \ \le \
    \frac{
    G_n(x)
    }
      {|y-y'|}
      \int_{(y\land y')}^{(y\lor y')}
    \left|
    f'_k(x-s)
    \right|
    \,\mathrm{d}s
    \,.
  \end{align*}
Since $\gamma_G< \alpha-1$
    by \eqref{nrtd},
    there exists $\gamma$ with $\gamma_G<\gamma<\alpha-1\le 1$ such that from Lemma~\ref{lem:f} with $1+\gamma < (\alpha \land 2) $, and the first part of  Lemma~\ref{lem:5.1} we get
  \begin{align*}
      \int_{(y\land y')}^{(y\lor y')}
    \left|
    f'_k(x-s)
    \right|
    \,\mathrm{d}s
    &
    \ \lesssim \
    g_{\gamma}(x)
      \int_{(y\land y')}^{(y\lor y')}
      (1\lor|s|)^{1+\gamma}
      \,\mathrm{d}s
      \\&
      \ \le \
    g_{\gamma}(x)
    |y-y'|
      (1\lor|y|\lor |y'|)^{1+\gamma}
      \\
      &
      \ \lesssim \
    g_{\gamma}(x)
    |y-y'|
    (1+|y|)^2
  \end{align*}
  since $|y-y'|\le 1$.
  Then, with
   Assumption~\ref{asu:G_n}
  \begin{align*}
    &
    \frac{
    G_n(x)
    }
      {|y-y'|}
      \int_{(y\land y')}^{(y\lor y')}
    \left|
    f'_k(x-s)
    \right|
    \,\mathrm{d}s
    \\&
    \ \lesssim \
    G_n(x)
    g_{\gamma}(x)
    (1+|y|)^2
    \ \lesssim \
    (1+|x|)^{\gamma_G}
    g_{\gamma}(x)
    (1+|y|)^2
    \ \lesssim \
    g_{\gamma-\gamma_G}(x)
    (1+|y|)^2
    \,.
  \end{align*}
  With $\gamma-\gamma_G>0$ and therefore
  \begin{align}
    \label{eq:lem:swap:g_bdd}
    \int_\RR
    g_{\gamma-\gamma_G}(x)
    \,\mathrm{d}x
    \ < \ \infty
    \,,
  \end{align}
    it follows \eqref{lem:swap:eq:4} with $g(x,y)=  g_{\gamma-\gamma_G}(x)
    (1+|y|)^2$.
    To complete the proof, we show that $G_{k,n}'$ is continuous.
    To this end, let $y\in\RR$ and $(y_m)_{m\in\mathbb{N}}$ a bounded sequence with $y_m\to y$ for $m\to\infty$.
    We want to show
    \begin{align*}
      \lim_{m\to\infty}
      G_{k,n}'(y_m)
      &
      \ = \
      \lim_{m\to\infty}
      -
      \int_{\RR}
      G_n(x)
      f_k'(x-y_m)
      \,\mathrm{d}x
      \\&
      \ = \
      -
      \int_{\RR}
      G_n(x)
      \lim_{m\to\infty}
      f_k'(x-y_m)
      \,\mathrm{d}x
      \\&
      \ = \
      -
      \int_{\RR}
      G_n(x)
      f_k'(x-y)
      \,\mathrm{d}x
      \\&
      \ = \
      G_{k,n}'(y)
      \,.
    \end{align*}
    The third equality is due to the continuity of $f_k'$ (see Lemma~\ref{lem:f}). To show the second equality we want to apply dominated convergence. To this end, note that by~\eqref{lem:f:1} in Lemma~\ref{lem:f} and then Lemma~\ref{lem:5.1}(i), with the choice of $\gamma$ as above,
    \begin{align*}
      \left|
      G_n(x)
      f_{k}'(x-y_m)
      \right|
       \ \lesssim \
      (1+|x|)^{\gamma_G}
      g_{\gamma}(x)
      (1\lor |y_m|)^{1+\gamma}
      \ \lesssim \
      g_{\gamma-\gamma_G}(x)
      \,.
    \end{align*}
    Together with \eqref{eq:lem:swap:g_bdd} we may apply dominated convergence to finish the proof.
\end{proof}

Lemma~\ref{lem:swap} tells us that
\begin{align*}
  G_{k,n}(y)
  \ := \
  \EE[G_n(X_{0,k}+y)]
  \qquad\text{and}\qquad
  G_{k,n}'(y)
  \ := \
  \frac{\mathrm{d}}{\mathrm{d}y}
  \EE[G_n(X_{0,k}+y)]
\end{align*}
are well-defined
for $k>k_0$.
The following lemma establishes a connection between consecutive values of $G_{k,n}$ and $G_{k,n}'$.
 \begin{lemma}
   \label{lem:cond}
  Let Assumptions~\ref{asu:coef},~\ref{asu:G_n} and~\ref{asu:f} hold true.
  It holds for $j\in\{0,1\}$ and $k>k_0$
   \[
     G_{k+1,n}^{(j)}(y)
     \ = \
     \EE[G_{k,n}^{(j)}(a_{k+1}\varepsilon_{-k-1}+y)]
     \qquad
     \text{and}
      \qquad
     G_{\infty,n}^{(j)}(y)
     \ = \
     \EE[G_{k,n}^{(j)}(\widetilde{X}_{0,k}+y)]
     \,.
   \]
 \end{lemma}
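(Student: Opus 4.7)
The plan rests on the decompositions
\[
X_{0,k+1} \ = \ X_{0,k} + a_{k+1}\varepsilon_{-k-1}
\qquad\text{and}\qquad
X_0 \ = \ X_{0,k} + \widetilde{X}_{0,k},
\]
in which the two summands on each right-hand side are independent (by the i.i.d.~assumption on $(\varepsilon_t)$). The strategy is to push the outer summand out of the expectation by Fubini/tower, which settles $j=0$ directly, and, for $j=1$, to combine this with the integral representation $G_{k,n}^{(1)}(y) = -\int_\RR G_n(x)\, f_k^{(1)}(x-y)\,\mathrm{d}x$ established in Lemma~\ref{lem:swap}(ii).

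For $j=0$, conditioning on the second summand gives, for example,
\[
G_{k+1,n}(y) \ = \ \EE\bigl[G_n(X_{0,k}+a_{k+1}\varepsilon_{-k-1}+y)\bigr] \ = \ \EE\bigl[G_{k,n}(a_{k+1}\varepsilon_{-k-1}+y)\bigr],
\]
and the $\widetilde{X}_{0,k}$ identity is analogous. The interchange is legitimate since $\EE|G_n(X_{0,k+1}+y)|<\infty$ (resp.~$\EE|G_n(X_0+y)|<\infty$) by Lemma~\ref{lem:swap}(i), so the joint integrand is absolutely integrable.

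For $j=1$, I would start from $G_{k+1,n}^{(1)}(y) = -\int_\RR G_n(x)\, f_{k+1}^{(1)}(x-y)\,\mathrm{d}x$. Since $X_{0,k}$ and $a_{k+1}\varepsilon_{-k-1}$ are independent, $f_{k+1}$ is the convolution of $f_k$ with the density $h$ of $a_{k+1}\varepsilon_{-k-1}$, and the same differentiation-under-the-integral argument used in the proof of Lemma~\ref{lem:swap}(ii) yields $f_{k+1}^{(1)}(z) = \EE[f_k^{(1)}(z - a_{k+1}\varepsilon_{-k-1})]$. Inserting this identity and swapping the expectation with the integral over $x$ produces
\[
G_{k+1,n}^{(1)}(y) \ = \ -\EE\!\left[\int_\RR G_n(x)\, f_k^{(1)}(x-y-a_{k+1}\varepsilon_{-k-1})\,\mathrm{d}x\right] \ = \ \EE\bigl[G_{k,n}^{(1)}(y+a_{k+1}\varepsilon_{-k-1})\bigr],
\]
and the $\widetilde{X}_{0,k}$ statement is identical with $f_\infty = f_k * \widetilde{h}_k$, where $\widetilde{h}_k$ denotes the density of $\widetilde{X}_{0,k}$. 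The one nontrivial step is the Fubini swap in the display above: fixing $\gamma_G<\gamma<\alpha-1$ and writing $Z\in\{a_{k+1}\varepsilon_{-k-1},\widetilde{X}_{0,k}\}$, the combination of Assumption~\ref{asu:G_n}, Lemma~\ref{lem:f}, and Lemma~\ref{lem:5.1}(i) gives the majorant
\[
|G_n(x)\, f_k^{(1)}(x-y-Z)| \ \lesssim \ g_{\gamma-\gamma_G}(x)\cdot (1\lor|y+Z|)^{1+\gamma}.
\]
Now $g_{\gamma-\gamma_G}$ is integrable on $\RR$ because $\gamma-\gamma_G>0$, and $\EE[(1\lor|y+Z|)^{1+\gamma}]<\infty$ because $1+\gamma<\alpha$ and $Z$ has finite absolute moment of every order in $[1,\alpha)$ (this is immediate for $a_{k+1}\varepsilon_{-k-1}$, and follows for $\widetilde{X}_{0,k}$ from the Bahr--Esseen-type bound in Lemma~\ref{lem:bahr}, exactly as in the proof of Lemma~\ref{lem:swap}(i)). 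This integrable dominating function legitimizes Fubini and completes the proof.
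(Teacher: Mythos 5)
Your proof is correct and rests on the same foundations as the paper's: the independence decomposition $X_{0,k+1}=X_{0,k}+a_{k+1}\varepsilon_{-k-1}$ (resp.~$X_0=X_{0,k}+\widetilde X_{0,k}$), the identity $f_{k+1}^{(1)}=f_k^{(1)}\ast f_{a_{k+1}\varepsilon_{-k-1}}$, and the integral representation of $G_{k,n}^{(1)}$ from Lemma~\ref{lem:swap}(ii). The difference is presentational rather than structural: the paper first rewrites $G_{k,n}^{(j)}$ as a convolution $G_n\ast f_k^{(j)}$ (a step that invokes the symmetry of $\varepsilon$ to make $f_k$ even and $f_k^{(1)}$ odd) and then applies associativity of convolution, whereas you argue directly with the tower property for $j=0$ and with a Fubini swap for $j=1$, supplying the dominating function $g_{\gamma-\gamma_G}(x)(1\lor|y+Z|)^{1+\gamma}$ explicitly. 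Your route is marginally more direct and, in the $j=0$ case, dispenses with symmetry entirely; the paper's convolution-algebra phrasing is more compact but implicitly uses the same Fubini argument inside the associativity step. Both are valid, and neither requires any ingredient absent from the other.
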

 \begin{proof}
   Let $(f\ast g)(y)$ denote the convolution defined by $\int_\RR f(y-x)g(x)\,\mathrm{d}x$.
   Since $\varepsilon$ is symmetric, the density $f_\varepsilon$ is even, and its derivative $f'_\varepsilon$ is odd.
   Consequently, $f_k$ is even and $f_k'$ is odd
  for all $k\in\NN\cup\{\infty\}$.
   Thus, for $k>k_0$,
   \begin{align}
     \label{eq:lab1}
 G_{k+1,n}(y)
     \ = \
     \int_\RR
     G_n(x)
     f_{k+1}(x-y)
     \,\mathrm{d}x
     \ = \
     (G_n\ast f_{k+1})(y)
     \,,
   \end{align}
   and as shown in Lemma~\ref{lem:swap}
   \begin{align}
     \label{eq:lab2}
 G_{k+1,n}'(y)
     \ = \
     \ - \
     \int_\RR
     G_n(x)
     f_{k+1}'(x-y)
     \,\mathrm{d}x
     \ = \
    (G_n\ast f_{k+1}')(y)
    \,.
   \end{align}
Observe that $f_{k+1} = f_k \ast f_{a_{k+1}\varepsilon_{-k-1}}$, by the independence of the innovations, and then, by dominated convergence,
  \begin{align}
    \label{eq:lab3}
    f_{k+1}' = (f_{k}\ast f_{a_{k+1}\varepsilon_{-k-1}})'
    \ = \
    (f'_{k}\ast f_{a_{k+1}\varepsilon_{-k-1}})
  \end{align}
  since
   $f_{a_{k+1}\varepsilon_{-k-1}}\in L^{1}(\RR)$,
   and
   $f_{k}\in C^1(\mathbb{R})$ with bounded derivative for all $k>k_0$ by~\eqref{lem:f:1} in Lemma~\ref{lem:f}.
      Thus for $j\in\{0,1\}$,
    \begin{align*}
    G_{k+1,n}^{(j)}(y)
    &
    \ = \
    (G_n\ast f_{k+1}^{(j)})(y)
    \\&
    \ = \
      ((G_n\ast f_{k}^{(j)})\ast f_{a_{k+1}\varepsilon_{-k-1}})(y)
    \\&
    \ = \
    (G_{k,n}^{(j)}\ast f_{a_{k+1}\varepsilon_{-k-1}})(y)
    \\&
    \ = \
    \int_\RR
      G_{k,n}^{(j)}(x)f_{a_{k+1}\varepsilon_{-k-1}}(x-y)
      \,\mathrm{d}x
      \\&
    \ = \
    \EE[G_{k,n}^{(j)}(a_k\varepsilon_{-k-1} + y)]
    \,,
  \end{align*}
  where the first and third equalities are due to \eqref{eq:lab1} and \eqref{eq:lab2}, and the second equality is due to
  \eqref{eq:lab3} (and associativity of the convolution operator).
   Observe, finally, that $f_{\infty} = f_k \ast f_{\widetilde{X}_{0,k}}$, and that a similar argument can be written for $f_{a_{k+1}\varepsilon_{-k-1}}$ replaced by $f_{\widetilde{X}_{0,k}}$.
   Therefore the statement holds also for $k=\infty$. This completes the proof.
   \end{proof}

\section{Proof of Theorem~\ref{thm:approx}}

The proof of Theorem~\ref{thm:approx} is long. It draws on ideas from \cite{koulAsymptoticsEmpiricalProcesses2001}, where the authors conducted their analysis without the benefit of finite second moments---a difficulty we must also address.
Throughout this section, let $n\in\NN$ unless stated otherwise.
To track the deviation of the PoT statistic from its approximation, we define a centered process that isolates the remainder term after linear approximation. This term will be central in applying martingale arguments.
In the sequel, for $n\in\NN$, define
  \begin{align*}
    \mathcal{U}_n(X_t)
    \ := \
    G_n(X_t) \ -\  \EE[G_n(X_t)] \ -\  G_{\infty,n}'(0) X_t
    \qquad\text{for}\ t\in\ZZ\,.
  \end{align*}
Note that under Assumptions~\ref{asu:coef},~\ref{asu:G_n} and~\ref{asu:f}, the term $G_{\infty,n}'(0)$ is indeed well-defined by Lemma~\ref{lem:swap}. Hence, $\mathcal{U}_n(X_t)$ itself will be well-defined and integrable once we verify that $G_n(X_0)$ and $X_0$
are integrable. For $k\in\ZZ$, let $\mathcal{F}_k=\sigma(\varepsilon_k,\varepsilon_{k-1},\ldots)$ denote the past $\sigma$-algebra generated by the sequence $(\varepsilon_t)$ up to index $k$.
For $Y\in L^1(\PP)$, we define the
projection
  \begin{align*}
    P_k\,Y
    \ := \
    \mathbf{E}\left[Y\, | \, \mathcal{F}_k\right]
    \ - \
    \mathbf{E}\left[Y\, | \, \mathcal{F}_{k-1}\right]
    \qquad \text{for all}\ k\in \ZZ
    \,.
  \end{align*}
We proceed with a sequence of lemmas that prepare the proof of the final result.
The next lemma establishes integrability properties of the centered process $\mathcal{U}_n(X_t)$---first in its unprojected form, then after projection by $P_k$.
This integrability is needed to apply the lemma on martingale difference arrays (Lemma~\ref{lem:bahr}) later in the proof. Note that Part~(ii) also provides an upper bound that contributes directly to the final result.
  \begin{lemma}
    \label{lem:2.2}
    Under Assumptions~\ref{asu:coef},~\ref{asu:G_n} and~\ref{asu:f}, for $r\in[1,\alpha)$, the following statements hold:
    \begin{enumerate}[label=(\roman*)]
      \item $\mathcal{U}_n(X_0)\in L^r(\PP)$.
      \item
      $
      \norm{P_k\,\mathcal{U}_n(X_0)}_{L^r(\PP)}
       \lesssim
      \norm{G_n(X_0)}_{L^r(\PP)}
      \lor
      |G_{\infty,n}'(0)|
      <\infty
      $
      for all $k\in\mathbb{Z}$.
    \end{enumerate}
  \end{lemma}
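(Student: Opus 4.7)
My plan is to reduce part (i) to the integrability of $X_0$ via the triangle inequality, and then to deduce part (ii) as a simple conditional-Jensen consequence of part (i). For part (i),
\[
\norm{\mathcal{U}_n(X_0)}_{L^r(\PP)} \le \norm{G_n(X_0)}_{L^r(\PP)} + |\EE[G_n(X_0)]| + |G_{\infty,n}^{(1)}(0)|\cdot \norm{X_0}_{L^r(\PP)}.
\]
The first term is finite by Lemma~\ref{lem:swap}(i) applied with $k=\infty$ and $y=0$; the second is dominated by the first via Jensen's inequality since $r\ge 1$; and $|G_{\infty,n}^{(1)}(0)|$ is finite by Lemma~\ref{lem:swap}(ii). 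Hence (i) reduces to establishing $X_0\in L^r(\PP)$ for every $r\in[1,\alpha)$.

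This integrability is in fact the main obstacle, since a direct application of the Bahr inequality at exponent $r$ only gives a convergent series bound when $r(1-d)>1$, which fails for $r$ close to $1$ when $d$ is close to the upper bound $1-1/\alpha$. To bypass this, I would apply Lemma~\ref{lem:bahr}(ii) to the independent centered summands $a_j\varepsilon_{-j}$ (centered thanks to the symmetry of $\varepsilon$) at an auxiliary exponent $r'\in[1,2]$, obtaining $\EE|X_0|^{r'}\le 2\,\EE|\varepsilon|^{r'}\sum_{j\ge 0}|a_j|^{r'}$, which is finite as soon as $r'<\nu$ and $r'(1-d)>1$ (using $|a_j|\sim c_a j^{d-1}$ from Assumption~\ref{asu:coef}). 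Since $d<1-1/\alpha$ the interval $(1/(1-d),\alpha)$ is nonempty, so a valid $r'$ always exists. Given $r\in[1,\alpha)$, I would take $r'=r$ when $r>1/(1-d)$, and otherwise pick any $r'\in(1/(1-d),\alpha)$ and use the embedding $L^{r'}(\PP)\subset L^r(\PP)$ (since $\PP$ is a probability measure) to conclude that $X_0\in L^r(\PP)$.

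For part (ii), no explicit computation of $P_k\mathcal{U}_n(X_0)$ is needed: two applications of conditional Jensen yield
\[
\norm{P_k\mathcal{U}_n(X_0)}_{L^r(\PP)} \le \norm{\EE[\mathcal{U}_n(X_0)\mid \mathcal{F}_k]}_{L^r(\PP)} + \norm{\EE[\mathcal{U}_n(X_0)\mid \mathcal{F}_{k-1}]}_{L^r(\PP)} \le 2\,\norm{\mathcal{U}_n(X_0)}_{L^r(\PP)}
\]
uniformly in $k\in\ZZ$. Combining with the bound from part (i) and absorbing $\norm{X_0}_{L^r(\PP)}$ as a fixed constant (depending only on the law of $X_0$, hence independent of $n$ and $k$) into the implicit multiplicative factor then gives $\norm{P_k\mathcal{U}_n(X_0)}_{L^r(\PP)}\lesssim \norm{G_n(X_0)}_{L^r(\PP)}\lor |G_{\infty,n}^{(1)}(0)|$, as required. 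Apart from the two-step Bahr argument for the integrability of $X_0$ at small $r$, the proof is just the triangle inequality and conditional Jensen.
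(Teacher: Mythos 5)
Your proof is correct, and for part (i) it is essentially the paper's argument: the reduction to $X_0\in L^r(\PP)$ via the triangle inequality, the application of the Bahr inequality (Lemma~\ref{lem:bahr}(ii)) to the independent centered terms $a_j\varepsilon_{-j}$ at an auxiliary exponent $r'\in(1/(1-d),\alpha)$ (the paper uses $\tilde r\in(r\lor(1/(1-d)),\alpha)$), and the embedding $L^{r'}(\PP)\subset L^r(\PP)$ all mirror the paper's steps; the only cosmetic change is that you delegate $\norm{G_n(X_0)}_{L^r(\PP)}<\infty$ to Lemma~\ref{lem:swap}(i) rather than rederiving it, which is fine since that lemma proves exactly this. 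For part (ii), however, you take a genuinely softer route: you bound $\norm{P_k\,\mathcal{U}_n(X_0)}_{L^r(\PP)}\le 2\norm{\mathcal{U}_n(X_0)}_{L^r(\PP)}$ by the $L^r$-contractivity of conditional expectation and then reuse the triangle-inequality estimate from part (i), absorbing the fixed constant $\norm{X_0}_{L^r(\PP)}$. The paper instead computes $P_k\,\mathcal{U}_n(X_0)$ explicitly, using $P_kX_0=a_{-k}\varepsilon_k\ind\{k\le 0\}$ to obtain formula~\eqref{eq:PkU}, and bounds each piece separately. Your version is shorter and avoids any structure-specific projection identity; but the paper's explicit formula~\eqref{eq:PkU} is not superfluous -- it is reused verbatim at the start of the proof of Lemma~\ref{lem:core_r}(i) to identify the decomposition $P_{-k}\,\mathcal{U}_n(X_0)=R_1+R_2+R_3$, which is where the sharp $k$-dependent bound lives. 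So your soft argument proves Lemma~\ref{lem:2.2}(ii) correctly, but if you followed it you would still have to derive~\eqref{eq:PkU} later anyway; the paper simply does that bookkeeping here.
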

  \begin{proof}
  \textbf{Proof of Part (i):}
  Let $r\in [1,\alpha)$.
    By Assumption~\ref{asu:G_n}, the inequalities
    \begin{align*}
      \gamma_G
      \ < \
      \frac{d}{1-d}
      \ < \
      1
      \ < \
      \frac{1}{1-d}
      \,,
    \end{align*}
    which hold because of $d<1/2$ (see Assumption~\ref{asu:coef}), and the convexity of $x \mapsto |x|^r$, we obtain
    \begin{align}
      \label{eq:2.2:1}
      \begin{split}
      \EE|G_n(X_0)|^r
      &
      \ \lesssim \
      \EE(1+|X_0|)^{r\gamma_G}
      \ \le \
      \EE(1+|X_0|)^{r}
      \ \le \
      2^{r-1}
      \left(
1
 +
      \EE|X_0|^{r}
      \right)
      \\
      &
      \ \le \
      2
      \left(
1
 +
 \EE|X_0|^r
      \right)
      \,.
      \end{split}
    \end{align}
    To further bound $\EE|X_0|^r$,
     we choose $\widetilde r \in (r\lor (1/(1-d)), \alpha)$, such that, by Hölder's inequality,
    $
    \EE|X_0|^r
    \ \le \
      (\EE|X_0|^{\widetilde r})^{r/\widetilde{r}}
    $, and continue to bound $\EE|X_0|^{\widetilde{r}}$. To this end,
    we apply Lemma~\ref{lem:bahr} to the array $(Y_{m,i})=(a_i \varepsilon_{-i}\,\colon \, i=0,\ldots,m\,,m\in\NN)$ and the sequence $(Y_i)=(a_i \varepsilon_{-i})_{i\ge 0}$.
       By Assumption~\ref{asu:f}, both $(Y_{m,i})$ and $(Y_i)\subset L^{\widetilde{r}}(\PP)$.
       Since the innovations are independent and centered,
  Condition~\eqref{eq:bahr:cond:1} is satisfied.
    Moreover, Condition \eqref{eq:bahr:cond:2} is clear, so by \eqref{eq:bahr:infty} in Lemma~\ref{lem:bahr} and Assumption~\ref{asu:coef}, we get
    \begin{align}
      \EE|X_0|^{\widetilde r}
      \ = \
      \EE
      \left|\sum_{i=0}^\infty Y_i
      \right|^{\widetilde r}
      \ \lesssim \
      \sum_{i=0}^\infty
      \EE\left|
      Y_i
      \right|^{\widetilde r}
      \ \lesssim \
      \sum_{i=1}^\infty
      i^{-(1-d)\widetilde r}
      \ < \ \infty\,,
      \label{eq:2.2:2}
    \end{align}
    since we chose $\widetilde r>1/(1-d)$.
    Using \eqref{eq:2.2:1}, this shows that $G_n(X_0)\in L^r(\PP)$.
    Since $X_0\in L^r(\PP)$ by \eqref{eq:2.2:2},
    the proof of Part (i) is complete.
    \\
  \textbf{Proof of Part (ii):}
    From Part (i), we know that $P_k \, \mathcal{U}_n (X_0)$ is well-defined.
  For fixed $j,k\in\mathbb{Z}$, observe that
  \begin{align*}
    P_{-k}
    \varepsilon_{-j}
    &
    \ = \
    \begin{cases}
      0\,\quad &\text{if} \ j\neq k\,,\\
      \varepsilon_{-k}\,\quad &\text{if} \ j= k\,,\\
    \end{cases}
  \end{align*}
  so, by dominated convergence, it follows that
 \begin{align*}
   P_{k}\, X_0
   \ = \
   \sum_{i=0}^\infty
   a_i
   P_{k}\,\varepsilon_{-i}
   \ = \
    \begin{cases}
      0\,\quad &\text{if} \ k>0\,,\\
      a_{-k}\varepsilon_{k}\,\quad &\text{if} \ k\le 0\,.\\
    \end{cases}
 \end{align*}
 Thus,
\begin{align}
\nonumber
  P_{k}\,\mathcal{U}_n(X_0)
  &
  \ = \
  P_{k}\,
  (G_n(X_0) \ -\  \EE[G_n(X_0)] \ -\  G_{\infty,n}'(0)X_0)
  \\
\label{eq:PkU}
  &
  \ = \
    \EE[G_n(X_0)\, | \, \mathcal{F}_{k}]
    \ - \
    \EE[G_n(X_0)\, | \, \mathcal{F}_{k-1}]
    \ - \
    G_{\infty,n}'(0)
    \cdot
    a_{-k}\cdot\varepsilon_{k}
    \cdot
    \ind\{k\le 0\}
    \,.
\end{align}
  Applying Jensen's inequality with $r\ge 1$, and using the properties of conditional expectation,
  we obtain
\begin{align*}
  \EE
    |\EE[G_n(X_0)\, | \, \mathcal{F}_{k}]|^r
    &
    \ \le \
    \EE|G_n(X_0)|^r
       \ < \ \infty\
       \qquad\text{as shown in Part (i)}
    \,.
    \end{align*}
Combining Equation~\eqref{eq:PkU} with the fact that $\varepsilon\in L^r(\PP)$ for $r<\alpha$ completes the proof of Lemma~\ref{lem:2.2}.
  \end{proof}
    The next lemma provides an intermediate upper bound on the error term
    \begin{align*}
    \sum_{t=1}^n
         \left(G_n(X_t)
         \ - \
         \EE[G_n(X_0)]
         \ - \
         G_{\infty,n}'(0)
         X_t\right) = \sum_{t=1}^{n} \mathcal{U}_n(X_t)
    \end{align*}
    from Theorem~\ref{thm:approx}(i). The bound is obtained by first decomposing the error term via projections, and then applying the martingale difference inequality (Lemma~\ref{lem:bahr}).
\begin{lemma}
  \label{lem:cond_decomp}
  Under the assumptions of Lemma~\ref{lem:2.2}, it holds that
\begin{align*}
      \mathbf{E}\left|
\sum_{t=1}^{n}
  \mathcal{U}_n(X_t)
      \right|^r
      \ \lesssim \
      \sum_{k=-\infty}^{n}
      \left(
      \sum_{t=k\lor 1}^{n}
      \norm{
      P_{k-t}\, \mathcal{U}_n(X_0)
      }_{L^r(\PP)}
      \right)^r
      \,.
  \end{align*}
\end{lemma}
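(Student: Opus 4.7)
The plan is to combine a Hannan-type projection decomposition with the von Bahr--Esseen inequality from Lemma~\ref{lem:bahr}. First I would exploit the fact that each $\mathcal{U}_n(X_t)$ is $\mathcal{F}_t$-measurable, belongs to $L^r(\PP)$ (Lemma~\ref{lem:2.2}(i)), and has mean zero, together with the triviality of the tail $\sigma$-algebra $\mathcal{F}_{-\infty}=\bigcap_{k\in\ZZ}\mathcal{F}_k$ (Kolmogorov's zero-one law applied to the i.i.d.\ innovations) and L\'evy's upward/downward $L^r$-martingale convergence theorems, to obtain the telescoping identity
\begin{equation*}
\mathcal{U}_n(X_t)
\ = \
\sum_{k=-\infty}^{t} P_k\,\mathcal{U}_n(X_t),
\end{equation*}
with convergence in $L^r(\PP)$.

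Next I would interchange the (finite) sum in $t$ with the series in $k$, which is legitimate since the projection series converges in $L^r$ and the $t$-sum is finite, and rewrite
\begin{equation*}
\sum_{t=1}^n \mathcal{U}_n(X_t) \ = \ \sum_{k=-\infty}^{n} W_{k,n},
\qquad
W_{k,n}
\ := \
\sum_{t=k\lor 1}^{n} P_k\,\mathcal{U}_n(X_t).
\end{equation*}
Each $W_{k,n}$ is $\mathcal{F}_k$-measurable with $\EE[W_{k,n}\,|\,\mathcal{F}_{k-1}]=0$, so $(W_{k,n})_{k\leq n}$ is a martingale difference sequence. I would then apply Lemma~\ref{lem:bahr}(i) to the truncated partial sum $S_M:=\sum_{k=-M}^{n}W_{k,n}$, note that $S_M\to\sum_{t=1}^{n}\mathcal{U}_n(X_t)$ in $L^r(\PP)$ as $M\to\infty$ by the first step applied term-by-term in the finite $t$-sum, and pass to the limit using continuity of the $L^r$-norm (or Fatou's lemma) to obtain
\begin{equation*}
\EE\left|\sum_{t=1}^n \mathcal{U}_n(X_t)\right|^r
\ \leq \
2 \sum_{k=-\infty}^{n} \EE|W_{k,n}|^r.
\end{equation*}

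Finally, Minkowski's inequality in $L^r(\PP)$ bounds each $\EE|W_{k,n}|^r$ by $\big(\sum_{t=k\lor 1}^{n}\norm{P_k\,\mathcal{U}_n(X_t)}_{L^r(\PP)}\big)^r$, and the stationarity of the innovation sequence $(\varepsilon_t)$ lets me shift indices to identify $\norm{P_k\,\mathcal{U}_n(X_t)}_{L^r(\PP)}=\norm{P_{k-t}\,\mathcal{U}_n(X_0)}_{L^r(\PP)}$; combining these gives the claimed inequality. The main obstacle I anticipate is not technically deep but demands care: justifying the projection decomposition and the interchange of sums so as to handle the tail behavior at $k=-\infty$ cleanly, thereby guaranteeing both the validity of the von Bahr--Esseen inequality applied to the truncated sum $S_M$ and of the subsequent passage to the limit in $M$. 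The quantitative control on $\norm{P_{k-t}\,\mathcal{U}_n(X_0)}_{L^r(\PP)}$ needed to turn this bound into a usable estimate downstream has already been furnished by Lemma~\ref{lem:2.2}(ii).
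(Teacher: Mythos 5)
Your proposal is correct and follows essentially the same route as the paper: projection decomposition $T_n(G_n)=\sum_{k\le n}P_k T_n(G_n)$ via L\'evy's martingale convergence and the triviality of the tail $\sigma$-algebra, the observation that $(W_{k,n})_{k\le n}$ is a martingale difference sequence, and then the von Bahr--Esseen bound of Lemma~\ref{lem:bahr} followed by Minkowski and stationarity. The only cosmetic differences are that you decompose each $\mathcal{U}_n(X_t)$ term-by-term before swapping sums (the paper decomposes the full partial sum and lets $\ell\to-\infty$), and that you apply Lemma~\ref{lem:bahr}(i) to truncations and pass to the limit, which is precisely how the paper's Lemma~\ref{lem:bahr}(ii) is proved, so you are effectively re-deriving it inline.
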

At this stage it is not obvious that the right-hand side is finite, and thus that this upper bound is informative; the next step of the proof, in Lemma~\ref{lem:core_r}, turns this into an informative upper bound.
\begin{proof}
  Since the process $(X_t)$ is (strictly) stationary, we may define
  \begin{align*}
    T_n(G_n)
    \ := \ \sum_{t=1}^{n}
    \mathcal{U}_n(X_t)
    \ = \
    \sum_{t=1}^{n}
    G_n(X_t) \ -\  \EE[G_n(X_0)] \ -\  G_{\infty,n}'(0) X_t
    \,.
  \end{align*}
Because $X_t$ is $\mathcal{F}_t$-measurable and the sequence $(\mathcal{F}_k)$ is increasing, $T_n(G_n)$ is $\mathcal{F}_n$-measurable. Then
  \begin{align*}
    P_{k}\, T_n(G_n)
    \ = \
    0
    \qquad \text{for}\ k>n
    \qquad\text{and}\qquad
    P_{k}\, \mathcal{U}_n(X_t)
    \ = \ 0
    \qquad\text{for}\ k>t
    \,.
  \end{align*}
 We can therefore write, for any $\ell<n$,
  \[
    T_n(G_n)
    \ - \
    \EE[T_n(G_n)\mid \mathcal{F}_{\ell-1}]
    \ = \
    \sum_{k=\ell}^n
    P_k\, T_n(G_n).
  \]
  Since $\cap_{k\in \mathbb{Z}} \mathcal{F}_k$ is the trivial $\sigma-$algebra, $\EE[T_n(G_n)\mid \mathcal{F}_{\ell-1}]\to \EE[T_n(G_n)]=0$ as $\ell\to-\infty$. Hence, we obtain
  \begin{align}
    \label{eq:above:md}
    T_n(G_n)
    &
    \ = \
    \sum_{k=-\infty}^n
    P_{k}\, T_n(G_n)
    \,,
  \end{align}
  and then
  \begin{align}
  \label{eq:above:pk}
  P_k\, T_n(G_n) &= \sum_{t=1}^{n} P_k \, \mathcal{U}_n(X_t) = \sum_{t=k\lor 1}^{n} P_k\, \mathcal{U}_n(X_t).
    \end{align}
We aim to apply Lemma~\ref{lem:bahr} to Equation~\eqref{eq:above:md}.
To this end, define the array
\begin{align*}
  (Y_{m,i})
  \ = \
  (P_{i+n-m}\, T_n(G_n)\,\colon\, i=1,\ldots,m\,,m\in\NN)
  \,,
\end{align*}
so that
\begin{align*}
  \sum_{i=1}^m
  Y_{m,i}
  \ = \
  \sum_{k=n-m+1}^{n}
  P_{k}\,T_n(G_n)
  \,,
\end{align*}
and define the associated sequence $(Y_i)_{i\geq 1}$ by
$
  Y_i = P_{n-i+1}\, T_n(G_n)
$.
By Lemma~\ref{lem:2.2}(ii) and Identity \eqref{eq:above:pk} it holds that $(Y_{m,i}),(Y_i)\subset L^r(\PP)$.
Note that
\begin{align*}
  \sum_{i=1}^{\infty}
  Y_i
  \ = \
  \sum_{k=-\infty}^{n}
  P_k\, T_n(G_n)
  \ = \
  \lim_{m\to\infty}
  \sum_{k=n-m+1}^{n}
  P_{k}\,T_n(G_n)
  \ = \
  \lim_{m\to\infty}
  \sum_{i=1}^m
  Y_{m,i}
  \,,
\end{align*}
and similarly
$
\sum_{i=1}^{\infty}
\EE|Y_i|^r
= \lim_{m\to\infty}
\sum_{i=1}^m
\EE|Y_{m,i}|^r
$.
To apply Lemma~\ref{lem:bahr}, it remains to verify Condition \eqref{eq:bahr:cond:1}.
To this end, observe
  that Equation \eqref{eq:above:md} represents
  an $\mathcal{F}_\ell$-martingale difference decomposition
  in the sense that, for $\ell\in\ZZ$,
  \begin{align*}
    &
    \EE[P_{\ell+1}\,T_n(G_n)\mid \mathcal{F}_\ell]
    \ = \
    \EE[
    \EE[T_n(G_n)\mid \mathcal{F}_{\ell+1}]
    \mid
    \mathcal{F}_{\ell}
    ]
    \ - \
    \EE[T_n(G_n)\mid \mathcal{F}_{\ell}]
    \ = \ 0
    \,.
  \end{align*}
  Consequently
  \begin{align}
    \label{eq:above:1}
    \EE[Y_{m,\ell+ 1}\mid \mathcal{F}_{n - m + \ell}]
    \ = \
    0
    \qquad\text{for all}\ 1\le \ell \le m-1\,, m\in\NN\,.
  \end{align}
  Let
  $\mathcal{V}^{m}_\ell$ denote the $\sigma$-algebra
  generated by
  \begin{align*}
    \sum_{i=1}^{\ell}
    Y_{m,i}
    \ = \
    \sum_{i=1}^{\ell}
    P_{i+n-m}\,T_n(G_n)
    \ = \
    \sum_{k=n-m+1}^{n-m+\ell}
    P_k\,T_n(G_n)
    \,.
  \end{align*}
  Since the filtration $(\mathcal{F}_k)$ is increasing, all summands are $\mathcal{F}_{n - m + \ell}$-measurable.
  Therefore, $\mathcal{V}^m_\ell\subset \mathcal{F}_{n-m+\ell}$.
  Combined with
    \eqref{eq:above:1} this verifies Condition \eqref{eq:bahr:cond:1}.
    We can now apply Inequality \eqref{eq:bahr:infty} to obtain
    \begin{align*}
\EE
\left|
      \sum_{k=-\infty}^n P_{k} T_n(G_n)
\right|^r
      \ = \
\EE
\left|
      \sum_{i=1}^\infty Y_i
\right|^r
\ \lesssim \
      \sum_{i=1}^\infty
\EE
\left|
Y_i
\right|^r
      \ = \
      \sum_{k=-\infty}^n
\EE
\left|
      P_{k} T_n(G_n)
\right|^r
\,.
    \end{align*}
With Equation \eqref{eq:above:md} and Equation \eqref{eq:above:pk}, it follows that
  \begin{align}
    \label{eq:2}
    \begin{split}
      \EE
      \left[
      \left|T_n(G_n)\right|^r
      \right]
      &
      \ \lesssim \
      \sum_{k\le n}^{}
      \mathbf{E}\left|
      \sum_{t=k\lor 1}^{n}
      P_k\, \mathcal{U}_n(X_t)
      \,
      \right|^r
      \\
      &
      \ \le \
      \sum_{k\le n}^{}
      \left(
      \sum_{t=k\lor 1}^{n}
      \norm{
      P_{k-t}\, \mathcal{U}_n(X_0)
      }_{L^r(\PP)}
      \right)^r
      \,.
    \end{split}
  \end{align}
  The final inequality follows from the  triangle inequality for the $L^r(\PP)$ norm, using that $\norm{
      P_k\, \mathcal{U}_n(X_t)
      }_{L^r(\PP)} = \norm{
      P_{k-t}\, \mathcal{U}_n(X_0)
      }_{L^r(\PP)}$ due to stationarity.
      This completes the proof.
\end{proof}
The most delicate step now lies ahead: bounding each projection term individually. The first part of the next lemma sets the stage by identifying a suitable decomposition of the error. The core of the argument is in the second part, where we introduce auxiliary parameters $\gamma$ and $r$ to mitigate the effect of infinite second moments.
\begin{lemma}[Error Decomposition and Control]
  \label{lem:core_r}
  Let Assumptions~\ref{asu:coef},~\ref{asu:G_n} and~\ref{asu:f} hold, let $k_0$ be as defined by Lemma~\ref{lem:f}, and let $k>k_0$.
  \begin{enumerate}[label=(\roman*)]
    \item
    We can write
    $
    P_{-k}\,\mathcal{U}_n(X_0)
     =
    R_1
     +
    R_2
     +
    R_3
    $,
    where
  \begin{align*}
    R_1
    &\ := \
    G_{k-1,n}(X_0-X_{0,k-1})
    \ - \
    \int
    G_{k-1,n}(X_0-X_{0,k}+a_kz)
    \,\mathrm{d}\PP_\varepsilon(z)
    \\&
    \qquad\ - \
    a_k\cdot\varepsilon_{-k}
    \cdot
    G'_{k-1,n}(X_0-X_{0,k})
    \,,
    \\
    R_2
    &\ := \
    a_k\cdot\varepsilon_{-k}
    \left(
    G_{\infty,n}'(X_0-X_{0,k})
    \ - \
    G_{\infty,n}'(0)
    \right)
    \,,
    \\
    R_3&\ := \
    a_k\cdot\varepsilon_{-k}
    \left(
    G_{k-1,n}'(X_0-X_{0,k})
    \ - \
    G_{\infty,n}'
(X_0-X_{0,k})
    \right)
    \,.
  \end{align*}
  \item
    Let $\gamma\in(0,1)$ and $r\in[1,2]$ such that
  \begin{align*}
    \gamma_G
    \ < \
    \gamma
    \ < \
\min
\left\{
    \frac{d}{1-d}
    \,,
    1
    -
    \frac{1}{r(1-d)}
    \,,
    \frac{\alpha}{r}
    -
    1
\right\}
    \qquad\text{and}\qquad
    \frac{1}{1-d}
    \ < \
    r
    \ < \
    \alpha
    \,.
  \end{align*}
    For each $j\in\{1,2,3\}$, we have
    $
    \norm{R_j}_{L^r(\PP)}
    \
 \lesssim
    \
      k^{-(1-d)(1+\gamma)}
      \cdot
      u_n^{-\gamma+\gamma_G}
    $.
    Therefore
    \begin{align}
      \label{lem:core_r:iii:final}
      \norm{
      P_{-k}\, \mathcal{U}_n(X_0)
      }_{L^r(\PP)}
      \
      \lesssim
      \
      k^{-(1-d)(1+\gamma)}
      \cdot
      u_n^{-\gamma+\gamma_G}
      \,.
    \end{align}
   \end{enumerate}
\end{lemma}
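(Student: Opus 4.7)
My approach splits into the algebraic decomposition (Part~1) and the $L^r$ moment bounds (Part~2). For Part~1, I would compute $P_{-k}\mathcal{U}_n(X_0)$ directly from its definition. Since $X_0-X_{0,k-1}$ is $\mathcal{F}_{-k}$-measurable while $X_{0,k-1}$ is independent of $\mathcal{F}_{-k}$, independence yields $\EE[G_n(X_0)\mid \mathcal{F}_{-k}]=G_{k-1,n}(X_0-X_{0,k-1})$. The analogous computation at level $\mathcal{F}_{-k-1}$, writing $X_{0,k}=X_{0,k-1}+a_k\varepsilon_{-k}$ and using that $X_{0,k}$ is independent of $\mathcal{F}_{-k-1}$, gives $\EE[G_n(X_0)\mid \mathcal{F}_{-k-1}]=\int G_{k-1,n}(X_0-X_{0,k}+a_kz)\,\mathrm{d}\PP_\varepsilon(z)$. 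Recalling $P_{-k}X_0=a_k\varepsilon_{-k}$ from the proof of Lemma~\ref{lem:2.2}, the desired decomposition follows by adding and subtracting both $a_k\varepsilon_{-k}G_{\infty,n}^{(1)}(X_0-X_{0,k})$ and $a_k\varepsilon_{-k}G_{k-1,n}^{(1)}(X_0-X_{0,k})$, and identifying the three groups $R_1,R_2,R_3$.

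For Part~2, the unifying pointwise estimate is
\[
    \int_{u_n}^\infty (1+x)^{\gamma_G}\,g_\gamma(x-y)\,\mathrm{d}x \ \lesssim \ u_n^{\gamma_G-\gamma}
\]
valid for $y$ small relative to $u_n$. Combining this with Assumption~\ref{asu:G_n} and the three inequalities of Lemma~\ref{lem:f} produces the pointwise bounds
\begin{align*}
    |G_{k-1,n}^{(1)}(y)-G_{k-1,n}^{(1)}(y')| & \ \lesssim\ |y-y'|\, u_n^{\gamma_G-\gamma}\qquad\text{for}\ |y-y'|<1, \\
    |G_{\infty,n}^{(1)}(y)-G_{\infty,n}^{(1)}(0)| & \ \lesssim\ |y|\, u_n^{\gamma_G-\gamma}\qquad\text{for}\ |y|<1, \\
    |G_{k-1,n}^{(1)}(y)-G_{\infty,n}^{(1)}(y)| & \ \lesssim\ k^{-(1-d)+1/r}\, u_n^{\gamma_G-\gamma}.
\end{align*}
In each $R_j$, $\varepsilon_{-k}$ is independent of $Y:=X_0-X_{0,k}$ (and of $X_0-X_{0,k-1}$ for $R_1$), so I would compute the $L^r$ expectation by conditioning and control $\norm{Y}_{L^r}\lesssim k^{1/r-(1-d)}$ via Lemma~\ref{lem:bahr}. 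For $R_3$, multiplying the third inequality above by $|a_k\varepsilon_{-k}|$ and using $|a_k|\lesssim k^{-(1-d)}$ yields $\norm{R_3}_{L^r}\lesssim k^{-2(1-d)+1/r}\, u_n^{\gamma_G-\gamma}$. For $R_2$, splitting on $\{|Y|<1\}$ versus $\{|Y|\ge 1\}$ and using on the latter the uniform bound $|G_{\infty,n}^{(1)}(y)|\lesssim u_n^{\gamma_G-\gamma}$ produces the same rate. The constraint $\gamma<1-1/(r(1-d))$ from~(\ref{cond:r_gamma}) makes $-2(1-d)+1/r \le -(1-d)(1+\gamma)$, absorbing both estimates into $k^{-(1-d)(1+\gamma)}\, u_n^{\gamma_G-\gamma}$.

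The main obstacle is $R_1$, which morally is a second-order Taylor remainder in $a_k\varepsilon_{-k}$ but must be estimated in $L^r(\PP)$ with $r<\alpha\le 2$, so that the natural ``$\varepsilon^2$'' factor is not integrable. Using symmetry of $\varepsilon$, I would rewrite
\begin{align*}
R_1 & = \big[G_{k-1,n}(Y+a_k\varepsilon_{-k})-G_{k-1,n}(Y)-a_k\varepsilon_{-k}G_{k-1,n}^{(1)}(Y)\big] \\
    & \qquad - \int\big[G_{k-1,n}(Y+a_kz)-G_{k-1,n}(Y)-a_kzG_{k-1,n}^{(1)}(Y)\big]\,\mathrm{d}\PP_\varepsilon(z),
\end{align*}
and split each bracket on $\{|a_kz|\le 1\}$ versus $\{|a_kz|>1\}$. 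On the good event, inequality~(\ref{lem:f:2}) delivers the quadratic Taylor bound $(a_kz)^2\, u_n^{\gamma_G-\gamma}$, whose $L^r$ moment (integrated against $\mathrm{d}\PP_\varepsilon$ and combined with $|a_k|^2\lesssim k^{-2(1-d)}$) matches the target rate. On the bad event, the triangle inequality together with direct pointwise bounds on $G_{k-1,n}$ and $G_{k-1,n}^{(1)}$, combined with the tail estimate $\PP[|\varepsilon|>|a_k|^{-1}]\lesssim|a_k|^\alpha$, contributes the remaining mass; the constraint $\gamma<\alpha/r-1$ in~(\ref{cond:r_gamma}), equivalent to $r(1+\gamma)<\alpha$, is precisely what guarantees that this contribution is absorbed in the target rate. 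Combining the three $L^r$ bounds by the triangle inequality then yields~(\ref{lem:core_r:iii:final}).
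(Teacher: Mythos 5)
Your Part~1 is exactly the paper's proof: compute $\EE[G_n(X_0)\mid\mathcal F_{-k}]$ and $\EE[G_n(X_0)\mid\mathcal F_{-(k+1)}]$ via independence (the paper routes the latter through Lemma~\ref{lem:cond} to get the representation with $\int G_{k-1,n}(\cdot+a_kz)\,\mathrm{d}\PP_\varepsilon(z)$), then add and subtract the two $G^{(1)}(X_0-X_{0,k})$-terms.

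For Part~2 your overall strategy matches the paper's, but two points deserve comment. First, for $R_1$ your rewrite as the difference of two one-sided Taylor remainders $\int_0^{a_k\varepsilon_{-k}}$ and $\int_0^{a_kz}$ (valid because $\varepsilon$ is centered) is a cleaner variant of the paper's single double integral $\int_{a_kz}^{a_k\varepsilon_{-k}}$, and it lets you split each bracket separately on $\{|a_k z|\le 1\}$ versus $\{|a_kz|>1\}$ instead of the paper's joint three-case analysis in $(z,\varepsilon_{-k})$; the resulting moment calculations — truncated second moment of $\varepsilon$ on the good event giving $|a_k|^{\alpha/r}$ and the constraint $\gamma<\alpha/r-1$ exactly absorbing it, the tail of $\varepsilon$ on the bad event — are the same ones the paper uses, just reorganized. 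Second, and this is the only genuine issue: the three ``pointwise bounds'' you write after the unifying estimate are \emph{not} uniform in $y$. Applying Lemma~\ref{lem:f} to $G^{(1)}_{k-1,n}(y)=-\int G_n(x)f^{(1)}_{k-1}(x-y)\,\mathrm{d}x$ and then Lemma~\ref{lem:5.1}(i) to move the shift out of $g_\gamma$ produces a polynomial penalty $(1\lor|y|)^{1+\gamma}$ on the right-hand side of each of your three displayed inequalities (and your qualifier ``$y$ small relative to $u_n$'' on the unifying estimate is not the right fix — $y=X_0-X_{0,k}$ is unbounded and has no relation to $u_n$). The paper therefore carries the factor $(1\lor|X_0-X_{0,k}|)^{1+\gamma}$ through every estimate and bounds its $L^r(\PP)$ norm by a constant using \eqref{eq:r2:x0bdd} with exponent $p=r(1+\gamma)$; this is finite precisely because $r(1+\gamma)<\alpha$, i.e.\ because of the constraint $\gamma<\alpha/r-1$ you already invoked for $R_1$. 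You anticipate something like this with ``control $\norm{Y}_{L^r}$'', but the exponent you need is $r(1+\gamma)$, not $r$, and the $R_2$ argument you sketch for $\{|Y|\ge 1\}$ — ``the uniform bound $|G^{(1)}_{\infty,n}(y)|\lesssim u_n^{\gamma_G-\gamma}$'' — is false as stated. Once you replace the uniform bounds by the correct ones with the $(1\lor|y|)^{1+\gamma}$ factor and control its $L^{r}$ moment, your rate calculations for $R_2$ and $R_3$ (and the absorption $1/r-2(1-d)\le-(1-d)(1+\gamma)$ from $\gamma<1-1/(r(1-d))$) go through and coincide with the paper's.
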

Below and throughout, the symbol $\asymp$ means asymptotic equivalence of sequences and functions up to a constant $C\neq0$ that may differ from 1.
 \subsection{Proof of Lemma~\ref{lem:core_r}(i)}
 By definition
\begin{align*}
  &
  R_1
  \ + \
  R_2
  \ + \
  R_3
  \\&
  \ = \
    G_{k-1,n}(X_0-X_{0,k-1})
    \ - \
    \int
    G_{k-1,n}(X_0-X_{0,k}+a_kz)
    \,\mathrm{d}\PP_\varepsilon(z)
    \ - \
    a_k\cdot\varepsilon_{-k}
    \cdot
    G_{\infty,n}'
    (0)
    \,.
\end{align*}
On the other hand, using~\eqref{eq:PkU}, we have
\begin{align*}
  P_{-k}\,\mathcal{U}_n(X_0)
  &
  \ = \
    \EE[G_n(X_0)\, | \, \mathcal{F}_{-k}]
    \ - \
    \EE[G_n(X_0)\, | \, \mathcal{F}_{-(k+1)}]
    \ - \
    a_k\cdot\varepsilon_{-k}
    \cdot
    G_{\infty,n}'(0)
    \,.
\end{align*}
Note that $X_{0,k-1}=\sum_{j=0}^{k-1}a_j\varepsilon_{-j}$ is independent of $\mathcal{F}_{-k}$, and $X_0-X_{0,k-1}=\sum_{j=k}^{\infty}a_j\varepsilon_{-j}$ is $\mathcal{F}_{-k}$-measurable.
Therefore,
\begin{align*}
    \EE[G_n(X_0)\, | \, \mathcal{F}_{-k}] & \ = \ \EE[G_n(X_{0,k-1}+(X_0-X_{0,k-1}))\, | \, \mathcal{F}_{-k}]
  \\&
    \ = \
    \int
    G_n(z+(X_0-X_{0,k-1}))
    \,\mathrm{d}
  \PP_{X_{0,k-1}}(z)
  \\&
  \ = \
  G_{k-1,n}(X_0-X_{0,k-1})
  \,.
\end{align*}
Similarly, by Lemma~\ref{lem:cond},
\begin{align*}
    \EE[G_n(X_0)\, | \, \mathcal{F}_{-(k+1)}]
    \ = \
  G_{k,n}(X_0-X_{0,k})
  \ = \
    \int
    G_{k-1,n}(X_0-X_{0,k}+a_kz)
    \,\mathrm{d}\PP_\varepsilon(z)
    \,.
\end{align*}
Consequently, combining the expressions gives
\begin{align*}
  & P_{-k}\,\mathcal{U}_n(X_0) \\
    & \ = \ G_{k-1,n}(X_0-X_{0,k-1})
    \ - \
    \int
    G_{k-1,n}(X_0-X_{0,k}+a_kz)
    \,\mathrm{d}\PP_\varepsilon(z)
    \ - \
    a_k\cdot\varepsilon_{-k}
    \cdot
    G_{\infty,n}'
    (0) \\
    & \ = \  R_1
  \ + \
  R_2
  \ + \
  R_3\,,
\end{align*}
as claimed.
\subsection{Proof of Lemma~\ref{lem:core_r}(ii)}
Given the bounds on the $R_j$, the Bound~\eqref{lem:core_r:iii:final} follows directly from Part (i). We now turn to the proof of the announced bounds on the $R_j$. Fix $k>k_0$ for the remainder of the proof.
Note that
by Assumption~\ref{asu:G_n}, and for any $\gamma$ with $\gamma_G<\gamma<d/(1-d)$, we have
\begin{align}
  \label{eq:iii:1}
  \int_\RR
  G_n(x)
  g_\gamma(x)
  \,\mathrm{d}x
  \ \lesssim \
  \int_{u_n}^{\infty}
  (1+|x|)^{-(1+\gamma-\gamma_G)}
  \,\mathrm{d}x
  \
  \asymp
  \
  u_n^{-\gamma+\gamma_G}\,.
\end{align}
Moreover, the next inequalities ensure that the conditions of Lemma~\ref{lem:f} are met:
\begin{align*}
  1
  \ + \
  \gamma
  \ < \
  1
  \ + \
  \frac{d}{1-d}
  \ = \
  \frac{1}{1-d}
  \ < \
  r
  \ < \
  \alpha
  \,.
\end{align*}
\subsubsection{Analysis of $R_1$}
Write $R_1$ as
  \begin{align}
    \label{eq:r1:0}
    \begin{split}
    R_1
    &
    \ = \
    G_{k-1,n}(X_0-X_{0,k-1})
    \ - \
    \int_\RR
    G_{k-1,n}(X_0-X_{0,k}+a_kz)
    \,\mathrm{d}\PP_\varepsilon(z)
    \\&
    \qquad
    \ - \
    a_k\cdot\varepsilon_{-k}
    \cdot
    G'_{k-1,n}(X_0-X_{0,k})
    \\&
    \ = \
    \int_\RR
    [
    G_{k-1,n}(X_0-X_{0,k}+a_k\varepsilon_{-k})
    \ - \
    G_{k-1,n}(X_0-X_{0,k}+a_kz)
    \\&
    \qquad
    \ - \
    a_k\cdot\varepsilon_{-k}
    \cdot
    G'_{k-1,n}(X_0-X_{0,k})]
    \,\mathrm{d}\PP_{\varepsilon}(z)
\\&
    \ = \
    \int_\RR
    \bigg[ \left(
    \int_{a_k z}^{a_k\varepsilon_{-k}}
    G_{k-1,n}'(X_0-X_{0,k}+s)
    \,\mathrm{d}s
    \right)
    \\&
    \qquad\ - \
    a_k\cdot\varepsilon_{-k}
    \cdot
    G'_{k-1,n}(X_0-X_{0,k})
    \ + \
    a_k\cdot z
    \cdot
    G'_{k-1,n}(X_0-X_{0,k}) \bigg]
    \,\mathrm{d}\PP_{\varepsilon}(z)
\\&
    \ = \
    \int_\RR
    \left(
    \int_{a_k z}^{a_k\varepsilon_{-k}}
    [ G_{k-1,n}'(X_0-X_{0,k}+s)
    \ - \
    G_{k-1,n}'(X_0-X_{0,k}) ]
    \,\mathrm{d}s
    \right)
        \,\mathrm{d}\PP_{\varepsilon}(z)
        \,.
    \end{split}
    \end{align}
    Note that the third equality follows by applying the fundamental theorem of calculus to $G_{k-1,n}$, which is justified by
    Lemma~\ref{lem:swap}.(ii).
    Note also that adding the term
\begin{align*}
    a_k\cdot z
    \cdot
    G'_{k-1,n}(X_0-X_{0,k})
\end{align*}
inside the integral is valid because $\varepsilon$ is centered.
To proceed, we consider three mutually exclusive cases:
\begin{itemize}
  \item
  $
      1
      \
      \ge
      \
      (
      |a_k\varepsilon_{-k}|
      \lor
      |a_kz|
      )
  $
  \item
  $
|a_k\varepsilon_{-k}|
      \
      >
      \
\left(
      |a_kz|
      \lor
      1
\right)
  $
  \item
  $
      |a_kz|
      \
      >
      \
\left(
      |a_k\varepsilon_{-k}|
      \lor
      1
\right)
  $
\end{itemize}
These three cases exhaust all possibilities and will be treated separately in the following sections.
  \paragraph{Case $1\ge (|a_k\varepsilon_{-k}|\lor |a_k z|)$}

  We bound $R_1$ as follows:
    \begin{equation}
    \label{eq:r1:0_bound}
  |R_1|
  \ \le \
\int_\RR
\left|
    \int_{a_kz}^{a_k\varepsilon_{-k}}
\left|
    G_{k-1,n}'(X_0-X_{0,k}+s)
    \ - \
    G_{k-1,n}'(X_0-X_{0,k})
\right|
    \,\mathrm{d}s
\right|
        \,
    \mathrm{d}\PP_\varepsilon(z)\,.
    \end{equation}
   Let $s$
   lie in the interval linking $a_kz$ to $a_k\varepsilon_{-k}$, so that $|s|\le 1$.
   Let $\gamma\in(\gamma_G,1)$ be as in Lemma~\ref{lem:f}.
    We can use \eqref{lem:f:2} there
    and~\eqref{lem:5.1:1} in Lemma~\ref{lem:5.1} to get for all $x\in\RR$
\begin{align}
  \label{eq:r1:0b}
  \begin{split}
  &
\left|
    f_{k-1}'(x-(X_0-X_{0,k}+s))
    \ - \
    f_{k-1}'(x-(X_0-X_{0,k}))
\right|
    \\
    &
    \ \lesssim \
     |s|
    \cdot
    g_\gamma(x-[X_0-X_{0,k}])
    \ \lesssim \
    |s|^{\gamma}
    \cdot
    g_\gamma(x)
    \cdot
    (1  +   |X_0-X_{0,k}|)^{1+\gamma}
    \,.
  \end{split}
\end{align}
Combining \eqref{eq:r1:0_bound} with
the representation of $G_{k-1,n}'$ from
Lemma~\ref{lem:swap}.(ii) and then the bound from \eqref{eq:r1:0b}, we bound $R_1$ as
    \[
    |R_1|
    \leq
    (1  +   |X_0-X_{0,k}|)^{1+\gamma}
    \cdot
    \int_\RR
    G_n(x)
    g_\gamma(x)
    \,\mathrm{d}x
    \cdot
    \int_\RR
    \left|
    \int_{a_kz}^{a_k\varepsilon_{-k}}
    |s|^{\gamma}
    \,\mathrm{d}s
    \right|
        \,
    \mathrm{d}\PP_\varepsilon(z)
    \,.
    \]
    Finally, by~\eqref{eq:iii:1} and using $\varepsilon\in L^{1+\gamma}(\PP)$ as $1+\gamma < \alpha$, we conclude
    \begin{align}
  \label{eq:r1:2}
      \begin{split}
        |R_1|
    &
    \ \lesssim \
    (1  +   |X_0-X_{0,k}|)^{1+\gamma}
    \cdot
    u_n^{-\gamma+\gamma_G}
    \cdot
    |a_k|^{1+\gamma}
    \cdot
    \left(
    |\varepsilon_{-k}|^{1+\gamma}
    \ + \
    \EE
    |\varepsilon|^{1+\gamma}
    \right)
    \\&
    \ \lesssim \
    (1  +   |X_0-X_{0,k}|^{1+\gamma})
    \cdot
    u_n^{-\gamma+\gamma_G}
    \cdot
    |a_k|^{1+\gamma}
    \cdot
    [|\varepsilon_{-k}|^{1+\gamma} \vee 1]
    \,.
  \end{split}
\end{align}
We now show that
\begin{align}
  \label{eq:r2:x0bdd}
  \EE|X_0-X_{0,k}|^{p}
  \ \lesssim \
  k^{1-(1-d)p}
  \ \lesssim \
  1
  \qquad\text{for}\
p\in
\left(
\frac{1}{1-d}
\,,
\alpha
\right)
  \,.
\end{align}
To this end, define the array and limiting sequence
\begin{align*}
  (Y_{m,i})
  \ = \
  (a_{k+i}\varepsilon_{-(k+i)}\,,i=1,\ldots,m\,,m\in\NN)
  \qquad\text{and}\qquad
  (Y_i)
  \ = \
  (a_{k+i}\varepsilon_{-(k+i)})_{i\in\NN}
  \,.
\end{align*}
Note that \eqref{eq:bahr:cond:1} and \eqref{eq:bahr:cond:2} are satisfied for $(Y_{m,i})$ and $(Y_i)$.
Then we may write
\begin{align*}
  X_0
   -
  X_{0,k}
  \ = \
\sum_{i=k+1}^\infty
  a_i\varepsilon_{-i}
  \ = \
\sum_{i=1}^\infty
  a_{k+i}\varepsilon_{-(k+i)}
  \ = \
\sum_{i=1}^\infty
Y_i
  \,.
\end{align*}
The $\varepsilon_j$ are independent, centered and belong to $L^p(\PP)$, so applying Inequality~\eqref{eq:bahr:infty}
in Lemma~\ref{lem:bahr}.(ii), we obtain
\begin{align*}
  \EE|X_0-X_{0,k}|^p
  \ \lesssim \
  \sum_{i=k+1}^\infty
  |a_i|^{p}
  \EE|\varepsilon_{-i}|^p
  \ \lesssim \
  \sum_{i=k+1}^\infty
  i^{-(1-d)p}
  \ \lesssim \
  k^{1-(1-d)p}
  \ \le \
  1
  \,,
\end{align*}
where we used $|a_i|\asymp i^{-(1-d)}$ and $(1-d)p>1$. This proves \eqref{eq:r2:x0bdd}. To conclude, note that
\begin{align*}
\frac{1}{1-d}
\ < \
  (1+\gamma)r
\ < \
\left(
1
\ + \
\frac{\alpha}{r}
-
1
\right)
r
  \ = \
  \alpha
  \,.
\end{align*}
Since $X_0-X_{0,k}$ and $\varepsilon_{-k}$ are independent, and using the moment bound from~\eqref{eq:r2:x0bdd} with $p=(1+\gamma)r<\alpha$ along with $|a_i|\asymp i^{-(1-d)}$, we obtain
\begin{align*}
  \norm{
    (1  +   |X_0-X_{0,k}|)^{1+\gamma}
    \cdot
    u_n^{-\gamma+\gamma_G}
    \cdot
    |a_k|^{1+\gamma}
    \cdot
    (1\lor|\varepsilon_{-k}|)^{1+\gamma}
  }_{L^r(\PP)}
  \ \lesssim \
    u_n^{-\gamma+\gamma_G}
    \cdot
    k^{-(1-d)(1+\gamma)}
    \,.
\end{align*}
Combining this with the bound from
\eqref{eq:r1:2}, we conclude
\begin{align*}
  \norm{
  R_1
  \ind\{1\ge (|a_kz|\lor|a_k\varepsilon_{-k})\}
  }_{L^r(\PP)}
  \ \lesssim \
    k^{-(1-d)(1+\gamma)}
    u_n^{-\gamma+\gamma_G}
    \qquad
    \text{for $k>k_0+1$}\,.
\end{align*}
\paragraph{Case $|a_k\varepsilon_{-k}|>(1\lor |a_k z|)$}
We now estimate $R_1$ restricted to the event $
|a_k\varepsilon_{-k}|>(1\lor |a_k z|)
$.
To do so, we note that on this event $|\varepsilon_{-k}| > |z|$ and we split the integrand in \eqref{eq:r1:0}, obtaining
\begin{align}
  \label{eq:r1:3}
  \begin{split}
  &
  |R_1\ind\{
|a_k\varepsilon_{-k}|>(1\lor |a_k z|)
  \}|
  \\&
  \ \le \
  \ind\{|a_k\varepsilon_{-k}|>1\}
\left|
  \int_\RR
  \left(
  \int_{a_k z}^{a_k \varepsilon_{-k}}
  G_{k-1,n}'(X_0-X_{0,k}+s)
    \ind\{|\varepsilon_{-k}|>|z|\}
  \,\mathrm{d}s
  \right)
  \mathrm{d}\PP_\varepsilon(z)
  \right|
  \\&
  \ + \
  \ind\{|a_k\varepsilon_{-k}|>1\}
\left|
  \int_\RR
  \left(
  \int_{a_k z}^{a_k \varepsilon_{-k}}
  G_{k-1,n}'(X_0-X_{0,k})
    \ind\{|\varepsilon_{-k}|>|z|\}
  \,\mathrm{d}s
  \right)
  \mathrm{d}\PP_\varepsilon(z)
  \right|
  \,,
  \end{split}
\end{align}
and bound each term separately.
To bound the first term in \eqref{eq:r1:3}, we note that necessarily $|s|<|a_k \varepsilon_{-k}|$ and we use successively Lemma~\ref{lem:swap}.(ii), the bound $|f_{k-1}'(x)|\lesssim g_{\gamma}(x)$ (from Lemma~\ref{lem:f}), Inequality~\eqref{lem:5.1:2} from Lemma~\ref{lem:5.1}, Inequality~\eqref{lem:5.1:1} from Lemma~\ref{lem:5.1}, and~\eqref{eq:iii:1}, resulting in
\begin{align*}
  &
  \ind\{|a_k\varepsilon_{-k}|>1\}
\left|
  \int_\RR
  \left(
  \int_{a_k z}^{a_k \varepsilon_{-k}}
  G_{k-1,n}'(X_0-X_{0,k}+s)
    \ind\{|\varepsilon_{-k}|>|z|\}
  \,\mathrm{d}s
  \right)
  \mathrm{d}\PP_\varepsilon(z)
  \right| \\
  &
\ \le \
  \ind\{|a_k\varepsilon_{-k}|>1\}
  \int_{|s|\le |a_k \varepsilon_{-k}|}
  \left(
  \int_\RR
  G_n(x)
  |
  f_{k-1}'
  (x-(X_0-X_{0,k}+s))
  |
  \,\mathrm{d}x
  \right)
  \,\mathrm{d}s \\
  &
  \ \lesssim \
  \ind\{|a_k\varepsilon_{-k}|>1\}
  \int_\RR
  G_n(x)
  \left(
  \int_{|s|\le |a_k \varepsilon_{-k}|}
  g_{\gamma}((x-(X_0-X_{0,k}))- s)
  \,\mathrm{d}s
  \right)
  \,\mathrm{d}x \\
  &
  \ \lesssim \
  |a_k\varepsilon_{-k}|^{1+\gamma}
  \int_{\RR}
  G_n(x)
  g_{\gamma}(x-(X_0-X_{0,k}))
  \,\mathrm{d}x \\
  &
  \ \lesssim \
  |a_k\varepsilon_{-k}|^{1+\gamma}
  (1+ |X_0-X_{0,k}|^{1+\gamma})
  \int_{\RR}
  G_n(x) g_{\gamma}(x)
  \,\mathrm{d}x \\
  &
  \ \lesssim \
  |a_k \varepsilon_{-k}|^{1+\gamma}
  (1+ |X_0-X_{0,k}|^{1+\gamma})
  \cdot
  u_n^{-\gamma+\gamma_G}
  \,.
\end{align*}
We now similarly estimate the second term in \eqref{eq:r1:3} using that $G_{k-1,n}'(X_0-X_{0,k})$ is constant in $s$
and we obtain
\begin{align*}
  &
  \ind\{|a_k\varepsilon_{-k}|>1\}
\left|
  \int_\RR
  \left(
  \int_{a_k z}^{a_k \varepsilon_{-k}}
  G_{k-1,n}'(X_0-X_{0,k})
    \ind\{|\varepsilon_{-k}|>|z|\}
  \,\mathrm{d}s
  \right)
  \mathrm{d}\PP_\varepsilon(z)
  \right|
\\
    &
\ \le \
  \ind\{|a_k\varepsilon_{-k}|>1\}
  \int_{|s|\le |a_k \varepsilon_{-k}|}
  \left(
  \int_\RR
  G_n(x)
  |
  f_{k-1}'
  (x-(X_0-X_{0,k}))
  |
  \,\mathrm{d}x
  \right)
  \,\mathrm{d}s
    \\
  &
  \ \lesssim \
  \ind\{|a_k\varepsilon_{-k}|>1\}
  |a_k \varepsilon_{-k}|
  \int_\RR
  G_n(x)
  g_{\gamma}(x-(X_0-X_{0,k}))
  \,\mathrm{d}x
  \\
  &
  \ \lesssim \
  |a_k \varepsilon_{-k}|^{1+\gamma}
  (1+ |X_0-X_{0,k}|^{1+\gamma})
  \cdot
    u_n^{-\gamma+\gamma_G}
  \,,
\end{align*}
where, in the last step, we also used that $
|a_k\varepsilon_{-k}|
\le
|a_k\varepsilon_{-k}|^{1+\gamma}
$ due to $|a_k\varepsilon_{-k}|>1$.
Combining the bounds for both terms and applying the moment bound from \eqref{eq:r2:x0bdd} and $|a_k| \sim k^{-(1-d)}$, we obtain
\begin{align*}
  \norm{R_1\ind\{
  |a_k\varepsilon_{-k}|> (1\lor|a_k z| )
  \}}_{L^r(\PP)}
  \ \lesssim \
  k^{-(1+\gamma)(1-d)}u_n^{-\gamma+\gamma_G}
\,.
\end{align*}
\paragraph{Case $|a_kz|>(1\lor |a_k\varepsilon_{-k}|)$}
As in the previous case we split the integrand in \eqref{eq:r1:0}:
\begin{align}
  \label{eq:r1:6}
  \begin{split}
  &
  |R_1\ind\{
|a_kz|>(1\lor |a_k \varepsilon_{-k}|)
  \}|
  \\&
  \ \le \
\left|
  \int_\RR
  \ind\{|a_kz|>1\}
  \left(
  \int_{a_k z}^{a_k \varepsilon_{-k}}
  G_{k-1,n}'(X_0-X_{0,k}+s)
    \ind\{|z|>|\varepsilon_{-k}|\}
  \,\mathrm{d}s
  \right)
  \mathrm{d}\PP_\varepsilon(z)
  \right|
  \\&
  \ + \
\left|
  \int_\RR
  \ind\{|a_kz|>1\}
  \left(
  \int_{a_k z}^{a_k \varepsilon_{-k}}
  G_{k-1,n}'(X_0-X_{0,k})
    \ind\{|z|>|\varepsilon_{-k}|\}
  \,\mathrm{d}s
  \right)
  \mathrm{d}\PP_\varepsilon(z)
  \right|
  \,.
  \end{split}
\end{align}
We now bound the first term in \eqref{eq:r1:6}. The argument proceeds as in the previous case: we apply Lemma~\ref{lem:swap}.(ii) and estimate the integrals using the smoothness and decay properties of the derivative of the density $f'_{k-1}$. We find:
\begin{align*}
  &
\left|
  \int_\RR
  \ind\{|a_kz|>1\}
  \left(
  \int_{a_k z}^{a_k \varepsilon_{-k}}
  G_{k-1,n}'(X_0-X_{0,k}+s)
    \ind\{|z|>|\varepsilon_{-k}|\}
  \,\mathrm{d}s
  \right)
  \mathrm{d}\PP_\varepsilon(z)
  \right|
\\
  &
\ \le \
  \int_{\RR}
  \ind\{|a_kz|>1\}
  \left(
  \int_\RR
  \left(
  \int_{|s|\le |a_k z|}
  G_n(x)
  |
  f_{k-1}'
  (x-(X_0-X_{0,k}+s))
  |
  \,\mathrm{d}s
  \right)
  \,\mathrm{d}x
  \right)
  \,\mathrm{d}\PP_{\varepsilon}(z)
    \\
  &
  \ \lesssim \
  \int_{\RR}
  G_n(x)
  \left(
  \int_\RR
  \ind\{|a_kz|>1\}
  \left(
  \int_{|s|\le |a_k z|}
  g_{\gamma}((x-(X_0-X_{0,k}))-s)
  \,\mathrm{d}s
  \right)
  \,\mathrm{d}\PP_{\varepsilon}(z)
  \right)
  \,\mathrm{d}x
  \\&
  \ \lesssim \
  \left(
  \int_{\RR}
  G_n(x)
  g_{\gamma}(x-(X_0-X_{0,k}))
  \,\mathrm{d}x
  \right)
  \left(
  \int_{\RR}
  |a_kz|^{1+\gamma}
  \,\mathrm{d}\PP_{\varepsilon}(z)
  \right)
  \\&
  \ \lesssim \
  |a_k|^{1+\gamma}
  \EE|\varepsilon|^{1+\gamma}
  (1+ |X_0-X_{0,k}|)^{1+\gamma}
  \int_{\RR}
  G_n(x)
  g_{\gamma}(x)
  \,\mathrm{d}x
  \\&
  \ \lesssim \
  |a_k|^{1+\gamma}
  (1+ |X_0-X_{0,k}|)^{1+\gamma}
  u_n^{-\gamma+\gamma_G}
  \,.
\end{align*}
We now bound the second term in \eqref{eq:r1:6}, where $G_{k-1,n}'(X_0-X_{0,k})$ is constant in $s$:
\begin{align*}
  &
\left|
  \int_\RR
  \ind\{|a_kz|>1\}
  \left(
  \int_{a_k z}^{a_k \varepsilon_{-k}}
  G_{k-1,n}'(X_0-X_{0,k})
    \ind\{|z|>|\varepsilon_{-k}|\}
  \,\mathrm{d}s
  \right)
  \mathrm{d}\PP_\varepsilon(z)
  \right|
\\
    &
\ \le \
  \int_{\RR}
  \ind\{|a_kz|>1\}
  \left(
  \int_{|s|\le |a_k z|}
  \left(
  \int_\RR
  G_n(x)
  |
  f_{k-1}'
  (x-(X_0-X_{0,k}))
  |
  \,\mathrm{d}x
  \right)
  \,\mathrm{d}s
  \right)
  \mathrm{d}\PP_\varepsilon(z)
    \\
  &
  \ \lesssim \
  \left(
  \int_{|a_kz|>1}
  |a_k z|
  \mathrm{d}\PP_\varepsilon(z)
  \right)
  \left(
  \int_\RR
  G_n(x)
  g_{\gamma}(x-(X_0-X_{0,k}))
  \,\mathrm{d}x
  \right)
  \\
  &
  \ \lesssim \
  \left(
  \int_{|a_kz|>1}
  |a_k z|^{1+\gamma}
  \mathrm{d}\PP_\varepsilon(z)
  \right)
  (1+ |X_0-X_{0,k}|)^{1+\gamma}
  \left(
  \int_\RR
  G_n(x)
  g_{\gamma}(x)
  \,\mathrm{d}x
  \right)
  \\&
  \ \lesssim \
  |a_k|^{1+\gamma}
  (1+ |X_0-X_{0,k}|)^{1+\gamma}
  u_n^{-\gamma+\gamma_G}
      \,.
\end{align*}
Combining both bounds with the moment bound in \eqref{eq:r2:x0bdd}, we conclude analogously to the previous case that
\begin{align*}
  \norm{R_1\ind\{
  |a_kz|> (1\lor|a_k \varepsilon_{-k}| )
  \}}_{L^r(\PP)}
  \ \lesssim \
  k^{-(1+\gamma)(1-d)}u_n^{-\gamma+\gamma_G}
\,.
\end{align*}
This completes the proof of the control of $R_1$.
\subsubsection{Analysis of $R_2$}
We begin with the estimate
\begin{align}
  \label{eq:r2:11}
  \begin{split}
  &
      |
      f_{\infty}'(x-(X_0-X_{0,k}))
      \ - \
      f_{\infty}'(x)
      |
      \\&
      \ \le \
      |
      f_{\infty}'(x-(X_0-X_{0,k}))
      \ - \
      f_{\infty}'(x)
      |
      \cdot
    \ind\{|X_0 - X_{0,k}|\le 1\}
      \\&
      + \
      \left(
      |
      f_{\infty}'(x-(X_0-X_{0,k}))
      |
      \ + \
      |
      f_{\infty}'(x)
      |
      \right)
    \ind\{|X_0 - X_{0,k}|> 1\}
    \,.
  \end{split}
\end{align}
Note that by Inequality \eqref{lem:f:2} from Lemma~\ref{lem:f}, we have
\begin{align}
  \label{eq:r2:12}
  \begin{split}
  &
      |
      f_{\infty}'(x-(X_0-X_{0,k}))
      \ - \
      f_{\infty}'(x)
      |
      \cdot
    \ind\{|X_0 - X_{0,k}|\le 1\}
    \\&
    \ \lesssim \
      |X_0-X_{0,k}|
      \cdot
      g_\gamma(x)
      \,.
  \end{split}
      \intertext{
  Moreover, using~\eqref{lem:f:1} from Lemma~\ref{lem:f} and~\eqref{lem:5.1:1} from Lemma~\ref{lem:5.1}, it follows that
      }
      \label{eq:r2:13}
      \begin{split}
      &
      \left(
      |
      f_{\infty}'(x-(X_0-X_{0,k}))
      |
      \ + \
      |
      f_{\infty}'(x)
      |
      \right)
    \ind\{|X_0 - X_{0,k}|> 1\}
    \\&
            \ \lesssim \
            \left(
      g_\gamma
      (x-(X_0-X_{0,k})) + g_\gamma
      (x)
            \right)
    \ind\{|X_0 - X_{0,k}|> 1\}
    \\&
    \ \lesssim \
      g_\gamma(x)
      \cdot
      |X_0-X_{0,k}|^{1+\gamma}
      \,.
      \end{split}
\end{align}
Therefore, combining Inequalities \eqref{eq:r2:11}-\eqref{eq:r2:13}, we get
\begin{align}
  \label{eq:r2:14}
      |
      f_{\infty}'(x-(X_0-X_{0,k}))
      \ - \
      f_{\infty}'(x)
      |
      \ \lesssim \
      g_{\gamma}(x)
      \left(
      |X_0-X_{0,k}|
      \lor
      |X_0-X_{0,k}|^{1+\gamma}
      \right).
\end{align}
Applying successively Lemma~\ref{lem:swap}.(ii), Inequality \eqref{eq:r2:14}, and~\eqref{eq:iii:1}, we obtain
    \begin{align}
      \label{eq:r2:15}
      \begin{split}
      &
      |
      G_{\infty,n}'(X_0-X_{0,k})
      \ - \
      G_{\infty,n}'(0)
      |
      \\
      &
      \ \le \
      \int_{\RR}
      G_n(x)
      \cdot
       |
      f_{\infty}'(x-(X_0-X_{0,k}))
      \ - \
      f_{\infty}'(x)
      |
      \,\mathrm{d}x
      \\&
      \ \lesssim \
      \left(
      |X_0-X_{0,k}|
      \lor
      |X_0-X_{0,k}|^{1+\gamma}
      \right)
      \int_{\RR}
      G_n(x)
      g_\gamma(x)
      \,\mathrm{d}x
      \\&
      \ \lesssim \
      \left(
      |X_0-X_{0,k}|
      \lor
      |X_0-X_{0,k}|^{1+\gamma}
      \right)
      u_n^{-\gamma + \gamma_G}
      \,.
      \end{split}
    \end{align}
    From the definition of $R_2$, together with the independence of $X_0-X_{0,k}$ and $\varepsilon_{-k}$, and Inequality \eqref{eq:r2:15}, we find
\begin{align}
  \label{eq:r2:155}
  \norm{R_2}_{L^r(\PP)}
  &
  \ \lesssim \
  k^{-(1-d)}
  \left(
  \norm{
      X_0-X_{0,k}
  }_{L^r(\PP)}
  \ + \
  \norm{
      |X_0-X_{0,k}|^{1+\gamma}
  }_{L^r(\PP)}
  \right)
  u_n^{-\gamma+\gamma_G}
  \,,
\end{align}
since $|a_k|\asymp k^{-(1-d)}$, and $\varepsilon\in L^r(\PP)$ for $r<\alpha$.
We now apply Estimate~\eqref{eq:r2:x0bdd} for both $p=r$ and $p=r(1+\gamma)$.
Since
$1/(1-d)<r<r(1+\gamma)<\alpha$ by the condition in Lemma \eqref{lem:core_r}(ii),
it follows
\begin{align}
  \label{eq:r2:16}
  \begin{split}
  \norm{
      X_0-X_{0,k}
  }_{L^r(\PP)}
  \ + \
  \norm{
      |X_0-X_{0,k}|^{1+\gamma}
  }_{L^r(\PP)}
  &
  \ \lesssim \
  k^{1/r-(1-d)}
  \ + \
  k^{1/r-(1-d)(1+\gamma)}
  \\&
  \ \lesssim \
  k^{1/r-(1-d)}
  \,.
  \end{split}
\end{align}
Note finally that
\begin{align*}
  \frac{1}{r}
  -
  2(1-d)
  &
  \ = \
  -
  (1-d)
  \left(
  1
  -
  \frac{1}{r(1-d)}
  \right)
  \ - \
  (1-d)
  \\&
  \ \le \
  -
  (1-d)
  \gamma
    \ - \
  (1-d)
  \\&
  \ = \
  -(1-d)(1+\gamma)
  \,.
\end{align*}
Combining this inequality with Inequalities \eqref{eq:r2:155} and \eqref{eq:r2:16}, we obtain
\begin{equation}
  \label{eq:r2:main_result}
  \norm{R_2}_{L^r(\PP)}
  \ \lesssim \
  k^{1/r-2(1-d)}
  u_n^{-\gamma+\gamma_G}
  \ \lesssim \
  k^{
  -(1+\gamma)(1-d)
  }
  u_n^{-\gamma+\gamma_G}
  \,.
\end{equation}
This completes the proof of the control of $R_2$.
\subsubsection{Analysis of $R_3$}
    Using Inequality \eqref{lem:f:3} from Lemma~\ref{lem:f} and Inequality~\eqref{lem:5.1:1} from Lemma~\ref{lem:5.1}, we obtain
\begin{align}
  \label{eq:r3:3}
  \begin{split}
  &
      |f_{\infty}'(x-(X_0-X_{0,k}))
      \ - \
      f_{k-1}'(x-(X_0-X_{0,k}))|
      \\
      &\lesssim \
      k^{-(1-d)+1/r}
      g_\gamma(x-(X_0-X_{0,k})) \\
      & \lesssim \ k^{-(1-d)+1/r} g_\gamma(x) (1\lor|X_0-X_{0,k}|)^{1+\gamma}
      \,.
  \end{split}
\end{align}
    Applying Lemma~\ref{lem:swap}.(ii) together with Inequality \eqref{eq:r3:3} and then~\eqref{eq:iii:1}, we obtain
    \begin{align}
      \label{eq:r3:main}
      \begin{split}
      &
      \left|
    G_{k-1,n}'(X_0-X_{0,k})
    \ - \
    G_{\infty,n}'
(X_0-X_{0,k})
      \right|
      \\&
      \ \lesssim \
      \int_\RR
      G_n(x)
      |
      f_{\infty}'(x-(X_0-X_{0,k}))
      \ - \
      f_{k-1}'(x-(X_0-X_{0,k}))
      |
      \,\mathrm{d}x
      \\&
      \ \lesssim \
      k^{-(1-d)+1/r}
      \left(
      1
      \lor
      |X_0-X_{0,k}|^{1+\gamma}
      \right)
      u_n^{-\gamma+\gamma_G}
      \,.
      \end{split}
    \end{align}
    Using the independence of $X_0-X_{0,k}$ and $\varepsilon_{-k}\stackrel{d}{=} \varepsilon \in L^r(\PP)$, together with Inequality \eqref{eq:r3:main} and the fact that $|X_0-X_{0,k}|^{1+\gamma} \in L^r(\PP)$ by \eqref{eq:r2:x0bdd}, we obtain
    \begin{align*}
    \norm{
    R_3
    }_{L^r(\PP)}
    &
    \ \lesssim \
    |a_k|
      u_n^{-\gamma+\gamma_G}
      k^{-(1-d)+1/r}
      \left(
      1
      +
      \norm{|X_0-X_{0,k}|^{1+\gamma}}_{L^r(\PP)}
      \right)
    \,.
    \end{align*}
    Since $|a_k|\asymp k^{-(1-d)}$,
    we conclude, using the same argument as in \eqref{eq:r2:main_result}, that
    \[
    \norm{
    R_3
    }_{L^r(\PP)}
    \ \lesssim \
    k^{-2(1-d)+1/r}
      u_n^{-\gamma+\gamma_G}
    \ \lesssim \
    k^{-(1-d)(1+\gamma)}
      u_n^{-\gamma+\gamma_G}
      \,.
    \]
  This completes the proof of the control of $R_3$, and therefore the proof of Lemma~\ref{lem:core_r}.(ii).

  \qed

  We have bounded the individual summands in Lemma~\ref{lem:cond_decomp}. To advance the proof
  and finally show Equation \eqref{lem:core_r:iii:final}, we now require the following lemma, which controls the rate of divergence of a convolution-type sum that arises from applying the individual bounds.
  \begin{lemma}
  \label{lem:analytic}
  Let $\gamma,r \in\RR$ satisfy
    the assumptions of Lemma~\ref{lem:core_r}(ii).
  Then for all $n\in \NN$, we have
  \begin{align*}
    \sum_{k = -\infty}^{n}
      \left(
      \sum_{t=k\lor 1}^{n}
      \left(
      1
      \land
      (t-k)
      ^{-(1-d)(1+\gamma)}
      \right)
      \right)^r
      \ \lesssim\
      n^{r+1-(1-d)(1+\gamma)r}
      \,,
    \end{align*}
    where $\lesssim$ denotes inequality up to a constant $C_{\gamma,\alpha,d}>0$, depending only on $\gamma$, $\alpha$, and $d$. We adopt the convention $1/0=\infty$.
\end{lemma}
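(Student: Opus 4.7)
Write $p := (1-d)(1+\gamma)$. The plan is to first extract from the hypotheses of Lemma~\ref{lem:core_r}(ii) the two analytic facts that drive the estimate: (a) $0 < p < 1$, which follows from $\gamma < d/(1-d)$ and will make the partial sums $\sum_{j=1}^{m} j^{-p}$ behave like $m^{1-p}$; and (b) $pr > 1$, which follows immediately from $r(1-d) > 1$ (since $p r = r(1-d) + r(1-d)\gamma > 1$) and ensures that the tail series $\sum_{m \geq M} m^{-pr}$ is summable with asymptotics $M^{1-pr}$. Both estimates are what make the target exponent $r + 1 - pr$ appear.

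Next I split the outer sum according to the value of $k \vee 1$. For $k \in \{1,\ldots,n\}$, writing $j = t-k$, the inner sum is $S_k = 1 + \sum_{j=1}^{n-k} j^{-p}$, which by integral comparison using $p<1$ satisfies $S_k \lesssim (n-k+1)^{1-p}$. Summing over $k$ gives
\[
\sum_{k=1}^{n} S_k^r \ \lesssim \ \sum_{j=1}^{n} j^{(1-p)r} \ \lesssim \ n^{(1-p)r + 1} \ = \ n^{r+1-pr},
\]
which is already the target rate.

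For $k \leq 0$, set $m = -k \geq 0$, so that $t-k \geq 1$ throughout and $S_k = \sum_{t=1}^{n}(t+m)^{-p}$. I split this range according to whether $m \leq n$ or $m > n$. When $m \leq n$, integral comparison gives the uniform bound $S_k \lesssim n^{1-p}$, contributing at most $(n+1)\cdot n^{(1-p)r} \lesssim n^{r+1-pr}$. When $m > n$, the mean value theorem applied to $x \mapsto x^{1-p}$ yields $S_k \lesssim n \cdot m^{-p}$, so
\[
\sum_{m > n} S_{-m}^r \ \lesssim \ n^r \sum_{m > n} m^{-pr} \ \lesssim \ n^r \cdot n^{1-pr} \ = \ n^{r+1-pr},
\]
where the middle step uses $pr > 1$. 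Combining the three contributions yields the claim.

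The main obstacle is the last regime, $|k| > n$, because here the crude bound $S_k \lesssim n^{1-p}$ is too loose: summed over an infinite range of $k$ it would diverge. The trick is to extract a factor of $n$ rather than $n^{1-p}$ from $S_k$ via the MVT on $x^{1-p}$, trading the favorable exponent $1-p$ for a single factor of $n$ while still accumulating the required decay $m^{-p}$; this is precisely what makes $pr > 1$ the critical hypothesis and what turns the infinite tail sum into a finite one of the correct order $n^{r+1-pr}$.
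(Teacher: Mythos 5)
Your proof is correct and follows essentially the same strategy as the paper's: split the outer sum into $k\in\{1,\ldots,n\}$, $-n\le k\le 0$, and $k<-n$, use $p<1$ to get the $n^{1-p}$-type bounds in the first two regimes, and use $pr>1$ to sum the infinite tail via the bound $S_k\lesssim n\cdot m^{-p}$. The only cosmetic difference is that the paper reaches the tail bound by trivially bounding each of the $n$ summands by $m^{-p}$ whereas you invoke the mean value theorem on $x^{1-p}$; both yield the same estimate.
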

\begin{remark}
  The above lemma actually holds with the relaxed conditions
  \begin{align*}
    0
    \ < \
    \gamma
    \ < \
    \frac{d}{1-d}
    \qquad\text{and}\qquad
    \frac{1}{1-d}
    \ < \
    r
    \,.
  \end{align*}
\end{remark}
\begin{proof}
    The sum in question can be decomposed and reindexed as
    \begin{align*}
    & \sum_{k = -\infty}^{0}
      \left(
      \sum_{t=1}^{n}
      (t-k)
      ^{-(1-d)(1+\gamma)}
      \right)^r
      \ + \
    \sum_{k = 1}^{n}
      \left(
      1
      \ + \
      \sum_{t= k+1}^{n}
      (t-k)
      ^{-(1-d)(1+\gamma)}
      \right)^r
      \\&
      \ = \
    \sum_{k = 0}^{\infty}
      \left(
      \sum_{t=k+1}^{n+k}
      t
      ^{-(1-d)(1+\gamma)}
      \right)^r
      \ + \
    \sum_{k = 1}^{n}
      \left(
      1
      \ + \
      \sum_{t= 1}^{n-k}
      t
      ^{-(1-d)(1+\gamma)}
      \right)^r
\\&
\ = \
\left(
      \sum_{t=1}^{n}
      t
      ^{-(1-d)(1+\gamma)}
      \right)^r
\ +\
    \sum_{k = n+1}^{\infty}
      \left(
      \sum_{t=k+1}^{n+k}
      t
      ^{-(1-d)(1+\gamma)}
      \right)^r
      \\&
      \ + \
    \sum_{k = 1}^{n}
    \left(
      \left(
      1
      \ + \
      \sum_{t= 1}^{n-k}
      t
      ^{-(1-d)(1+\gamma)}
      \right)^r
      \ + \
\left(
      \sum_{t=k+1}^{n+k}
      t
      ^{-(1-d)(1+\gamma)}
      \right)^r
    \right)
      \,.
    \end{align*}
    The first and second terms correspond to $k=0$ and $k>n$, respectively, in the first double sum of the previous equation.
    The third term arises from a combination of the remaining terms
    $k\in\{1,\ldots,n\}$ in the first double sum and all terms in the second double sum
    of the previous equation.
    From the assumptions on $\gamma$ and $r$, we observe that
  \begin{align*}
    (1-d)(1+\gamma) < 1
    \qquad\text{and}\qquad
    r(1-d)(1+\gamma) >1
    \,.
  \end{align*}
    We begin by estimating the first term in the decomposition above.
    Since $(1-d)(1+\gamma)<1$, we have
    \begin{align}
    \label{eq:analytic:10}
\left(
      \sum_{t=1}^{n}
      t
      ^{-(1-d)(1+\gamma)}
      \right)^r
      \ \lesssim \
      n^{r-(1-d)(1+\gamma)r}
      \ \le \
      n^{r+1-(1-d)(1+\gamma)r}
      \,.
    \end{align}
    For $k\ge 1$, again since $(1-d)(1+\gamma)<1$, we also have
    \begin{align}
      \begin{split}
      \label{eq:analytic:11}
      \sum_{t=k+1}^{n+k}
      t
      ^{-(1-d)(1+\gamma)}
      &
      \ \le \
      \min\left( n
      \cdot
      k^{-(1-d)(1+\gamma)},
      \int_k^{n+k}
      t
      ^{-(1-d)(1+\gamma)}
      \,\mathrm{d}t \right) \\
      & \ \lesssim \
      \min\left( n
      \cdot
      k^{-(1-d)(1+\gamma)},
      (n+k)^{1-(1-d)(1+\gamma)}
      \right) \,.
      \end{split}
    \end{align}
    Using the first bound in the Estimate~\eqref{eq:analytic:11} and the fact that $(1-d)(1+\gamma)r>1$, we obtain
    \begin{align*}
      &
    \sum_{k = n+1}^{\infty}
      \left(
      \sum_{t=k+1}^{n+k}
      t
      ^{-(1-d)(1+\gamma)}
      \right)^r
      \\&
    \ \lesssim \
    n^r
    \sum_{k = n+1}^{\infty}
      k^{-(1-d)(1+\gamma)r}
      \ \lesssim \
    n^r
    \int_n^\infty
      t^{-(1-d)(1+\gamma)r}
      \,\mathrm{d}t
      \ \lesssim \
      n^{r+1-(1-d)(1+\gamma)r}
      \,.
     \end{align*}
    Applying the second bound in Estimate~\eqref{eq:analytic:11},
    we obtain
    \[
    \sum_{k = 1}^{n}
\left(
      \sum_{t=k+1}^{n+k}
      t
      ^{-(1-d)(1+\gamma)}
      \right)^r
      \ \lesssim \
    \sum_{k = 1}^{n}
      (n+k)^{r-(1-d)(1+\gamma)r}
    \ \lesssim \
      n^{r+1-(1-d)(1+\gamma)r}
      \,.
    \]
    Finally,
    recalling the first bound in~\eqref{eq:analytic:10}, we obtain
    \begin{align*}
 \sum_{k = 1}^{n}
      \left(
      1
      \ + \
      \sum_{t= 1}^{n-k}
      t
      ^{-(1-d)(1+\gamma)}
      \right)^r
      \ \lesssim \
      n
      \left(
      \sum_{t=1}^n
      t^{-(1-d)(1+\gamma)}
      \right)^r
      \ \lesssim \
      n^{r+1-(1-d)(1+\gamma)r}
      \,.
    \end{align*}
    This completes the proof of Lemma~\ref{lem:analytic}.
    \end{proof}
    With all necessary tools in hand, we now turn to the final part of the proof.
\subsection{
Proof of Theorem~\ref{thm:approx}
}
    Let
    $n>k_0$.
    We are going to apply Lemmas~\ref{lem:cond_decomp}, \ref{lem:core_r}, and \ref{lem:analytic}.
    We begin the proof.
    By Lemma~\ref{lem:cond_decomp}, it follows
    \[
      \EE
      \left|
      \sum_{t=1}^n
      G_n(X_t)
      \ - \
      \EE[G_n(X_t)]
      \ - \
      G_{\infty,n}'(0)X_t
      \right|^r
      \ \lesssim \
      \sum_{k=-\infty}^{n}
      \left(
      \sum_{t=k\lor 1}^{n}
      \norm{
      P_{k-t}\, \mathcal{U}_n(X_0)
      }_{L^r(\PP)}
      \right)^r
      \,,
    \]
    where $\mathcal{U}_n(X_t):=G_n(X_t)-\EE[G_n(X_t)]-G_{\infty,n}'(0)X_t$. We split the inner sum based on whether $t-k>k_0+1$ and use the convexity of $x\mapsto x^r$ (due to $r>1$) to get
    \begin{align*}
      &\EE
      \left|
      \sum_{t=1}^n
      G_n(X_t)
      \ - \
      \EE[G_n(X_t)]
      \ - \
      G_{\infty,n}'(0)X_t
      \right|^r \\
      &
      \ \lesssim \
      \sum_{k=-\infty}^{n}
      \left(
      \sum_{
      \begin{smallmatrix}
        (1\lor k)\le t \le n\\
        t-k > k_0+1
      \end{smallmatrix}
      }
      \norm{
      P_{k-t}\, \mathcal{U}_n(X_0)
      }_{L^r(\PP)}
      \right)^{r}
      \\&
      \qquad + \
      \sum_{k=-\infty}^{n}
      \left(
      \sum_{
      \begin{smallmatrix}
        (1\lor k)\le t \le n\\
        t-k \le k_0+1
      \end{smallmatrix}
      }
      \norm{
      P_{k-t}\, \mathcal{U}_n(X_0)
      }_{L^r(\PP)}
      \right)^r
           \,.
    \end{align*}
    For the first term, we apply Lemma~\ref{lem:core_r}.(ii) and Lemma~\ref{lem:analytic}.
    This yields
    \begin{align*}
      &
      \sum_{k=-\infty}^{n}
      \left(
      \sum_{
      \begin{smallmatrix}
        (1\lor k)\le t \le n\\
        t-k > k_0+1
      \end{smallmatrix}
      }
      \norm{
      P_{k-t}\, \mathcal{U}_n(X_0)
      }_{L^r(\PP)}
      \right)^{r}
      \\&
      \ \lesssim \
      u_n^{(\gamma_G-\gamma)r}
      \sum_{k=-\infty}^{n}
      \left(
      \sum_{
      t=1\lor k
      }^n
      \left(
      1
      \land
      (t-k)^{-(1-d)(1+\gamma)}
      \right)
      \right)^{r}
      \\&
      \ \lesssim \
      u_n^{(\gamma_G-\gamma)r}
      n^{r+1-(1-d)(1+\gamma)r}
      \,.
    \end{align*}
    For the second term, note that the constraints $t\le k_0+k+1$ and $t \ge (1\lor k)$ ensure that the inner sum is empty when $k<-k_0$. Therefore, applying Lemma~\ref{lem:2.2}.(ii), we obtain for sufficiently large $n\in\NN$ that
    \begin{align*}
      &
      \sum_{k=-\infty}^{n}
      \left(
      \sum_{
      \begin{smallmatrix}
        (1\lor k)\le t \le n\\
        t-k \le k_0 +1
      \end{smallmatrix}
      }
      \norm{
      P_{k-t}\, \mathcal{U}_n(X_0)
      }_{L^r(\PP)}
      \right)^r
      \\&
 \ \lesssim \
      \left(
      \norm{G_n(X_0)}_{L^r(\PP)}
      \lor
      |G_{\infty,n}'(0)|
      \right)^{r}
      \sum_{k=-k_0}^{n}
      \left(
      \#
      \{
      t\in\NN
      \,\colon\,
        (1\lor k)\le t \le (n\land (k+k_0+1))
      \}
      \right)^r
      \\&
 \ \leq \
      \left(
      \norm{G_n(X_0)}_{L^r(\PP)}
      \lor
      |G_{\infty,n}'(0)|
      \right)^{r}
      \sum_{k=-k_0}^{n}
      \left(
      \#
      \{
      t\in\NN
      \,\colon\,
        k\le t \le k+k_0+1
      \}
      \right)^r
      \\&
      \ = \
      \left(
      \norm{G_n(X_0)}_{L^r(\PP)}
      \lor
      |G_{\infty,n}'(0)|
      \right)^{r}
      (n+k_0+1)(k_0+2)^r
      \\&
      \ \lesssim \
      n \left(
      \EE|G_n(X_0)|^r
      \lor
      |G_{\infty,n}'(0)|^r
      \right)
      \,.
    \end{align*}
    We conclude that
    \begin{align*}
      \EE
      \left|\sum_{t=1}^n \mathcal{U}_n(X_t)\right|^r
      \ \lesssim \
      u_n^{(\gamma_G-\gamma)r}n^{r+1 - (1-d)(1+\gamma)r}
      \ + \ n
      \left(
      \EE|G_n(X_0)|^r
      \lor
      |G_{\infty,n}'(0)|^r
      \right)
    \,.
    \end{align*}
    This is the first inequality we wanted to prove. For the second inequality, observe that $1-(1-d)(1+\gamma) \ge 0$  due to $\gamma \le d/(1-d)$. Therefore
    the $n$ rate of the first term dominates.
Since $\gamma_G<\gamma$, it holds that $u_n^{\gamma_G-\gamma}<\infty$ for all $n\in\mathbb{N}$. Applying Lemma~\ref{lem:swap} gives that $\EE|G_n(X_0)|^{r}<\infty$ and $|G_{\infty,n}'(0)|<\infty$ for all $n\in\mathbb{N}$, such that we may conclude
\begin{align*}
   u_n^{(\gamma_G-\gamma)r}n^{r+1 - (1-d)(1+\gamma)r}
      \ + \ n
      \left(
      \EE|G_n(X_0)|^r
      \lor
      |G_{\infty,n}'(0)|^r
      \right)
      \ \lesssim \
   n^{r+1 - (1-d)(1+\gamma)r}
   \,.
\end{align*}
Taking power $1/r$ on both sides of the inequality finishes the proof of Theorem~\ref{thm:approx}.
    \qed

Having established a $L^r(\PP)$ moment bound in Theorem~\ref{thm:approx}, we now turn to the proof of Theorem~\ref{thm:approx_optimal} which sharpens the result by identifying the optimal rate of convergence. This requires a careful balance of the parameters $\gamma$ and $r$.
     \subsection{Proof of Theorem~\ref{thm:approx_optimal}}
     Combining Theorem~\ref{thm:approx} with the following proposition immediately gives the desired result.
     \begin{proposition}
\label{prop:optim}
 Denote
      $\kappa(\gamma,r)=1+1/r-(1-d)(1+\gamma)$.
       \begin{enumerate}[label=(\roman*)]
         \item
      There exists a pair of unique minimizers $(\gamma_0,r_0)$ of $\kappa(\gamma,r)$ over the compact domain
      \begin{align*}
        0
         \ \le \
         \gamma
         \ \le \
  \min \left\{
  \frac{d}{1-d}
  \,,
         1
          -
         \frac{1}{r(1-d)}
         \,,
         \frac{\alpha}{r}
          -
         1
  \right\}
           \qquad\text{and}\qquad
         \frac{1}{1-d}
         \ \le \
         r
         \ \le \
         \alpha
         \,,
      \end{align*}
       namely
       \begin{align*}
 \gamma_0
   :=
  \begin{cases}
    \frac{d}{1-d}
    &
    \ \text{if}\
    \frac{1}{(1-d)(1-2d)}
    < \alpha\,,\\
    \frac{\alpha(1-d)-1}{\alpha(1-d)+1}
    &
    \
    \text{else,}
  \end{cases}
  \
  r_0
   :=
  \begin{cases}
    \alpha(1-d)
    &
    \ \text{if}\
    \frac{1}{(1-d)(1-2d)}
    < \alpha\,,\\
    \frac{1}{2}
    \left(
    \frac{1}{1-d}
    \ + \ \alpha
    \right)
    &
    \
    \text{else,}
  \end{cases}
  \end{align*}
  yielding
  \begin{align*}
  \kappa_0
    \ = \
    \kappa(\gamma_0,r_0)
  \ = \
  \left\{ \begin{array}{l}
    \frac{1}{\alpha(1-d)}
    \ \text{ if } \
    \frac{1}{(1-d)(1-2d)}
    < \alpha\,,\\[10pt]
    \frac{2(1-d) + (1-\alpha(1-d)(1-2d))}{\alpha(1-d)+1} = d
    \ + \
    (1-d)
    \frac{3-\alpha(1-d)}{\alpha(1-d)+1}
    \ \text{ else.}
  \end{array} \right.
\end{align*}
\item
It holds
\begin{align*}
  \gamma_0
  \ = \
  \min \left\{
  \frac{d}{1-d}
  \,,
         1
          -
         \frac{1}{r_0(1-d)}
         \,,
         \frac{\alpha}{r_0}
          -
         1
  \right\}
  \,,\quad
  r_0
  \in
  \left(
  \frac{1}{1-d}
  \,,
  \alpha
  \right)
  \,,
  \end{align*}
  and
  \begin{align*}
  \kappa_0
  \ - \ (d+1/\alpha)
  \ < \
  0
  \,.
\end{align*}
\end{enumerate}
\end{proposition}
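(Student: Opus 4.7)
\textbf{Proof proposal for Proposition~\ref{prop:optim}.} The plan is to reduce the two-dimensional minimization to a one-dimensional problem by exploiting the monotonicity of $\kappa$. Since $\kappa(\gamma,r) = 1 + 1/r - (1-d)(1+\gamma)$ is strictly decreasing in $\gamma$, for each fixed $r \in [1/(1-d), \alpha]$ the minimum over admissible $\gamma$ is attained at the upper boundary
\[
\bar\gamma(r) := \min\left\{\tfrac{d}{1-d},\ 1 - \tfrac{1}{r(1-d)},\ \tfrac{\alpha}{r} - 1\right\}.
\]
Thus it suffices to minimize the single-variable function $h(r) := 1 + 1/r - (1-d)(1 + \bar\gamma(r))$ over $r \in [1/(1-d), \alpha]$.

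Next I would analyze the three upper constraints defining $\bar\gamma$: the first is constant, the second is strictly increasing in $r$ (equaling $0$ at $r = 1/(1-d)$), and the third is strictly decreasing (equaling $0$ at $r = \alpha$). Elementary algebra identifies the three pairwise intersections: the second and third curves meet at $r^\star = (1/(1-d) + \alpha)/2$ with common value $\gamma^\star = (\alpha(1-d)-1)/(\alpha(1-d)+1)$; the second meets the constant $d/(1-d)$ at $r = 1/(1-2d)$; the third meets it at $r = \alpha(1-d)$. The condition for the first constraint to be binding anywhere on the admissible range is $d/(1-d) \le \gamma^\star$, which by direct manipulation is equivalent to $\alpha > 1/((1-d)(1-2d))$.

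This leads naturally to the announced case split. In the subcase $\alpha \le 1/((1-d)(1-2d))$, the first constraint is inactive throughout, $\bar\gamma$ is a tent-shaped function with unique maximum at $r^\star$, and $h$ is strictly convex on each of the two monotone pieces, so its minimum is attained uniquely at $r_0 = r^\star$ with $\gamma_0 = \gamma^\star$. In the subcase $\alpha > 1/((1-d)(1-2d))$, there is a plateau $r \in [1/(1-2d), \alpha(1-d)]$ on which $\bar\gamma(r) = d/(1-d)$, so that $h(r) = 1/r$ and is minimized uniquely at $r_0 = \alpha(1-d)$, with $\gamma_0 = d/(1-d)$. A direct substitution then yields the stated expressions for $\kappa_0$.

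For part (ii), the first identity follows by construction since $(\gamma_0,r_0)$ lies on the active boundary. The inclusion $r_0 \in (1/(1-d),\alpha)$ reduces in both subcases to the strict inequality $\alpha > 1/(1-d)$, which follows from Assumption~\ref{asu:coef} ($d < 1 - 1/\alpha$). The final inequality $\kappa_0 < d + 1/\alpha$ is verified by straightforward algebra: in the first subcase it reduces to $\alpha(1-d) > 1$, while in the second subcase, setting $x := \alpha(1-d) > 1$, it reduces to $(x-1)^2 > 0$. The main obstacle I anticipate is bookkeeping across the two subcases without conflating which upper bound is binding, and checking that the uniqueness claim is not lost at the boundary between the plateau and the strictly monotone pieces; this can be handled by observing that $h$ is strictly convex on each monotone branch while the plateau itself is strictly decreasing as $1/r$.
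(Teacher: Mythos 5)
Your overall strategy mirrors the paper's: reduce the two-variable minimization to the one-dimensional function $h(r) = \kappa(\bar\gamma(r),r)$ on the boundary curve, locate the pairwise intersections of the three constraint curves, split into the two subcases governed by the sign of $\alpha(1-d)(1-2d)-1$, and read off the optimizer. Your algebra for $\kappa_0 - (d+1/\alpha) < 0$ in the second subcase, reducing it to $(x-1)^2 > 0$ with $x = \alpha(1-d)$, is actually cleaner than the paper's chain of bounds and is a genuine simplification.

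However, the core step locating the minimum is not correctly justified. You claim that ``$h$ is strictly convex on each of the two monotone pieces,'' but this is false on the $\gamma_3$-branch: there $h(r) = 1 + (1-\alpha(1-d))/r$ and $h''(r) = 2(1-\alpha(1-d))/r^3 < 0$, so $h$ is strictly \emph{concave}, not convex. Moreover, even if convexity held on each piece, that alone would not place the minimum at the kink or at the right end of the plateau — one can easily have a piecewise-convex function whose minimum lies at an endpoint of the admissible interval. What actually drives the conclusion, and what the paper establishes by computing the derivatives and showing they never vanish, is the \emph{sign} of $h'$: $h(r) = 2d-1+2/r$ is strictly decreasing on the $\gamma_2$-branch, $h(r) = 1/r$ is strictly decreasing on the $\gamma_1$-plateau, and $h(r) = 1 + (1-\alpha(1-d))/r$ is strictly increasing on the $\gamma_3$-branch. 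The V-shape of $h$ then gives the unique minimizer. You should replace the convexity argument with these monotonicity computations. A second, smaller slip: you assert that $r_0 \in (1/(1-d),\alpha)$ ``reduces in both subcases to $\alpha > 1/(1-d)$.'' In the first subcase $r_0 = \alpha(1-d)$, and the left inequality $\alpha(1-d) > 1/(1-d)$ is equivalent to $\alpha(1-d)^2 > 1$, which does \emph{not} follow from $\alpha(1-d)>1$ alone; you need the case condition $\alpha(1-d)(1-2d) > 1$ together with $1-d > 1-2d$ to deduce it, as the paper does.
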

\begin{proof}
  \textbf{Proof of Part (i):}
     The goal is to
     minimize
     \begin{align*}
  \kappa(\gamma,r)
  \ = \
  1
  \ + \
  \frac{1}{r}
  \ - \
  (1-d)(1+\gamma)
\end{align*}
over $\gamma$ and $r$ from the
closure of the range in Theorem~\ref{thm:approx}(i), namely such that
\begin{align*}
  0
         \ \le \
         \gamma
         \ \le \
         \gamma(r)
         \qquad\text{and}\qquad
         \frac{1}{1-d}
         \ \le \
         r
         \ \le \
         \alpha
         \,,
\end{align*}
where the function $\gamma(r)$ is defined as
\begin{align*}
   \gamma(r)
   \ := \
         \gamma_{1}(r)
         \
         \land
         \
         \gamma_{2}(r)
         \
         \land
         \
         \gamma_{3}(r)
         \,,
 \end{align*}
 with
 \begin{align*}
   \gamma_1(r)
   \ := \
   \frac{d}{1-d}
   \,,
   \qquad
   \gamma_2(r)
   \ := \
   1
   -
   \frac{1}{r(1-d)}
   \,,
   \qquad
   \gamma_3(r)
   \ := \
   \frac{\alpha}{r}
   -
   1
   \,.
 \end{align*}
The function $\kappa$ is continuous and strictly decreasing in both $\gamma$ and $r$, which implies that
\[
  (\gamma_0,r_0)
  \
  =
  \
  \mathrm{argmin} \left\{ \kappa(\gamma,r)
  \, \Bigg| \,
  (\gamma,r)\in K
  \right\}
  \quad
  \text{with}
  \quad
   K:=  \left\{
  (\gamma(r),r)
  \, \Bigg| \,
  \frac{1}{1-d}
  \ \le \
  r
  \ \le \
  \alpha
  \right\}
  \,.
\]
Moreover, for any admissible value of $r$, the value of $\kappa(\gamma(r),r)$ is determined by the smallest among $\gamma_1(r),\gamma_2(r),\gamma_3(r)$. Specifically,
\begin{align*}
  \kappa(\gamma(r),r)
  \ = \
  \begin{cases}
    \kappa(\gamma_{1}(r),r)\,,&\qquad \gamma_{1}(r)\le (\gamma_2(r)\land \gamma_{3}(r))\,,\\
    \kappa(\gamma_{2}(r),r)\,,&\qquad \gamma_{2}(r)\le (\gamma_1(r)\land \gamma_{3}(r))\,,\\
    \kappa(\gamma_{3}(r),r)\,,&\qquad \gamma_{3}(r)\le (\gamma_1(r)\land \gamma_{2}(r))\,.\\
  \end{cases}
\end{align*}
Note that $r\mapsto \kappa(\gamma(r),r)$ is not differentiable everywhere. We shall therefore argue that on the set where it is differentiable the derivative does not vanish and therefore the optimal values have to lie on the boundary.
Note that this set is essentially
\begin{align*}
\left\{ r>0 \, \mid \, (\gamma_1(r),\gamma_2(r),\gamma_3(r))\  \mbox{are pairwise distinct} \right\},
\end{align*}
and it is nonempty because $\alpha>1/(1-d)$, and open since each $\gamma_j(r)$ is continuous. Next, we compute the derivative of $r\mapsto \kappa(\gamma(r),r)$ on this set.
 Clearly, for all $r>0$ it holds that
\begin{align*}
   \gamma_{1}'(r)
   \ = \
   0
   \,,
   \qquad
   \gamma_{2}'(r)
   \ = \
   \frac{1}{r^2(1-d)}
   \,,
   \qquad
   \gamma_{3}'(r)
   \ = \
   -
   \frac{\alpha}{r^2}
   \,.
 \end{align*}
 Applying the chain rule yields
 \begin{align*}
   &
   \frac{\mathrm{d}}{\mathrm{d}r}
   \kappa(\gamma_{j}(r),r)
   \ = \
   \gamma_j'(r)
  \frac{\partial}{\partial \gamma}
   \kappa(\gamma_j(r),r)
   \ + \
  \frac{\partial}{\partial r}
   \kappa(\gamma_j(r),r)
   \\&
   \ = \
   -
   \gamma_j'(r)
   (1-d)
   \ - \
\frac{1}{r^2}
   \,.
 \end{align*}
 Setting the derivative equal to zero gives
 \begin{align*}
   \gamma_j'(r)
   \ = \
   -
   \frac{1}{r^2(1-d)}
   \,.
 \end{align*}
 This equation clearly has no solution for $j=1,2$.
 For $j=3$, solving yields $\alpha=1/(1-d)$, which contradicts our assumption that $\alpha>1/(1-d)$.
 Therefore, we conclude that
 for all $r>0$ it holds that
 \begin{align*}
   \frac{\mathrm{d}}{\mathrm{d}r}
   \kappa(\gamma_{j}(r),r)
   \
   \neq
   \
   0
   \qquad\text{for}\ j=1,2,3
   \,.
 \end{align*}
To complete the analysis, we examine the behavior of $\kappa(\gamma(r),r)$ at values of $r$ where two or more of the functions $\gamma_j(r)$ coincide.

We begin by identifying the intersection points of the functions $\gamma_1(r),\gamma_2(r)$, and $\gamma_3(r)$.
Note that $\gamma_1$ is constant, $\gamma_2$ is strictly increasing, and $\gamma_3$ is strictly decreasing. Hence, each pair of functions can intersect at most once. These intersections occur at
\begin{align*}
\gamma_1
\
  =
\
  \gamma_2(r)
  \ = \
  \frac{d}{1-d}
  \qquad\text{if and only if}\qquad
  r
  \ = \
  r_{1,2}
  \ := \
  \frac{1}{1-2d}
  \,,
\end{align*}
\begin{align*}
\gamma_1
\
  =
\
  \gamma_3(r)
  \ = \
  \frac{d}{1-d}
  \qquad\text{if and only if}\qquad
  r
  \ = \
  r_{1,3}
  \ := \
  \alpha(1-d)
  \,,
\end{align*}
\begin{align*}
\gamma_2(r)
\
  =
\
  \gamma_3(r)
  \ = \
  \frac{\alpha(1-d)-1}{\alpha(1-d)+1}
  \qquad\text{if and only if}\qquad
  r
  \ = \
  r_{2,3}
  \ := \
  \frac{1}{2}
  \left(
  \frac{1}{1-d}
  \ + \
  \alpha
  \right)
  \,.
\end{align*}
We next need to calculate $\kappa(\gamma(r),r)$ at $r \in \{r_{1,2},r_{1,3},r_{2,3}\}$, which requires computing $\gamma(r)=\gamma_1\wedge\gamma_2(r)\wedge\gamma_3(r)$ at these points. To determine whether $\gamma_2(r_{2,3})$ lies above or below $\gamma_1$, we examine the sign of the difference:
\begin{align*}
  \mathrm{sgn}
  \left(
  \gamma_2(r_{2,3})
  \ - \
  \frac{d}{1-d}
  \right)
  \ = \
  \mathrm{sgn}
  \left(
  \alpha(1-d)(1-2d)
  \ - \ 1
  \right)
  \,.
\end{align*}
Thus,
\begin{align*}
  \gamma_2(r_{2,3})
  \ > \
  \frac{d}{1-d}
  \ = \
  \gamma_1
  \qquad\text{if and only if}\qquad
  \alpha
  \ > \
  \frac{1}{(1-d)(1-2d)}
  \,.
\end{align*}
We now distinguish between the two cases:
\begin{itemize}
  \item
$\alpha(1-d)(1-2d)> 1$
\item
$\alpha(1-d)(1-2d)\leq 1$
\end{itemize}
In the first case, $\alpha(1-d)(1-2d)>1$, the ordering of the intersection points is
\begin{align*}
   r_{1,2}
  \ < \
  r_{2,3}
  \ < \
  r_{1,3}
  \,,
\end{align*}
and we therefore have $\gamma(r)=d/(1-d)$ for all $r\in\{r_{1,2},r_{2,3},r_{1,3}\}$ due to the monotonicity properties of $\gamma_2$ and $\gamma_3$, see the left panel of Figure~\ref{fig:optimal}.
\begin{figure}[h]
    \centering
    \includegraphics[width=0.95\textwidth]{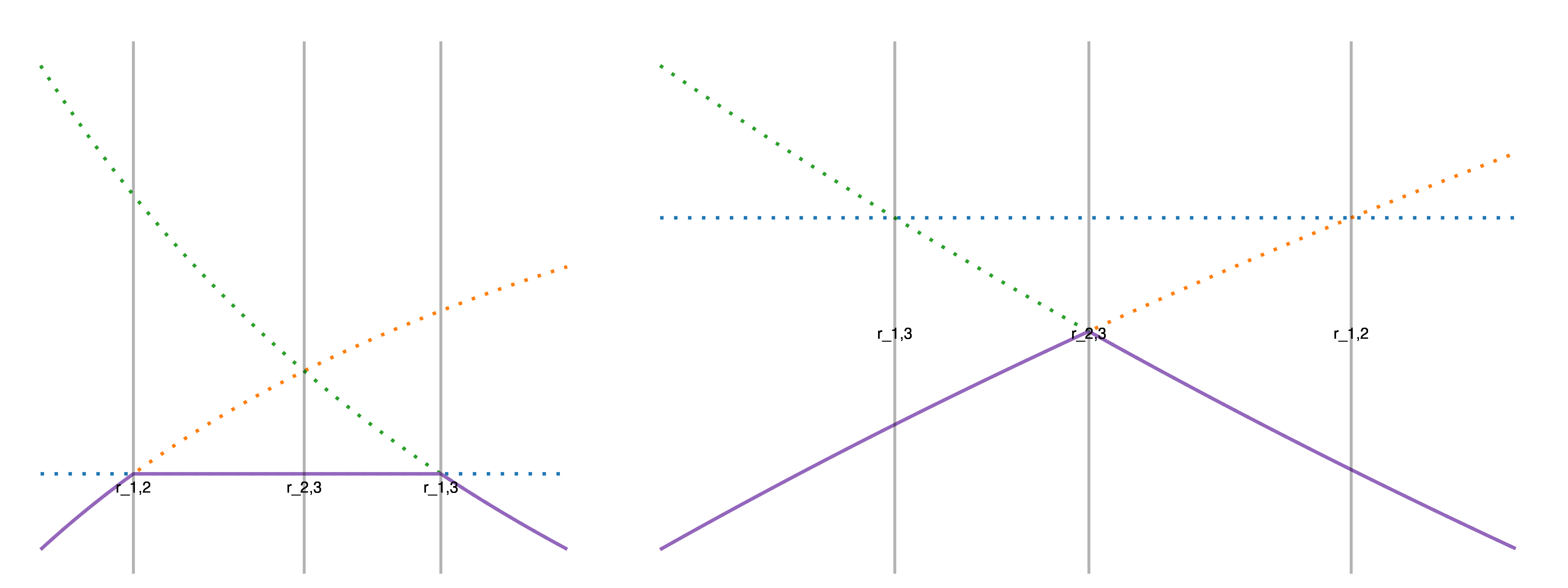}
    \caption{General position of the curves representing the functions $\gamma_1$ (blue dotted line), $\gamma_2$ (orange dotted line) and $\gamma_3$ (green dotted line) in the cases $\alpha(1-d)(1-2d)> 1$ (left panel) and $\alpha(1-d)(1-2d)\leq 1$ (right panel). The purple curve represents the function $\gamma_1\wedge \gamma_2 \wedge \gamma_3$.
    }
  \label{fig:optimal}
\end{figure}
Since $\kappa$ is decreasing in both $\gamma$~and~$r$,
the optimal value is attained at the largest feasible $r$, which in this case is $r_{1,3}$, provided it lies within the admissible interval.
Note that the inequality chain
\begin{align*}
 \frac{1}{1-d}
 \ < \
  \alpha(1-2d)
  \ < \
  \alpha(1-d)
  \ = \
  r_{1,3}
  \ < \
  \alpha
  \,,
\end{align*}
confirms that $r_{1,3}$ lies within the range $(1/(1-d),\alpha)$, and is therefore an admissible choice for minimizing $\kappa$ over $K$.
We conclude that, when $\alpha(1-d)(1-2d)>1$, the optimal choice is $(\gamma_0,r_0)=(d/(1-d),\alpha(1-d))$.
If on the contrary $\alpha(1-d)(1-2d)\le 1$, we have the ordering
\begin{align*}
   r_{1,3}
  \ \le \
  r_{2,3}
  \ \le \
  r_{1,2}
  \qquad\text{and}\qquad
  \gamma(r_{1,2})
  ,
  \gamma(r_{1,3})
  \ \le \
  \gamma(r_{2,3})
\end{align*}
as the right panel of Figure~\ref{fig:optimal} indicates. For $r\in[r_{2,3}, r_{1,2}]$, we have $\gamma(r)=\gamma_3(r)=\alpha/r-1$, and thus
\begin{align*}
  \kappa(\gamma(r),r)
  \ = \
  \kappa
  \left(
  \frac{\alpha}{r}
   -  1,r
  \right)
  \ = \
  1
  \ + \
  \frac{1}{r}
  (1-\alpha(1-d))
  \,.
\end{align*}
Since $\alpha(1-d)>1$ by assumption, this function is increasing in $r$ on the interval $[r_{2,3},r_{1,2}]$, so
$\kappa(\gamma(r),r)$ is minimal at $r=r_{2,3}$ on this interval.
As both $r_{1,3}\le r_{2,3}$ and $\gamma(r_{1,3})\le \gamma(r_{2,3})$ (see again the right panel of Figure~\ref{fig:optimal}), and since $\kappa$ is decreasing in both $\gamma$ and $r$, the minimum of $\kappa(\gamma(r),r)$ over the full interval $[r_{1,3},r_{2,3}]$ is attained at $r=r_{2,3}$. Finally, given that $r_{2,3}$ lies in the center of the interval $[1/(1-d),\alpha]$, it is therefore an admissible choice for minimizing $\kappa$ over $K$. Therefore, the optimal choice in the case $\alpha(1-d)(1-2d)\le 1$ is
\[
(\gamma_0, r_0) = \left( \frac{\alpha(1-d)-1}{\alpha(1-d)+1}
  \,,
  \frac{1}{2}
  \left(
  \frac{1}{1-d}
  \ + \
  \alpha
  \right)
  \right)
  \,.
\]
Plugging the values of $\gamma_0$ and $r_0$ into $\kappa(\gamma,r)$ finishes the proof of (i).
\\
  \textbf{Proof of Part (ii):}
Note that we already showed that $r_0\in(1/(1-d),\alpha)$, and the statement about $\gamma_0$ follows from $\gamma_0=\gamma_1(r_0)\wedge \gamma_2(r_0)\wedge \gamma_3(r_0)$.
From what we showed it also follows that $\gamma_0$ lies on the upper bound of the admissible interval for $\gamma$.
It remains to show $\kappa_0 - (d+1/\alpha)<0$. In the case $\alpha(1-d)(1-2d)>1$ this follows from
\begin{align*}
  \frac{1}{\alpha(1-d)}
  \ - \
  d
  \ - \
  \frac{1}{\alpha}
  \ = \
  \frac{1 - (1-d)(\alpha d + 1 )}{\alpha(1-d)}
  \ = \
  \frac{d(1-\alpha(1-d))}{\alpha(1-d)}
  \ < \
  0
  \,,
\end{align*}
since $\alpha(1-d)>1$.
In the opposite case $\alpha(1-d)(1-2d)\le 1$, it holds that
\begin{align*}
  &
    d
    \ + \
    (1-d)
    \frac{3-\alpha(1-d)}{\alpha(1-d)+1}
\ - \
d
\ - \
\frac{1}{\alpha}
  \\&
  \ = \
  \frac{1}{\alpha(\alpha(1-d)+1)}
  \left(
  \alpha(1-d)(3 - \alpha(1-d))
  \ - \
  \alpha(1-d) - 1
  \right)
  \\&
  \ = \
  \frac{1}{\alpha(\alpha(1-d)+1)}
  \left(
  \alpha(1-d)(2 - \alpha(1-d))
  \ - \
  1
  \right)
  \,.
  \intertext{By $\alpha(1-d)(1-2d)\le 1$ and $\alpha(1-d)\le \alpha \le 2$ this is bounded above by}
  &
  \frac{1}{\alpha(\alpha(1-d)+1)}
  \left(
  \frac{
  2 - \alpha(1-d)
  }{1-2d}
  \ - \
  1
  \right)
  \,,
  \intertext{which in turn by $\alpha(1-d)>1$ is bounded above by }
  &
  \frac{1}{\alpha(\alpha(1-d)+1)}
  \left(
  \frac{
  1
  }{1-2d}
  \ - \
  1
  \right)
  \ < \ 0\,.
\end{align*}
It follows (ii).
\end{proof}
\subsection{Proof of Corollaries~\ref{cor:heavy_det} and~\ref{cor:light_det}}
\begin{proof}[Proof of Corollary~\ref{cor:heavy_det}]
  We want to apply Corollary~\ref{cor:sub_ord_univariateI_optim} for $G_n(x)=\ind\{x>u_n\}$ and $G_n(x)=\log(x/u_n)\ind\{x>u_n\}$, respectively.
  Note that both choices of $G_n$ satisfy Assumption~\ref{asu:G_n} for all $\gamma_G>0$ arbitrarily small.
  By Lemma~\ref{lem:swap} it holds $\EE[G_n(X_0)] = \PP[X_0>u_n]$ and $G_{\infty,n}'(0)
    =f_{X_0}(u_n)$ for $G_n(x)=\ind\{x>u_n\}$. Besides, for $G_n(x)=\log(x/u_n)\ind\{x>u_n\}$, one has
  \begin{align*}
    \EE[G_n(X_0)]
    \ &= \
    \int_{u_n}^\infty \log(x/u_n)f_{X_0}(x)
    \,\mathrm{d}x
    \ \sim\
    \frac{
    \PP[X_0>u_n]
    }{\nu}
  \end{align*}
  by the regular variation property of $f_{X_0}$ (see {\it e.g.} \cite[Equation~(3.2.1)]{dehaanExtremeValueTheory2006}, and
  \begin{align*}
    G_{\infty,n}'(0)
    \ &= \
    -\int_{u_n}^\infty \log(x/u_n)f'_{X_0}(x)
    \,\mathrm{d}x
    \ = \
    \int_{u_n}^\infty
    \frac{
    f_{X_0}(x)
    }{x}
    \,\mathrm{d}x
    \ \sim\
    \frac{
    f_{X_0}(u_n)
    }{\nu+1}
    \,,
  \end{align*}
  by partial integration
  and Karamata's theorem \cite[Theorem~B.1.5]{dehaanExtremeValueTheory2006}.
  Therefore it holds for $G_n(x)=\ind\{x>u_n\}$ that
  \begin{align*}
    \frac{\EE[G_n(X_0)]}{G_{\infty,n}'(0)}
    \ = \
    \frac{
    \PP[X_0>u_n]
    }{f_{X_0}(u_n)}
    \ \sim
    \
    \frac{u_n}{\nu}
    \,,
  \end{align*}
  by Karamata's theorem again, and for $G_n(x)=\log(x/u_n)\ind\{x>u_n\}$ that
  \begin{align*}
    \frac{\EE[G_n(X_0)]}{G_{\infty,n}'(0)}
    \sim
    \frac{
    \PP[X_0>u_n]
    }
    {f_{X_0}(u_n)}
    \frac{\nu+1}{\nu}
    \ \sim \
    u_n
    \frac{1}{\nu}
    \frac{\nu+1}{\nu}
    \,.
  \end{align*}
  We finally check that $n^{\kappa_0+\delta/2-1/(\nu\land 2)-d}/G_{\infty,n}'(0)\to 0$.
  For this it suffices to consider
  \begin{align*}
    \frac{n^{\kappa_0+\delta/2-1/(\nu\land 2)-d}}{f_{X_0}(u_n)}
   \  \sim\
    L(u_n)
    n^{\kappa_0+\delta/2-1/(\nu\land 2)-d}
    u_n^{\nu+1}
    \,,
  \end{align*}
  where $L:x\mapsto x^{-\nu-1}/f_{X_0}(x)$ is slowly varying.
  Note that the term in the last display converges to zero since $u_n^{\nu+1}=o(n^{-\kappa_0-\delta+d+1/(\nu\land 2)})$ and $L(u_n)=o(n^{\delta/2})$.
  Therefore, we may apply Corollary~\ref{cor:sub_ord_univariateI_optim} to yield the desired results.
\end{proof}
For the proof of Corollary~\ref{cor:light_det} we require the following auxiliary result.
\begin{lemma}
\label{lem:gronwall}
  If $\omega\in C^1(\RR)$ is positive and satisfies, for some $\beta>0$, that $x^{1-\beta}\omega'(x)/\omega(x)$ converges to 0 as $x\to\infty$, then for all $p\in\mathbb{R}$ it holds that  $x^{p}\omega(x)\exp(-x^{\beta}/\beta)\to 0$ as $x\to\infty$.
\end{lemma}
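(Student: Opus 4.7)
The plan is to treat the hypothesis as a differential inequality on $\log \omega$ and integrate it, yielding a bound of the form $\omega(x) \lesssim \exp(\varepsilon x^\beta/\beta)$ for arbitrarily small $\varepsilon>0$, which is easily absorbed by the exponential decay $\exp(-x^\beta/\beta)$ and any polynomial prefactor.

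More concretely, since $\omega$ is positive and $C^1$, the function $\log \omega$ is well-defined and $C^1$ with derivative $(\log \omega)' = \omega'/\omega$. The hypothesis $x^{1-\beta}\omega'(x)/\omega(x) \to 0$ rewrites as $(\log \omega)'(x) = o(x^{\beta-1})$. Thus, for any $\varepsilon>0$, there exists $x_0>0$ such that $|\omega'(x)/\omega(x)| \le \varepsilon x^{\beta-1}$ for all $x \ge x_0$. Applying the fundamental theorem of calculus to $\log \omega$ on $[x_0,x]$ and using this pointwise bound gives
\[
|\log \omega(x) - \log \omega(x_0)| \ \le \ \varepsilon \int_{x_0}^x t^{\beta-1}\,\mathrm{d}t \ = \ \frac{\varepsilon}{\beta}(x^\beta - x_0^\beta) \ \le \ \frac{\varepsilon}{\beta}x^\beta\,.
\]
Exponentiating yields the (sub-exponential) growth bound $\omega(x) \le \omega(x_0)\exp(\varepsilon x^\beta/\beta)$ for $x\ge x_0$.

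To finish, fix $p\in\RR$ and choose $\varepsilon\in(0,1)$. Combining the bound above with the factor $\exp(-x^\beta/\beta)$, we obtain
\[
x^p\omega(x)\exp(-x^\beta/\beta) \ \le \ \omega(x_0)\, x^p\exp\left(-(1-\varepsilon)\frac{x^\beta}{\beta}\right) \ \longrightarrow \ 0 \qquad \text{as}\ x\to\infty\,,
\]
since $\beta>0$ and the exponential decay dominates any polynomial prefactor. This completes the proof.

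There is really no serious obstacle here: the argument is a one-line Gronwall-type estimate disguised as an ODE bound on $\log\omega$. The only small subtlety is ensuring $\omega$ stays positive so that $\log\omega$ and $\omega'/\omega$ are meaningful, which is given by the hypothesis.
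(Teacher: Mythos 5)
Your proof is correct and follows essentially the same route as the paper: both derive a sub-exponential growth bound $\omega(x)\lesssim\exp(\varepsilon x^{\beta}/\beta)$ from the hypothesis treated as a differential inequality on $\log\omega$ (the paper phrases this as an application of Gr\"onwall's lemma with $\varepsilon=1/2$, you integrate $(\log\omega)'$ directly, which is the same estimate), and then absorb that growth into the dominant factor $\exp(-x^{\beta}/\beta)$.
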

\begin{proof}
  Since
  $x^{1-\beta}\omega'(x)/\omega(x)\to 0$ as $x\to\infty$,
  there exists $x_0>0$ with $\omega'(x)\leq \frac{1}{2} x^{\beta-1} \omega(x)$ for all $x> x_0$.
  By Gr{\"o}nwall's lemma it holds, for $x>x_0$,
 \begin{align*}
  \omega(x)
  \ \le \
  \omega(x_0)
  \exp(-x_0^{\beta}/(2\beta))
  \exp(x^{\beta}/(2\beta))
  \,.
\end{align*}
Therefore, it holds that for any $p\in\mathbb{R}$, as $x\to\infty$,
  \begin{align*}
  x^{p}
  \omega(x)
  \exp(-x^{\beta}/\beta)
  \ \lesssim \
  x^{p}
  \exp(-x^{\beta}/(2\beta))
  \ \to\ 0
  \,,
\end{align*}
which immediately yields the result since $\omega$ is assumed to be non-negative.
\end{proof}
\begin{proof}[Proof of Corollary~\ref{cor:light_det}]
  We proceed as in the proof of Corollary~\ref{cor:heavy_det}.
  There we checked that Assumption~\ref{asu:G_n} holds. We next establish the limit
  \begin{equation}
  \label{eqn:hopital}
  \lim_{n\to\infty}
    \frac
    {u_n^{1-\beta} f_{X_0}(u_n)}
    {\PP[X_0>u_n]} = 1.
  \end{equation}
  Let $\phi:x\mapsto x^{1-\beta} f_{X_0}(x)$ and $\psi:x\mapsto \PP[X_0>x]$. The functions $\phi$ and $\psi$ are differentiable in a neighborhood of infinity, converge to 0 at infinity, and $\psi'(x)=-f_{X_0}(x)<0$ for $x$ large enough. Moreover $x^{1-\beta}\omega'(x)/\omega(x)\to 0$ as $x\to\infty$ since $\beta>0$ and $\omega'$ is either identically zero far from the origin or regularly varying at infinity, so that
  \[
  \lim_{x\to\infty} \frac{\phi'(x)}{\psi'(x)} = \lim_{x\to\infty} -\frac{(1-\beta) x^{-\beta} f_{X_0}(x) + x^{1-\beta} f'_{X_0}(x)}{f_{X_0}(x)} = \lim_{x\to\infty} \left( 1-x^{1-\beta}\frac{\omega'(x)}{\omega(x)} \right)=1.
  \]
  Then~\eqref{eqn:hopital} follows from l'H\^{o}pital's rule, that is,
  \[
  \lim_{n\to\infty}
    \frac
    {u_n^{1-\beta} f_{X_0}(u_n)}
    {\PP[X_0>u_n]} = \lim_{n\to\infty}
    \frac
    {\phi(u_n)}
    {\psi(u_n)} = \lim_{n\to\infty}
    \frac
    {\phi'(u_n)}
    {\psi'(u_n)} = 1.
  \]
  Therefore $\PP[X_0>u_n]\sim u_n^{1-\beta}f_{X_0}(u_n)$.
  Next, note that by partial integration it holds
  \begin{align*}
  \EE[G_n(X_0)] &=\int_{u_n}^\infty
    \log(x/u_n)
    f_{X_0}(x)
    \,\mathrm{d}x \\
    &= -
    \lim_{x\to\infty}
    \log(x/u_n)\PP[X_0>x]
    \ +\
    \int_{u_n}^\infty
    \frac{\PP[X_0>x]}{x}
    \,\mathrm{d}x \ =\  \int_{u_n}^\infty
    \frac{\PP[X_0>x]}{x}
    \,\mathrm{d}x
  \end{align*}
  since the boundary term vanishes due to Lemma~\ref{lem:gronwall}. Similarly we get
  \begin{align*}
  G_{\infty,n}'(0)=-\int_{u_n}^\infty
    \log(x/u_n)
    f'_{X_0}(x)
    \,\mathrm{d}x
    \ =\
    \int_{u_n}^\infty
    \frac{f_{X_0}(x)}{x}
    \,\mathrm{d}x
    \,.
  \end{align*}
  Note that
  \begin{align*}
    \lim_{n\to\infty}
    \frac{u_n^{-\beta} \PP[X_0>u_n]}
    {
    \int_{u_n}^{\infty}
    \frac{\PP[X_0>x]}{x}
    \,\mathrm{d}x
    }
    \ = \
    \lim_{n\to\infty}
    \frac{\beta u_n^{-\beta-1}\PP[X_0>u_n]+u_n^{-\beta}f_{X_0}(u_n)}{
    \PP[X_0>u_n]/u_n
    }
    \ = \ 1
    \,,
  \end{align*}
  and
  \begin{align*}
    \lim_{n\to\infty}
    \frac{u_n^{-1} \PP[X_0>u_n]}
    {
    \int_{u_n}^{\infty}
    \frac{f_{X_0}(x)}{x}
    \,\mathrm{d}x
    }
    \ = \
    \lim_{n\to\infty}
    \frac{u_n^{-2}\PP[X_0>u_n]+u_n^{-1}f_{X_0}(u_n)}{
    f_{X_0}(u_n)
    /u_n
    }
    \ = \ 1
    \,,
  \end{align*}
  where we used twice that $\PP[X_0>u_n]\sim u_n^{1-\beta}f_{X_0}(u_n)$ and l'H\^{o}pital's rule in a way similar to what was done above. This yields $\EE[G_n(X_0)]/G_{\infty,n}'(0)\sim u_n^{1-\beta}$.
  Finally we check that $n^{\kappa_0+\delta/2-d-1/2}/G_{\infty,n}'(0)\to 0$.
  Since $G_{\infty,n}'(0)\sim u_n^{-\beta} f_{X_0}(u_n)$, this boils down to
  \begin{align}
  \label{eq:omega_vanish}
  \frac{
  n^{\kappa_0+\delta/2-d-1/2}
  }{u_n^{-\beta}\omega(u_n)\exp(-u_n^{\beta}/\beta)}
  \ = \
  \frac{n^{-\delta/2}}{u_n^{-\beta}\omega(u_n)}
  \ \to \ 0
  \end{align}
  due to the regular variation property of $\omega$.
  Then, as before, applying Corollary~\ref{cor:sub_ord_univariateI_optim} yields the results.
\end{proof}

\section{Derandomization device}

We state and prove a lemma which is the main tool for obtaining convergence of PoT estimators with random thresholds (Corollaries~\ref{cor:heavy_rand} and~\ref{cor:light_rand}) from their convergence with deterministic thresholds (Corollaries~\ref{cor:heavy_det} and~\ref{cor:light_det}).
\begin{lemma}
  \label{lem:derand}
  Let $(X_t)$ be a stationary time series, and
  let $g$ be an increasing, continuously differentiable function whose derivative $g'$ is regularly varying at infinity.
  For $x>0$ define
  \begin{align*}
    \xi(x)
    \ := \
    \frac{xf_{X_0}(x)}{\PP[X_0>x]}
    \,.
  \end{align*}
  Assume the following:
  \begin{itemize}
    \item
    There exists a threshold sequence $(u_n)$ with $u_n\to\infty$, and a rate sequence $(r_n)$ with $r_n\to\infty$, such that $r_n/\xi(u_n)\to\infty$,
 and there exists a constant $c>0$ such that
      \begin{align}
        \label{lem:derand:c1}
        \begin{split}
        u_n
        \cdot
        g'(u_n)
        \frac
        {
        \PP[X_0>u_n]
        }
        {
\EE
\left[
  \,
\left(
      g(X_0)
       -
      g(u_n)
      \right)
      \mathbbm{1}
      \left\{
        X_0>u_n
      \right\}
\right]
        }
        \
        \sim
        \
          c
          \cdot
          \xi(u_n)
          \,,
        \end{split}
      \end{align}
      and,
      for all $t\in\mathbb{R}$,
\begin{align}
  \label{asu:derand:iii}
        r_n
        \left(
        \frac{\PP[X_0>u_n]}{\PP[X_0>(1+t/r_n)\cdot u_n]}
        \ - \
        1
        \right)
        \ \sim\
        t
        \cdot
        \xi(u_n)
        \,.
      \end{align}
\item
    There exists $\delta_\epsilon\in(0,1)$ such that for all
    sequences $(\epsilon_n)$ converging to $0$ and satisfying
    \begin{align}
      \label{cond:epsilon}
      |\epsilon_n|= \mathcal{O}(r_n^{\delta_\epsilon-1})
    \end{align}
    the following holds:
\begin{align}
  \label{lem:derand:clt}
  \begin{split}
    &
      \frac
      {
    r_n/\xi(u_n)
      }
      {n}
      \\&
      \cdot \
      \sum_{t=1}^{n}
     \left[
      \left(
      \frac
      {
      \left(
      g(X_t)
       -
      g(u_n)
      \right)
      \mathbbm{1}
      \left\{
        X_t>u_n
      \right\}
    }
      {
\EE
\left[
  \,
\left(
      g(X_0)
       -
      g(u_n)
      \right)
      \mathbbm{1}
      \left\{
        X_0>u_n
      \right\}
\right]
      }
      \ - \
      1
      \right)
      \,,
      \left(
      \frac
      {
            \mathbbm{1}
      \left\{
        X_t>u_n(1+\epsilon_n)
      \right\}
    }
      {
      \PP[
        X_0>u_n(1+\epsilon_n)
      ]
      }
      \ - \
      1
      \right)
    \right]
  \end{split}
      \end{align}
      converges in distribution, as $n\to\infty$, to a pair of non-degenerate continuous random variables $[\Phi, \Psi]$.
        \item
There exists a positive sequence $(\tau_n)$
such that
\begin{align}
\label{cond:tau:1}
\tau_n \ = \  \mathcal{O}(r_n^{\delta_\epsilon - 1})
\,,
\end{align}
    \begin{align}
      \label{cond:tau:2}
      \tau_nr_n\to \infty
      \,,
    \end{align}
and
      \begin{align}
        \label{asu:derand:iv}
        \frac{\PP[X_0>u_n]}{\PP[X_0>u_n(1\pm\tau_n)]}
        \ \to \
        1
        \,,
      \end{align}
      as $n\to\infty$.
  \end{itemize}
  If additionally $n\PP[X_0>u_n]\to\infty$, then under the above assumptions, for
  \begin{align*}
    k
    \ = \
    k(n,u_n)
    \ := \
        \lfloor n\PP[X_0>u_n]\rfloor
        \,,
  \end{align*}
  it holds that
      \begin{align*}
        &
     \left[
        \frac{r_n}{\xi(u_n)}
     \frac{1}{k}
      \sum_{i=1}^{k}
      \left(
      \frac
      {
        g(X_{n-i+1:n})
      \ - \
        g(X_{n-k:n})
    }
      {
\EE
\left[
        \left(
          \,
      g(X_0)
      -
      g(u_n)
        \right)
      \mathbbm{1}
      \left\{
        X_0>u_n
      \right\}
\right]
      }
      \frac{k}{n}
      \ - \ 1
      \right)
      \,
      , \
      r_n
      \left(
      \frac
      {
        X_{n-k :n}
      }
      {u_n}
    \ - \
    1
      \right)
    \right]
      \end{align*}
      converges in distribution to
      $
      \left[
      \Phi
      \ - \
      c\cdot
      \Psi
      \,,
      \Psi
      \right]
      $ as $n\to\infty$.
\end{lemma}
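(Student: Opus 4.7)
The plan is to linearize the first coordinate around its deterministic-threshold counterpart and reduce to the joint convergence supplied by~\eqref{lem:derand:clt}. Set $\mu_n := \EE[(g(X_0)-g(u_n))\ind\{X_0>u_n\}]$, $T_n(v) := \#\{1\le t\le n \,:\, X_t>v\}$, and
\[
H(v) := \sum_{t=1}^n (g(X_t)-g(v))\ind\{X_t>v\}.
\]
The map $H$ is continuous and piecewise-$C^1$ with $H'(v)=-g'(v)T_n(v)$ at regular points; moreover $T_n(X_{n-k:n})=k$ almost surely. Writing $T_n(v)=k+(T_n(v)-k)$ in the identity $S_n - S_n^{\det} = H(X_{n-k:n}) - H(u_n) = -\int_{u_n}^{X_{n-k:n}} g'(v)T_n(v)\,\mathrm{d}v$, with $S_n := H(X_{n-k:n})$ and $S_n^{\det} := H(u_n)$, yields
\[
S_n - S_n^{\det} = -k\,(g(X_{n-k:n})-g(u_n)) - \int_{u_n}^{X_{n-k:n}} g'(v)(T_n(v)-k)\,\mathrm{d}v.
\]

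Next, I would isolate the main contribution. The mean-value theorem together with the regular variation of $g'$ yields $g(X_{n-k:n})-g(u_n) = (g'(u_n)u_n/r_n)\, B_n\, (1+\operatorname{o}_{\PP}(1))$, where $B_n := r_n(X_{n-k:n}/u_n - 1)$; combined with $k\sim n\PP[X_0>u_n]$ and~\eqref{lem:derand:c1}, this produces
\[
\frac{r_n}{\xi(u_n)\,n\,\mu_n}(-k)(g(X_{n-k:n})-g(u_n)) = -c\,B_n(1+\operatorname{o}_{\PP}(1)).
\]
For the error integral, the pointwise bound $|T_n(v)-k|\le |T_n(u_n)-k|$ for $v$ between $u_n$ and $X_{n-k:n}$, combined with $|T_n(u_n)-k|=\operatorname{O}_{\PP}(k\xi(u_n)/r_n)$ (from tightness of the second coordinate of~\eqref{lem:derand:clt} at $\epsilon_n=0$) and $|g(X_{n-k:n})-g(u_n)|=\operatorname{O}_{\PP}(u_n g'(u_n)/r_n)$ (from tightness of $B_n$, itself deducible from the identity mentioned below), controls the integral by $\operatorname{O}_{\PP}(k\xi(u_n)u_n g'(u_n)/r_n^2)$, which after multiplying by $r_n/(\xi(u_n)n\mu_n)$ and reapplying~\eqref{lem:derand:c1} reduces to $\operatorname{O}_{\PP}(\xi(u_n)/r_n)=\operatorname{o}_{\PP}(1)$. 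Thus
\[
A_n = A_n^{\det} - c\,B_n + \operatorname{o}_{\PP}(1),
\]
where $A_n$ denotes the first-coordinate quantity in the conclusion of the lemma and $A_n^{\det}$ its analogue in~\eqref{lem:derand:clt} at $\epsilon_n=0$.

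By~\eqref{lem:derand:clt} with $\epsilon_n=0$, $(A_n^{\det},\tilde\Psi_n)\stackrel{\mathrm{d}}{\longrightarrow}(\Phi,\Psi)$, where $\tilde\Psi_n$ is the second coordinate in~\eqref{lem:derand:clt}. Slutsky's lemma applied to the above linearization then reduces the proof to the equivalence $B_n = \tilde\Psi_n + \operatorname{o}_{\PP}(1)$. This is the main obstacle, as $B_n$ is of quantile type while $\tilde\Psi_n$ is of distribution-function type. The marginal convergence $B_n\stackrel{\mathrm{d}}{\longrightarrow}\Psi$ follows from the order-statistic identity $\{X_{n-k:n}\le u_n(1+t/r_n)\}=\{T_n(u_n(1+t/r_n))\le k\}$ by invoking~\eqref{lem:derand:clt} with $\epsilon_n=t/r_n$ and using~\eqref{asu:derand:iii} to translate $\PP[X_0>u_n(1+t/r_n)]$ back to $\PP[X_0>u_n]$. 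Upgrading this to $B_n-\tilde\Psi_n\to 0$ in probability is the technical heart of the argument: on the event $\{|B_n|\le \tau_n r_n\}$, which has probability tending to $1$ thanks to~\eqref{cond:tau:2}, the sandwich $T_n(u_n(1+\tau_n))\le k\le T_n(u_n(1-\tau_n))$ holds, and conditions~\eqref{cond:tau:1} and~\eqref{asu:derand:iv} are tailored so that Lemma~\ref{lem:koen} can invert the tail empirical function on a window of width $\tau_n$ around $u_n$, thereby yielding $B_n = \tilde\Psi_n + \operatorname{o}_{\PP}(1)$ and completing the proof.
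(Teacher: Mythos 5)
Your decomposition is correct and essentially the same as the paper's, just written differently: the identity
\[
S_n - S_n^{\mathrm{det}} = -k\bigl(g(X_{n-k:n})-g(u_n)\bigr) - \int_{u_n}^{X_{n-k:n}} g'(v)\bigl(T_n(v)-k\bigr)\,\mathrm{d}v
\]
is equivalent to the paper's split of $\widehat{e}_{g,n}(k,X_{n-k:n}) - \widehat{e}_{g,n}(k,u_n)$ into the main term $g(u_n)-g(X_{n-k:n})$ and remainders $R_1,R_2$, and your bound of the error integral by $|T_n(u_n)-k|\cdot|g(X_{n-k:n})-g(u_n)|$ combined with~\eqref{lem:derand:c1} to produce an $\operatorname{o}_{\PP}(1)$ term is a clean alternative to the paper's case analysis. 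The linearization $A_n = A_n^{\mathrm{det}} - c\,B_n + \operatorname{o}_{\PP}(1)$ that you reach is exactly equation~\eqref{eq:g:grand_derand} in the paper.

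The gap is in your final step. You assert that Slutsky's lemma reduces the problem to proving $B_n - \tilde\Psi_n \to 0$ in probability (with $\tilde\Psi_n$ the second coordinate of~\eqref{lem:derand:clt} at $\epsilon_n=0$), and then describe this as the technical heart without actually proving it. This is both stronger than needed and unsupported: under the abstract hypotheses of the lemma, such a Bahadur-type in-probability relation between the quantile-type statistic $B_n$ and the empirical-process-type statistic $\tilde\Psi_n$ would require oscillation control of the tail empirical function that the assumptions do not supply, and the sandwich on $\{|B_n|\le\tau_n r_n\}$ together with~\eqref{cond:tau:1} and~\eqref{asu:derand:iv} does not deliver it. What the argument actually needs is only the joint distributional convergence $(A_n^{\mathrm{det}},B_n)\stackrel{\mathrm{d}}{\to}(\Phi,\Psi)$, and this is precisely Lemma~\ref{lem:koen}.(ii): it is proved at the level of joint c.d.f.s, by inserting $\epsilon_n = s_2/r_n$ (depending on the level $s_2$) into~\eqref{lem:derand:clt} via the inversion identity of Lemma~\ref{lem:koen}.(i), using~\eqref{asu:derand:iii} to rescale the right-hand side, and exploiting that~\eqref{lem:derand:clt} produces the \emph{same} limit $[\Phi,\Psi]$ for every admissible sequence $(\epsilon_n)$. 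Replacing your claimed in-probability equivalence with this c.d.f.-level argument closes the gap; the rest of your proposal then goes through.
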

To prove this result, we need the following lemma, which establishes a connection between random thresholds and their deterministic counterparts.
\begin{lemma}\emph{(Koenker-type inversion lemma)}
  \label{lem:koen}
  Let
  $k(u_n,n):= \lfloor n\PP[X_0>u_n]  \rfloor$, and assume that Conditions~\eqref{asu:derand:iii} and~\eqref{lem:derand:clt} from Lemma~\ref{lem:derand} hold. Then:
  \begin{enumerate}[label=(\roman*)]
    \item
    The inequality
  \begin{align*}
      r_n
      \left(
        \frac
        {X_{n-k:n}}
        {u_n}
        \ - \
        1
      \right)
      &
      \
      \le
      \
      s
      \intertext{is equivalent to}
      r_n
      \left(
        \frac
        {1}
        {n}
        \sum_{t=1}^{n}
        \frac
        {
\mathbbm{1}
        \left\{
          X_t>u_n(1+s/r_n)
        \right\}
        }
        {
          \PP[
          X_0>u_n(1+s/r_n)
          ]
        }
        \ - \
        1
              \right)
              &
              \ \le \
              r_n
              \left(
                \frac
        {
          \PP[X_0>u_n]
        }
        {
          \PP[
          X_0>u_n(1+s/r_n)
          ]
        }
        \ - \
        1
              \right)
              \,.
  \end{align*}
\item
As $n\to\infty$, it holds that
\begin{align*}
&
  r_n
  \left[
      \frac
      {
    1/\xi(u_n)
      }
      {n}
      \sum_{t=1}^{n}
      \left(
      \frac
      {
      \left(
      g(X_t)
       -
      g(u_n)
      \right)
      \mathbbm{1}
      \left\{
        X_t>u_n
      \right\}
    }
      {
\EE
\left[
  \,
\left(
      g(X_0)
       -
      g(u_n)
      \right)
      \mathbbm{1}
      \left\{
        X_0>u_n
      \right\}
\right]
      }
      \ - \
      1
      \right)
      \,,
      \left(
        \frac
        {X_{n-k:n}}
        {u_n}
        \ - \
        1
      \right)
  \right]
  \\&
      \
      \stackrel{\mathrm{d}}{\longrightarrow}
      \
      [
      \Phi
      \,,
      \Psi
      ].
\end{align*}
  \end{enumerate}
\end{lemma}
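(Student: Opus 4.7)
The plan is to prove (i) via a classical Koenker-type inversion trick for order statistics and then leverage this equivalence, together with the assumed joint CLT in \eqref{lem:derand:clt}, to deduce (ii).

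For (i), I start from the elementary equivalence $\{X_{n-k:n} \leq v\} = \{\sum_{s=1}^{n} \mathbbm{1}\{X_s > v\} \leq k\}$ between an order-statistic event and a threshold-exceedance count, valid for any $v\in\RR$ and any integer $k\in\{0,\ldots,n\}$. Taking $v = u_n(1+t/r_n)$ and using the integer-valued nature of the exceedance count together with $k = \lfloor n\PP[X_0 > u_n]\rfloor$ allows me to replace $k$ by $n\PP[X_0 > u_n]$ inside the inequality without changing the event (since $\sum \leq \lfloor y\rfloor \Leftrightarrow \sum \leq y$ for integer $\sum$). Dividing both sides by the strictly positive quantity $n\PP[X_0 > u_n(1+t/r_n)]$, subtracting $1$, and multiplying by $r_n > 0$ then yields the stated equivalence, with no error terms introduced. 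A minor notational care is needed since the symbol $t$ plays two roles (summation index and scalar); I will disambiguate by using a different letter (say $s$) for the summation index.

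For (ii), the key step is to apply \eqref{lem:derand:clt} with the sequence $\epsilon_n = t/r_n$. Since $\delta_\epsilon\in(0,1)$, the bound $|t/r_n| = \mathcal{O}(r_n^{-1}) = \mathcal{O}(r_n^{\delta_\epsilon-1})$ is met, so the assumption produces the joint convergence
\[
(\Phi_n,\, \Theta_n(t)) \ \stackrel{\mathrm{d}}{\longrightarrow}\ (\Phi,\, \Psi),
\]
where $\Phi_n$ is the first coordinate of \eqref{lem:derand:clt} (which does not depend on $\epsilon_n$) and $\Theta_n(t)$ is the second coordinate evaluated at $\epsilon_n = t/r_n$. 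Part (i), after dividing both sides of its inequality by $\xi(u_n)>0$, rewrites $\{r_n(X_{n-k:n}/u_n - 1) \leq t\}$ as $\{\Theta_n(t) \leq y_n(t)\}$, where $y_n(t) := (r_n/\xi(u_n))\bigl(\PP[X_0 > u_n]/\PP[X_0 > u_n(1+t/r_n)] - 1\bigr) \to t$ by \eqref{asu:derand:iii}. For every continuity point $(s,t)$ of the joint distribution of $(\Phi,\Psi)$, a sandwich argument based on the Portmanteau theorem combined with $y_n(t)\to t$ yields $\PP[\Phi_n\leq s,\, \Theta_n(t)\leq y_n(t)] \to \PP[\Phi\leq s,\, \Psi\leq t]$, which is precisely the claimed joint convergence.

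The main technical subtlety I anticipate is the substitution of the converging deterministic sequence $y_n(t)$ for the constant $t$ inside a joint CDF; this hinges on the continuity of the limit $(\Phi, \Psi)$, a property built into the hypothesis \eqref{lem:derand:clt}. Apart from this and the verification that $\epsilon_n = t/r_n$ meets the decay condition \eqref{cond:epsilon}, both parts of the lemma reduce to clean algebraic manipulations once the Koenker inversion is in place.
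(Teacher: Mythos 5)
Your proof is correct and follows essentially the same route as the paper: part (i) is obtained from the standard order-statistic/empirical-CDF duality $\{X_{n-k:n}\le v\}=\{\sum_{s}\ind\{X_s>v\}\le k\}$ together with integer-valuedness of the count, and part (ii) combines this with condition~\eqref{asu:derand:iii} to replace the deterministic right-hand side by $t(1+o(1))$ and then invokes \eqref{lem:derand:clt} with $\epsilon_n=t/r_n$ and continuity of $(\Phi,\Psi)$. The only cosmetic difference is that the paper phrases the inversion via the left-continuous quantile inverse applied to $X_{\lceil n\alpha\rceil:n}$ and carries the normalisation by $\xi(u_n)$ inside the probability rather than dividing the inequality first; both are equivalent.
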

\begin{proof}
  \textbf{Proof of Part (i):}
  For any $x\in\mathbb{R}$, it holds that
  \begin{align*}
    X_{\lceil n \alpha \rceil:n}
    \ \le \ x
    \qquad
    \text{if and only if}
    \qquad
    \alpha
    \ \le \
        \frac
        {1}
        {n}
        \sum_{t=1}^{n}
\mathbbm{1}
        \left\{
          X_t\le x
        \right\}
        \,.
  \end{align*}
  This equivalence follows from a general property of quantile functions: for any distribution function $G$ and its associated quantile function $Q$, we have $Q(\alpha)\le x$ if and only if $\alpha\le G(x)$, where $Q$ is defined as the left-continuous inverse of $G$.
  \\
  \textbf{Proof of Part (ii):}
  Let $s_1,s_2\in\RR$ and define the event
  \[
  E_n(s_1)=\left\{ \frac
      {
    r_n/\xi(u_n)
      }
      {n}
      \sum_{t=1}^{n}
      \left(
      \frac
      {
      \left(
      g(X_t)
       -
      g(u_n)
      \right)
      \mathbbm{1}
      \left\{
        X_t>u_n
      \right\}
    }
      {
\EE
\left[
  \,
\left(
      g(X_0)
       -
      g(u_n)
      \right)
      \mathbbm{1}
      \left\{
        X_0>u_n
      \right\}
\right]
      }
      \ - \
      1
      \right)
      \ \le \
      s_1
      \right\}.
  \]
  By Part \textit{(i)}, together with Conditions~\eqref{asu:derand:iii} and~\eqref{lem:derand:clt} and the continuity of $\Psi$, we obtain
  \begin{align*}
    &
    \PP
    \left[E_n(s_1),\,
      r_n
      \left(
        \frac
        {X_{n-k:n}}
        {u_n}
        \ - \
        1
      \right)
      \
      \le
      \
      s_2
    \right]
    \\&
    \ = \
       \PP
    \left[
    E_n(s_1),\,
r_n
      \left(
        \frac
        {1}
        {n}
        \sum_{t=1}^{n}
        \frac
        {
\mathbbm{1}
        \left\{
          X_t>u_n(1+s_2/r_n)
        \right\}
        }
        {
          \PP[
          X_0>u_n(1+s_2/r_n)
          ]
        }
        \ - \
        1
              \right)
              \ \le \
              r_n
              \left(
                \frac
        {
          \PP[X_0>u_n]
        }
        {
          \PP[
          X_0>u_n(1+s_2/r_n)
          ]
        }
        \ - \
        1
              \right)
    \right]
    \\&
    \ = \
 \PP
    \left[
    E_n(s_1),\,
r_n
      \left(
        \frac
        {1}
        {n}
        \sum_{t=1}^{n}
        \frac
        {
\mathbbm{1}
        \left\{
          X_t>u_n(1+s_2/r_n)
        \right\}
        }
        {
          \PP[
          X_0>u_n(1+s_2/r_n)
          ]
        }
        \ - \
        1
              \right)
              \ \le \
              s_2
              \cdot \xi(u_n)
              (1 + o(1))
    \right]
    \\&
    \ \to \
 \PP
    \left[
    \Phi \le s_1\,,
    \Psi \le s_2
    \right]
    \,,
  \end{align*}
  from which the convergence in distribution follows by Slutsky's lemma (note that the quantity
  $\epsilon_n:=s_2/r_n$ indeed satisfies Condition~\eqref{cond:epsilon}).
    \end{proof}

\begin{proof}[Proof of Lemma~\ref{lem:derand}]
  Define
  \begin{align*}
        \widehat{e}_{g,n}(k,u_n)
    &
    \ := \
    \frac
    {1}
    {k}
    \sum_{t=1}^{n}
    \left(
    g(X_{t})
    \ - \
    g(u_n)
    \right)
    \mathbbm{1}
    \left\{
      X_t>u_n
    \right\}
    \,.
  \end{align*}
  Note that
  \begin{align*}
    \widehat{e}_{g,n}(k,X_{n-k:n})
    &
    \ = \
\frac
    {1}
    {k}
    \sum_{t=1}^{n}
    \left(
    g(X_{t})
    \ - \
    g(X_{n-k:n})
    \right)
    \ind\{X_{t}>X_{n-k:n}\}
    \,,
    \end{align*}
    and therefore
  \begin{align}
    \label{derand:1}
    \widehat{e}_{g,n}(k,X_{n-k:n})
    \ - \
    \widehat{e}_{g,n}(k,u_n)
    &
    \ = \
    g(u_n)
    \ - \
    g(X_{n-k:n})
    \ + \
    R_1
    \ + \
    R_2
    \,,
  \end{align}
  where
  \begin{align*}
    R_1
    &
    \ := \
    \left(
    g(X_{n-k:n})
    \ - \
    g(u_n)
    \right)
    \left(
    1
    \ - \
    \frac{1}{k}
    \sum_{t=1}^n
    \ind\{X_t>X_{n-k:n}\}
    \right)
    \,,
    \\
    R_2
    &
    \ := \
\frac
    {1}
    {k}
    \sum_{t=1}^{n}
    \left(
      g(X_t)
      \ - \
      g(u_n)
    \right)
    \left(
    \mathbbm{1}
    \left\{
      X_t>X_{n-k:n}
    \right\}
    -
    \mathbbm{1}
    \left\{
      X_t>u_n
    \right\}
    \right)
    \,.
  \end{align*}
  We will show that $R_j=o_{\PP}(g(u_n)-g(X_{n-k:n}))$ for $j=1,2$, that is,
  \begin{align}
    \label{R:vanish:1}
    \frac{R_j}{g(u_n)-g(X_{n-k:n})}
    \ \stackrel{\PP}{\longrightarrow} \
    0
    \,.
  \end{align}
  Combining \eqref{R:vanish:1} with \eqref{derand:1}, it will follow that
\begin{align}
  \label{derand:eq:P}
     \widehat{e}_{g,n}(k,X_{n-k:n})
    \ - \
    \widehat{e}_{g,n}(k,u_n)
    \ = \
    (g(u_n)
    \ - \
    g(X_{n-k:n}))
    ( 1 \ + \
    o_\PP(1))
    \,.
    \end{align}
    We begin with the analysis of $R_1$.
 By Lemma~\ref{lem:koen}.(ii), we have
    \begin{align}
      \label{eq:derand:rand}
      r_n
      \left(
      \frac{X_{n-k:n}}{u_n}
      \ - \
      1
      \right)
      \ \stackrel{\mathrm{d}}{\longrightarrow} \
      \Psi.
    \end{align}
    Let $(\tau_n)$ be as in Condition~\eqref{cond:tau:1} and \eqref{cond:tau:2}.
    Then,
    we obtain
    \begin{align}
      \label{eq:rand:1}
      \frac{1}{\tau_n}
      \left(
      \frac{X_{n-k:n}}{u_n}
      \ - \
      1
      \right)
      \ = \
      \frac{1}{r_n\tau_n}
      r_n
      \left(
      \frac{X_{n-k:n}}{u_n}
      \ - \
      1
      \right)
      \ \stackrel{\PP}{\longrightarrow} \
      0\,,
    \end{align}
    since $r_n\tau_n\to\infty$ as $n\to\infty$.
    Consequently, with arbitrarily high probability for large $n$, we have
    \begin{align}
      \label{eq:4444}
      \left(
      1
      \ - \
      \tau_n
      \right)
      u_n
      \ \le \
      X_{n-k:n}
      \ \le \
      \left(
      1
      \ + \
      \tau_n
      \right)
      u_n
      \,.
    \end{align}
    This implies that
\begin{align}
  \label{2210}
  \begin{split}
  &
  \left|
  \frac{R_1}{g(u_n)-g(X_{n-k:n})}
  \right|
  \ = \
  \left|
  \left(
    \frac{1}{k}
    \sum_{t=1}^n
    \ind\{X_t>X_{n-k:n}\}
  \right)
    \ - \
    1
  \right|
  \\&
  \ \le \
 \left|
 \left(
    \frac{1}{k}
      \sum_{t=1}^{n}
    \mathbbm{1}
    \left\{
      X_t>
      \left(
      1
      \ - \
      \tau_n
      \right)
      u_n
    \right\}
 \right)
    \ - \
    1
    \right|
    \
    \lor
    \
    \left|
    \left(
    \frac{1}{k}
      \sum_{t=1}^{n}
    \mathbbm{1}
    \left\{
      X_t>
      \left(
      1
      \ + \
      \tau_n
      \right)
      u_n
    \right\}
    \right)
    \ - \
    1
    \right|
       \,,
  \end{split}
    \end{align}
  with arbitrarily high probability as $n\to\infty$.
    By Assumption~\eqref{asu:derand:iv}
    and definition $k=\lfloor n\PP[X_0>u_n]\rfloor$, we obtain
    \begin{align}
    \nonumber
      &
    \left|
    \left(
      \frac
      {1}
      {k}
      \sum_{t=1}^{n}
    \mathbbm{1}
    \left\{
      X_t>u_n(1\pm\tau_n)
    \right\}
    \right)
    \ - \
    1
    \right|
        \\
        \nonumber
        &
        \ \le \
        \frac
        {n\PP[X_0>u_n]}
        {\lfloor n\PP[X_0>u_n]\rfloor}
        \cdot
        \frac
        {\PP[X_0>u_n(1\pm \tau_n)]}
        {\PP[X_0>u_n]}
        \cdot
        \frac{1}{r_n/\xi(u_n)}
        \cdot
    \left|
      \frac
        {r_n/\xi(u_n)}
      {n}
      \sum_{t=1}^{n}
    \left(
      \frac{
    \mathbbm{1}
    \left\{
      X_t>u_n(1\pm\tau_n)
    \right\}
      }{
        \PP[X_0>u_n(1\pm\tau_n)]
      }
    \ - \
    1
    \right)
    \right|
        \\
        \nonumber
        &
    \qquad + \
    \left|
        \frac{n\PP[X_0>u_n]}
        {\lfloor n\PP[X_0>u_n]\rfloor}
        \cdot
        \frac
        {\PP[X_0>u_n(1\pm \tau_n)]}
        {\PP[X_0>u_n]}
        \ - \
        1
            \right|
           \\
            \label{vanish:2}
            &
    \ \stackrel{\PP}{\longrightarrow} \
    0
    \,,
    \end{align}
    as $n\to\infty$, since each term on the right-hand side converges to zero in probability due to
    \begin{align*}
\frac
        {n\PP[X_0>u_n]}
        {\lfloor n\PP[X_0>u_n]\rfloor}
        \,,\,
        \frac
        {\PP[X_0>u_n(1\pm \tau_n)]}
        {\PP[X_0>u_n]}
        \ \to \
        1
        \qquad\text{and}\qquad
        \frac{1}{r_n/\xi(u_n)}
        \ \to \
        0\,,
    \end{align*}
    by $n\PP[X_0>u_n]\to\infty$, Assumption~\eqref{asu:derand:iv}
    and $r_n/\xi(u_n)\to\infty$, and
    \begin{align*}
    \left|
      \frac
      {r_n/\xi(u_n)}
      {n}
      \sum_{t=1}^{n}
      \left( \frac{
    \mathbbm{1}
    \left\{
      X_t>u_n(1\pm\tau_n)
    \right\}
      }{
      \PP[X_0>u_n(1\pm\tau_n)]
      }
    \ - \
    1 \right)
    \right|
    \ \stackrel{\mathrm{d}}{\longrightarrow} \
    |\Psi|
    \,,
    \end{align*}
    which is guaranteed by \eqref{lem:derand:clt} and the fact that
    $(\tau_n)$ also satisfies \eqref{cond:tau:1}. Using \eqref{2210} and \eqref{vanish:2} we get \eqref{R:vanish:1} for $j=1$.

    We now analyze $R_2$. Since $g$ is increasing, we can write
  \begin{align*}
    &
    \left|
      g(u_n)
      \ - \
      g(X_t)
    \right| \left|
    \mathbbm{1}
    \left\{
      X_t>X_{n-k:n}
    \right\}
    -
    \mathbbm{1}
    \left\{
      X_t>u_n
    \right\}
    \right|
    \\&
    \ = \
\left(
      g(u_n)
      \ - \
      g(X_t)
    \right)
    \ind\{
    u_n > X_t > X_{n-k:n}
    \}
    \\&
    \qquad + \
    \left(
      g(X_t)
      \ - \
      g(u_n)
    \right)
    \ind\{
   X_{n-k:n} \ge X_t > u_n
    \}
    \,.
    \intertext{By monotonicity of $g$, we may further bound this as}
    &
    \ \le \
\left(
      g(u_n)
      \ - \
      g(X_{n-k:n})
    \right)
    \ind\{
    u_n>X_t > X_{n-k:n}
    \}
    \\&
    \qquad + \
    \left(
      g(X_{n-k:n})
      \ - \
      g(u_n)
    \right)
    \ind\{
    X_{n-k:n}>X_t > u_n
    \}
    \\&
    \ = \
    \left(
      g(X_{n-k:n})
      \ - \
      g(u_n)
    \right)
    \left(
    \ind\{
    X_{n-k:n}>X_t > u_n
    \}
    \ - \
    \ind\{
    u_n>X_t > X_{n-k:n}
    \}
    \right)
    \,.
  \end{align*}
  It follows, rewriting $R_2$ and carefully considering the sign in the absolute value, that
  \begin{align}
    \nonumber
    &
    |R_2| \ = \
      \left|
      \frac
    {1}
    {k}
    \sum_{t=1}^{n}
    \left(
      g(X_t)
      \ - \
      g(u_n)
    \right)
    \left(
    \mathbbm{1}
    \left\{
      X_t>X_{n-k:n}
    \right\}
    -
    \mathbbm{1}
    \left\{
      X_t>u_n
    \right\}
    \right)
      \right|
      \\
      \nonumber
      &
    \ \leq \ \left(
      g(X_{n-k:n})
      \ - \
      g(u_n)
    \right) \cdot
    \frac{1}{k}
    \sum_{t=1}^n
    \left( \ind\{
    X_{n-k:n} > X_t > u_n
    \} - \ind\{
    u_n > X_t > X_{n-k:n}
    \} \right)
    \\
    \nonumber
    &
      \ \le \
      \left(
    g(X_{n-k:n})
    \ - \
    g(u_n)
      \right)
    \cdot
      \frac
      {1}
      {k}
      \sum_{t=1}^{n} \Big(
    \mathbbm{1}
    \left\{
      X_t>u_n
    \right\}
    \ - \
      \mathbbm{1}
    \left\{
      X_t>X_{n-k:n}
      \right\} \Big)
      \\
      \label{R:vanish:2}
      &
    \ \le \
    \left|
    g(u_n)
    \ - \
    g(X_{n-k:n})
    \right|
    \cdot
    \left|
      \frac
      {1}
      {k}
      \sum_{t=1}^{n} \Big(
    \mathbbm{1}
    \left\{
      X_t>X_{n-k:n}
    \right\}
    \ - \
    \mathbbm{1}
    \left\{
      X_t>u_n
    \right\} \Big)
    \right|
    \,.
  \end{align}
  We now aim to show that
  \begin{align}
    \label{R:vanish}
    \left|
      \frac
      {1}
      {k}
      \sum_{t=1}^{n} \Big(
    \mathbbm{1}
    \left\{
      X_t>X_{n-k:n}
    \right\}
    \ - \
    \mathbbm{1}
    \left\{
      X_t>u_n
    \right\} \Big)
    \right|
    \ \stackrel{\PP}{\longrightarrow} \
    0
    \,,
  \end{align}
  which, when combined with \eqref{R:vanish:2}, establishes \eqref{R:vanish:1} for $j=2$.
  To prove this convergence, note that by~\eqref{eq:4444},
  with arbitrarily high probability as $n\to\infty$, the following holds:
    \begin{align}
      \label{vanish:21}
    \begin{split}
      &
    \left|
      \frac
      {1}
      {k}
      \sum_{t=1}^{n} \Big(
    \mathbbm{1}
    \left\{
      X_t>X_{n-k:n}
    \right\}
    \ - \
    \mathbbm{1}
    \left\{
      X_t>u_n
    \right\} \Big)
    \right|
      \\&
    \ \le \
    \left|
    \left(
      \frac
      {1}
      {k}
      \sum_{t=1}^{n}
    \mathbbm{1}
    \left\{
      X_t>X_{n-k:n}
    \right\}
    \right)
    \
     -
    \
    1
    \right|
    \ + \
    \left|
    \left(
      \frac
      {1}
      {k}
      \sum_{t=1}^{n}
    \mathbbm{1}
    \left\{
      X_t>u_n
    \right\}
    \right)
    \ - \
    1
    \right|
      \\&
    \ \le \
 \left|
 \left(
    \frac{1}{k}
      \sum_{t=1}^{n}
    \mathbbm{1}
    \left\{
      X_t>
      \left(
      1
      \ - \
      \tau_n
      \right)
      u_n
    \right\}
 \right)
    \ - \
    1
    \right|
    \ \lor \
    \left|
    \left(
    \frac{1}{k}
      \sum_{t=1}^{n}
    \mathbbm{1}
    \left\{
      X_t>
      \left(
      1
      \ + \
      \tau_n
      \right)
      u_n
    \right\}
    \right)
    \ - \
    1
    \right|
    \\&
    \qquad + \
    \left|
    \left(
      \frac
      {1}
      {k}
      \sum_{t=1}^{n}
    \mathbbm{1}
    \left\{
      X_t>u_n
    \right\}
    \right)
    \ - \
    1
    \right|
    \,.
    \end{split}
    \end{align}
    Applying Inequalities~\eqref{vanish:2} and~\eqref{vanish:21}, we obtain the desired convergence in~\eqref{R:vanish}, and hence conclude~\eqref{R:vanish:1}, and therefore \eqref{derand:eq:P}.
    We now show that
\begin{align}
  \label{eq:g:1}
    g(X_{n-k:n})
    \ - \
    g(u_n)
    \ = \
    u_n
    g'
    (u_n)
\left(
      \frac
      {
    X_{n-k:n}
      }
      {u_n}
      \ - \
      1
    \right)
    \ + \
    o_{\PP}
    \left(
      \frac
      {u_n\cdot g'(u_n)}
      {r_n}
    \right)
    \,,
\end{align}
and, using this together with~\eqref{derand:eq:P},~\eqref{eq:derand:rand}, and \eqref{eq:g:1}, we will conclude that
  \begin{align}
  \label{eq:g:grand_derand}
    \widehat{e}_{g,n}(k,X_{n-k:n})
    \ = \
    \widehat{e}_{g,n}(k,u_n)
    \ - \
    u_n
    \cdot
    g'
    (u_n)
    \left(
      \frac
      {
    X_{n-k:n}
      }
      {u_n}
      \ - \
      1
    \right)
    \ + \
o_{\PP}
    \left(
      \frac
      {u_n\cdot g'(u_n)}
      {r_n}
    \right)
    \,.
  \end{align}
  To establish Equation~\eqref{eq:g:1}, observe that
\begin{align}
  \label{eq:1:int:g}
  \begin{split}
  &
    g(X_{n-k:n})
    \ - \
    g(u_n)
\ - \
    u_n
    g'
    (u_n)
\left(
      \frac
      {
    X_{n-k:n}
      }
      {u_n}
      \ - \
      1
    \right)
    \\&
    \ = \
    u_n g'(u_n)
    \int_1^{X_{n-k:n}/u_n} \bigg(
    \frac{g'(xu_n)}{g'(u_n)}
    \ - \ 1 \bigg)
    \,\mathrm{d}x
    \,.
    \end{split}
\end{align}
Since $g'$ is regularly varying and $X_{n-k:n}/u_n\stackrel{\PP}{\longrightarrow}1$
by \eqref{eq:rand:1}, it follows from \cite[Proposition~B.1.10]{dehaanExtremeValueTheory2006} that
\begin{align*}
    \int_1^{X_{n-k:n}/u_n} \left(
    \frac{g'(xu_n)}{g'(u_n)}
    \ - \ 1 \right)
    \,\mathrm{d}x
    \ = \
    o_\PP
    \left(  \left|
  \frac{X_{n-k:n}}{u_n}
  \ - \ 1 \right|
    \right)
    \ = \
    o_\PP
    \left(
    \frac{1}{r_n}
    \right)
    \,,
\end{align*}
where the final equality follows from \eqref{eq:derand:rand}.
Combining this with \eqref{eq:1:int:g}, we obtain the desired result in \eqref{eq:g:1}.
  We now prove the stated result using Equation~\eqref{eq:g:grand_derand}.
  To this end, note that by the stationarity of the time series, $k/n \sim \PP[X_0>u_n]$, and Assumption~\eqref{lem:derand:c1} it follows that
  \begin{align}
    \label{expect:1}
  \EE[\widehat{e}_{g,n}(k,u_n)]
    &
    \ = \
  \frac{n}{k}
  \EE
  [
  \left(
  g(X_0)-g(u_n)
  \right)
  \ind\{X_0>u_n\}
  ]
      \\
      \nonumber
      &
    \ \sim \
    \frac{
  \EE
  [
  \left(
  g(X_0)-g(u_n)
  \right)
  \ind\{X_0>u_n\}
  ]
    }{\PP[X_0>u_n]}
    \\
    \nonumber
    &
    \ \sim \
    \frac{u_ng'(u_n)}{c\cdot \xi(u_n)}
    \,.
  \end{align}
  Consequently, we can write
  \begin{align*}
    &
  \frac{
  r_n
  }{\xi(u_n)}
\left(
  \frac
  {
    \widehat{e}_{g,n}(k,u_n)
  }
  {
  \EE[\widehat{e}_{g,n}(k,u_n)]
  }
  \ - \
1
\right)
    \\&
    \ = \
\frac{
    r_n/\xi(u_n)
  }{n}
    \sum_{t=1}^n
    \left(
  \frac
    {
    \left(
      g(X_t)-g(u_n)
    \right)
      \ind\{X_t>u_n\}
    }
  {
      \EE[
      \left(
      g(X_0)-g(u_n)
      \right)
      \ind\{X_0>u_n\}
      ]
  }
  \ - \
1
    \right)
    \,.
  \end{align*}
  Substituting the approximation for
  $
    \widehat{e}_{g,n}(k,X_{n-k:n})
  $
  from
  \eqref{eq:g:grand_derand}
  and combining with the joint convergence established in
Lemma~\ref{lem:koen}.(ii), we conclude
\begin{align*}
  &
  \frac{
  r_n
  }{\xi(u_n)}
\left(
  \frac
  {
    \widehat{e}_{g,n}(k,X_{n-k:n})
  }
  {
  \EE[\widehat{e}_{g,n}(k,u_n)]
  }
  \ - \
1
\right)
  \\&
    \
    \ = \
    \
  \frac{
  r_n
  }{\xi(u_n)}
\left(
  \frac
  {
    \widehat{e}_{g,n}(k,u_n)
  }
  {
  \EE[\widehat{e}_{g,n}(k,u_n)]
  }
  \ - \
1
\right)
\ - \
c\cdot
r_n
      \left(
      \frac
      {
    X_{n-k:n}
      }
      {u_n}
      \ - \
      1
    \right)
    \ + \
o_{\PP}
  (
  1)
  \,.
  \end{align*}
  Therefore, by Lemma~\ref{lem:koen}.(ii), it follows that
\begin{align*}
  &
  \left[
  \frac{r_n}{\xi(u_n)}
    \left(
      \frac
      {
        \widehat{e}_{g,n}(k,X_{n-k:n})
    }
      {
        \EE
        \left[
      \widehat{e}_{g,n}(k,u_n)
        \right]
      }
        \ - \
        1
    \right)
    , \
    r_n
    \left(
      \frac
      {X_{n-k:n}}
      {u_n}
      \ - \
      1
    \right)
  \right]
  \
  \stackrel{\mathrm{d}}{\longrightarrow}
  \
  \left[
    \Phi
    \ - \
    c\cdot
    \Psi
    ,
    \Psi
  \right]
  \,.
\end{align*}
To complete the proof, we use Equation~\eqref{expect:1} and the fact that
\[
 \widehat{e}_{g,n}(k,X_{n-k:n}) = \frac{1}{k}
    \sum_{i=1}^{k}
    g(X_{n-i+1:n})-g(X_{n-k:n})
\]
to express
\begin{align*}
      \frac
      {
        \widehat{e}_{g,n}(k,X_{n-k:n})
    }
      {
        \EE
        \left[
      \widehat{e}_{g,n}(k,u_n)
        \right]
      }
        \ - \
        1
    \ = \
    \frac{1}{k}
    \sum_{i=1}^{k}
    \left(
    \frac{g(X_{n-i+1:n})-g(X_{n-k:n})}{\EE[(g(X_0)-g(u_n))\ind\{X_0>u_n\}]}
    \frac{k}{n}
    \ - \ 1
    \right)
\end{align*}
from which the stated convergence result follows immediately.
\end{proof}
\section{Verifying the conditions of Lemma~\ref{lem:derand} in the settings of Corollary~\ref{cor:heavy_det} and~\ref{cor:light_det}}
In the next lemma we check the assumptions of Lemma~\ref{lem:derand} in the settings of Corollary~\ref{cor:heavy_det} and Corollary~\ref{cor:light_det}. Applying Lemma~\ref{lem:derand} then directly results in Corollary~\ref{cor:heavy_rand} and~\ref{cor:light_rand}.
\begin{lemma}
  \label{lem:derand:poly}
  Let $g=\log$. Then the assumptions of Lemma~\ref{lem:derand} hold in the settings of (i) Corollary~\ref{cor:heavy_det} and (ii) Corollary~\ref{cor:light_det}, respectively, with the following parameters:
  \begin{enumerate}[label=(\roman*)]
    \item In Corollary~\ref{cor:heavy_det}, $r_n=n^{1-1/(\nu\land 2)-d} u_n$,
\begin{align*}
    \xi(u_n)
    \ = \
    \frac{u_nf_{X_0}(u_n)}{\PP[X_0>u_n]}
    \ \to \
    \nu
    \,,
\end{align*}
and the joint limit is
    \begin{align*}
    (\Phi, \Psi)
    \ = \
    \left( \frac{\nu}{1+\nu}
    Z_{2\land\nu}, \ Z_{2\land\nu} \right)
    \,,
  \end{align*}
  with constant $c=1$ and $(\tau_n)$ any sequence satisfying \eqref{cond:tau:1} and \eqref{cond:tau:2}.
    \item In Corollary~\ref{cor:heavy_det}, $r_n=n^{1/2-d} u_n$,
\begin{align*}
    \xi(u_n)
    \ = \
    \frac{u_nf_{X_0}(u_n)}{\PP[X_0>u_n]}
    \ \sim \
   u_n^\beta
   \,,
\end{align*}
and the joint limit is $(\Phi, \Psi) = (Z_2, Z_2)$,
  with constant $c=1$ and $(\tau_n)$ any sequence
  satisfying \eqref{cond:tau:1} and \eqref{cond:tau:2}.
  \end{enumerate}
 \end{lemma}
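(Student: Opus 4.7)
The plan is to verify, in each of the two regimes (i) and (ii), every condition of Lemma~\ref{lem:derand}. The tail asymptotics and the conditions~\eqref{lem:derand:c1}, \eqref{asu:derand:iii}, \eqref{asu:derand:iv} are obtained by standard tail calculus; the joint CLT~\eqref{lem:derand:clt} is obtained from the $L^{r_0}$-reduction principle (Theorem~\ref{thm:approx_optimal}).

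First I would establish the stated asymptotics of $\xi(u_n)$: in case (i), Karamata's theorem applied to $f_{X_0}\in\mathrm{RV}_{-\nu-1}$ yields $\PP[X_0>u_n]/f_{X_0}(u_n)\sim u_n/\nu$, hence $\xi(u_n)\to\nu$; in case (ii), the l'H\^{o}pital argument from the proof of Corollary~\ref{cor:light_det} gives $\PP[X_0>u_n]\sim u_n^{1-\beta}f_{X_0}(u_n)$, hence $\xi(u_n)\sim u_n^\beta$. Under the growth conditions on $(u_n)$ one verifies $r_n\to\infty$ and $r_n/\xi(u_n)\to\infty$. Condition~\eqref{lem:derand:c1} reduces (using $u_n g'(u_n)=1$) to $\PP[X_0>u_n]/\EE[\log(X_0/u_n)\ind\{X_0>u_n\}]$; the partial-integration identity $\EE[\log(X_0/u_n)\ind\{X_0>u_n\}]=\int_{u_n}^\infty\PP[X_0>x]/x\,\mathrm{d}x$ combined with Karamata (case (i)) or l'H\^{o}pital (case (ii)) shows this ratio equals $\xi(u_n)(1+o(1))$, so $c=1$ in both cases. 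Condition~\eqref{asu:derand:iii} comes from the first-order expansion $\PP[X_0>u_n]-\PP[X_0>u_n(1+t/r_n)]\sim(tu_n/r_n)f_{X_0}(u_n)$, valid in case (ii) because $u_n^\beta/r_n\to 0$ (as $u_n$ is logarithmic in $n$ while $r_n$ is polynomial): dividing by $\PP[X_0>u_n(1+t/r_n)]\sim\PP[X_0>u_n]$ and multiplying by $r_n$ gives $t\xi(u_n)$. Finally, $\tau_n=r_n^{\delta_\epsilon-1}/\log r_n$ satisfies~\eqref{cond:tau:1}--\eqref{cond:tau:2}, and~\eqref{asu:derand:iv} follows from the same expansion.

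For the joint CLT~\eqref{lem:derand:clt}, I would apply Theorem~\ref{thm:approx_optimal} to the two subordinators $x\mapsto(g(x)-g(u_n))\ind\{x>u_n\}$ and $x\mapsto\ind\{x>u_n(1+\epsilon_n)\}$, each of which satisfies Assumption~\ref{asu:G_n} for any $\gamma_G>0$. The reduction principle rewrites each centred partial sum as its derivative at $0$ times $\sum_{t=1}^n X_t$ plus an $o_\PP(n^{d+1/\alpha})$ error, so that after normalization as in~\eqref{lem:derand:clt} both components of the vector equal deterministic multiples of $n^{-d-1/\alpha}\sum_{t=1}^n X_t$ plus $o_\PP(1)$. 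Plugging in the asymptotics of $\EE[G_n(X_0)]$ and $G'_{\infty,n}(0)$ obtained above, the multiplicative constants are computed explicitly, and Theorem~\ref{thm:clt_partsum} yields the stated joint limits $(\Phi,\Psi)$. The joint convergence is automatic because both components reduce to multiples of the same $Z_\alpha$.

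The main technical obstacle is ensuring this reduction applies \emph{uniformly} in $\epsilon_n$ satisfying~\eqref{cond:epsilon}. The smallness condition $n^{\kappa_0+\delta-d-1/\alpha}/f_{X_0}(u_n(1+\epsilon_n))\to 0$ required by Theorem~\ref{thm:approx_optimal} for the indicator subordinator is preserved because $u_n(1+\epsilon_n)\asymp u_n$: in case (i) this is immediate from the regular variation of $f_{X_0}$, and in case (ii) it uses $|\epsilon_n|u_n^\beta\to 0$ (with $|\epsilon_n|$ at most polynomially small in $n$ and $u_n^\beta$ only logarithmic). The same argument yields $\PP[X_0>u_n(1+\epsilon_n)]/\PP[X_0>u_n]\to 1$, so the normalizing constants in~\eqref{lem:derand:clt} with general $\epsilon_n$ are equivalent to those with $\epsilon_n=0$, and Slutsky's lemma completes the argument.
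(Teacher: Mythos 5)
Your proposal is correct, and for the preliminary conditions ($\xi(u_n)$ asymptotics, $r_n/\xi(u_n)\to\infty$, \eqref{lem:derand:c1} with $c=1$, \eqref{asu:derand:iii}, \eqref{asu:derand:iv}, and the choice of $(\tau_n)$) it matches the paper's argument essentially verbatim. Where you genuinely diverge is in the proof of the joint convergence~\eqref{lem:derand:clt}. The paper applies the Cram\'er--Wold device: it forms a single combined subordinator
\[
G_n(x) \ = \ \lambda_1 \cdot \frac{(g(x)-g(u_n))_+\,\PP[X_0>u_n]}{\EE[(g(X_0)-g(u_n))_+]} \ + \ \lambda_2 \cdot \ind\{x>u_n\}\,,
\]
notably using the unshifted threshold $u_n$ (not $u_n(1+\epsilon_n)$) in the indicator, applies Theorem~\ref{thm:approx_optimal} to this $G_n$, and then accounts for the $\epsilon_n$-shift via a remainder $R=R_1-R_2$ that it shows vanishes in probability. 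You instead apply the reduction principle componentwise, to $(g(\cdot)-g(u_n))\ind\{\cdot>u_n\}$ and to $\ind\{\cdot>u_n(1+\epsilon_n)\}$ separately (the latter with threshold $u_n(1+\epsilon_n)$ directly, which is still admissible under Assumption~\ref{asu:G_n} and still satisfies the required rate condition since $f_{X_0}(u_n(1+\epsilon_n))\sim f_{X_0}(u_n)$). Each component then equals a converging deterministic constant times $n^{-d-1/\alpha}\sum_t X_t$ plus $o_\PP(1)$, and the joint convergence to $(c_1 Z_\alpha, c_2 Z_\alpha)$ follows from the continuous mapping theorem and Slutsky, with no Cram\'er--Wold needed. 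This is a valid and arguably more transparent route: it avoids the ad hoc remainder decomposition $R_1,R_2$ and makes the ``perfect dependence'' structure of $(\Phi,\Psi)$ manifest. The cost is that you must still carry out the explicit computation of the two constants ($\nu/(1+\nu)$ in case (i), $1$ in case (ii)) via the integration-by-parts and Karamata/l'H\^opital asymptotics that the paper records when computing $G'_{\infty,n}(0)$ for the combined function; your sketch asserts these but does not reproduce them.

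One small caveat of phrasing: you describe the need for the reduction to hold ``uniformly in $\epsilon_n$,'' but condition~\eqref{lem:derand:clt} only requires the limit for each fixed sequence $(\epsilon_n)$ satisfying~\eqref{cond:epsilon} (with $\delta_\epsilon$ existing so that the class of admissible sequences is nonempty); no uniformity over sequences is needed, and your subsequent argument in fact only establishes (and only needs) the per-sequence version.
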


 \begin{proof}[Proof of Lemma~\ref{lem:derand:poly}]
We begin by showing all the assumptions of Lemma~\ref{lem:derand} except for the joint convergence Assumption~\eqref{lem:derand:clt}. We do this separately for both settings.

In Setting (i), it follows from Karamata's theorem~\cite[Theorem~B.1.5]{dehaanExtremeValueTheory2006}, that
  \[
    \xi(u_n)
    \ = \
    \frac{u_nf_{X_0}(u_n)}{\PP[X_0>u_n]}
    \ \to \
    \nu
    \,.
  \]
  This implies $r_n/\xi(u_n)\to\infty$. Note now that, by Karamata's theorem, we have
  \begin{align}
  \nonumber
\EE
\left[
    \log
    \left(
    X_0/u_n
    \right)_+
\right]
    &
    \ = \
    u_n
    f_{X_0}(u_n)
    \int_{1}^{\infty}
    \log(x)
    \frac{
    f_{X_0}(u_n x)
    }{
    f_{X_0}(u_n)
    }
    \,\mathrm{d}x
    \\
    \nonumber
    &
    \ \sim \
    \nu
    \PP[X_0>u_n]
    \int_{1}^{\infty}
    \log(x)
    \frac{1}{x^{1+\nu}}
        \,\mathrm{d}x
        \label{i_know}
        \\&
       \ = \
        \frac{
    \PP[X_0>u_n]
        }{\nu}
        \,.
  \end{align}
  Since $g'(u_n)=1/u_n$, and using Equation~\eqref{i_know}, we find
    that
  \begin{align*}
    u_n
    g'(u_n)
    \frac{\PP[X_0>u_n]}{\EE[(g(X_0)-g(u_n))\ind\{X_0>u_n\}]} = \frac{\PP[X_0>u_n]}{\EE
\left[
    \log
    \left(
    X_0/u_n
    \right)_+
\right]}
    \ \sim \
    \nu
    \ \sim \
    \xi(u_n)
    \,,
  \end{align*}
  which shows that Assumption~\eqref{lem:derand:c1} holds with constant $c=1$. Applying the mean value theorem and the regular variation property of $f_{X_0}$, along with the local uniformity property of regularly varying functions,
  we conclude that for each $t\in\mathbb{R}$,
  there exists a sequence $(\theta_n)=(\theta_n(t)) \subset (0,1)$ such that
  \begin{align*}
    &
 r_n
    \left(
    \frac{\PP[X_0>u_n]}{\PP[X_0>(1+t/r_n)\cdot u_n]}
    \ - \ 1
    \right)
    \\&
    \ = \
    \frac{r_n}{\PP[X_0>(1+t/r_n)u_n]}
    \frac{u_nt}{r_n}
    f_{X_0}((1+\theta_nt/r_n)u_n)
    \\&
    \ = \
    t
    \cdot
    \frac{u_n}{u_n(1+t/r_n)}
    \frac{
    u_n(1+t/r_n)\cdot  f_{X_0}(u_n(1+t/r_n))
    }{\PP[X_0>u_n(1+t/r_n)]}
    \cdot
    \frac{
    f_{X_0}(u_n(1+\theta_nt/r_n))
    }{
    f_{X_0}(u_n(1+t/r_n))
    }
    \\&
    \ \sim \
    t\cdot \xi(u_n(1+t/r_n))
    \\&
    \ \sim\
    t\cdot \xi(u_n)
    \,,
  \end{align*}
  where the last approximation uses that $\xi(x)\to \nu$ as $x\to\infty$, as shown above.
  Thus, Assumption~\eqref{asu:derand:iii} is satisfied.
  Moreover, by the local uniformity of regularly varying functions,
  we have that for any sequence $(\tau_n)$ with $\tau_n\to 0$ as $n\to\infty$,
  \begin{align*}
    \frac{\PP[X_0>u_n]}{\PP[X_0>u_n(1\pm \tau_n)]}
    \ \to \
    1
    \,.
  \end{align*}
  Therefore, Assumption~\eqref{asu:derand:iv} is also satisfied.

Now we turn to Setting (ii).
  We use again
  \[
    \xi(u_n)
    \ = \
    \frac{u_nf_{X_0}(u_n)}{\PP[X_0>u_n]}
    \ \sim \
    u_n^{1-(1-\beta)}
    =
    u_n^{\beta}
    \,
  \]
  such that $\xi(u_n)/r_n \sim u_n^\beta/r_n \to 0 $, since $u_n$ grows only logarithmically while $r_n$ grows polynomially in $n$.
  As shown in the proof of Corollary~\ref{cor:light_det}, we have
  \begin{align*}
    u_n
    g'(u_n)
    \frac{\PP[X_0>u_n]}{\EE[(g(X_0)-g(u_n))\ind\{X_0>u_n\}]}
    \ =\
    \frac{\PP[X_0>u_n]}{
    \int_{u_n}^{\infty}
    \frac{\PP[X_0>x]}{x}
    \,
    \mathrm{d}x
    }
    \ \sim\
    u_n^\beta
    \ \sim \
    \xi(u_n)
    \,,
  \end{align*}
  which confirms that Assumption~\eqref{lem:derand:c1}
  holds with constant $c=1$. Applying the mean value theorem, we find that for each $t\in\mathbb{R}$, there exists a sequence $(\theta_n(t))\subset(0,1)$ such that
\begin{align*}
    r_n
    \left(
    \frac{\PP[X_0>u_n]}
    {\PP[X_0>(1+t/r_n)u_n]}
    \ - \
    1
    \right)
    &
    \ = \
    \frac{r_n}{\PP[X_0>(1+t/r_n)u_n]}
    \frac{u_nt}{r_n}
    f_{X_0}((1+\theta_nt/r_n)u_n)
    \\&
    \ \sim \
    \frac{u_n^{\beta}
    }
    {f_{X_0}((1+t/r_n)u_n)}
    t
    f_{X_0}((1+\theta_nt/r_n)u_n)
    \\&
    \ \sim\
    t
    u_n^\beta
    \ \sim\
    t\xi(u_n)
    \,.
      \end{align*}
          This confirms that Assumption~\eqref{asu:derand:iii} holds in Case (ii).
      For any sequence $(\tau_n)$ satisfying Conditions~\eqref{cond:tau:1} and \eqref{cond:tau:2}, we have
      \begin{align*}
        \frac{\PP[X_0>u_n]}{\PP[X_0>u_n(1\pm \tau_n)]}
        &
        \ \sim \
        \frac{f_{X_0}(u_n)}{f_{X_0}(u_n(1\pm \tau_n))}
        \frac{1}{(1\pm \tau_n)^{1-\beta}}
                \\&
                \ \sim \
                \frac{\omega(u_n)}{\omega(u_n(1\pm \tau_n))}
                \exp(-u_n^\beta(1 - (1\pm\tau_n)^\beta)/\beta)
        \ \to \
        1
        \,,
      \end{align*}
      as $n\to\infty$. This holds because $\tau_n$ decreases polynomially in $n$ (since $r_n$ grows polynomially), whereas $u_n$ grows only logarithmically.
       Thus Assumption~\eqref{asu:derand:iv} is satisfied.

 It remains to establish the joint convergence stated in~\eqref{lem:derand:clt}. To this end, we apply the Cram\'er-Wold device with coefficients $\lambda_1,\lambda_2\in\mathbb{R}$. Our goal is to show that for any sequence $(\epsilon_n)$,
   satisfying \eqref{cond:epsilon},
 \begin{align}
   \nonumber
    &
     \frac{r_n/\xi(u_n)}{n}
      \sum_{t=1}^n
     \left[ \lambda_1
    \left(
    \frac{
\left(
     g(X_t)
     -
     g(u_n)
\right)_+
            }{
\EE
\left[
\left(
     g(X_0)
     -
     g(u_n)
\right)_+
    \right]
    }
    \ - \ 1
    \right)
    \ + \
    \lambda_2
    \left(
    \frac{\ind\{X_t>u_n(1+\epsilon_n)\}}{
    \PP[
    X_0>u_n(1+\epsilon_n)
    ]
    }
    \ - \ 1
     \right) \right]
     \intertext{converges in distribution to}
    &
    \label{goal:wold}
    \begin{cases}
    Z_{\nu\land 2}
    \left(
    \lambda_1
    \dfrac{\nu}{1+\nu}
    \ + \
    \lambda_2
    \right)
      &\qquad\text{in Case (i), and}\\
    Z_{2}
    \left(
    \lambda_1
    \ + \
    \lambda_2
    \right)
      &\qquad\text{in Case (ii).}
    \end{cases}
  \end{align}
  The proof consists in reducing~\eqref{goal:wold} to an application of Theorem~\ref{thm:approx_optimal}, and it is organized in two consecutive steps. Define the function
\begin{align*}
  G_n(x)
  \ = \
  \lambda_1
  \cdot
  \frac{
  \left(
  g(x)
  -
  g(u_n)
  \right)_+
  \PP[X_0>u_n]
  }{
  \EE[
  \left(
  g(X_0)
  -
  g(u_n)
  \right)_+
  ]
  }
  \ + \
  \lambda_2
  \cdot
  \ind\{x>u_n\}
  \,.
\end{align*}
{\bf Step 1:} Observe that by construction
  \begin{align*}
    \EE[G_n(X_0)]
    \ = \
    \left(
    \lambda_1
    \ + \
    \lambda_2
    \right)
    \PP[X_0>u_n]
    \,,
  \end{align*}
  which yields
  \begin{align*}
    &   \frac{1}{\PP[X_0>u_n]} \sum_{t=1}^n
    \left(
    G_n(X_t)
    \ - \
    \EE[G_n(X_0)]
    \right)=    \sum_{t=1}^n
    \left(
    \frac{G_n(X_t)}{\PP[X_0>u_n]}
    \ - \
    (\lambda_1+\lambda_2)
    \right).
  \end{align*}
  We now write the decomposition
      \begin{align*}
  &
    \sum_{t=1}^n
    \left(
    \frac{
    G_n(X_t)
    }{
    \PP[X_0>u_n]
    }
    \ - \
    (\lambda_1+\lambda_2)
    \right)
    \\&
    \ = \
    \sum_{t=1}^n
     \left[ \lambda_1
    \left(
    \frac{
\left(
     g(X_t)
     -
     g(u_n)
\right)_+
            }{
\EE
\left[
\left(
     g(X_0)
     -
     g(u_n)
\right)_+
    \right]
    }
    \ - \ 1
    \right)
    \ + \
    \lambda_2
    \left(
    \frac{\ind\{X_t>u_n(1+\epsilon_n)\}}{
    \PP[
    X_0>u_n(1+\epsilon_n)
    ]
    }
    \ - \ 1
     \right) \right] \\
     &\ + \
     \frac{n}{r_n/\xi(u_n)}
    \lambda_2
    R
    \,,
      \end{align*}
      where
      \begin{align*}
        R
        \ := \
\frac{r_n/\xi(u_n)}{n}
    \sum_{t=1}^n
    \left(
    \frac
    {
    \ind\{X_t>u_n\}
    }{
    \PP[
    X_0>u_n
    ]
    }
    \ - \
    \frac
    {
    \ind\{X_t>u_n(1+\epsilon_n)\}
    }{
    \PP[
    X_0>u_n(1+\epsilon_n)
    ]
    }
    \right)
        \ = \
        R_1
        \ - \
        R_2
    \,,
      \end{align*}
      and $R_1, R_2$ are defined as
\begin{align*}
  R_1
  &
  \ := \
  \frac{
  \PP[X_0>u_n]
-
  \PP[X_0>u_n(1+\epsilon_n)]
  }
  {\PP[X_0>u_n]}
  \frac{r_n/\xi(u_n)}{n}
  \\&
  \qquad\cdot
  \sum_{t=1}^n
  \left(
  \frac{
  \ind\{X_t>u_n\}
  \ - \
  \ind\{X_t>u_n(1+\epsilon_n)\}
  }
  {
  \EE[
  \ind\{X_0>u_n\}
  \ - \
  \ind\{X_0>u_n(1+\epsilon_n)\}
  ]
  }
  \ - \
  1
  \right)
  \,,
  \\
  R_2
  &
  \ := \
  \frac{
  \PP[X_0>u_n]
  \
-
  \
  \PP[X_0>u_n(1+\epsilon_n)]}
  {\PP[X_0>u_n]}
  \frac{r_n/\xi(u_n)}{n}
  \sum_{t=1}^n
  \left(
  \frac{
  \ind\{X_t>u_n(1+\epsilon_n)\}
  }
  {
  \PP[
  X_0>u_n(1+\epsilon_n)
  ]
  }
  \ - \
  1
  \right)
  \,.
\end{align*}
If we can show that $R_1,R_2\stackrel{\PP}{\longrightarrow} 0$, then the convergence in Equation~\eqref{goal:wold} follows from the convergence of
\[
\frac{r_n/\xi(u_n)}{n \PP[X_0>u_n]} \sum_{t=1}^n
    \left(
    G_n(X_t)
    \ - \
    \EE[G_n(X_0)]
    \right).
\]
{\bf Step 2:} Obviously $r_n=n^{1-(d+1/\alpha)}$ with
\begin{align*}
  \alpha
  \ = \
    \begin{cases}
      \nu\land 2
          &\qquad\text{in Case (i), and}\\
          2
      &\qquad\text{in Case (ii).}
    \end{cases}
\end{align*}
We aim to combine Theorems~\ref{thm:approx_optimal} and Theorem~\ref{thm:clt_partsum} to show that
  \begin{align}
    \nonumber
    &
    \frac{r_n/\xi(u_n)}{n \PP[X_0>u_n]} \sum_{t=1}^n
    \left(
    G_n(X_t)
    \ - \
    \EE[G_n(X_0)]
    \right) \\
    \label{eq:28891}
    &= n^{1-(d+1/\alpha)}
    \cdot \frac{1}
    {
    f_{X_0}(u_n)
    }
    \cdot \frac{1}{n}
    \sum_{t=1}^n
    \left(
    G_n(X_t)
    \ - \
    \EE[G_n(X_0)]
    \right)
  \end{align}
  converges in distribution to the limit announced in~\eqref{goal:wold}. To apply this result it is sufficient to show that
  \begin{align}
    \label{eq:still_to_show}
    \begin{split}
    G_{\infty,n}'(0)
  \ = \
    \begin{cases}
      f_{X_0}(u_n)
    \left(
    \lambda_1
    \dfrac{\nu}{1+\nu}
    \ + \
    \lambda_2
    \ + \
              o(1)
    \right)
              &\qquad\text{in Case (i),}\\[10pt]
              f_{X_0}(u_n)
              (
              \lambda_1
              \ + \
              \lambda_2
              \ + \
              o(1)
              )
      &\qquad\text{in Case (ii).}
    \end{cases}
    \end{split}
  \end{align}
  Indeed, from this, Theorem~\ref{thm:approx_optimal} will imply
  that in Case (i),
 \begin{align*}
    &
    \norm{
    n^{-d-1/\alpha}
    \frac{1}{
    f_{X_0}(u_n)
    }
    \sum_{t=1}^{n}
    (G_n(X_t)
    \ - \
    \EE[G_n(X_0)])
    \ - \
    \left(
    \lambda_1
    \frac{\nu}{1+\nu}
    \ + \
    \lambda_2
    \ + \
    o(1)
    \right)
    n^{-d-1/\alpha}
    \sum_{t=1}^{n}
    X_t
    }_{L^{r_0}(\PP)}
    \\&
    \ \lesssim \
    \frac{
   n^{\kappa_0+\delta/2-d-1/\alpha}
       }{
    f_{X_0}(u_n)
    }
\ \to \
0
\,,
  \end{align*}
  as $n\to\infty$.
  This conclusion follows from $f_{X_0}\in\mathrm{RV}_{-\nu-1}$ and
  $u_n=\operatorname{o}(n^{(d+1/\alpha-\kappa_0-\delta)/(\nu+1)})$.
  In Case (ii), we similarly obtain
\begin{align*}
    &
    \norm{
    n^{-d-1/2}
    \frac{1}{
    f_{X_0}(u_n)
    }
    \sum_{t=1}^{n}
    (G_n(X_t)
    \ - \
    \EE[G_n(X_0)])
    \ - \
    \left(
    \lambda_1
    \ + \
    \lambda_2
    \ + \
    o(1)
    \right)
    n^{-d-1/2}
    \sum_{t=1}^{n}
    X_t
    }_{L^{r_0}(\PP)}
    \\&
    \ \lesssim \
    \frac{
  n^{\kappa_0+\delta/2-d-1/2}
       }{
    f_{X_0}(u_n)
    }
    \to
    0
    \,,
    \end{align*}
  as $n\to\infty$, due to \eqref{eq:omega_vanish}.
  \eqref{eq:28891} and the announced result.

We now provide details on both steps.
\subsection*{Details Step 1: Analysis of $R_1$}
We apply Theorem~\ref{thm:approx_optimal} using the function
\begin{align*}
  G_n^{R_1}
  (x)
  \ = \
  \ind\{x>u_n\}
  \ - \
  \ind\{x>u_n(1+\epsilon_n)\}
  \,,
\end{align*}
which satisfies the bound
\begin{align*}
|G_n^{R_1}(x)|\le \ind\{x>u_n(1-(-\epsilon_n)_+)\}\,.
\end{align*}
This ensures that Assumption~\ref{asu:G_n} is met, with the threshold $u_n$ replaced by $u_n(1-(-\epsilon_n)_+)$.
We then decompose $R_1$ as
\begin{align*}
  R_1
  &
  \ = \
  R_{1,1}
  \ + \
  R_{1,2}
  \,,
\end{align*}
where
\begin{align*}
  R_{1,1}
  &
  \ := \
  \frac{
  1}{\PP[X_0>u_n]}
  \frac{r_n/\xi(u_n)}{n} \cdot
  \sum_{t=1}^n
  \left(
  G_n^{R_1}(X_t)
  \ - \
  \EE[ G_n^{R_1}(X_0) ]
  \ - \
  (G_{n,\infty}^{R_1})'(0)
  X_t
  \right)
  \,,
  \\
R_{1,2}
  &
  \ := \
  \frac{
  1}{\PP[X_0>u_n]}
  \frac{r_n/\xi(u_n)}{n} \cdot
  (G_{n,\infty}^{R_1})'(0)
  \sum_{t=1}^n
  X_t
  \,.
\end{align*}
First we analyze $R_{1,1}$. Note that by construction
\begin{align*}
  \frac{
  1}{\PP[X_0>u_n]}
  \frac{r_n/\xi(u_n)}{n}
  =
  \frac{n^{-d-1/\alpha}}{f_{X_0}(u_n)}
  \,.
\end{align*}
Applying Theorem~\ref{thm:approx_optimal}, we find that
\begin{align*}
  &
  \norm{
  R_{1,1}
  }_{L^{r_0}(\PP)}
  \\&
  \ = \
  \frac{
  1}{f_{X_0}(u_n)}
  \norm{
  n^{-d-1/\alpha}
  \sum_{t=1}^n
  \left(
  G_n^{R_1}(X_t)
  \ - \
  \EE[ G_n^{R_1}(X_0) ]
  \ - \
  (G_{n,\infty}^{R_1})'(0)
  X_t
  \right)
  }_{L^{r_0}(\PP)}
  \\&
  \ \le \
  \frac{n^{\kappa_0+\delta/2-d-1/\alpha}}{f_{X_0}(u_n)}
  \ \to \
  0
  \,,
\end{align*}
due to the assumed growth rate on $u_n$, as was shown during the outline of Step 2, so that $R_{1,1}\stackrel{\PP}{\longrightarrow} 0$.
Next we turn to $R_{1,2}$.
Since $r_n/(nu_n)=n^{-d-1/\alpha}$, it follows from
Theorem~\ref{thm:clt_partsum} that
\begin{align*}
  \frac{r_n}{nu_n}
  \sum_{t=1}^n
  X_t
  \stackrel{\mathrm{d}}{\longrightarrow}
  Z_{\alpha}\,.
\end{align*}
The remaining factor is
\begin{align*}
\frac{
  u_n
  (G_{n,\infty}^{R_1})'(0)
  }{\PP[X_0>u_n]\xi(u_n)}
  \,,
\end{align*}
which we will show
tends to zero as $n\to\infty$.
The stated convergence of $R_{1,2}$ then follows by Slutsky's lemma.
We now use Lemma~\ref{lem:swap}(ii) in order to evaluate
\begin{align*}
  &
\frac{
  u_n
  (G_{n,\infty}^{R_1})'(0)
  }{\PP[X_0>u_n]\xi(u_n)}
  \ = \
  \frac{f_{X_0}(u_n(1+\epsilon_n))-f_{X_0}(u_n)}{f_{X_0}(u_n)}
  \ = \
  \frac{f_{X_0}(u_n(1+\epsilon_n))}{f_{X_0}(u_n)}
  \ - \
  1
  \,.
\end{align*}
In Case (i), we assume $f_{X_0}\in\mathrm{RV}_{-\nu-1}$, so by local uniformity of regular variation
\begin{align*}
  \frac{f_{X_0}(u_n(1+\epsilon_n))}{f_{X_0}(u_n)}
  \ = \
  (1+\epsilon_n)^{-\nu-1}
  \ + \
  o(1)
  \ \to \
  1
  \,,
\end{align*}
as $n\to\infty$, since $\epsilon_n\to 0$.
In Case (ii), we have
\begin{align}
\label{helper123}
  \frac{f_{X_0}(u_n(1+\epsilon_n))}{f_{X_0}(u_n)}
  \ = \
  \frac{\omega(u_n(1+\epsilon_n))}{\omega(u_n)}
  \exp(u_n^\beta(1 - (1+\epsilon_n)^\beta)/\beta)
  \ \to \
  1
  \,,
\end{align}
since $\epsilon_n$ decays polynomially in $n$ while $u_n$ grows only logarithmically, and $\omega$ is either constant or regularly varying at infinity.
This shows $R_{1,2}\stackrel{\PP}{\longrightarrow} 0$, and thus $R_{1}\stackrel{\PP}{\longrightarrow}0$.
\subsection*{Details Step 1: Analysis of $R_2$}
The analysis of $R_2$ is
more straightforward, as the conditions of Theorem~\ref{thm:approx_optimal} are easily verified for the function
\begin{align*}
  G_n^{R_2}(x)
  \ = \
  \ind\{x>u_n(1+\varepsilon_n)\}
  \,.
\end{align*}
Recall first of all that we have just shown the convergence $f_{X_0}(u_n(1+\epsilon_n))/f_{X_0}(u_n)\to 1$. Moreover, in Case (i), since $f_{X_0}\in \mathrm{RV}_{-\nu-1}$,
  it follows that $x\mapsto \PP[X_0>x]\in \mathrm{RV}_{-\nu}$.
  Therefore,
  \begin{align*}
  \frac{
  \PP[X_0>u_n(1+\epsilon_n)]
  \
-
  \
  \PP[X_0>u_n]
  }{\PP[X_0>u_n]}
    \ = \
    (1+\epsilon_n)^{-\nu}
    \ - \ 1
    \ + \ o(1)
  \ \to\ 0
  \,.
  \end{align*}
  In Case (ii), we apply the mean value theorem: there exists $(\theta_n)\subset(0,1)$ such that
 \begin{align*}
   &
  \frac{
  \PP[X_0>u_n(1+\epsilon_n)]
  \
-
  \
  \PP[X_0>u_n]
  }{\PP[X_0>u_n]}
    \ = \
    -u_n\epsilon_n
    \frac{f_{X_0}(u_n)}{\PP[X_0>u_n]}
    \frac{f_{X_0}(u_n(1+\theta_n\epsilon_n))}{f_{X_0}(u_n)}
    \ \to \ 0\,,
  \end{align*}
  as $n\to\infty$, since $u_nf_{X_0}/\PP[X_0>u_n]\sim u_n^{\beta}$ with $\epsilon_n u_n^\beta\to 0$, and the remaining term converges to 1 due to \eqref{helper123}. This entails $\PP[X_0>u_n(1+\epsilon_n)]/\PP[X_0>u_n]\to 1$. Then, applying Theorem~\ref{thm:approx_optimal} and Theorem~\ref{thm:clt_partsum}, we obtain that
\begin{align*}
  &
  n^{1-(d+1/\alpha)} \cdot \frac{\PP[X_0>u_n(1+\varepsilon_n)]}{f_{X_0}(u_n(1+\varepsilon_n))} \cdot \frac{1}{n}
  \sum_{t=1}^n
  \left(
  \frac{\ind\{X_t>u_n(1+\epsilon_n)\}}{\PP[X_0>u_n(1+\epsilon_n)]}
  \ - \
  1
  \right)
\end{align*}
  converges in distribution to a non-degenerate random variable, and so does
  \begin{align*}
  &
  n^{1-(d+1/\alpha)} \cdot \frac{\PP[X_0>u_n]}{f_{X_0}(u_n)} \cdot \frac{1}{n}
  \sum_{t=1}^n
  \left(
  \frac{\ind\{X_t>u_n(1+\epsilon_n)\}}{\PP[X_0>u_n(1+\epsilon_n)]}
  \ - \
  1
  \right) \\
  &=
\frac{r_n/\xi(u_n)}{n}
  \sum_{t=1}^n
  \left(
  \frac{\ind\{X_t>u_n(1+\epsilon_n)\}}{\PP[X_0>u_n(1+\epsilon_n)]}
  \ - \
  1
  \right) \\
  &= \frac
  {\PP[X_0>u_n]}
  {
  \PP[X_0>u_n(1+\epsilon_n)]
  \
-
  \
  \PP[X_0>u_n]
  }
  R_2.
\end{align*}
We conclude that in both settings it holds $R_2\stackrel{\PP}{\longrightarrow}0$.

We will now continue the analysis separated into the settings, where
it suffices to check the validity of \eqref{eq:still_to_show} in order to complete the proof.
\subsection*{Details Step 2}
We begin by verifying Equation~\eqref{eq:still_to_show} in Setting (i).
To simplify the notation, we write
  \[
  (\log(x)-\log(u_n))\ind\{x>u_n\}
  \
  =
  \
    \max\{
  \log(x/u_n),0
    \}
    \ =: \
  \log(x/u_n)_+.
  \]
  It then follows that
  \begin{align*}
    G_{\infty,n}'(0)
    &
    \ = \
    -\int_\RR
    G_n(x)
    f_{X_0}'(x)
    \,\mathrm{d}x
    \\&
    \ = \
    -\lambda_1
\frac{
    \int_{u_n}^{\infty}
    \log
    \left(
    x/u_n
    \right)
    f_{X_0}'(x)
    \,\mathrm{d}x
    }{
\EE
\left[
    \log
    \left(
    X_0/u_n
    \right)_+
\right]
    }
    \PP[X_{0}>u_n]
    \ + \
    \lambda_2
    \cdot
    f_{X_0}(u_n)
    \,.
  \end{align*}
 Using integration by parts and the regular variation of $f_{X_0}\in\mathrm{RV}_{-\nu-1}$,
 we obtain
  \begin{align*}
    -\int_{u_n}^{\infty}
    \log
    \left(
    x/u_n
    \right)
    f_{X_0}'(x)
    \,\mathrm{d}x
    &
    \ = \
    \left[
    -f_{X_0}(x)
    \log(x/u_n)
    \right]^{x\to\infty}_{x=u_n}
    \ + \
\int_{u_n}^{\infty}
\frac{1}{x}
    f_{X_0}(x)
    \,\mathrm{d}x \\
    &\ = \
    \int_{u_n}^{\infty}
\frac{1}{x}
    f_{X_0}(x)
    \,\mathrm{d}x
    \,.
    \intertext{Changing variables,
    and using the uniform convergence of $f_{X_0}(u_nx)/f_{X_0}(u_n)$ to 1 in neighborhoods of infinity, this becomes:}
   -\int_{u_n}^{\infty}
    \log
    \left(
    x/u_n
    \right)
    f_{X_0}'(x)
    \,\mathrm{d}x
    &
           \ = \
           f_{X_0}(u_n)
\int_{1}^{\infty}
\frac{1}{x}
\frac{
           f_{X_0}(u_nx)
           }{f_{X_0}(u_n)}
           \,\mathrm{d}x
           \\&
           \ \sim \
           f_{X_0}(u_n)
\int_{1}^{\infty}
           \frac{1}{x^{2+\nu}}
           \,\mathrm{d}x
           \\&
           \ = \
           f_{X_0}(u_n)
           \frac{1}{1+\nu}
           \,.
         \end{align*}
  Recalling~\eqref{i_know}, it follows that
  \begin{align*}
    G_{\infty,n}'(0)
    \
    \ = \
    \
        f_{X_0}(u_n)
    \left(
    \lambda_1
    \frac{\nu}{1+\nu}
    \ + \
    \lambda_2
    \ + \
    o(1)
    \right)
    \,,
  \end{align*}
  which verifies Equation~\eqref{eq:still_to_show} in Setting (i).

To prove the same equation in Case (ii),
first note that in the proof of Corollary~\ref{cor:light_det} we showed
\begin{align*}
\frac{\EE[G_n(X_0)]}{G_{\infty,n}'(0)} = \frac
{
\int_{u_n}^{\infty}
\log(x/u_n)f_{X_0}(x)\,\mathrm{d}x
}
{
-
\int_{u_n}^{\infty}
\log(x/u_n)f'_{X_0}(x)\,\mathrm{d}x
}
\
\sim
\
\frac{\PP[X_0>u_n]}{f_{X_0}(u_n)}
\
\sim
\
u_n^{1-\beta}
\,.
\end{align*}
From this it follows immediately that
  \begin{align*}
    G_{\infty,n}'(0)
    \
    =
    \
    u_n^{\beta-1} \EE[G_n(X_0)]
    \
    &=
    \
    u_n^{\beta-1} \PP[X_0>u_n] (\lambda_1 + \lambda_2 + o(1)) \\
    &
    \
    =
    \
    f_{X_0}(u_n)(\lambda_1 + \lambda_2 + o(1))
    \, ,
  \end{align*}
  that is, \eqref{eq:still_to_show}.
Having shown \eqref{eq:still_to_show} in both cases, we conclude the proof.
  \end{proof}

%% else use the following coding to input the bibitems directly in the
%% TeX file.

%%% Refer following link for more details about bibliography and citations.
%%% https://en.wikibooks.org/wiki/LaTeX/Bibliography_Management
%
%\begin{thebibliography}{00}
%
%%% For numbered reference style
%%% \bibitem{label}
%%% Text of bibliographic item
%
%\bibitem{lamport94}
%  Leslie Lamport,
%  \textit{\LaTeX: a document preparation system},
%  Addison Wesley, Massachusetts,
%  2nd edition,
%  1994.
%
%\end{thebibliography}
\end{document}